\documentclass{amsart}
\usepackage[utf8]{inputenc}
\usepackage[T1]{fontenc}
\usepackage{lmodern}
\usepackage{amsmath}
\usepackage{amssymb}
\usepackage{mathtools}
\usepackage{latexsym}
\usepackage[lite]{amsrefs}
\usepackage{nicefrac}
\usepackage{microtype}
\usepackage{color}
\newcommand{\llp}{\mathrel{\ooalign{\hss$\square$\hss\cr$\diagup$}}}
\usepackage{tikz-cd}
\usepackage{MnSymbol}
\usepackage{enumitem} 
\setlist[enumerate,1]{label=\textup{(\arabic*)}}

\usepackage[all]{xy}
\newdir{ >}{{}*!/-5pt/@{>}}

\usepackage[pdftitle={Bornological K-theory},
pdfauthor={Devarshi Mukherjee},
pdfsubject={Mathematics}
]{hyperref}

\BibSpec{book}{%
  +{}  {\PrintPrimary}                {transition}
  +{,} { \textit}                     {title}
  +{.} { }                            {part}
  +{:} { \textit}                     {subtitle}
  +{,} { \PrintEdition}               {edition}
  +{}  { \PrintEditorsB}              {editor}
  +{,} { \PrintTranslatorsC}          {translator}
  +{,} { \PrintContributions}         {contribution}
  +{,} { }                            {series}
  +{,} { \voltext}                    {volume}
  +{,} { }                            {publisher}
  +{,} { }                            {organization}
  +{,} { }                            {address}
  +{,} { \PrintDateB}                 {date}
  +{,} { }                            {status}
  +{}  { \parenthesize}               {language}
  +{}  { \PrintTranslation}           {translation}
  +{;} { \PrintReprint}               {reprint}
  +{.} { }                            {note}
  +{.} {}                             {transition}
  +{} { \PrintDOI}                   {doi}
  +{} { available at \url}            {eprint}
  +{}  {\SentenceSpace \PrintReviews} {review}
}

\renewcommand*{\PrintDOI}[1]{\href{http://dx.doi.org/\detokenize{#1}}{doi: \detokenize{#1}}}

\newcommand{\comment}[1]{}  

\newcommand{\adj}[4]{#1\negmedspace: #2\rightleftarrows #3:\negmedspace #4}

\theoremstyle{plain}
\newtheorem{theorem}{Theorem}[section]
\newtheorem{lemma}[theorem]{Lemma}
\newtheorem{corollary}[theorem]{Corollary}
\newtheorem{proposition}[theorem]{Proposition}
\theoremstyle{remark}
\newtheorem{remark}[theorem]{Remark}
\theoremstyle{question}

\theoremstyle{definition}
\newtheorem{definition}[theorem]{Definition}
\newtheorem{example}[theorem]{Example}
\numberwithin{theorem}{section}

\newcommand{\defeq}{\mathrel{:=}} 

\newcommand{\colim}{\mathrm{colim}}

\DeclarePairedDelimiterX{\setgiven}[2]{\{}{\}}{#1\,{:}\,\mathopen{}#2}

\DeclareMathAlphabet{\mathpzc}{OT1}{pzc}{m}{it}

\begin{document}
\title{Proto-exact categories and injective Banach modules}

\author{Jack Kelly}
\email{jack.kelly@lincoln.ox.ac.uk}

\maketitle

\begin{abstract}
We develop the basic theory of covers and envelopes in proto-exact categories. As an application, we prove the existence of enough injectives for categories of Banach modules over arbitrary Banach rings.
\end{abstract}

\tableofcontents

\section{Introduction}

Let $K$ be a spherically complete Banach field. The classical Hahn-Banach theorem implies that $K$ is an injective object in the quasi-abelian category $\mathrm{Ban}_{K}$ of Banach spaces over $K$. In other words, any bounded monomorphism $K\rightarrow E$ (necessarily a homeomorphism onto its image) splits. In fact, one can use this fact to prove that $\mathrm{Ban}_{K}$ has enough injectives, with injective cogenerator given by the space $l^{\infty}$ of bounded sequences. 

In this article we show that over any Banach ring $R$, the quasi-abelian category $\mathrm{Ban}_{R}$ of Banach $R$-modules, with bounded morphisms between them, has enough injectives. The category $\mathrm{Ban}_{R}$ only has finite limits and colimits, so there is no hope of establishing the existence of enough injectives using standard techniques from Grothendieck abelian categories, or exact categories of Grothendieck type \cite{vst2012exact}. However, if we instead consider the category $\mathrm{Ban}_{R}^{\le1}$, where we restrict all maps to have norm at most $1$, then this category is locally presentable. Unfortunately, we here lose additivity. However, the category is still very close to being quasi-abelian - it is a \textit{parabelian} pointed category. In particular it has a canonical proto-exact structure, in the sense of \cite{dyckerhoff2019higher}. In fact, this proto-exact structure is extremely rich - it satisfies a version of the obscure axiom (c.f. \cite{Buehler}*{Proposition 2.16}), that is not even satisfied by the category of pointed sets. It turns out that proto-exact categories with some version of the obscure axiom, and in which certain filtered colimits are exact, admit a robust theory of covers and envelopes. In this article we develop this theory, along the lines of Eklof-Trlifaj and successors:  (\cite{MR1832549}, \cite{MR1798574}, \cite{estrada2014derived}, \cite{Gillespie2}, \cite{gillespie2006flat}, \cite{MR3649814}, \cite{saorin2011exact}, \cite{vst2012exact}). We use this to establish the existence of enough injectives in a large class of proto-exact categories.

To conclude, we will explain how covering/ enveloping theory in $\mathrm{Ban}^{\le1}_{R}$ can be used to deduce some covering/ enveloping theory in $\mathrm{Ban}_{R}$, thereby showing that the category $\mathrm{Ban}_{R}$ has enough injectives. To summarise, we prove the following.

\begin{theorem}[Theorem \ref{thm:injban}]
   Let $R$ be a Banach ring. Then the quasi-abelian category $\mathrm{Ban}_{R}$ of Banach $R$-modules has enough injectives. 
\end{theorem}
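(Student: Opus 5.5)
My plan is to run the whole argument in the contracting category $\mathrm{Ban}_R^{\le1}$ and only at the end pass back to $\mathrm{Ban}_R$. In $\mathrm{Ban}_R^{\le1}$ an admissible monomorphism will be an isometric embedding onto a closed submodule. The steps are: set up the proto-exact structure on $\mathrm{Ban}_R^{\le1}$; build an injective cogenerating set there by a small object argument; and then show that injectivity, and enough injectives, survive the passage to $\mathrm{Ban}_R$.

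\textbf{Step 1: the proto-exact structure.} I will check that $\mathrm{Ban}_R^{\le1}$ is pointed, parabelian and locally presentable. The conflations are the strictly exact sequences: isometric closed embeddings and quotient maps carrying the quotient norm. I will also verify a version of the obscure axiom, and that transfinite compositions of isometric embeddings, taken as completed unions, remain admissible monomorphisms. Filtered colimits along isometries should be exact in the required sense.

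\textbf{Step 2: injectives in $\mathrm{Ban}_R^{\le1}$.} I will apply the small object argument to a generating set $\mathcal{S}$ of admissible monomorphisms. The intended choice for $\mathcal{S}$ is the isometric embeddings $A \hookrightarrow B$ with $B$ of bounded density character. A Baer-type criterion should reduce injectivity against all isometric embeddings to injectivity against $\mathcal{S}$. To do this, given a general embedding $A\hookrightarrow B$ and a contraction $A\to I$, I extend along a maximal partial extension by Zorn's lemma. I then enlarge the domain by a separable (or $\kappa$-small) piece, which is possible because completion and closure preserve the norm estimates. The small object argument then embeds every $M$ into a $\mathcal{S}$-injective $I$. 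Here we need that pushouts of admissible monos are admissible monos, which the proto-exact axioms supply, and that the relevant colimits commute with $\mathcal{S}$ by smallness.

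\textbf{Step 3: transfer to $\mathrm{Ban}_R$.} Let $M\hookrightarrow E$ be an admissible monomorphism in $\mathrm{Ban}_R$, meaning a closed embedding with an equivalent induced norm. I will show that $I$ is injective in $\mathrm{Ban}_R$, at least after renorming the source, by scaling arguments. A bounded map $f\colon M\to I$ of norm $c$ becomes a contraction after dividing by $c$. The embedding becomes isometric after replacing the norm on $E$ by an equivalent one that agrees with the norm of $M$ on $M$. The natural candidate is the norm $\|e\|' = \inf\{\|m\|_M + C\|e-m\|_E\}$, which is equivalent to $\|\cdot\|_E$ and restricts to $\|\cdot\|_M$ on $M$ by the bounded-below estimate. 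I then extend with norm at most $1$ and rescale. An object of $\mathrm{Ban}_R^{\le1}$ that is injective against isometric embeddings is therefore injective in $\mathrm{Ban}_R$. Since $M\hookrightarrow I$ from Step 2 is an isometric closed embedding, it is admissible in $\mathrm{Ban}_R$, which gives enough injectives.

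I expect the main obstacle to be Step 2, namely the Baer-type reduction to a set of embeddings together with the smallness bookkeeping. This is where the obscure axiom and the exactness of filtered colimits are genuinely used. Step 3 is essentially norm-equivalence bookkeeping.
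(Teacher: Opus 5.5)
Your Step 2 does not go through, and the problem is not smallness bookkeeping: the Baer-type reduction is false in general.

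Look at the one-step extension in your Zorn argument. Suppose you have reached a maximal contractive extension $g\colon C\to I$ and adjoin a small closed piece $D_0\ni b$, with $C_0=C\cap D_0$. Injectivity against $C_0\hookrightarrow D_0$ gives a contraction $h\colon D_0\to I$ extending $g|_{C_0}$. The glued map $c+d\mapsto g(c)+h(d)$ is well defined, but you only get $\|g(c)+h(d)\|\le\|c\|+\|d\|$, not $\le\|c+d\|$.

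For the glued map to be contractive, the norm on $\overline{C+D_0}$ must be the amalgamated norm $\inf_{c_0\in C_0}(\|c+c_0\|+\|d-c_0\|)$. Equivalently, $C\hookrightarrow\overline{C+D_0}$ must be a pushout of $C_0\hookrightarrow D_0$ in $\mathrm{Ban}_R^{\le1}$. Closures and completions do not provide this, and in the Archimedean case it usually fails. For example, take $C=\mathbb{R}e_1$ and $D_0=\mathbb{R}e_2$ in Euclidean $\mathbb{R}^2$: the pushout over $C_0=0$ is the $\ell^1$-sum, and gluing the two coordinate functionals gives a map of norm $\sqrt2$. In the classical Hahn--Banach theorem this step is supplied by the order on $\mathbb{R}$ (or by spherical completeness in the non-Archimedean case). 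For a general Banach ring and a general target $I$ you have nothing in its place.

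In fact no set of test embeddings can work, already for $R=\mathbb{R}$ with ordinary norms. Let $\mathcal{S}$ be any set of isometric embeddings.
\begin{itemize}
\item Choose a regular uncountable $\kappa$ exceeding the density characters of all their domains, and a set $\Gamma$ with $|\Gamma|\ge\kappa$.
\item Let $T\subseteq\ell^\infty(\Gamma)$ be the closed subspace of functions whose support has cardinality $<\kappa$.
\item $T$ is $\mathcal{S}$-injective. If $A\subseteq B$ is isometric with $\mathrm{dens}(A)<\kappa$ and $t\colon A\to T$ is a contraction, regularity of $\kappa$ gives $t(A)\subseteq\ell^\infty(\Gamma_0)$ for some $|\Gamma_0|<\kappa$. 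Coordinatewise Hahn--Banach then extends $t$ to a contraction $B\to\ell^\infty(\Gamma_0)\subseteq T$.
\item $T$ is not injective. A contractive retraction $P\colon\ell^\infty(\Gamma)\to T$ of the inclusion would give $P(\mathbf{1}-2e_\gamma)(\gamma)=-2$ for any $\gamma\notin\mathrm{supp}\,P(\mathbf{1})$, although $\|\mathbf{1}-2e_\gamma\|_\infty=1$.
\end{itemize}
So the small object argument on $\mathcal{S}$ only produces $\mathcal{S}$-injectives, and Step 3 has nothing to transfer.

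Step 3 itself is fine. It is essentially the paper's rescaling argument for $\mathcal{A}^{\perp}\subseteq\overline{\mathcal{A}}^{\perp}$, except that ``dividing by $c$'' should be replaced by passing to $M_c$, since $1/c$ need not act on $I$.

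The paper does not use a Baer criterion. It tries to obtain a generating set from deconstructibility (Lemma \ref{lem:decomp}, Proposition \ref{prop:pushoutadmonA}, Corollary \ref{cor:enoghinj}). However, the example above also contradicts the conclusion $\mathbf{AdMon}=\mathbf{I}^{\kappa,\gamma}$-cof in $\mathrm{Ban}^{\le1}_{\mathbb{R}}$ with ordinary norms. The weak point is the same as yours. The square in Proposition \ref{prop:pushoutadmonA} is declared a pushout via Lemma \ref{lem:pullbacker}, which needs $D'\to D$ to be an admissible epimorphism, whereas there it is an admissible monomorphism. The five lemma rescues this in the additive non-Archimedean case, but not in the Archimedean non-expanding categories.

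For $R=\mathbb{R}$, enough injectives comes directly from Hahn--Banach via $\ell^\infty$, not from a set-sized criterion. For general $R$, what is missing is a genuine extension theorem for some class of targets.
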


The article is structured as follows. In Section \ref{sec:defs} we begin by recalling the definitions of proto-exact and parabelian categories, and establishing some of their basic properties. We introduce the obscure axiom, and consider various notions of generators in proto-exact categories. Finally, we give definitions of cotorsion pairs, covers, and envelopes in proto-exact categories. In Section \ref{sec:cohdec} (c.f. also \cite{positselski2025pure}) we generalise the idea of deconstructibility from \cite{saorin2011exact} and \cite{vst2012exact} to proto-exact categories, and explain how deconstructibility implies the existence of certain covers and envelopes. After giving various exactness conditions on colimits in proto-exact categories, and considering pure monomorphisms in locally presentable proto-exact categories, comes the technical heart of the paper. Specifically, we generalise the notion of coherence of an exact category, introduced in \cite{positselski2023locally}, to proto-exact categories. Our main application here is to give conditions on a locally presentable exact category such that its class of admissible monomorphisms is generated as a weakly saturated class by a set, a result from which deconstructibility can be deduced. To conclude, in Section \ref{sec:bancoh}, we show that various categories of semi-normed, normed, and Banach modules satisfy the technical assumptions required by the statements in Section \ref{sec:cohdec}, thereby deducing the existence of enough injectives.


\comment{
\subsection{Model categories for non-Archimedean modules}
In this paper we aim to understand model category structures on the category of chain complexes
$$\mathrm{Ch}(\mathrm{xNrm}_{R}^{nA,\le1})$$
for which the weak equivalences are the quasi-isomorphisms for the quasi-abelian exact structure. In particular we show the following.

\begin{theorem}
    The injective model structure, in which
    \begin{enumerate}
        \item 
        the cofibrations are the degree-wise admissible monomorphisms and
        \item
         the weak equivalence the quasi-abelian quasi-isomorphisms
    \end{enumerate} 
    exists on $\mathrm{Ch}(\mathrm{xNrm}_{R}^{nA,\le1})$. Moreover this is a combinatorial model category.
\end{theorem}

The categories $\mathrm{xNrm}_{R}^{nA}$ are moreover closed symmetric monoidal. If $M$ and $N$ are semi-normed $R$-modules we define $M\otimes_{R}N$ to be the algebraic tensor product equipped with the semi-norm
$$\rho_{M\otimes N}(v)=\mathrm{inf}\{\mathrm{max}\rho_{M}(x_{i})\rho_{N}(y_{i}):v=\sum x_{i}\otimes y_{i}\}.$$
The internal hom $\underline{\mathrm{Hom}}(M,N)$ is given by the module of bounded operators with the operator norm. This restricts to a closed symmetric monoidal structure on $\mathrm{xNrm}_{R}^{nA,\le1}$. We prove the following

\begin{theorem}
    The flat model structure, in which
    \begin{enumerate}
        \item 
        the cofibrations are the degree-wise admissible monomorphisms with dg-flat cokernel
        \item
         the weak equivalence the quasi-abelian quasi-isomorphisms
    \end{enumerate} 
    exists on $\mathrm{Ch}(\mathrm{xNrm}_{R}^{nA})$. Moreover this is a combinatorial monoidal model category satisfying the monoid axiom.
\end{theorem}

In \cite{kelly2024homotopy} we introduced the \textit{strong exact structure} on $\mathrm{SNrm}_{R}^{nA,\le1}$, for which a morphism $f:M\rightarrow N$ is an admissible, or \textit{strong epimorphism} if for all $n\in N$ there is $m\in M$ such that $f(m)=n$ and $\rho_{M}(m)=\rho_{N}(n)$. 

}
\section{Proto-exact and parabelian categories}\label{sec:defs}

Proto-exact categories, introduced in \cite{dyckerhoff2019higher}, are non-additive versions of Quillen exact categories.

\subsection{The definition}

Here we recall the definitions of proto-exact and parabelian categories, and their basic properties. We follow the exposition in \cite{mozgovoy}.

Let $\mathcal{C}$ be a pointed category. For $f:X\rightarrow Y$ a morphism in $\mathcal{C}$ we define $\mathrm{Coker}(f)$ by the following pushout diagram, provided it exists,
\begin{displaymath}
    \xymatrix{
    X\ar[r]^{f}\ar[d] & Y\ar[d]^{g}\\
    0\ar[r] & \mathrm{Coker}(f),
    }
\end{displaymath}
and $\mathrm{Ker}(f)$ by the following pullback diagram, provided it exists,

\begin{displaymath}
    \xymatrix{
    \mathrm{Ker}(f)\ar[r]^{i}\ar[d] & X\ar[d]^{f}\\
    0\ar[r] & Y.
    }
\end{displaymath}

A pair $(f,g)=(f:X\rightarrow Y,g:Y\rightarrow Z)$ is said to be \textit{strict} if $f$ is isomorphic to $\mathrm{Ker}(g)\rightarrow Y$ and $g$ is isomorphic to $Y\rightarrow\mathrm{Coker}(f)$. We call the map $g$ appearing in such a pair a \textit{strict epimorphism}, and the map $f$ appearing in such a pair a \textit{strict monomorphism}. We have the following useful result (see e.g., \cite{mozgovoy}*{after Lemma 1.2}).

\begin{proposition}\label{prop:strictnessequiv}
    Let $\mathcal{C}$ be a pointed category with all kernels and cokernels. Let $f:X\rightarrow Y$ be a morphism in $\mathcal{C}$.
    \begin{enumerate}
        \item 
        The following are equivalent.
        \begin{enumerate}
            \item 
            The morphism $f$ is a strict epimorphism.
            \item 
            The morphism $f$ is isomorphic to $X\rightarrow \mathrm{Coker}(\mathrm{Ker}(f)\rightarrow X)$
             \item 
            There is a morphism $g:K\rightarrow X$ such that $f\circ g=0$ and $f$ is isomorphic to $X\rightarrow \mathrm{Coker}(g)$.
        \end{enumerate}
\item 
The following are equivalent
     \begin{enumerate}
            \item 
            The morphism $f$ is a strict monomorphism.
            \item 
            The morphism $f$ is isomorphic to $\mathrm{Ker}(Y\rightarrow\mathrm{Coker}(f))\rightarrow Y$.
            \item 
            There is a morphism $g:Y\rightarrow Z$ such that $g\circ f=0$ and $f$ is isomorphic to $\mathrm{Ker}(g)\rightarrow Y$.
        \end{enumerate}
    \end{enumerate}
\end{proposition}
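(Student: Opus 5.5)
By duality it suffices to prove part (1). Part (2) is the same statement in $\mathcal{C}^{op}$, which is again pointed with all kernels and cokernels, since kernels and cokernels swap. I will prove (a)$\Rightarrow$(b)$\Rightarrow$(c)$\Rightarrow$(a).

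\textbf{(a)$\Rightarrow$(b).} If $f$ is a strict epimorphism, there is a strict pair $(h,f)$ with $h:W\rightarrow X$. Then $h$ is isomorphic to $\mathrm{Ker}(f)\rightarrow X$ over $X$, and $f$ is isomorphic to $X\rightarrow\mathrm{Coker}(h)$ under $X$. Transporting along the first isomorphism, $\mathrm{Coker}(h)\cong\mathrm{Coker}(\mathrm{Ker}(f)\rightarrow X)$ compatibly with the maps from $X$. Pushouts along isomorphic spans are isomorphic, so (b) follows.

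\textbf{(b)$\Rightarrow$(c).} Take $g:\mathrm{Ker}(f)\rightarrow X$ to be the canonical map. Then $f\circ g=0$ by the defining pullback square of the kernel, and (c) is just (b).

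\textbf{(c)$\Rightarrow$(a).} This is the substantive step. Replacing $f$ by an isomorphic map, assume $f$ is the canonical map $q:X\rightarrow C:=\mathrm{Coker}(g)$. Put $k:\mathrm{Ker}(q)\rightarrow X$. We must show that $(k,q)$ is a strict pair, i.e.\ that $q\cong\mathrm{Coker}(k)$; the condition $k\cong\mathrm{Ker}(q)$ holds by definition.
\begin{enumerate}
\item Since $q\circ g=0$, the universal property of the kernel gives $u:K\rightarrow\mathrm{Ker}(q)$ with $k\circ u=g$.
\item Since $q\circ k=0$, the universal property of $\mathrm{Coker}(k)$ (a pushout against $0$, so it simply says that maps out of $X$ killing $k$ factor uniquely) gives $\phi:\mathrm{Coker}(k)\rightarrow C$ with $\phi\circ p=q$, where $p:X\rightarrow\mathrm{Coker}(k)$ is the canonical map.
\item Conversely, $p\circ g=p\circ k\circ u=0$, so the universal property of $C=\mathrm{Coker}(g)$ gives $\psi:C\rightarrow\mathrm{Coker}(k)$ with $\psi\circ q=p$.
\item By uniqueness in the two universal properties, $\psi\phi=\mathrm{id}$ and $\phi\psi=\mathrm{id}$. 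Indeed, $\psi\phi p=\psi q=p$ and $\phi\psi q=\phi p=q$, and maps out of a cokernel are determined by their precomposition with the canonical map.
\end{enumerate}
Hence $q\cong\mathrm{Coker}(\mathrm{Ker}(q)\rightarrow X)$, so $(k,q)$ is strict and $f$ is a strict epimorphism.

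The only point needing care is the uniqueness used in step 4: cokernels, defined as pushouts along $0$, really are epimorphic in the sense that maps out of them are determined by precomposition. This holds because a map from $\mathrm{Coker}(g)$ corresponds to a cocone on the span $(X\leftarrow K\rightarrow 0)$. The leg from $0$ is forced, since $0$ is initial, so the cocone is determined by its leg from $X$. Everything else is formal.
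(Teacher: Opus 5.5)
Your proof is correct. It runs the cycle (a)$\Rightarrow$(b)$\Rightarrow$(c)$\Rightarrow$(a), where the only substantive step is showing that $q=\mathrm{Coker}(g)$ is the cokernel of its own kernel via the mutually inverse maps $\phi,\psi$, and it gets part (2) by passing to $\mathcal{C}^{op}$. The paper gives no argument of its own here and simply cites Mozgovoy (after Lemma 1.2); your write-up is the standard verification that this citation stands in for.
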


Given a class $\mathcal{Q}$ of strict pairs in a pointed category $\mathcal{C}$, we write $\mathcal{M}_{\mathcal{Q}}=\{f:(f,g)\in\mathcal{Q}\}$ for the class of \textit{admissible monomorphisms} (relative to $\mathcal{Q}$), and $\mathcal{E}_{\mathcal{Q}}=\{g:(f,g)\in\mathcal{Q}\}$ for the class of \textit{admissible epimorphisms} (relative to $\mathcal{Q}$). If $(f:X\rightarrow Y,g:Y\rightarrow Z)\in\mathcal{Q}$, we will equivalently say that
\begin{displaymath}
          \xymatrix{
            0\ar[r] & X\ar[r]^{f} & Y\ar[r]^{g} & Z\ar[r] & 0
            }
        \end{displaymath}
is a short exact sequence relative to $\mathcal{Q}$. More generally, by a \textit{sequence} in a pointed category $\mathcal{C}$ we will mean a diagram

    \begin{displaymath}
            \xymatrix{
         X\ar[r]^{f} & Y\ar[r]^{g} & Z
            }
        \end{displaymath}
        such that $g\circ f=0$. 

\begin{definition}[\cite{MR3970975} Definition 2.4.2]
    A \textit{proto-exact category} is a pair $(\mathcal{C},\mathcal{Q})$ where
    \begin{enumerate}
        \item $\mathcal{C}$ is a pointed category, and
        \item $\mathcal{Q}$ is a class of strict pairs
    \end{enumerate}
    such that 
    \begin{enumerate}
        \item 
        $\mathrm{Id}_{X}\in\mathcal{M}_{\mathcal{Q}}\cap\mathcal{E}_{\mathcal{Q}}$ for all $X\in\mathcal{C}$;
        \item 
        $\mathcal{M}_{\mathcal{Q}}$ and $\mathcal{E}_{\mathcal{Q}}$ are closed under composition;
        \item 
        $\mathcal{E}_{\mathcal{Q}}$ is closed under pullbacks along $\mathcal{M}_{\mathcal{Q}}$;
        \item 
        $\mathcal{M}_{\mathcal{Q}}$ is closed under pushouts along $\mathcal{E}_{\mathcal{Q}}$.
    \end{enumerate}
    A proto-exact category $(\mathcal{C},\mathcal{Q})$ is said to be \textit{right totally proto-exact} if $\mathcal{E}_{\mathcal{Q}}$ is closed under pullbacks along all morphisms, \textit{left totally proto-exact} if $\mathcal{M}_{\mathcal{Q}}$ is closed under pushouts along all morphisms, and \textit{totally proto-exact} if it is both left totally proto-exact and right totally proto-exact.
\end{definition}

Note that a Quillen exact category is precisely an additive totally proto-exact category. 

\smallskip

In a pointed category we will always write
$$A\vee B$$
for the coproduct of two objects, and 
$$A\times B$$
for their product.

\begin{proposition}
    \begin{enumerate}
        \item Let $(\mathcal{C},\mathcal{Q})$ be a right totally proto-exact category. Then for any two objects $A$ and $B$, the sequence
        $$0\rightarrow A\rightarrow A\times B\rightarrow B\rightarrow0$$
        is exact.
             \item Let $(\mathcal{C},\mathcal{Q})$ be a left totally proto-exact category. Then for any two objects $A$ and $B$, the sequence
        $$0\rightarrow A\rightarrow A\vee B\rightarrow B\rightarrow0$$
        is exact.
    \end{enumerate}
\end{proposition}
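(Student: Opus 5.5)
We prove (1); statement (2) is the formal dual, obtained by passing to $\mathcal{C}^{op}$. This swaps products with coproducts, $\mathcal{M}_{\mathcal{Q}}$ with $\mathcal{E}_{\mathcal{Q}}$, and pullbacks with pushouts. Under this swap, right total proto-exactness becomes left total proto-exactness. So it suffices to treat the product case.

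The idea is to obtain the sequence $0\rightarrow A\rightarrow A\times B\rightarrow B\rightarrow 0$ as a pullback of a trivially exact sequence. First, since $\mathrm{Id}_{B}\in\mathcal{E}_{\mathcal{Q}}$, there is a pair in $\mathcal{Q}$ whose second map is $\mathrm{Id}_{B}$. As this pair is strict, its first map is $\mathrm{Ker}(\mathrm{Id}_{B})\rightarrow B$, which is $0\rightarrow B$. Hence the zero morphism $B\rightarrow 0$ has the following property: pulling back $\mathrm{Id}_B$ along it should give our sequence. More directly, we work with $A\rightarrow 0$ in $\mathcal{E}_{\mathcal{Q}}$ as follows.

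\textbf{Step 1.} Show $A\rightarrow 0$ lies in $\mathcal{E}_{\mathcal{Q}}$. Since $\mathrm{Id}_{A}\in\mathcal{M}_{\mathcal{Q}}$, there is a pair $(\mathrm{Id}_{A},g)\in\mathcal{Q}$. By strictness, $g$ is $A\rightarrow\mathrm{Coker}(\mathrm{Id}_{A})$. The pushout of $\mathrm{Id}_A$ along $A\rightarrow 0$ is $0$, so $\mathrm{Coker}(\mathrm{Id}_{A})\cong 0$. Thus $(\mathrm{Id}_A, A\rightarrow 0)\in\mathcal{Q}$, and in particular $A\rightarrow 0\in\mathcal{E}_{\mathcal{Q}}$. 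Here we assume, as is implicit in the definitions, that $\mathcal{Q}$ is closed under isomorphism of pairs.

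\textbf{Step 2.} Pull back $A\rightarrow 0$ along the arbitrary morphism $B\rightarrow 0$. The pullback is $A\times_{0}B=A\times B$, and the pulled-back map is the projection $p_{B}:A\times B\rightarrow B$. By right total proto-exactness, $p_{B}\in\mathcal{E}_{\mathcal{Q}}$. So there is a pair $(k,p_{B})\in\mathcal{Q}$ with $k=\mathrm{Ker}(p_{B})\rightarrow A\times B$.

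\textbf{Step 3.} Identify this kernel. We check directly from the universal property of the product that $(\mathrm{Id}_A,0):A\rightarrow A\times B$ is a kernel of $p_B$. Indeed, a map $T\rightarrow A\times B$ whose composite with $p_B$ is zero is the same as a pair $(t,0)$ with $t:T\rightarrow A$. Such a pair factors uniquely through $(\mathrm{Id}_A,0)$, namely via $t$. Hence the pair $((\mathrm{Id}_A,0),p_B)$ is isomorphic to a pair in $\mathcal{Q}$, which is exactly the claimed exact sequence.

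The only delicate point is Step 1 together with the implicit isomorphism-closure of $\mathcal{Q}$, that is, producing a map to $0$ in $\mathcal{E}_{\mathcal{Q}}$ from the axioms. Everything else is formal. Dually, for (2), $0\rightarrow A$ lies in $\mathcal{M}_{\mathcal{Q}}$. Pushing it out along $0\rightarrow B$ gives $B\rightarrow A\vee B$ in $\mathcal{M}_{\mathcal{Q}}$, whose cokernel is $A$. Swapping the roles of $A$ and $B$ yields the sequence as stated.
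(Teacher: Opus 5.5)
Your argument is correct and is the paper's proof: the paper takes the exact sequence $0\rightarrow A\rightarrow A\rightarrow 0\rightarrow 0$, pulls back $A\rightarrow 0$ along $B\rightarrow 0$, and gets (2) by duality. Your Steps 1 and 3 just make explicit what the paper leaves implicit, namely that $A\rightarrow 0$ is admissible and that $(\mathrm{Id}_A,0)$ is the kernel of $p_B$; the aside in your second paragraph about pulling back $\mathrm{Id}_B$ along $B\rightarrow 0$ does not parse (those maps do not share a codomain) and should simply be deleted.
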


\begin{proof}
    Consider the exact sequence
    $$0\rightarrow A\rightarrow A\rightarrow 0\rightarrow 0.$$
    Taking the pullback of the map $A\rightarrow 0$ along $B\rightarrow 0$ gives claim (1). Claim (2) is dual. 
    
\end{proof}

\begin{proposition}\label{prop:pullker}
    Let $\mathcal{C}$ be a pointed category, and let $f:X\rightarrow Z$, $g:Y\rightarrow Z$ be morphisms such that $\mathrm{Ker}(f)$, $\mathrm{Ker}(g)$, and the pullback diagram
       \begin{displaymath}
             \xymatrix{
             X\times_{Z}Y\ar[d]^{g'}\ar[r]^{f'} & Y\ar[d]^{g}\\
             X\ar[r]^{f} & Z.
             }
         \end{displaymath}
         all exist. Then the natural map $\mathrm{Ker}(f')\rightarrow\mathrm{Ker}(f)$ is an isomorphism. An obvious dual claim holds for pushouts and cokernels.
\end{proposition}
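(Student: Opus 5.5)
The proof is a direct check of universal properties, using the pasting lemma for pullback squares. First I fix notation: let $i\colon \mathrm{Ker}(f')\rightarrow X\times_{Z}Y$ and $j\colon \mathrm{Ker}(f)\rightarrow X$ be the kernel maps. Note that $f'$ goes from $X\times_Z Y$ to $Y$. Read in the direction that makes the comparison map exist, the claim concerns $\mathrm{Ker}(f')\rightarrow\mathrm{Ker}(g)$: we have $f'\circ i=0$, and $\mathrm{Ker}(g)$ is the kernel of the map $g$ parallel to $f'$. Rather than dwell on the labelling, I will prove the symmetric and correct statement: the kernel of the pulled-back map is canonically isomorphic to the kernel of the original map along which we pulled back. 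Concretely, $g'\colon X\times_Z Y\rightarrow X$ is the pullback of $g$ along $f$, and the claim is that the induced map $\mathrm{Ker}(g')\rightarrow\mathrm{Ker}(g)$, given by $f'\circ(\text{kernel map})$, is an isomorphism. The other labelling is the same argument with the roles of $X$ and $Y$ swapped.

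The key step is the pasting lemma. The kernel of $g'$ is by definition the pullback of $g'$ along $0\rightarrow X$. Stack this square on top of the given pullback square. The outer rectangle is then the pullback of $g$ along the composite $0\rightarrow X\rightarrow Z$, which is the zero map $0\rightarrow Z$. By the pasting lemma, a pasting of two pullback squares is a pullback. Hence $\mathrm{Ker}(g')$, with its composite map to $Y$, is a pullback of $g$ along $0\rightarrow Z$, that is, a kernel of $g$. Pullbacks are unique up to unique isomorphism, so the canonical comparison map $\mathrm{Ker}(g')\rightarrow\mathrm{Ker}(g)$ is an isomorphism. I would spell this out directly as well. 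Given $t\colon T\rightarrow Y$ with $g t=0$, the pair $(0\colon T\rightarrow X,\ t)$ satisfies $f\circ 0=0=g\circ t$, so it factors uniquely through $X\times_Z Y$ via some $s$. We have $g' s=0$, so $s$ factors uniquely through $\mathrm{Ker}(g')$. Uniqueness follows because maps into $X\times_Z Y$ are determined by their two components.

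The dual statement for pushouts and cokernels follows by applying this argument in $\mathcal{C}^{op}$. This works because pointedness, kernels and pullbacks dualize to pointedness, cokernels and pushouts.

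The only real obstacle is bookkeeping: making sure the comparison map in the statement is the one induced by the projection parallel to the map whose kernel is taken. Once the square is oriented correctly, the pasting lemma does all the work. No proto-exactness is needed, only the existence of the stated limits.
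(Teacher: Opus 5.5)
Your proof is correct, but your worry about the labelling is unfounded: the statement as written makes sense. In the square, $f'$ is the top edge and $f$ the bottom edge, so $f'$ is parallel to $f$ (it is the base change of $f$ along $g$), not to $g$.

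The comparison map $\mathrm{Ker}(f')\rightarrow\mathrm{Ker}(f)$ is the one induced by $g'$. It is well defined because $f\circ g'\circ i=g\circ f'\circ i=0$. The map $\mathrm{Ker}(f')\rightarrow\mathrm{Ker}(g)$ you suggest instead would be induced by $f'\circ i=0$, so it is simply zero. The correct slogan is that the kernel of a base change of $f$ is the kernel of $f$, not of the map you base-changed along.

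The version you actually prove, that $f'$ induces an isomorphism $\mathrm{Ker}(g')\rightarrow\mathrm{Ker}(g)$, is the stated claim with $X$ and $Y$ interchanged. So after your swap you have proved exactly what is asked.

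One small point remains. The hypotheses provide $\mathrm{Ker}(g)$ and the pullback, but not $\mathrm{Ker}(g')$ (in the paper's labelling, $\mathrm{Ker}(f')$). Your pasting step, however, starts from the kernel square of $g'$. Run the pasting lemma in its other form instead. Let $k_{g}$ be the kernel map of $g$, and consider the square with top edge $(0,k_{g})\colon\mathrm{Ker}(g)\rightarrow X\times_{Z}Y$. Pasted with the given pullback square, it gives the kernel square of $g$, which is a pullback. Since the given square is also a pullback, this square is a pullback too. This shows in one step that $\mathrm{Ker}(g')$ exists and is identified with $\mathrm{Ker}(g)$.

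As for the route, the paper uses the Yoneda lemma to reduce to pointed sets. It computes $\mathrm{Ker}(f')=\{(x,0_{Y}):x\in\mathrm{Ker}(f)\}$ elementwise and then projects to $X$. Your argument is purely diagrammatic, via the pasting lemma or the equivalent direct check of the universal property that you spell out.

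The underlying content is the same: maps $T\rightarrow X\times_{Z}Y$ killed by $f'$ correspond to maps $T\rightarrow X$ killed by $f$. Your version avoids having to justify the Yoneda reduction, namely that $\mathrm{Hom}(T,-)$ preserves the pullback and kernels involved and that isomorphisms are detected on all $\mathrm{Hom}(T,-)$. It also dualises to pushouts and cokernels verbatim via $\mathcal{C}^{op}$. The paper's version is shorter, gives an explicit description of the kernel, and handles existence of $\mathrm{Ker}(f')$ implicitly, since it exhibits $\mathrm{Ker}(f)$ as representing the kernel of $f'$.
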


\begin{proof}
By using the Yoneda Lemma, we may assume that $\mathcal{C}$ is the category of pointed sets. Then $X\times_{Z}Y=\{(x,y):f(x)=g(y)\}$. The map $f':X\times_{Z}Y\rightarrow Y$ is the projection to $Y$. Then $f'(x,y)=0_{Y}$ if and only if $y=0_{Y}$. Then we must have $f(x)=g(y)=g(0_{Y})=0_{Z}$, so $x\in\mathrm{Ker}(f)$. Conversely, any element of the form $(x,0_{Y})$ with $x\in\mathrm{Ker}(f)$ is clearly in $\mathrm{Ker}(f')$. Thus $\mathrm{Ker}(f')=\{(x,0_{Y}):x\in\mathrm{Ker}(f)\}$. By projection to the $X$ factor, this is clearly in bijection with $\mathrm{Ker}(f)$.
\end{proof}

We will make use of the following result.

     \begin{lemma}\label{lem:pullbacker}
         Let $(\mathcal{C},\mathcal{Q})$ be a proto-exact category. Let $f:B'\rightarrow B$ be an admissible epimorphism, and $i:A\rightarrow B$ an admissible monomorphism. Set $A'=\mathrm{Ker}(B'\rightarrow\mathrm{Coker}(i))$. Then the following diagram is bicartesian, $i'$ is an admissible monomorphism, and $f'$ is an admissible epimorphism:
         \begin{displaymath}
             \xymatrix{
             A'\ar[d]^{i'}\ar[r]^{f'} & A\ar[d]^{i}\\
             B'\ar[r]^{f} & B.
             }
         \end{displaymath}
     \end{lemma}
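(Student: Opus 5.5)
The plan is to realise the square as a pullback of $f$ along $i$, using the axiom that $\mathcal{E}_{\mathcal{Q}}$ is stable under pullback along $\mathcal{M}_{\mathcal{Q}}$. Since $f$ is an admissible epimorphism and $i$ an admissible monomorphism, the pullback $P=B'\times_{B}A$ exists. The projection $P\rightarrow A$ is an admissible epimorphism, and the other projection is $P\rightarrow B'$. It then suffices to do three things: identify $P$ with $A'=\mathrm{Ker}(B'\rightarrow\mathrm{Coker}(i))$; show that $P\rightarrow B'$ is an admissible monomorphism; and show that the square is also a pushout.

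\emph{Step 1: identifying $P$ with $A'$.} Write $p:B\rightarrow C=\mathrm{Coker}(i)$. Since $(i,p)$ is a strict pair, $i\cong\mathrm{Ker}(p)$, so $A\cong B\times_{C}0$. Pullbacks paste, so
$$P=B'\times_{B}(B\times_{C}0)\cong B'\times_{C}0=\mathrm{Ker}(p\circ f)=A'.$$
Under this identification $i'$ is the kernel inclusion $A'\rightarrow B'$, and $f'$ is the projection to $A$, which is admissible epic by the pullback axiom.

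\emph{Step 2: $i'$ is an admissible monomorphism.} First, $p\circ f$ is a composite of admissible epimorphisms, hence admissible epic, so it sits in a pair $(k,p\circ f)\in\mathcal{Q}$. Strictness then gives $k\cong\mathrm{Ker}(p\circ f)\rightarrow B'$, which is exactly $i'$. Hence $i'\in\mathcal{M}_{\mathcal{Q}}$, and $\mathrm{Coker}(i')\cong C$.

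\emph{Step 3: the square is a pushout.} This is where I expect the main difficulty. The route I would try first is to use that $\mathcal{M}_{\mathcal{Q}}$ is stable under pushouts along $\mathcal{E}_{\mathcal{Q}}$. Form the pushout $Q=A\vee_{A'}B'$ of $i'$ along $f'$. The map $j:A\rightarrow Q$ is then an admissible monomorphism, and there is a comparison map $u:Q\rightarrow B$ with $u\circ j=i$. By the dual of Proposition \ref{prop:pullker}, $\mathrm{Coker}(j)\cong\mathrm{Coker}(i')\cong C$, and the induced map $Q\rightarrow C$ is compatible with $p$ via $u$. Moreover $B'\rightarrow Q$ is admissible epic, being the pushout of the admissible epimorphism $f'$ along $i'$; I would verify this from the dual axioms, or via the standard fact that in a pushout of an admissible mono along an admissible epi the opposite side is admissible epic. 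So $Q\rightarrow C$ is admissible epic with kernel $j$.

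It remains to show that $u$ is an isomorphism. The idea is that $u$ is a map between two admissible epimorphisms $Q\rightarrow C$ and $B\rightarrow C$ with the same kernel $A$, compatible with everything. I would argue via the Yoneda-style pointed-set reasoning of Proposition \ref{prop:pullker}, or more robustly by a proto-exact ``short five lemma'': both $Q$ and $B$ receive admissible epimorphisms from $B'$ whose kernels agree. Indeed, the kernel of $B'\rightarrow Q$ equals $\mathrm{Ker}(f')=\mathrm{Ker}(f)$ by Proposition \ref{prop:pullker} applied to the pullback square. Since strict epimorphisms are cokernels of their kernels by Proposition \ref{prop:strictnessequiv}, both $Q$ and $B$ are $\mathrm{Coker}(\mathrm{Ker}(f)\rightarrow B')$ compatibly, so $u$ is an isomorphism. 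Making this kernel comparison precise, in particular identifying $\mathrm{Ker}(B'\rightarrow Q)$ with $\mathrm{Ker}(f)$, is the delicate point.
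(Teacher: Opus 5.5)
Your Steps 1 and 2 are correct, and they are more self-contained than the paper's argument. The paper forms the pullback $P$ and cites \cite{mozgovoy}*{Remark 1.7 (3)} both for $i''$ being an admissible monomorphism and for the square being bicartesian. Only then does it identify $P$ with $\mathrm{Ker}(B'\rightarrow\mathrm{Coker}(i))$, via the dual of Proposition \ref{prop:pullker}. Your pasting identification $P\cong B'\times_{C}0=\mathrm{Ker}(p\circ f)$ gives the pullback property directly from the listed axioms. Your observation that the kernel of the admissible epimorphism $p\circ f$ is an admissible monomorphism does the same for the admissibility of $i'$.

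The gap is Step 3, at exactly the point you flag. The listed axioms do not say that admissible epimorphisms are stable under pushout along admissible monomorphisms, so admissibility of $B'\rightarrow Q$ is not free. The identification $\mathrm{Ker}(B'\rightarrow Q)\cong\mathrm{Ker}(f')$ also does not follow from Proposition \ref{prop:pullker}, which only treats kernels across pullbacks; it does give $\mathrm{Ker}(f')\cong\mathrm{Ker}(f)$, but not the first identification. The Yoneda-style pointed-set argument cannot supply it either, since $\mathrm{Hom}(X,-)$ does not carry the pushout $Q$ to a pushout of pointed sets.

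The missing idea is the dual of your own Step 1, and it makes $Q$ unnecessary.
\begin{enumerate}
\item Let $k:K=\mathrm{Ker}(f')\rightarrow A'$. As $f'$ is admissible epic, $f'\cong\mathrm{Coker}(k)$, so the square with sides $k$, $K\rightarrow 0$, $f'$, $0\rightarrow A$ is a pushout.
\item By Proposition \ref{prop:pullker} applied to your pullback square, $i'\circ k$ is isomorphic to $\mathrm{Ker}(f)\rightarrow B'$. Since $f$ is a strict epimorphism, $f\cong\mathrm{Coker}(i'\circ k)$. That is, the outer rectangle with top row $K\rightarrow A'\rightarrow B'$ and bottom row $0\rightarrow A\rightarrow B$ is a pushout.
\item The pasting lemma for pushouts now shows that the right-hand square, which is your square, is a pushout.
\end{enumerate}
The same computation shows $Q\cong\mathrm{Coker}(i'\circ k)$ with $i'\circ k$ an admissible monomorphism. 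This yields both the admissibility of $B'\rightarrow Q$ and the kernel comparison your argument for $u$ needed. Alternatively, you may invoke the standard bicartesian property as the paper does, in which case Step 3 becomes a citation rather than the comparison argument you sketch.
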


     \begin{proof}
     This is really a rewording of the fact that the pullback square of an admissible monomorphism along an admissible epimorphism is bicartesian.
     
     Note that $A'$ exists since, as the composition of admissible epimorphisms $B'\rightarrow B$ and $B\rightarrow\mathrm{Coker}(i)$, $B'\rightarrow\mathrm{Coker}(i)$ is an admissible epimorphism.
        Consider the pullback square
         \begin{displaymath}
             \xymatrix{
             P\ar[d]^{i''}\ar[r]^{f''} & A\ar[d]^{i}\\
             B'\ar[r]^{f} & B.
             }
         \end{displaymath}
         Then $i''$ is an admissible monomorphism, and $f''$ is an admissible epimorphism. Moreover by \cite{mozgovoy}*{Remark 1.7 (3)}, the square is bicartesian. Let $C=\mathrm{Coker}(i'')$. By Proposition \ref{prop:pullker}, we then get a commutative diagram with exact columns:
           \begin{displaymath}
             \xymatrix{
             P\ar[d]^{i''}\ar[r]^{f''} & A\ar[d]^{i}\\
             B'\ar[r]^{f}\ar[d] & B\ar[d]\\
             C\ar@{=}[r] & C.
             }
         \end{displaymath}
         Hence, $C\cong\mathrm{Coker}(i)$, and we have $P\cong\mathrm{Ker}(B'\rightarrow C)\cong\mathrm{Ker}(B'\rightarrow\mathrm{Coker}(i))$. 
     \end{proof}

     \begin{proposition}\label{prop:cokernelcomp}
         Let $(\mathcal{C},\mathcal{Q})$ be a proto-exact category, and let $f:X\rightarrow Y$ and $g:Y\rightarrow Z$ be admissible monomorphisms. Then we have an exact sequence
         $$0\rightarrow\mathrm{Coker}(f)\rightarrow\mathrm{Coker}(g\circ f)\rightarrow\mathrm{Coker}(g)\rightarrow0.$$
     \end{proposition}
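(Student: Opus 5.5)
We prove the third isomorphism theorem (Noether) in a proto-exact category by exhibiting the sequence as the pushout of the exact sequence $0\to Y\xrightarrow{g}Z\to\mathrm{Coker}(g)\to 0$ along the admissible epimorphism $Y\to\mathrm{Coker}(f)$. The natural maps come from the universal properties of the cokernels. Since $gf$ is an admissible monomorphism (closure under composition), $\mathrm{Coker}(g\circ f)$ exists, with admissible epimorphism $q:Z\to \mathrm{Coker}(gf)$. The map $\alpha:\mathrm{Coker}(f)\to\mathrm{Coker}(gf)$ is induced by $q\circ g$, since $q g f=0$. The map $\beta:\mathrm{Coker}(gf)\to\mathrm{Coker}(g)$ is induced by $p:Z\to\mathrm{Coker}(g)$, since $p g f=0$.

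First, form the pushout of $g:Y\to Z$ along the admissible epimorphism $\pi:Y\to\mathrm{Coker}(f)$:
\[
\xymatrix{
Y\ar[r]^{g}\ar[d]_{\pi} & Z\ar[d]^{\pi'}\\
\mathrm{Coker}(f)\ar[r]^{g'} & W.
}
\]
Axiom (4) of a proto-exact category says that $g'$ is an admissible monomorphism. By the usual proto-exact lemma that such a pushout square is bicartesian and that $\pi'$ is again an admissible epimorphism (the dual of the fact used in Lemma \ref{lem:pullbacker}, \cite{mozgovoy}*{Remark 1.7}), $\pi'$ is admissible. The dual of Proposition \ref{prop:pullker} identifies $\mathrm{Coker}(g')\cong\mathrm{Coker}(g)$, so
\[
0\rightarrow\mathrm{Coker}(f)\xrightarrow{g'} W\rightarrow\mathrm{Coker}(g)\rightarrow 0
\]
is exact.

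It then remains to identify $W$ with $\mathrm{Coker}(gf)$, compatibly with the maps. We check directly that $\pi':Z\to W$ satisfies the universal property of the cokernel of $gf$. First, $\pi' g f=g'\pi f=0$. Now let $h:Z\to T$ satisfy $hgf=0$. Then $hg$ kills $f$, so $hg$ factors uniquely through $\pi=\mathrm{Coker}(f)$ as $hg=k\pi$. The pair $(h,k)$ is compatible on the pushout square, so there is a unique $u:W\to T$ with $u\pi'=h$ and $ug'=k$. For uniqueness of the factorisation of $h$ through $\pi'$ alone, we use that $\pi'$ is an epimorphism, being a strict epimorphism. Hence $W\cong\mathrm{Coker}(gf)$ with $\pi'=q$. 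Under this isomorphism $g'$ becomes $\alpha$, because $g'\pi=\pi' g=qg$ and $\pi$ is epic. Likewise the map $W\to\mathrm{Coker}(g)$ becomes $\beta$, since both composites with $q$ equal $p$.

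The main obstacle is the step guaranteeing that the pushout of an admissible monomorphism along an admissible epimorphism yields an admissible epimorphism on the other side, together with the cokernel identification. This is exactly the dual of the input used in Lemma \ref{lem:pullbacker}, so I would cite \cite{mozgovoy} for bicartesianness and admissibility. Alternatively, one can apply Lemma \ref{lem:pullbacker}'s dual directly. The remaining identifications are routine universal-property arguments.
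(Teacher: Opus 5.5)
Your proof is correct and follows the paper's own argument. The paper likewise takes the right-hand square, with bottom edge $\mathrm{Coker}(f)\to\mathrm{Coker}(g\circ f)$, to be the pushout of $g$ along $Y\to\mathrm{Coker}(f)$; it gets admissibility from axiom (4), identifies the cokernel with $\mathrm{Coker}(g)$ via the dual of Proposition \ref{prop:pullker}, and uses pasting of pushouts where you verify the universal property of $\mathrm{Coker}(g\circ f)$ by hand (and in that check you do not need $\pi'$ to be strict, since uniqueness already follows from $\pi$ being epic).
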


\begin{proof}
    Consider the following diagram in which all squares and hence the entire rectangle are pushouts
    \begin{displaymath}
        \xymatrix{
        X\ar[d]\ar[r]^{f} & Y\ar[d]\ar[r]^{g} & Z\ar[d]\\
        0\ar[r] & \mathrm{Coker}(f)\ar[r] & \mathrm{Coker}(g\circ f).
        }
    \end{displaymath}

    In particular, we have $\mathrm{Coker}(f)\rightarrow\mathrm{Coker}(g\circ f)$ is an admissible monomorphism, and the cokernel of this map is isomorphic to $\mathrm{Coker}(g)$.
\end{proof}
 
In the additive world, a category $\mathpzc{E}$ is called \textit{quasi-abelian} if the class of all kernel-cokernel pairs determines a Quillen exact structure on $\mathpzc{E}$. The analogue in the non-additive world is the notion of a \textit{parabelian category}.

\begin{definition}
    A pointed category $\mathcal{C}$ with all kernels and cokernels is called \textit{parabelian} if the class of all strict pairs determines a proto-exact structure. 
\end{definition}

When $\mathcal{C}$ is a parabelian category, unless we state otherwise, we will assume that it is equipped with its canonical parabelian proto-exact structure.

By \cite{mozgovoy}*{Theorem 1.10}, a pointed category $\mathcal{C}$ with kernels and cokernels is parabelian precisely if strict epimorphisms are closed under pullbacks along strict monomorphisms, and strict monomorphisms are closed under pushouts along strict epimorphisms.

\begin{definition}
    A parabelian category $\mathcal{C}$ is called \textit{(left/ right) totally parabelian} if the parabelian proto-exact structure is (left/ right) total.
\end{definition}

In particular, an additive totally parabelian category is precisely a quasi-abelian category.

\begin{example}
    Let $\mathrm{Set}_{*}$ denote the category of pointed sets. A monomorphism is just an injective base-point-preserving map of sets. All monomorphisms are strict. A strict epimorphism $f:X\rightarrow Y$ is precisely a surjection that is injective when restricted to $X\setminus f^{-1}(0_{Y})$. The category $\mathrm{Set}_{*}$ is a parabelian category \cite{mozgovoy}*{Section 2.1}. It is obviously left totally parabelian, but we claim that it is \textit{not totally parabelian}. Let $f:X\rightarrow Y$ be a strict epimorphism, and $g:Z\rightarrow Y$ any map. We have $X\times_{Y}Z$ is the subset of $(X,Z)$ consisting of those pairs $(x,z)$ such that $f(x)=g(z)$. The map $f':X\times_{Y}Z\rightarrow Z$ is the projection. Suppose there is $z\neq 0$ such that $g(z)=0_{Y}$. Suppose further there are distinct $x_{1}\neq x_{2}\in X$ with $f(x_{1})=f(x_{2})=0_{Y}$. Then $(x_{1},z)\neq (x_{2},z)$, they are both not in $f'^{-1}(0_{Z})$, but they both map to $z$ under $f'$. Hence $f'$ is not a strict epimorphism. For an explicit example, take $X=\{0_{X},x_{1},x_{2}\}$, $Z=\{0_{Z},z\}$, and $Y=\{0_{Y}\}$. The only choice for $f$ and $g$ is to send everything to zero. Then in fact both $f$ and $g$ are strict epimorphisms, but $f'$ is not. These issues vanish if $g$ is required to be injective. 
\end{example}

\comment{
\begin{definition}

    \begin{enumerate}
        \item 
      A proto-exact category $\mathcal{C}$ is said to be  \textit{weakly left serpentine} if the following conditions hold. 
      \begin{enumerate}
      \item 
      Let

    \begin{displaymath}
        \xymatrix{
        A\ar[r]^{i}\ar[d]^{f} & B\ar[d]^{f'}\ar[r] & C\ar@{=}[d]\\
        A'\ar[r]^{i'} &  B'\ar[r]& C
        }
    \end{displaymath}
    be a commutative diagram with exact rows, in which $f$ and $f'$ are admissible epimorphisms. Then the square on the left is a pushout.
          \item 

      The following are equivalent for a commutative square
   \begin{displaymath}
        \xymatrix{
        A\ar[d]^{f}\ar[r]^{i} & B\ar[d]^{f'}\\
        A'\ar[r]^{i'} & B'
        }
    \end{displaymath}
    with $i$, $i'$ admissible monomorphisms.

\begin{enumerate}
    \item 
    The square is a pushout.
    \item The square is both a pushout and a pullback.
    \end{enumerate}
          \end{enumerate}
    \item    A proto-exact category $\mathcal{C}$ is said to be \textit{weakly right serpentine} if $\mathcal{C}^{op}$ is weakly left serpentine.
    \item 
    A proto-exact category $\mathcal{C}$ is said to be \textit{weakly serpentine} if it is both weakly left serpentine and weakly right serpentine.
\end{enumerate}
   
\end{definition}

The reason behind the terminology \textit{weakly serpentine}, is as follows. If in condition (1)(a) we relax the requirement that $f$ and $f'$ be admissible epimorphisms, and instead just allow them to be any morphisms, then we are assuming the equivalence of conditions (i), (ii), and (iv) in \cite{Buehler}*{Proposition 2.12}. We might call such a proto-exact category \textit{serpentine}, as many diagram lemmata analogous to those from the theory of exact categories which are consequences of the snake lemma will hold.

}

\subsubsection{Some useful facts}

Here we collect some useful facts concerning the detection of isomorphisms in proto-exact categories.

\begin{proposition}\label{prop:admonepiisio}
    Let $f:X\rightarrow Y$ be a morphism in a proto-exact category $(\mathcal{C},\mathcal{Q})$ that is an admissible monomorphism and an epimorphism. Then it is an isomorphism.
\end{proposition}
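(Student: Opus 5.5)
Since $f$ is an admissible monomorphism, there is a short exact sequence $0\rightarrow X\xrightarrow{f} Y\xrightarrow{g} Z\rightarrow 0$ in $\mathcal{Q}$. In particular $(f,g)$ is a strict pair, so $g$ is isomorphic to $Y\rightarrow\mathrm{Coker}(f)$ and $f$ is isomorphic to $\mathrm{Ker}(g)\rightarrow Y$. The plan is to show that $Z\cong 0$ and then that the kernel of the zero map $Y\rightarrow 0$ is $Y$ itself, so that $f$ is isomorphic to an identity.

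\emph{Step 1: $Z\cong 0$.} We have $g\circ f=0=0\circ f$, where $0:Y\rightarrow Z$ is the zero morphism (it factors through $0$). Since $f$ is an epimorphism, $g=0$. Now $g$ exhibits $Z$ as the pushout of $0\leftarrow X\xrightarrow{f}Y$. Apply the universal property to the pair of maps $\mathrm{Id}_Z$ and $0_Z$ out of $Z$: they agree after precomposition with $g$, since both composites are $0$, and they agree on the $0$ leg trivially. By uniqueness in the universal property of the pushout, $\mathrm{Id}_Z=0_Z$, so $Z$ is a zero object.

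\emph{Step 2: $f$ is an isomorphism.} Now $f$ is isomorphic to $\mathrm{Ker}(g)\rightarrow Y$ with $g:Y\rightarrow 0$. The pullback of $0\rightarrow 0\leftarrow Y$ is $Y$ with the identity map, because any map into $Y$ together with a map into $0$ is compatible over $0$. Hence $\mathrm{Ker}(g)\rightarrow Y$ is $\mathrm{Id}_Y$ up to isomorphism, so $f$ is an isomorphism.

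The only point needing care is Step 1: one must use the pushout universal property rather than assume any additivity. The argument is short, and I expect no real obstacle beyond making sure that the notion ``isomorphic to the kernel map'' is read as isomorphism of arrows over $Y$.
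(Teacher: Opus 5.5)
Your proof is correct, and it reaches the result more directly than the paper. The paper argues abstractly: an admissible monomorphism is the kernel of its admissible epimorphism $g$, hence the equaliser of $g$ and $0$, i.e.\ a regular monomorphism. It then cites the general categorical fact (Ad\'amek--Herrlich--Strecker, Prop.~16.16) that a regular monomorphism which is also an epimorphism is an isomorphism.

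Your argument unpacks that fact in the pointed setting. The epimorphism property forces $g=0$, so $f$ becomes the kernel of a zero map. Your Step 1 is not actually needed. Once $g=0$, every morphism $h\colon W\to Y$ satisfies $g\circ h=0$, so the pullback defining $\mathrm{Ker}(g)$ is already $\mathrm{Id}_Y$, whether or not $Z\cong 0$. This is exactly the step ``the equaliser of a map with itself is the identity'' in the general proof.

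The trade-off is this. Your version is self-contained and uses only the kernel--cokernel description of strict pairs. The paper's version is a one-line citation that applies to any regular monomorphism in any category.
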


\begin{proof}
As an admissible monomorphism, $f$ is in particular a regular monomorphism. Since it is also an epimorphism, it is an isomorphism, by \cite{MR2377903}*{Proposition 16.16}.
\end{proof}

\begin{proposition}\label{prop:adepikerzero}
    Let $f:X\rightarrow Y$ be a morphism in a proto-exact category $(\mathcal{C},\mathcal{Q})$. If $f$ is an admissible epimorphism and $\mathrm{Ker}(f)\cong 0$, then $f$ is an isomorphism.
\end{proposition}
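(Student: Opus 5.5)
Since $f$ is an admissible epimorphism, there is a pair $(k,f)\in\mathcal{Q}$ with $k:K\rightarrow X$, and because $\mathcal{Q}$ consists of strict pairs, $f$ is isomorphic to $X\rightarrow\mathrm{Coker}(k)$ and $k$ is isomorphic to $\mathrm{Ker}(f)\rightarrow X$. The hypothesis $\mathrm{Ker}(f)\cong 0$ therefore means $K\cong 0$, so $k$ is the zero map $0\rightarrow X$. The plan is to show directly that $\mathrm{Coker}(0\rightarrow X)$ is $X$ itself, with structure map $\mathrm{Id}_X$. It then follows that $f$ is isomorphic, as an object under $X$, to $\mathrm{Id}_X$, and hence is an isomorphism.

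The key step is to compute the pushout of $0\leftarrow 0\rightarrow X$. Take a test object $T$ with maps $a:X\rightarrow T$ and $b:0\rightarrow T$ that agree on $0$. Since $0$ is initial, $b$ is forced and the compatibility condition is automatic. So cocones correspond exactly to maps $a:X\rightarrow T$, and the square with $X$ in the corner and $\mathrm{Id}_X$ along the side is a pushout. Hence $\mathrm{Coker}(k)\cong X$ via $\mathrm{Id}_X$.

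Now $f$ being isomorphic to $X\rightarrow\mathrm{Coker}(k)$ means there is an isomorphism $\phi:Y\rightarrow\mathrm{Coker}(k)\cong X$ with $\phi\circ f=\mathrm{Id}_X$, up to composing with the isomorphism on the domain side. Hence $f=\phi^{-1}$ up to isomorphisms, and $f$ is an isomorphism.

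The only point needing care is bookkeeping: "isomorphic to" should be read as isomorphism of arrows, which may involve isomorphisms at both ends. Isomorphisms of arrows preserve being an isomorphism, so this causes no trouble. As an alternative route, one could note that $f$ is a strict, hence regular, epimorphism, being the coequaliser of $k$ and $0$. Since $k=0$, $f$ is the coequaliser of $0$ and $0$, which is an isomorphism. This is the dual of Proposition \ref{prop:admonepiisio} in spirit. I do not expect any step to be a genuine obstacle.
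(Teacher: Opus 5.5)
Your proof is correct and follows the same route as the paper, which simply notes that $f$ sits in an exact sequence $0\rightarrow 0\rightarrow X\rightarrow Y\rightarrow 0$, so that $Y\cong\mathrm{Coker}(0\rightarrow X)\cong X$. Your version is slightly more careful: you check that this identification carries $f$ to $\mathrm{Id}_X$, which is what actually shows that $f$ itself is an isomorphism, rather than only that $Y\cong X$.
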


\begin{proof}
    Since $f$ is an admissible epimorphism, we have an exact sequence
    $$0\rightarrow 0\rightarrow X\rightarrow Y\rightarrow 0.$$
    Hence $Y\cong\mathrm{Coker}(0\rightarrow X)\cong X$.
   
\end{proof}

\subsubsection{Exact functors}

Here we give some definitions of exact functors between proto-exact categories.


    \begin{definition}
    Let $(\mathcal{C},\mathcal{Q})$ be a proto-exact category.
        A sequence 
        \begin{displaymath}
            \xymatrix{
            0\ar[r] & X\ar[r]^{f} & Y\ar[r]^{g} & Z\ar[r] & 0
            }
        \end{displaymath}
        in which $\mathrm{Ker}(g)$, $\mathrm{Coker}(g)$, and $\mathrm{Im}(g)$ exist, is said to be 
        \begin{enumerate}
            \item 
                   \textit{left semi-exact (relative to $\mathcal{Q}$)} if 
        \begin{enumerate}
            \item 
            the map $g:Y\rightarrow Z$ is an effective epimorphism,
            \item 
            the map $f$ is an admissible monomorphism, 
            \item 
            and $f$ is isomorphic to the canonical map $\mathrm{Ker}(g)\rightarrow Y$.
        \end{enumerate}
        \item 
               \textit{right semi-exact (relative to $\mathcal{Q}$)} if 
        \begin{enumerate}
            \item
           the map $f:X\rightarrow Y$ is an effective monomorphism,
            \item 
            $g$ is an admissible epimorphism, 
            \item and $g$ is isomorphic to the canonical map $Y\rightarrow \mathrm{Coker}(f)$.
        \end{enumerate}
        \end{enumerate}
    \end{definition}

Note that a sequence is exact precisely if it is both left and right semi-exact.

\begin{definition}
Let $(\mathcal{C},\mathcal{Q})$, $(\mathcal{D},\mathcal{R})$ be proto-exact categories.
    A functor 
    $$F:\mathcal{C}\rightarrow\mathcal{D}$$
    is said to be 
    \begin{enumerate}
    \item 
        \textit{left semi-exact} (relative to $\mathcal{Q}$ and $\mathcal{R}$) if it sends exact sequences in $(\mathcal{C},\mathcal{Q})$ to left semi-exact sequences in $(\mathcal{D},\mathcal{R})$;
             \item 
        \textit{right semi-exact} (relative to $\mathcal{Q}$ and $\mathcal{R}$) if it sends exact sequences in $(\mathcal{C},\mathcal{Q})$ to right semi-exact sequences in $(\mathcal{D},\mathcal{R})$;
        \item
           \textit{exact} (relative to $\mathcal{Q}$ and $\mathcal{R}$) if it sends exact sequences in $(\mathcal{C},\mathcal{Q})$ to exact sequences in $(\mathcal{D},\mathcal{R})$.
    \end{enumerate}
\end{definition}
Note that $F$ is exact if and only if it is both left semi-exact and right semi-exact.

\subsubsection{Exact subcategories}

\begin{definition}
    Let $(\mathcal{D},\mathcal{R})$ be a proto-exact category. A \textit{proto-exact subcategory of} $(\mathcal{D},\mathcal{R})$ is a proto-exact category $(\mathcal{C},\mathcal{Q})$ equipped with an exact functor
    $$i:(\mathcal{C},\mathcal{Q})\rightarrow(\mathcal{D},\mathcal{R})$$ where $i$ is the inclusion of a subcategory.
\end{definition}

\begin{definition}
A \textit{subexact category} is a proto-exact subcategory equipped with an exact functor
   $$i:(\mathcal{C},\mathcal{Q})\rightarrow(\mathcal{D},\mathcal{R})$$ 
   where $i$ is the inclusion of a subcategory, and $(\mathcal{D},\mathcal{R})$ is an exact category.
\end{definition}

\begin{example}
    Even in subexact categories that are totally proto-exact categories, various useful lemmata from the theory of exact categories cannot be expected to hold. Let  $$i:(\mathcal{C},\mathcal{Q})\rightarrow(\mathcal{D},\mathcal{R})$$ be subexact, and let $A,B$ be objects of $\mathcal{C}$. Consider the natural map $c_{A,B}:A\vee B\rightarrow A\times B$ from the coproduct to the product. This map satisfies the property that $F(c_{A,B})$ is an isomorphism in $\mathpzc{E}$. In particular, it is both a monomorphism and an epimorphism in $\mathcal{C}$. It is not an isomorphism in $\mathcal{C}$ unless $\mathcal{C}$ is an additive subcategory. Then we have the following commutative diagram of exact sequences

    \begin{displaymath}
        \xymatrix{
        0\ar[r] & A\ar@{=}[d]\ar[r] & A\vee B\ar[d]\ar[r] & B\ar@{=}[d]\ar[r] & 0\\
        0\ar[r] & A\ar[r] & A\times B\ar[r] & B\ar[r] & 0.
        }
    \end{displaymath}
 in which the morphism in the middle is not an isomorphism. In particular, the familiar $3\times 3$ lemma does not hold. 
\end{example}

\subsection{The obscure axiom}

We shall need a version of the obscure axiom (\cite{Buehler}*{Proposition 2.16}) that, for proto-exact categories, generally needs to be imposed. It does not follow from the axioms - for example, it is not even satisfied for pointed sets.

\begin{definition}
    A proto-exact category $(\mathcal{C},\mathcal{Q})$ is said to 
    \begin{enumerate}
        \item 
        \textit{satisfy the left obscure axiom if}
        whenever a morphism $i$ has a cokernel and there exists a morphism $j$ such that $j\circ i$ is an admissible monomorphism, then $i$ is an admissible monomorphism;
        \item
             \textit{satisfy the right obscure axiom if}
        whenever a morphism $e$ has a kernel and there exists a morphism $j$ such that $e\circ j$ is an admissible epimorphism, then $e$ is an admissible epimorphism;
              \item
             \textit{satisfy the obscure axiom if}
        it satisfies both the left and right obscure axioms.
    \end{enumerate}
\end{definition}

\begin{definition}
    A proto-exact category $(\mathcal{C},\mathcal{Q})$ is said to 
    \begin{enumerate}
        \item 
        \textit{satisfy the strong left obscure axiom if}
        whenever $j\circ i$ is an admissible monomorphism, then $i$ is an admissible monomorphism;
        \item
             \textit{satisfy the strong right obscure axiom if}
        whenever $e\circ j$ is an admissible epimorphism, then $e$ is an admissible epimorphism;
              \item
             \textit{satisfy the strong obscure axiom if}
        it satisfies both the left and right strong obscure axioms.
    \end{enumerate}
\end{definition}

Note that if $\mathcal{C}$ is finitely cocomplete (resp. finitely complete), then $(\mathcal{C},\mathcal{Q})$ satisfies the strong left (resp. right) obscure axiom if and only if it satisfies the left (resp. right) obscure axiom.

The two-sided obscure axiom seems to be quite rarely satisfied, even in the complete and cocomplete setting. As we shall see later, it is satisfied in our example of interest, $\mathrm{Ban}_{R}^{\le1}$. Our impression is that this is somehow to do with the fact that $\mathrm{Ban}_{R}^{\le1}$ is very close to being additive. 

\begin{example}
    Consider the parabelian category $\mathrm{Set}_{*}$ of pointed sets. Note that this category is complete and cocomplete. Since the strict monomorphisms are precisely the injections, the parabelian category $\mathrm{Set}_{*}$ clearly satisfies the left obscure axiom. On the other hand, a morphism $h:A\rightarrow B$ of pointed sets is a strict epimorphism precisely if it is surjective, and injective away from $A\setminus h^{-1}(\{0_{B}\})$. Let $f:X\rightarrow Y$ and $g:Y\rightarrow Z$ be morphisms such that $g\circ f$ is a strict epimorphism. Then $g\circ f$ is surjective, and injective away from $(g\circ f)^{-1}(\{0_{Z}\})$. Now, $g$ will be surjective, but may not necessarily be injective away from $g^{-1}(\{0_{Z}\})$. Indeed consider a pointed set $X$ with one non-zero element $x$ (so two elements in total), and put $X=Z$. Let $X\subset Y$ with $Y$ strictly larger than $X$. Let $f$ be the inclusion, and $g:Y\rightarrow X$ the map which sends all non-zero elements to $x$. Then $g\circ f$ is the identity, but $g$ is not a strict epimorphism. Hence $\mathrm{Set}_{*}$ \textit{does not satisfy the right obscure axiom}.
\end{example}

\subsubsection{Split morphisms}

In the theory of exact categories, there is always an `initial' exact structure consisting of all split exact sequences. In the non-additive setting, this becomes much more complicated.

\begin{definition}
    Let $\mathcal{C}$ be a category. A morphism $i:X\rightarrow Y$ is said to be \textit{a split monomorphism } if there is a morphism $j:Y\rightarrow X$ such that $j\circ i=\mathrm{Id}_{X}$. Dually, the morphism $j$ is then called a \textit{split epimorphism}.
 \end{definition}

\begin{remark}
    Let $(\mathcal{C},\mathcal{Q})$ be a proto-exact category. Unlike the additive case, it is not automatic that split monomorphisms/split epimorphisms are admissible. Note, however, that if $(\mathcal{C},\mathcal{Q})$ satisfies the strong left (resp. right) obscure axiom, then split monomorphisms (resp. split epimorphisms) are admissible.
\end{remark}

\begin{example}
    Consider again the example $\mathrm{Set}_{*}$. Clearly split monomorphisms are strict monomorphisms. On the other hand, any pointed epimorphism is split (assuming the Axiom of Choice). However, pointed epimorphisms need not be strict. 
\end{example}

\comment{
Suppose that $(\mathpzc{E},\mathcal{Q})$ is a totally proto-exact category. For any objects $A$ and $B$ of $\mathpzc{E}$, the sequence 
$$0\rightarrow A\rightarrow A\vee B\rightarrow B\rightarrow 0$$
is then exact. Let $A\vee B\rightarrow M$ be a strict epimorphism which restricts to a strict epimorphism $A\rightarrow M$. We get a commutative diagram of exact sequences 

\begin{displaymath}
    \xymatrix{
    0\ar[r] & K\ar[r]\ar[d]^{j} & A\ar[r]^{\sigma}\ar[d]^{i_{A}} & M\ar[r]\ar@{=}[d] & 0\\
    0\ar[r] & L\ar[r] & A\vee B\ar[r]^{c} & M\ar[r] & 0.
    }
\end{displaymath}
Let $\pi_{A}:A\vee B\rightarrow A$ be a splitting such that $\sigma\circ\pi_{A}=c$. Then $\pi_{A}$ restricts to a map $L\rightarrow K$ which splits $j$. We then get a commutative diagram in which all vertical maps are admissible epis
\begin{displaymath}
    \xymatrix{
    0\ar[r] & K\ar[r] & A\ar[r]^{\sigma} & M\ar[r]\ar@{=}[d] & 0\\
    0\ar[r] & L\ar[r]\ar[u] & A\vee B\ar[r]^{c}\ar[u]^{\pi_{A}} & M\ar[r] & 0.
    }
\end{displaymath}

In particular, the left hand square is a pushout and a pullback. 


}
\subsubsection{Epimorphic and admissible generators}

It will be important to have various notions of generators in proto-exact categories. As we shall see, the obscure axiom features in this discussion.

\begin{definition}
Let $\mathcal{C}$ be a category. A collection $\mathcal{G}$ of objects of $\mathcal{C}$ is said to be an \textit{epimorphic generating class}
    if, for any object $C\in\mathcal{C}$ there is a subset $\{G_{i}\}_{i\in\mathcal{I}}\subset\mathcal{G}$ and an epimorphism 
    $$\coprod_{i\in\mathcal{I}}G_{i}\rightarrow C.$$ 
\end{definition}

\begin{lemma}
    Suppose that a category $\mathcal{C}$ has an epimorphic generating class $\mathcal{G}$. Then, if $\mathrm{Hom}(G,X)\rightarrow\mathrm{Hom}(G,Y)$ is an epimorphism of  sets for all $G\in\mathcal{G}$, the map $X\rightarrow Y$ is an epimorphism in $\mathcal{C}$.
 \end{lemma}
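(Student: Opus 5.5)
Write $f:X\rightarrow Y$ for the morphism in question; we must show it is an epimorphism in $\mathcal{C}$. So fix two morphisms $u,v:Y\rightarrow Z$ with $u\circ f=v\circ f$, and aim to prove $u=v$. The plan is to use the generating class to reduce this equality to a check on morphisms out of objects of $\mathcal{G}$. There, the surjectivity hypothesis on $\mathrm{Hom}(G,X)\rightarrow\mathrm{Hom}(G,Y)$ lets us lift everything through $f$.

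First, choose a subset $\{G_{i}\}_{i\in\mathcal{I}}\subset\mathcal{G}$ and an epimorphism $p:\coprod_{i\in\mathcal{I}}G_{i}\rightarrow Y$, which exists by the definition of an epimorphic generating class. Write $\iota_{i}:G_{i}\rightarrow\coprod_{i}G_{i}$ for the coproduct inclusions and $p_{i}=p\circ\iota_{i}:G_{i}\rightarrow Y$. Since $G_{i}\in\mathcal{G}$, the map $\mathrm{Hom}(G_{i},X)\rightarrow\mathrm{Hom}(G_{i},Y)$ is surjective. So there is $q_{i}:G_{i}\rightarrow X$ with $f\circ q_{i}=p_{i}$.

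Next, compute for each $i$:
$$u\circ p\circ\iota_{i}=u\circ f\circ q_{i}=v\circ f\circ q_{i}=v\circ p\circ\iota_{i}.$$
By the universal property of the coproduct, the morphisms $u\circ p$ and $v\circ p$ agree on every component, so $u\circ p=v\circ p$. Since $p$ is an epimorphism, $u=v$, and hence $f$ is an epimorphism.

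There is no substantive obstacle. The one point to watch is that the argument must not assume the induced map $\coprod_{i}G_{i}\rightarrow X$ exists. The coproduct is only guaranteed to exist over $Y$, via the generating class; it need not be the same coproduct as one over $X$. The argument avoids this by working componentwise with the $q_{i}$, so only the coproduct $\coprod_{i}G_{i}$ supplied for $Y$ is ever used. (One could also assemble the $q_{i}$ into a single map $q$ with $f\circ q=p$, which shows directly that $f$ is an epimorphism because $p$ is one; but the componentwise check is all that is needed.)
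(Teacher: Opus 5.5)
Your proof is correct and is essentially the paper's argument: the paper lifts each component of the epimorphism $p:\coprod_{i}G_{i}\rightarrow Y$ through $f$, assembles the lifts into $q:\coprod_{i}G_{i}\rightarrow X$ with $f\circ q=p$, and concludes that $f$ is an epimorphism because $p$ is, which is your componentwise check packaged through the coproduct. Your caveat in the last paragraph is unnecessary: once $\coprod_{i}G_{i}$ exists, its universal property gives a map out of it to any object, including $X$, so assembling the $q_{i}$ is always legitimate.
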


 \begin{proof}
Suppose that $\mathrm{Hom}(G,X)\rightarrow\mathrm{Hom}(G,Y)$ is an epimorphism of sets for all $G\in\mathcal{G}$. Pick an epimorphism $\coprod_{i\in\mathcal{I}} G_{i}\rightarrow Y$ with all $G_{i}\in\mathcal{G}$. This must then factor through a map $\coprod G\rightarrow X$. Hence $X\rightarrow Y$ is an epimorphism.
 \end{proof}

 \begin{proposition}\label{prop:genfaithf}
      Suppose that a category $\mathcal{C}$ has an epimorphic generating set $\mathcal{G}$. Regard $\mathcal{G}$ as a full subcategory of $\mathcal{C}$. Then, the functor
      $$F:\mathcal{C}\rightarrow\mathrm{Set}^{\mathcal{G}^{op}}$$
      $$C\mapsto\mathrm{Hom}(-,C)$$
      is faithful.
 \end{proposition}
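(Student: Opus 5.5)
To show $F$ is faithful, take two morphisms $f,g:C\rightarrow D$ in $\mathcal{C}$ with $F(f)=F(g)$ and prove $f=g$. The hypothesis $F(f)=F(g)$ says that the two natural transformations $\mathrm{Hom}(-,C)\rightarrow\mathrm{Hom}(-,D)$ on $\mathcal{G}$, given by post-composition with $f$ and with $g$, agree. Concretely, for every $G\in\mathcal{G}$ and every morphism $h:G\rightarrow C$ we have $f\circ h=g\circ h$.

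Since $\mathcal{G}$ is an epimorphic generating class, choose a family $\{G_{i}\}_{i\in\mathcal{I}}$ of objects of $\mathcal{G}$ together with an epimorphism
$$p:\coprod_{i\in\mathcal{I}}G_{i}\rightarrow C.$$
Write $\iota_{i}:G_{i}\rightarrow\coprod_{i\in\mathcal{I}}G_{i}$ for the coproduct inclusions and $p_{i}=p\circ\iota_{i}:G_{i}\rightarrow C$. Applying the hypothesis to each $p_{i}$ gives $f\circ p\circ\iota_{i}=g\circ p\circ\iota_{i}$ for all $i\in\mathcal{I}$. By the uniqueness part of the universal property of the coproduct, this yields $f\circ p=g\circ p$. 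Since $p$ is an epimorphism, we conclude $f=g$.

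The only point needing care is this coproduct step: two maps out of $\coprod G_{i}$ that agree after precomposing with every inclusion $\iota_{i}$ must be equal. This is exactly the uniqueness clause of the universal property. The existence of the coproduct is built into the definition of an epimorphic generating class, so no additional hypothesis on $\mathcal{C}$ is required. Apart from this step, the argument is a direct unwinding of the definitions, and I expect no real obstacle.
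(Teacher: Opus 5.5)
Your proposal is correct and is essentially the paper's own proof. Both choose an epimorphism $\coprod_{i} G_{i}\rightarrow C$, use $F(f)=F(g)$ on each restriction $p\circ\iota_{i}$, conclude $f\circ p=g\circ p$ by the coproduct's universal property, and cancel the epimorphism.
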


 \begin{proof}
     Let $f,g:X\rightarrow Y$ be maps such that the two maps $\mathrm{Hom}(G,f):\mathrm{Hom}(G,X)\rightarrow\mathrm{Hom}(G,Y)$ and $\mathrm{Hom}(G,g):\mathrm{Hom}(G,X)\rightarrow\mathrm{Hom}(G,Y)$ coincide for all $G\in\mathcal{G}$. Pick an epimorphism $\pi:\coprod_{i\in\mathcal{I}}G_{i}\rightarrow X$. Let $\pi_{i}$ denote the restriction of $\pi$ to $G_{i}$. Since $\mathrm{Hom}(G_{i},f)=\mathrm{Hom}(G_{i},g)$, we find that $f\circ\pi_{i}=g\circ\pi_{i}$ for all $i\in\mathcal{I}$. Hence $f\circ\pi=g\circ\pi$. But $\pi$ is an epimorphism, so $f=g$.
 \end{proof}

When we have the strong right obscure axiom, we get a sensible notion of admissible generator. 

\begin{definition}
Let $(\mathcal{C},\mathcal{Q})$ be a proto-exact category. A collection $\mathcal{G}$ of objects of $\mathcal{C}$ is said to be an \textit{admissible generating class}
    if, for any object $C\in\mathcal{C}$ there is a subset $\{G_{i}\}_{i\in\mathcal{I}}\subset\mathcal{G}$ and an admissible epimorphism 
    $$\coprod_{i\in\mathcal{I}}G_{i}\rightarrow C.$$ 
    We call an object $G$ an \textit{admissible generator} if $\{G\}$ is an admissible generating class.
\end{definition}

\begin{lemma}\label{lem:adepicheckgen}
    Suppose that a proto-exact category $(\mathcal{C},\mathcal{Q})$ has an admissible generating class $\mathcal{G}$ and satisfies the strong right obscure axiom. If $\mathrm{Hom}(G,X)\rightarrow\mathrm{Hom}(G,Y)$ is an epimorphism of pointed sets for all $G\in\mathcal{G}$, then $X\rightarrow Y$ is an admissible epimorphism.
 \end{lemma}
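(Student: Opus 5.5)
Let $h\colon X\rightarrow Y$ be the morphism in question. We mimic the proof of the preceding lemma for epimorphic generators, replacing ``epimorphism'' by ``admissible epimorphism'' and using the strong right obscure axiom to transfer admissibility from a composite back to $h$.

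First, since $\mathcal{G}$ is an admissible generating class, choose a subset $\{G_{i}\}_{i\in\mathcal{I}}\subset\mathcal{G}$ and an admissible epimorphism
$$\pi\colon\coprod_{i\in\mathcal{I}}G_{i}\rightarrow Y.$$
Let $\pi_{i}\colon G_{i}\rightarrow Y$ denote the restriction of $\pi$ to the $i$-th summand, so $\pi_{i}\in\mathrm{Hom}(G_{i},Y)$.

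Next, use the hypothesis that $\mathrm{Hom}(G_{i},X)\rightarrow\mathrm{Hom}(G_{i},Y)$ is surjective. For each $i$ we obtain a lift $\tilde{\pi}_{i}\colon G_{i}\rightarrow X$ with $h\circ\tilde{\pi}_{i}=\pi_{i}$; this uses the Axiom of Choice, once per $i$. By the universal property of the coproduct, these assemble to a morphism
$$\tilde{\pi}\colon\coprod_{i\in\mathcal{I}}G_{i}\rightarrow X.$$
The composites $h\circ\tilde{\pi}$ and $\pi$ agree on every summand, so the uniqueness part of the universal property gives $h\circ\tilde{\pi}=\pi$.

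Finally, $h\circ\tilde{\pi}$ is an admissible epimorphism, so the strong right obscure axiom (with $e=h$ and $j=\tilde{\pi}$) shows that $h$ is an admissible epimorphism.

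There is no real obstacle. The only points to check are that the coproduct $\coprod G_{i}$ exists, which is implicit in the definition of an admissible generating class, and that the strong form of the obscure axiom is the one needed. The strong form is essential: the non-strong version would additionally require $h$ to have a kernel, which we are not given.
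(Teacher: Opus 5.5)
Your proof is correct and is the paper's argument: factor an admissible epimorphism $\coprod_{i\in\mathcal{I}}G_{i}\rightarrow Y$ through $X$, then apply the strong right obscure axiom to conclude that $X\rightarrow Y$ is an admissible epimorphism. You additionally spell out the summand-by-summand lifting and the coproduct universal property, which the paper leaves implicit in the phrase ``must then factor through''.
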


 \begin{proof}
Suppose that $\mathrm{Hom}(G,X)\rightarrow\mathrm{Hom}(G,Y)$ is an epimorphism of pointed sets for all $G\in\mathcal{G}$. Pick an admissible epimorphism $\pi:\coprod_{i\in\mathcal{I}} G_{i}\rightarrow Y$. The map $\pi$ must then factor through a map $\coprod_{i\in\mathcal{I}} G_{i}\rightarrow X$. By the strong right obscure axiom, the map $X\rightarrow Y$ is an admissible epimorphism.
 \end{proof}

 \begin{example}
    Again, the example of $\mathrm{Set}_{*}$ is instructive to see that the strong right obscure axiom is really necessary in Lemma \ref{lem:adepicheckgen}. The pointed set $G$ with one non-zero element is an admissible, and hence epimorphic, generator. For any pointed set $X$, we have that $\mathrm{Hom}(G,X)=X$ as a pointed set. In particular if $\mathrm{Hom}(G,X)\rightarrow\mathrm{Hom}(G,Y)$ is an epimorphism, it \textit{does not imply} that $X\rightarrow Y$ is a strict epimorphism.
\end{example}

\begin{proposition}\label{prop:canonicalepi}
    Suppose that $(\mathcal{C},\mathcal{Q})$ satisfies the strong right obscure axiom, has a set of admissible generators $\mathcal{G}$, and admits arbitrary coproducts of objects of $\mathcal{G}$. For any object $S$ of $\mathcal{C}$, the evaluation map 
    $$e:\coprod_{G\in\mathcal{G}}\coprod_{\mathrm{Hom}(G,S)}G\rightarrow S$$
    is an admissible epimorphism.
\end{proposition}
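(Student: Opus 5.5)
The plan is to factor an admissible epimorphism onto $S$ through $e$ and then invoke the strong right obscure axiom, much as in the proof of Lemma \ref{lem:adepicheckgen}. Since $\mathcal{G}$ is an admissible generating class, there is a family $\{G_{i}\}_{i\in\mathcal{I}}\subset\mathcal{G}$ and an admissible epimorphism
$$\pi:\coprod_{i\in\mathcal{I}}G_{i}\rightarrow S.$$
Write $\pi_{i}:G_{i}\rightarrow S$ for the restriction of $\pi$ to the $i$-th summand. Each $\pi_{i}$ is an element of $\mathrm{Hom}(G_{i},S)$, with $G_{i}\in\mathcal{G}$. So $\pi_{i}$ indexes a specific summand of the source of $e$, namely the copy of $G_{i}$ in $\coprod_{\mathrm{Hom}(G_{i},S)}G_{i}$ labelled by $\pi_{i}$. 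Call the inclusion of this summand $\iota_{(G_{i},\pi_{i})}$.

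Using the universal property of the coproduct $\coprod_{i\in\mathcal{I}}G_{i}$, I define
$$j:\coprod_{i\in\mathcal{I}}G_{i}\rightarrow\coprod_{G\in\mathcal{G}}\coprod_{\mathrm{Hom}(G,S)}G$$
by sending the $i$-th summand via $\iota_{(G_{i},\pi_{i})}$. The map $j$ need not be injective on summands, since distinct $i$ may share the same pair $(G_{i},\pi_{i})$, but that does not matter. By construction, $e$ restricted to the summand labelled $(G,\phi)$ is $\phi$. Hence $e\circ j\circ\mathrm{incl}_{i}=\pi_{i}=\pi\circ\mathrm{incl}_{i}$ for every $i$, and the uniqueness part of the universal property gives $e\circ j=\pi$.

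Since $\pi$ is an admissible epimorphism, the strong right obscure axiom applied to $e\circ j=\pi$ shows that $e$ is an admissible epimorphism. The only points needing care are bookkeeping ones. The coproducts involved exist by the hypothesis that arbitrary coproducts of objects of $\mathcal{G}$ exist, and $\mathcal{G}$ being a set makes the indexing coproduct a legitimate set-indexed one. I expect no real obstacle here, since the statement is a direct reformulation of the definition combined with the obscure axiom.
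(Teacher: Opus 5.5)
Your proof is correct and matches the paper's argument. Both factor the given admissible epimorphism $\coprod_{i\in\mathcal{I}}G_{i}\rightarrow S$ through $e$, sending each summand to the copy labelled by its restriction, and then apply the strong right obscure axiom. Your direct definition of $j$, which allows repeated pairs $(G_{i},\pi_{i})$, is slightly cleaner than the paper's detour through $\coprod_{i\in\mathcal{I}}\coprod_{\mathrm{Hom}(G_{i},S)}G_{i}$ followed by an ``inclusion''.
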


\begin{proof}
    There exists a set $\mathcal{I}$ and an admissible epimorphism $\phi:\coprod_{i\in\mathcal{I}}G_{i}\rightarrow S$ with each $G_{i}\in\mathcal{G}$. Let $\eta_{i}:G_{i}\rightarrow\coprod_{i\in\mathcal{I}}G_{i}$ denote the inclusion. Then $\phi\circ\eta_{i}\in\mathrm{Hom}(G_{i},S)$. We get a morphism 
    $$\eta_{\phi_{i}}:G_{i}\rightarrow\coprod_{\mathrm{Hom}(G_{i},S)}G_{i}$$
    by identifying $G_{i}$ on the left-hand side, with the copy of $G_{i}$ indexed by $\phi\circ\eta_{i}$ on the right-hand side. Taking the coproduct over all $\mathcal{I}$, we get a morphism $\eta_{\phi}:\coprod_{i\in\mathcal{I}}G_{i}\rightarrow\coprod_{i\in\mathcal{I}}\coprod_{\mathrm{Hom}(G_{i},S)}G_{i}$. By inclusion, we get a composite morphism $\overline{\eta}_{\phi}:\coprod_{i\in\mathcal{I}}G_{i}\rightarrow\coprod_{G\in\mathcal{G}}\coprod_{\mathrm{Hom}(G,S)}G$. We have $e\circ\overline{\eta}_{\phi}=\phi$ is an admissible epimorphism. Hence $e$ is an admissible epimorphism.
\end{proof}

Similarly, we have the following (standard) result.

\begin{proposition}\label{prop:canonicalepi}
    Suppose that $\mathcal{C}$ is a category that has an epimorphic generating set $\mathcal{G}$, and admits arbitrary coproducts of objects of $\mathcal{G}$. For any object $S$ of $\mathpzc{E}$, the evaluation map 
    $$e:\coprod_{G\in\mathcal{G}}\coprod_{\mathrm{Hom}(G,S)}G\rightarrow S$$
    is an epimorphism.
\end{proposition}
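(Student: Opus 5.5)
The plan is to copy the proof of the admissible version of Proposition \ref{prop:canonicalepi}, replacing ``admissible epimorphism'' by ``epimorphism''. In place of the strong right obscure axiom we use the elementary fact that in any category, if $e\circ h$ is an epimorphism then so is $e$.

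First, since $\mathcal{G}$ is an epimorphic generating set, there is a set $\mathcal{I}$, objects $G_{i}\in\mathcal{G}$, and an epimorphism $\phi:\coprod_{i\in\mathcal{I}}G_{i}\rightarrow S$. This coproduct exists by hypothesis, as does the coproduct $\coprod_{G\in\mathcal{G}}\coprod_{\mathrm{Hom}(G,S)}G$ defining the source of $e$. Let $\eta_{i}:G_{i}\rightarrow\coprod_{i\in\mathcal{I}}G_{i}$ be the coproduct inclusions. For each $i$, the composite $\phi\circ\eta_{i}$ lies in $\mathrm{Hom}(G_{i},S)$. We then include $G_{i}$ as the summand indexed by the pair $(G_{i},\phi\circ\eta_{i})$ in $\coprod_{G\in\mathcal{G}}\coprod_{\mathrm{Hom}(G,S)}G$. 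By the universal property of $\coprod_{i\in\mathcal{I}}G_{i}$, these inclusions assemble into a morphism
$$\overline{\eta}_{\phi}:\coprod_{i\in\mathcal{I}}G_{i}\rightarrow\coprod_{G\in\mathcal{G}}\coprod_{\mathrm{Hom}(G,S)}G.$$
Different $i$ may give the same summand; this causes no problem, since we only need a map out of the coproduct.

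Next, we check that $e\circ\overline{\eta}_{\phi}=\phi$. It suffices to test this on each $\eta_{i}$. The map $e$ restricted to the summand indexed by $(G,s)$ is $s$ by definition, so $e\circ\overline{\eta}_{\phi}\circ\eta_{i}=\phi\circ\eta_{i}$.

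Finally, $\phi$ is an epimorphism and factors as $e\circ\overline{\eta}_{\phi}$. If $a\circ e=b\circ e$, then $a\circ\phi=b\circ\phi$, hence $a=b$. Therefore $e$ is an epimorphism. There is no real obstacle. The only point needing care is the bookkeeping of the indexing in the construction of $\overline{\eta}_{\phi}$, which is identical to the earlier proof.
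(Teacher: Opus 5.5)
Your proposal is correct and matches the paper's intended argument. The paper gives no separate proof here and only says the result holds ``similarly'' to the admissible version. You transcribe that proof, with the strong right obscure axiom replaced by the elementary fact that if $e\circ h$ is an epimorphism then so is $e$.
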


Although the notion of an admissible generator may seem more natural for a proto-exact category, as we will see, in many cases having an epimorphic generator will be enough. In one of our primary applications, we will want to detect when a morphism $f:X\rightarrow Y$ in a proto-exact category $(\mathcal{C},\mathcal{Q})$ is an isomorphism. For separate reasons we will already know that it is an admissible monomorphism, and we will know that $\mathrm{Hom}(G,X)\rightarrow\mathrm{Hom}(G,Y)$ is an epimorphism for some epimorphic generator $G$. In particular, $f$ is also an epimorphism. Then we can conclude that $f$ is an isomorphism by Proposition \ref{prop:admonepiisio}.

Having an admissible generator will prove useful for establishing local presentability, however. Recall that a subcategory $\mathcal{G}\subset\mathcal{C}$ is a strong generator (\cite{MR3795415}*{Section 3}) precisely if the functor
$$F:\mathcal{C}\rightarrow\mathrm{Set}^{\mathcal{G}^{op}}, C\mapsto\mathrm{Hom}(-,C)$$
is faithful and conservative.

\begin{proposition}
    Let $(\mathcal{C},\mathcal{Q})$ be a proto-exact category satisfying the strong right obscure axiom with a set of admissible generators $\mathcal{G}$. Then, regarded as a full subcategory, $\mathcal{G}\subset\mathcal{C}$ is a strong generator.
\end{proposition}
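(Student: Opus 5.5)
Recall that $\mathcal{G}\subset\mathcal{C}$ is a strong generator when the functor $F:\mathcal{C}\rightarrow\mathrm{Set}^{\mathcal{G}^{op}}$, $C\mapsto\mathrm{Hom}(-,C)$, is faithful and conservative. We prove the two properties separately.

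\emph{Faithfulness.} An admissible epimorphism is in particular a strict epimorphism, hence a cokernel, hence an epimorphism. So an admissible generating set is an epimorphic generating set, and faithfulness is exactly Proposition \ref{prop:genfaithf}.

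\emph{Conservativity.} Let $f:X\rightarrow Y$ be a morphism such that $\mathrm{Hom}(G,X)\rightarrow\mathrm{Hom}(G,Y)$ is a bijection for every $G\in\mathcal{G}$. We show $f$ is an isomorphism in two steps.

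First, surjectivity on $\mathrm{Hom}(G,-)$ for all $G\in\mathcal{G}$, together with the strong right obscure axiom, gives by Lemma \ref{lem:adepicheckgen} that $f$ is an admissible epimorphism. That lemma's argument only uses the single chosen admissible epimorphism $\coprod G_i\rightarrow Y$, so no extra coproducts are needed.

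Second, by Proposition \ref{prop:adepikerzero} it suffices to show $\mathrm{Ker}(f)\cong 0$. Since $f$ is admissible, its kernel $k:K\rightarrow X$ exists as part of a strict pair. Take any $G\in\mathcal{G}$ and any $u:G\rightarrow K$. Then $f\circ k\circ u=0=f\circ 0$, and injectivity of $\mathrm{Hom}(G,X)\rightarrow\mathrm{Hom}(G,Y)$ gives $k\circ u=0$. Since $k$ is a kernel, hence a monomorphism, $u=0$. Thus $\mathrm{Hom}(G,K)=\{0\}$ for all $G\in\mathcal{G}$.

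Now choose an admissible epimorphism $\pi:\coprod_i G_i\rightarrow K$. Each component of $\pi$ is zero, so $\pi=0$. A zero map that is an epimorphism forces $K\cong 0$: the maps $\mathrm{Id}_K$ and $0:K\rightarrow K$ agree after precomposing with $\pi$, so $\mathrm{Id}_K=0$. Hence $f$ is an isomorphism.

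The main subtlety is this last step. We need the kernel to exist, which admissibility guarantees, and we need to pass from "no nonzero maps from generators" to "zero object", which uses that admissible epimorphisms are genuine epimorphisms. An alternative for the second step is to show $f$ is a monomorphism and invoke the fact that an epimorphism which is a regular/strict epimorphism and also a monomorphism is an isomorphism. The kernel route is cleaner, since monicity in general requires testing against arbitrary objects rather than generators.
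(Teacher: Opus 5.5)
Your proof is correct and follows the paper's argument. Faithfulness comes from Proposition \ref{prop:genfaithf}, admissibility of $f$ from Lemma \ref{lem:adepicheckgen}, and then $\mathrm{Hom}(G,\mathrm{Ker}(f))=0$ forces $\mathrm{Ker}(f)\cong 0$, with Proposition \ref{prop:adepikerzero} concluding. The only cosmetic difference is that you obtain $\mathrm{Id}_{K}=0$ by rerunning the epimorphism-from-a-coproduct argument, whereas the paper just invokes the faithfulness it has already established.
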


\begin{proof}
The functor $F$ is faithful by Proposition \ref{prop:genfaithf}. Let $f:X\rightarrow Y$ be such that $\mathrm{Hom}(G,f):\mathrm{Hom}(G,X)\rightarrow\mathrm{Hom}(G,Y)$ is an isomorphism for all $G\in\mathcal{G}$. By Lemma \ref{lem:adepicheckgen}, $f$ is an admissible epimorphism. Now, consider $\mathrm{Ker}(f)$. We have $\mathrm{Hom}(G,\mathrm{Ker}(f))\cong\mathrm{Ker}(\mathrm{Hom}(G,f))\cong 0$ for all $G\in\mathcal{G}$. By faithfulness, this implies that the composite $\mathrm{Ker}(f)\rightarrow 0\rightarrow\mathrm{Ker}(f)$ is the identity. The composite $0\rightarrow\mathrm{Ker}(f)\rightarrow 0$ is always the identity. Hence $\mathrm{Ker}(f)\cong 0$. Thus $f$ is an isomorphism by Proposition \ref{prop:adepikerzero}.
\end{proof}

By \cite{adamek1994locally}*{Theorem 1.20}, we immediately get the following.

\begin{theorem}\label{thm:locpresentgen}
    Let $(\mathcal{C},\mathcal{Q})$ be a proto-exact category satisfying the strong right obscure axiom that is cocomplete. Suppose it has a set of admissible generators $\mathcal{G}$, and that there exists a cardinal $\lambda$ such that for each $G\in\mathcal{G}$, $G$ is $\lambda$-presentable. Then $\mathcal{C}$ is locally $\lambda$-presentable as a category. 
\end{theorem}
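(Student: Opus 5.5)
The plan is to reduce directly to the characterization of locally presentable categories in \cite{adamek1994locally}*{Theorem 1.20}: a category is locally $\lambda$-presentable if and only if it is cocomplete and has a strong generator consisting of a set of $\lambda$-presentable objects. Cocompleteness of $\mathcal{C}$ is part of the hypotheses, and every $G\in\mathcal{G}$ is assumed $\lambda$-presentable. The only thing to check is therefore that $\mathcal{G}$, viewed as a full subcategory, is a strong generator.

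For this, apply the preceding proposition. Since $(\mathcal{C},\mathcal{Q})$ satisfies the strong right obscure axiom and $\mathcal{G}$ is a set of admissible generators, the functor $F\colon C\mapsto\mathrm{Hom}(-,C)$ into $\mathrm{Set}^{\mathcal{G}^{op}}$ is faithful and conservative. Cocompleteness guarantees the coproducts of objects of $\mathcal{G}$ that appear in the definition of an admissible generating class. Faithfulness comes from Proposition \ref{prop:genfaithf}, since admissible epimorphisms are epimorphisms. Conservativity comes from combining Lemma \ref{lem:adepicheckgen} with Proposition \ref{prop:adepikerzero}.

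The main point to verify is a definitional mismatch. The paper's notion of strong generator (faithful and conservative $F$) must agree with the one used in \cite{adamek1994locally}. There, a strong generator is a set $\mathcal{G}$ such that a morphism inducing bijections on all $\mathrm{Hom}(G,-)$ is an isomorphism; some formulations instead ask that every proper subobject be missed by some $G\to C$. In a cocomplete category with a generating set these formulations coincide, and the version recorded in \cite{MR3795415} is the one the paper uses, so this is routine. One should also check that $\lambda$ can be taken regular: if the given $\lambda$ is singular, replace it by $\lambda^{+}$. A $\lambda$-presentable object is also $\lambda^{+}$-presentable, so this change is harmless.
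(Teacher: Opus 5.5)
Your proposal is correct and is essentially the paper's own argument: the paper also deduces the theorem from \cite{adamek1994locally}*{Theorem 1.20}, using the immediately preceding proposition that $\mathcal{G}$ is a strong generator (faithfulness from Proposition \ref{prop:genfaithf}, conservativity from Lemma \ref{lem:adepicheckgen} together with Proposition \ref{prop:adepikerzero}), and for your definitional comparison only the easy direction is needed, namely that faithful plus conservative implies the subobject formulation. One small correction: replacing a singular $\lambda$ by $\lambda^{+}$ would give only local $\lambda^{+}$-presentability, not the stated conclusion, but the step is unnecessary anyway because $\lambda$-presentability is defined only for regular $\lambda$, so regularity is already implicit in the hypothesis.
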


\subsubsection{Proto-exact structures on monad algebras}

Here we explain when one can lift proto-exact structures to categories of algebras over monads.

\comment{
\begin{proposition}\label{prop:relexact}
    Let $F:\mathcal{C}\rightarrow(\mathpzc{E},\mathcal{Q})$ be a relative proto-exact category. Define $F^{-1}(\mathcal{Q})$ to be the class of kernel-cokernel pairs
    \begin{displaymath}
        \xymatrix{
        0\ar[r] & X\ar[r]^{i} & Y\ar[r]^{p} & Z\ar[r] & 0
        }
    \end{displaymath}
    where $i\in F^{-1}(\mathcal{M}_{\mathcal{Q}})$ and $p\in F^{-1}(\mathcal{E_{Q}})$. Then
    \begin{enumerate}
        \item 
        $(\mathcal{C},F^{-1}(\mathcal{Q}))$ is a proto-exact category;
        \item 
        if $(\mathpzc{E},\mathcal{Q})$ is left/right totally proto-exact, then $(\mathcal{C},F^{-1}(\mathcal{Q}))$ is left/right totally proto-exact;
        \item 
        if $(\mathpzc{E},\mathcal{Q})$ satisfies the strong left/ strong right obscure axiom, then $(\mathcal{C},F^{-1}(\mathcal{Q}))$ satisfies the strong left/ strong right obscure axiom;
        \item 
        if $(\mathpzc{E},\mathcal{Q})$ is parabelian and $F$ commutes with kernels and cokernels, then $(\mathcal{C},F^{-1}(\mathcal{Q}))$ is parabelian.
    \end{enumerate}
\end{proposition}

\begin{proof}
\begin{enumerate}
    \item 
\textcolor{red}{check this}
    We show that a composition of strict monos in $F^{-1}(\mathcal{M_{Q}})$ is a strict mono in $F^{-1}(\mathcal{M_{Q}})$. In fact is enough to show that a composition of such monomorphisms is strict. Let $g:X\rightarrow Y$ and $f:Y\rightarrow Z$ be two such morphisms. Consider $f\circ g$.  We have $F(f\circ g)=F(f)\circ F(g)$ is in $\mathcal{M_{Q}}$. Consider the unique factorisation
    \begin{displaymath}
        \xymatrix{
        X\ar[r]^{\widetilde{f\circ g}} & \tilde{X}\ar[r]^{i_{f\circ g}} & Z.
        }
    \end{displaymath}
    Since $f$ is a monomorphism, we have $\mathrm{im}(f\circ g)\cong\mathrm{im}(g)\cong X$. Similarly, we get that the composition of two admissible epimorphisms is an admissible epimorphism. The pushout/ pullback axioms follow immediately from the assumptions. 
    \item  
    The only difference to part $(1)$ is the stronger pushout/ pullback axioms, but this again follows by assumption.
    \item This is obvious.
    \item This is obvious from part (1), and the assumption that $F$ commutes with kernels and cokernels.
\end{enumerate}
  
}

    \comment{
To complete the proof of the existence of the proto-exact structure, it suffices to prove that the pushout of a strict monomorphism in $F^{-1}(\mathcal{M_{Q}})$ along a strict epi in $F^{-1}(\mathcal{E_{Q}})$ (or, in the total case, along any morphism) is a strict monomorphism, and dually for pullbacks of strict epimorphisms in $F^{-1}(\mathcal{E_{Q}})$. We prove only the monomorphism case, as the question is entirely dual. Let $j:Z\rightarrow X$ be a strict monomorphism in $F^{-1}(\mathcal{M_{Q}})$, and $q:Z\rightarrow Y$ a strict epimorphism in $F^{-1}(\mathcal{E_{Q}})$, we can form a pushout
    \begin{displaymath}
            \xymatrix{
           Z\ar[d]^{q}\ar[r]^{j}  & X\ar[d]^{q'}\\
             Y\ar[r]^{j'} & Q
            }
        \end{displaymath}
        in $\mathcal{C}$ such that 
          \begin{displaymath}
            \xymatrix{
           F(Z)\ar[d]^{F(q)}\ar[r]^{F(j)}  & F(X)\ar[d]^{F(q')}\\
             F(Y)\ar[r]^{F(j')} & F(Q)
            }
        \end{displaymath}
      is a pushout in $\mathpzc{E}$. We need to show that $j'$ is strict. We extend the first diagram to 
          \begin{displaymath}
            \xymatrix{
           Z\ar[d]^{q}\ar[r]^{j}  & X\ar[d]^{q'}\ar[r] & C\ar@{=}[d]\\
             Y\ar[r]^{j'} & Q\ar[r] & C
            }
        \end{displaymath}
        where $C$ is a cokernel of both $j$ and $j'$, and the top row is a kernel-cokernel pair.
        
\end{proof}
}
\comment{
In particular if $F:\mathcal{C}\rightarrow(\mathpzc{E},\mathcal{Q})$ is a (totally) relative proto-exact category with $(\mathpzc{E},\mathcal{Q})$ an exact category, then $\mathcal{C}$ is a (totally) proto-exact category satisfying the obscure axiom.

}

\begin{proposition}
    Let $(\mathcal{C},\mathcal{Q})$ be a proto-exact category, and $T:\mathcal{C}\rightarrow\mathcal{C}$ a monad that preserves pushouts. Then there is a proto-exact structure $\mathcal{Q}_{T}$ on $\mathrm{Alg}_{T}(\mathcal{C})$ whereby a sequence
    $$0\rightarrow X\rightarrow Y\rightarrow Z\rightarrow 0$$
    is short exact precisely if 
        $$0\rightarrow |X|\rightarrow |Y|\rightarrow |Z|\rightarrow 0$$
        is short exact in $(\mathcal{C},\mathcal{Q})$, where $|-|:\mathrm{Alg}_{T}(\mathcal{C})\rightarrow\mathcal{C}$ denotes the forgetful functor. 
\end{proposition}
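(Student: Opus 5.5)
The strategy is to transfer everything along the forgetful functor $U=|-|:\mathrm{Alg}_{T}(\mathcal{C})\rightarrow\mathcal{C}$. Two standard facts about monad algebras do the work. First, $U$ creates all limits that exist in $\mathcal{C}$. Second, $U$ creates those colimits that $T$ preserves (and, by naturality of $\mu$, those that $TT$ preserves). Since $T$ preserves pushouts, $TT=T\circ T$ does as well.

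The first step is to note that $T(0)$ need not be $0$. However, $0$ is a pushout of the empty span into itself, so in the pointed setting I would argue as follows. Because $T$ preserves pushouts, $T(0)\cong 0$. More concretely, $0$ is the pushout of $0\leftarrow 0\rightarrow 0$, and preservation of this pushout shows that $T(0)$ is initial among objects under $T(0)$. This will need care. If it fails, I would instead observe that cokernels $\mathrm{Coker}(f)=Y\sqcup_{X}0$ are pushouts involving $0$. In that case it suffices to show that the terminal algebra $U^{-1}(0)$ (which exists because $U$ creates limits) is also initial. I expect this to follow because the unit $0\rightarrow T0$ together with $T0\rightarrow 0$ exhibits $T0$ as a retract of $0$. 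With that settled, $\mathrm{Alg}_{T}(\mathcal{C})$ is pointed, and $U$ preserves and reflects kernels (as limits) and cokernels (as pushouts), whenever these exist in $\mathcal{C}$.

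Next I would define $\mathcal{Q}_{T}$ as the class of pairs $(f,g)$ of algebra maps such that $(Uf,Ug)\in\mathcal{Q}$. I must check that such a pair is strict in $\mathrm{Alg}_{T}(\mathcal{C})$. Since $Uf=\mathrm{Ker}(Ug)$ in $\mathcal{C}$, the lift $\mathrm{Ker}(g)$ exists and $U$ carries it to $\mathrm{Ker}(Ug)$. The comparison $X\rightarrow\mathrm{Ker}(g)$ therefore becomes an isomorphism after applying $U$, and since $U$ is conservative it is an isomorphism. The cokernel case is dual, using the fact that $U$ creates pushouts. One consequence is that $\mathcal{M}_{\mathcal{Q}_{T}}=U^{-1}(\mathcal{M}_{\mathcal{Q}})$ and $\mathcal{E}_{\mathcal{Q}_{T}}=U^{-1}(\mathcal{E}_{\mathcal{Q}})$. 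To see this, note that if $Uf$ is admissible, its cokernel lifts, and the resulting pair lies in $\mathcal{Q}_{T}$.

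The axioms then follow. Identities lie in both classes, and composition is immediate from the last description. For pullbacks, take $g$ admissible epi and $m$ admissible mono in the algebra category. The pullback in $\mathcal{C}$ of $Ug$ along $Um$ exists, because $\mathcal{E}_{\mathcal{Q}}$ is stable under pullback along $\mathcal{M}_{\mathcal{Q}}$. It lifts uniquely to a pullback of algebras, and its image under $U$ is that pullback, hence admissible. Pushouts of admissible monos along admissible epis are handled the same way, using that $U$ creates pushouts. The step I expect to be the main obstacle is the pointedness and zero-object issue in the first step, since everything else is formal creation of (co)limits.
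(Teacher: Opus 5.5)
Your overall strategy is the paper's: transfer everything along $U$, using that $U$ creates limits and pushouts and is conservative. Once $\mathrm{Alg}_{T}(\mathcal{C})$ is known to be pointed, the rest of your argument is fine. The gap is exactly the zero-object step, and neither of your two arguments closes it.

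Preservation of the pushout of $0\leftarrow 0\rightarrow 0$ is vacuous. It is the pushout of a span of identities, which every functor preserves, so it says nothing about $T(0)$. The retract argument runs the wrong way. The composite $0\rightarrow T(0)\rightarrow 0$ being the identity exhibits $0$ as a retract of $T(0)$. To have $T(0)$ a retract of $0$ you would need $T(0)\rightarrow 0\rightarrow T(0)$ to be the identity, which is equivalent to $T(0)\cong 0$, the very thing to be proved. The initial algebra is the free algebra $(T(0),\mu_{0})$ and the terminal algebra is $0$. Hence $\mathrm{Alg}_{T}(\mathcal{C})$ is pointed if and only if $T(0)\cong 0$.

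In fact this cannot be derived from the stated hypotheses. Fix an object $E\not\cong 0$, say in $\mathrm{Set}_{*}$, and take the monad $T(X)=X\vee E$, with unit the inclusion and multiplication folding the two copies of $E$. It preserves pushouts, because coproduct with a fixed object commutes with connected colimits. Its algebras form the coslice category $E/\mathcal{C}$, whose initial object $E$ is not terminal. So $\mathrm{Alg}_{T}(\mathcal{C})$ is not pointed and carries no proto-exact structure.

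The statement needs the additional hypothesis $T(0)\cong 0$. This holds, for instance, if $T$ preserves cokernels in the sense $T(\mathrm{Coker}(f))\cong\mathrm{Coker}(T(f))$ (take $f=\mathrm{Id}_{0}$), which is the hypothesis used later in the paper. The paper's one-line proof invokes only creation of limits and pushouts and reflection of isomorphisms, so it silently passes over this same point.

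Once $T(0)\cong 0$ is added, $U$ preserves the zero object and zero morphisms, and kernels and cokernels are created by $U$. Your remaining checks then go through as written: strictness of the lifted pairs, the equalities $\mathcal{M}_{\mathcal{Q}_{T}}=U^{-1}(\mathcal{M}_{\mathcal{Q}})$ and $\mathcal{E}_{\mathcal{Q}_{T}}=U^{-1}(\mathcal{E}_{\mathcal{Q}})$, and the proto-exact axioms.
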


\begin{proof}
This works as in \cite{kelly2016homotopy}*{Proposition 2.73}.
    The claim follows from the general fact that if $T$ is a monad on any category $\mathcal{C}$ that preserves pushouts, then the forgetful functor $\mathrm{Alg}_{T}(\mathcal{C})\rightarrow\mathcal{C}$ creates limits and pushouts, and reflects isomorphisms. For a proof of this, see \cite{handbook} Proposition 4.3.1 and Proposition 4.3.2.
\end{proof}

\subsection{Cotorsion and enveloping theory}

In this subsection we define cotorsion pairs in proto-exact categories and, more importantly, we introduce cotorsion and enveloping theory.

\subsubsection{Projectives and injectives.}

We begin by considering the simplest case of a cotorsion pair, a notion that we will define fully subsequently. 

\begin{definition}
    An object $P$ of a proto-exact category $(\mathcal{C},\mathcal{Q})$ is said to be \textit{projective} if any commutative diagram 
    \begin{displaymath}
        \xymatrix{
        0\ar[d]\ar[r] & X\ar[d]^{f}\\
        P\ar[r] & Y
        }
    \end{displaymath}
    where $f\in\mathcal{E}_{\mathcal{Q}}$ admits a lift, that is there exists a commutative diagram
       \begin{displaymath}
        \xymatrix{
        0\ar[d]\ar[r] & X\ar[d]^{f}\\
        P\ar[r]\ar[ur] & Y
        }
    \end{displaymath}
\end{definition}

We will denote by $\mathrm{Proj}(\mathcal{C},\mathcal{Q})$ the class of projective objects. The following is essentially the definition of a projective.

\begin{corollary}    An object $P$ is projective if and only if the functor
    $$\mathrm{Hom}(P,-):\mathcal{C}\rightarrow\mathrm{Set}_{*}$$
    is left semi-exact. 
\end{corollary}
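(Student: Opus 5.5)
We unwind the definition of left semi-exactness for the sequence of pointed sets obtained by applying $\mathrm{Hom}(P,-)$ to an exact sequence
$$0\rightarrow X\xrightarrow{f} Y\xrightarrow{g} Z\rightarrow 0$$
in $(\mathcal{C},\mathcal{Q})$. We equip $\mathrm{Set}_{*}$ with its parabelian structure, whose admissible monomorphisms are the injections. We must check three things:
\begin{enumerate}
\item $\mathrm{Hom}(P,f)$ is an admissible monomorphism;
\item $\mathrm{Hom}(P,f)$ is isomorphic to the kernel of $\mathrm{Hom}(P,g)$;
\item $\mathrm{Hom}(P,g)$ is an effective epimorphism of pointed sets, i.e.\ a surjection.
\end{enumerate}

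Conditions (1) and (2) hold for every object $P$, because $\mathrm{Hom}(P,-)$ preserves all limits, in particular kernels. Since $f\cong\mathrm{Ker}(g)\rightarrow Y$, we get $\mathrm{Hom}(P,X)\cong\mathrm{Ker}(\mathrm{Hom}(P,g))$ as pointed sets. The kernel inclusion is injective, hence a strict, and so admissible, monomorphism in $\mathrm{Set}_{*}$. Thus left semi-exactness of $\mathrm{Hom}(P,-)$ on this sequence is equivalent to surjectivity of $\mathrm{Hom}(P,g)$.

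It remains to match this surjectivity condition with projectivity. Every admissible epimorphism $g$ occurs in some exact sequence, namely $(\mathrm{Ker}(g)\rightarrow Y, g)\in\mathcal{Q}$. So left semi-exactness of $\mathrm{Hom}(P,-)$ is equivalent to: $\mathrm{Hom}(P,Y)\rightarrow\mathrm{Hom}(P,Z)$ is surjective for every $g\in\mathcal{E}_{\mathcal{Q}}$. Surjectivity says that every map $P\rightarrow Z$ lifts through $g$. The square in the definition of projective, with $0$ in the top-left corner, imposes no further condition, since $0$ is initial. Hence this is exactly the lifting property defining a projective object, and both implications follow.

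The only potential subtlety is the meaning of "effective epimorphism" in $\mathrm{Set}_{*}$. We need the fact that these are exactly the surjections; this holds because $\mathrm{Set}_{*}$ is regular and its effective (regular) epimorphisms coincide with its epimorphisms, namely the surjective maps. Beyond this, the argument is purely definitional.
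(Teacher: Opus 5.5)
Your proof is correct and follows the same route as the paper. The paper's one-line proof also reduces left semi-exactness of $\mathrm{Hom}(P,-)$, using that it preserves kernels, to $\mathrm{Hom}(P,g)$ being an effective epimorphism for each admissible epimorphism $g$, and then uses that effective epimorphisms in $\mathrm{Set}_{*}$ are exactly the surjections, which is the lifting property defining projectivity.

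One small quibble: regularity of $\mathrm{Set}_{*}$ is not what makes every surjection effective. The reason is that coequalisers in $\mathrm{Set}_{*}$ are computed in $\mathrm{Set}$, so a surjection is the coequaliser of its kernel pair.
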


\begin{proof}
    This is immediate from the fact that all epimorphisms in $\mathrm{Set}$, and hence in $\mathrm{Set}_{*}$, are effective.
\end{proof}

\begin{corollary}
Suppose that $(\mathcal{C},\mathcal{Q})$ has an admissible generator $P$ that is projective, and satisfies the strong right obscure axiom. Then a morphism $X\rightarrow Y$ is an admissible epimorphism if and only if the map
$$\mathrm{Hom}(P,X)\rightarrow\mathrm{Hom}(P,Y)$$
is an epimorphism of pointed sets.
\end{corollary}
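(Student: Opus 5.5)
One direction is Lemma \ref{lem:adepicheckgen}, and the other is projectivity of $P$. The plan is to prove the two implications separately.

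For the "if" direction, suppose $\mathrm{Hom}(P,X)\rightarrow\mathrm{Hom}(P,Y)$ is an epimorphism of pointed sets. Since $\{P\}$ is an admissible generating class and $(\mathcal{C},\mathcal{Q})$ satisfies the strong right obscure axiom, Lemma \ref{lem:adepicheckgen} applies directly with $\mathcal{G}=\{P\}$ and shows that $X\rightarrow Y$ is an admissible epimorphism. Concretely: choose an admissible epimorphism $\pi:\coprod_{i\in\mathcal{I}}P\rightarrow Y$. Each component $P\rightarrow Y$ lifts along $X\rightarrow Y$ by surjectivity on $\mathrm{Hom}(P,-)$. These lifts assemble into a map $\coprod_{i\in\mathcal{I}}P\rightarrow X$ through which $\pi$ factors. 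The strong right obscure axiom then makes $X\rightarrow Y$ admissible.

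For the "only if" direction, let $f:X\rightarrow Y$ be an admissible epimorphism and let $g:P\rightarrow Y$ be any morphism. The square with top row $0\rightarrow X$, left column $0\rightarrow P$, right column $f$ and bottom row $g$ commutes trivially, because $0$ is initial. By the definition of projectivity, there is a lift $h:P\rightarrow X$ with $f\circ h=g$. Hence $\mathrm{Hom}(P,X)\rightarrow\mathrm{Hom}(P,Y)$ is surjective. In $\mathrm{Set}_{*}$ a surjective pointed map is an epimorphism, which is what we need. Equivalently, one can invoke the preceding corollary: $\mathrm{Hom}(P,-)$ is left semi-exact, so it sends the admissible epimorphism $f$ to an effective epimorphism of pointed sets.

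No step is a real obstacle. The only point needing care is in the "if" direction, where the admissible generator gives a coproduct $\coprod_{i}P$ with possibly many copies. The lifting must be done componentwise, using the universal property of the coproduct, to produce the factorisation through $X$. This is exactly what is packaged in Lemma \ref{lem:adepicheckgen}.
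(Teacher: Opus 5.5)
Your proof is correct and is exactly the argument the paper intends; the paper records this corollary without a separate proof. The ``if'' direction is Lemma~\ref{lem:adepicheckgen} applied with $\mathcal{G}=\{P\}$. The ``only if'' direction is the definition of projectivity (equivalently, the preceding corollary that $\mathrm{Hom}(P,-)$ is left semi-exact), combined with the fact that epimorphisms in $\mathrm{Set}_{*}$ are precisely the surjections.
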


Of course, we have the dual concept of injectives.

\begin{definition}
    An object $I$ of $(\mathcal{C},\mathcal{Q})$ is said to be \textit{injective} if any commutative diagram 
    \begin{displaymath}
        \xymatrix{
        X\ar[d]^{g}\ar[r] & I\ar[d]\\
        Y\ar[r] & 0
        }
    \end{displaymath}
    where $g\in\mathcal{M}_{\mathcal{Q}}$, admits a lift. 
\end{definition}

\begin{definition}
    \begin{enumerate}
        \item 
        A proto-exact category $(\mathcal{C},\mathcal{Q})$ is said to \textit{have enough projectives} if for every object $C$ of $\mathcal{C}$ there is a projective $P$ and a morphism $P\rightarrow C$ in $\mathcal{E}_{\mathcal{Q}}$.
        \item 
         A proto-exact category $(\mathcal{C},\mathcal{Q})$ is said to \textit{have enough injectives} if for every object $C$ of $\mathcal{C}$ there is an injective $I$ and a morphism $C\rightarrow I$ in $\mathcal{M}_{\mathcal{Q}}$. 
    \end{enumerate}
\end{definition}

\subsubsection{General cotorsion theory}

Let $(\mathcal{C},\mathcal{Q})$ be a proto-exact category. Denote by $\mathbf{AdMon}_{\mathcal{F}}$ the class of admissible monomorphisms whose cokernel is in $\mathcal{F}$, and by $\mathbf{AdEpi}_{\mathcal{F}}$ the class of admissible epimorphisms whose kernel is in $\mathcal{F}$. We denote by $\mathcal{F}^{\perp}$ the class of objects $K$ such that any diagram of the form

    \begin{displaymath}
        \xymatrix{
        X\ar[d]^{g}\ar[r] & K\ar[d]\\
        Y\ar[r] & 0
        }
    \end{displaymath}
    with $g\in\mathbf{AdMon}_{\mathcal{F}}$ admits a lift, and by ${}^{\perp}\mathcal{F}$ the class of objects $K$ such that any diagram

        \begin{displaymath}
        \xymatrix{
        0\ar[d]\ar[r] & X\ar[d]^{f}\\
        K\ar[r] & Y
        }
    \end{displaymath}
    with  $f\in\mathbf{AdEpi}_{\mathcal{F}}$ admits a lift. For a class $\mathcal{S}$ of morphisms, we denote by $\mathcal{S}^{\llp}$ the class of morphisms with the right lifting property against $\mathcal{S}$.

    \begin{definition}
        Let $(\mathcal{C},\mathcal{Q})$ be a proto-exact category. A \textit{cotorsion pair} on $\mathcal{C}$ is a pair of classes of objects $(\mathfrak{L},\mathfrak{R})$ such that
        \begin{enumerate}
            \item $\mathfrak{L}={}^{\perp}\mathfrak{R}$,
            \item and $\mathfrak{R}=\mathfrak{L}^{\perp}$.
        \end{enumerate}
    \end{definition}

    The following is immediate from the definitions.

\begin{proposition}
    The pairs $(\mathrm{Proj}(\mathcal{C},\mathcal{Q}),\mathrm{Ob}(\mathcal{C}))$ and $(\mathrm{Ob}(\mathcal{C}),\mathrm{Inj}(\mathcal{C},\mathcal{Q}))$ are cotorsion pairs. 
\end{proposition}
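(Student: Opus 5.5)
We have to check the four equalities:
\[
\mathrm{Proj}(\mathcal{C},\mathcal{Q})={}^{\perp}\mathrm{Ob}(\mathcal{C}),\quad \mathrm{Ob}(\mathcal{C})=\mathrm{Proj}(\mathcal{C},\mathcal{Q})^{\perp},\quad \mathrm{Ob}(\mathcal{C})={}^{\perp}\mathrm{Inj}(\mathcal{C},\mathcal{Q}),\quad \mathrm{Inj}(\mathcal{C},\mathcal{Q})=\mathrm{Ob}(\mathcal{C})^{\perp}.
\]
The argument is dual for the two pairs, so we treat the projective pair in detail and indicate the changes for the injective one.

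For $\mathrm{Proj}(\mathcal{C},\mathcal{Q})={}^{\perp}\mathrm{Ob}(\mathcal{C})$, observe that $\mathbf{AdEpi}_{\mathrm{Ob}(\mathcal{C})}$ is simply $\mathcal{E}_{\mathcal{Q}}$. Indeed, every admissible epimorphism $f$ occurs in a pair $(\mathrm{Ker}(f)\to X,f)\in\mathcal{Q}$, so its kernel exists and lies in $\mathrm{Ob}(\mathcal{C})$. The lifting condition defining ${}^{\perp}\mathrm{Ob}(\mathcal{C})$ is then literally the lifting condition defining projectivity. Dually, $\mathbf{AdMon}_{\mathrm{Ob}(\mathcal{C})}=\mathcal{M}_{\mathcal{Q}}$, since admissible monomorphisms have cokernels. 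Hence $\mathrm{Ob}(\mathcal{C})^{\perp}=\mathrm{Inj}(\mathcal{C},\mathcal{Q})$.

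The remaining two equalities say that every object $K$ lies in $\mathrm{Proj}(\mathcal{C},\mathcal{Q})^{\perp}$ and in ${}^{\perp}\mathrm{Inj}(\mathcal{C},\mathcal{Q})$. Take $K$ and an admissible monomorphism $g:X\to Y$ with $\mathrm{Coker}(g)=P$ projective, together with a map $X\to K$. We need a lift $Y\to K$ in the square whose bottom row is $Y\to 0$. This is where care is needed, because the naive reading would amount to splitting $g$ in a non-additive setting, which is not available. So I will read the lifting squares as in the paper's definition: the square has $K\to 0$ on the right and $Y\to 0$ on the bottom. Whether it is the square or its lift that imposes the real constraint depends on exactly which diagrams are required to commute and what their corners are.

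The main obstacle is therefore unpacking the definition so that the condition on the objects in the statement becomes vacuous. My first attempt would be to show that the squares in question degenerate, for instance through the zero object: a lifting problem against $0\to P$ (respectively $I\to 0$) is trivially solvable by the zero map, since $\mathcal{C}$ is pointed. If the definitions force the test morphisms $g$ to be trivial in this sense, the inclusions $\mathrm{Ob}(\mathcal{C})\subseteq\mathrm{Proj}(\mathcal{C},\mathcal{Q})^{\perp}$ and $\mathrm{Ob}(\mathcal{C})\subseteq{}^{\perp}\mathrm{Inj}(\mathcal{C},\mathcal{Q})$ follow from the existence of zero maps. If instead they genuinely require splitting, I would fall back on using the projectivity of $P$ to lift $\mathrm{Id}_P$ along the admissible epimorphism $Y\to P$, obtaining a section, and then combine this with the universal property of $\mathrm{Ker}$ to produce the required map. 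This is the step I expect to require the most attention.
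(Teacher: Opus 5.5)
Your proof of $\mathrm{Proj}(\mathcal{C},\mathcal{Q})={}^{\perp}\mathrm{Ob}(\mathcal{C})$ and $\mathrm{Inj}(\mathcal{C},\mathcal{Q})=\mathrm{Ob}(\mathcal{C})^{\perp}$ is correct. These two equalities are all that is genuinely ``immediate from the definitions'', which is the only justification the paper gives. The gap is in the other two inclusions, $\mathrm{Ob}(\mathcal{C})\subseteq\mathrm{Proj}(\mathcal{C},\mathcal{Q})^{\perp}$ and $\mathrm{Ob}(\mathcal{C})\subseteq{}^{\perp}\mathrm{Inj}(\mathcal{C},\mathcal{Q})$, and neither of your strategies closes it.

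\textbf{Why your strategies fail.} Nothing degenerates. The test maps are arbitrary $g\in\mathbf{AdMon}_{\mathrm{Proj}}$, not just $0\to P$. Moreover, the zero map is not a lift unless the given map $X\to K$ is zero, since the upper triangle must commute. Taking $K=X$ with $\mathrm{Id}_X$ shows the condition says exactly that every admissible monomorphism with projective cokernel is a split monomorphism. Dually, it says every admissible epimorphism with injective kernel is a split epimorphism. Your fallback gives a section $s$ of $q:Y\to P$. Turning it into a retraction of $g$ requires applying the kernel's universal property to $\mathrm{Id}_Y-s\circ q$, and that needs subtraction. In a Quillen exact category this works: one gets $\mathrm{Id}_Y-sq=g\circ r$ with $r\circ g=\mathrm{Id}_X$, and $\phi$ extends as $\phi\circ r$. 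In a non-additive proto-exact category there is simply no candidate endomorphism of $Y$ killed by $q$.

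\textbf{The missing step is false in general.} It fails in the paper's own main example. Work in $\mathrm{Ban}^{\le1}_{\mathbb{R}}$ with its parabelian structure, where $\mathbb{R}$ is injective by Hahn--Banach. Let $X=\ell^\infty_3$, $u=(1,1,1)$, and let $q:X\to Y=X/\mathbb{R}u$ be the quotient map. Its kernel $\mathbb{R}u\cong\mathbb{R}$ is injective, so $q\in\mathbf{AdEpi}_{\mathrm{Inj}}$.
\begin{itemize}
\item A lift of $\mathrm{Id}_Y$ along $q$ is a non-expanding linear section. Its image is a plane $H$ with $\|h\|_\infty\le\inf_t\|h+tu\|_\infty=(\max_i h_i-\min_i h_i)/2$ for all $h\in H$, i.e.\ $\max_i h_i=-\min_i h_i$.
\item A nonzero vector $(a,b,0)$ satisfies this only if $b=-a$. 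So intersecting $H$ with $\{h_3=0\}$ and $\{h_2=0\}$ forces $(1,-1,0),(1,0,-1)\in H$.
\item Their sum $(2,-1,-1)$ then lies in $H$ but violates the condition.
\end{itemize}
Hence $Y\notin{}^{\perp}\mathrm{Inj}$, and $(\mathrm{Ob}(\mathcal{C}),\mathrm{Inj}(\mathcal{C},\mathcal{Q}))$ is not a cotorsion pair there.

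What your argument actually proves is the statement for additive (Quillen exact) categories. It also holds for proto-exact categories in which admissible monomorphisms with projective cokernel split, and admissible epimorphisms with injective kernel split. In general only the two immediate equalities survive.
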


    In a non-additive proto-exact category such pairs seem to be quite rare, apart from the projective and injective ones. However, we do have the following. 
    
    \begin{lemma}\label{lem:rlpF}
        Let $\mathcal{F}$ be a class of objects in a proto-exact category satisfying the strong right obscure axiom. If $\mathcal{F}$ contains an admissible generating class, then $\mathbf{AdMon}_{\mathcal{F}}^{\llp}\subset\mathbf{AdEpi}_{\mathcal{F}^{\perp}}$. 
    \end{lemma}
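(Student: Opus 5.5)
Let $p:X\rightarrow Y$ be a morphism with the right lifting property against $\mathbf{AdMon}_{\mathcal{F}}$. We must show three things: $p$ is an admissible epimorphism, $p$ has a kernel, and that kernel lies in $\mathcal{F}^{\perp}$. The plan is to handle the epimorphism part first, using lifting against maps out of $0$ together with the strong right obscure axiom. The kernel part then reduces to a pullback argument.

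\emph{Step 1: $p$ is an admissible epimorphism.} Let $\mathcal{G}\subset\mathcal{F}$ be an admissible generating class. For $G\in\mathcal{G}$ the map $0\rightarrow G$ is an admissible monomorphism with cokernel $G$. Indeed, $0\rightarrow G\rightarrow G\rightarrow 0$ is exact: pull back the exact sequence $0\rightarrow G\xrightarrow{\mathrm{Id}} G\rightarrow 0\rightarrow 0$ appropriately, or note directly that $(0\rightarrow G,\mathrm{Id}_{G})$ is obtained from the identity axiom and is strict. Hence $0\rightarrow G\in\mathbf{AdMon}_{\mathcal{F}}$. Lifting in the square with top edge $0\rightarrow X$ and bottom edge an arbitrary $G\rightarrow Y$ shows that $\mathrm{Hom}(G,X)\rightarrow\mathrm{Hom}(G,Y)$ is surjective for every $G\in\mathcal{G}$. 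By Lemma \ref{lem:adepicheckgen}, which uses exactly the strong right obscure axiom and the admissible generating class, $p$ is an admissible epimorphism. (Alternatively, choose an admissible epimorphism $\coprod G_i\rightarrow Y$, lift each component to $X$, and apply the strong right obscure axiom directly. This is the same argument.)

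\emph{Step 2: the kernel lies in $\mathcal{F}^{\perp}$.} Since $p$ is an admissible epimorphism, $K=\mathrm{Ker}(p)$ exists and $K\rightarrow X$ is an admissible monomorphism. The class $\mathbf{AdMon}_{\mathcal{F}}^{\llp}$ is closed under pullback, by the usual formal argument. The square
\[
\begin{array}{ccc}
K & \rightarrow & X\\
\downarrow & & \downarrow\\
0 & \rightarrow & Y
\end{array}
\]
is a pullback, so $K\rightarrow 0$ has the right lifting property against $\mathbf{AdMon}_{\mathcal{F}}$. Concretely, take $g:A\rightarrow B$ in $\mathbf{AdMon}_{\mathcal{F}}$ and a map $A\rightarrow K$. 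Form the square with top edge $A\rightarrow K\rightarrow X$ and bottom edge $B\rightarrow 0\rightarrow Y$. It commutes because $A\rightarrow K\rightarrow X\rightarrow Y$ is zero. A lift $B\rightarrow X$ then composes with $p$ to zero, so it factors through $K$ by the universal property of the kernel, giving the required extension $B\rightarrow K$. The resulting triangle commutes after composing with the monomorphism $K\rightarrow X$, hence commutes. Thus $K\in\mathcal{F}^{\perp}$, and $p\in\mathbf{AdEpi}_{\mathcal{F}^{\perp}}$.

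\emph{Main obstacle.} The only genuine point is Step 1. Admissibility of $p$ cannot be obtained from lifting alone. It needs the strong right obscure axiom via Lemma \ref{lem:adepicheckgen}, together with the check that $0\rightarrow G$ is admissible with cokernel $G$. That check follows from the identity axiom, since $(0\rightarrow G,\mathrm{Id}_G)$ is the pushout of $\mathrm{Id}_0$ along $0\rightarrow G$, or follows directly from the proposition giving exactness of $0\rightarrow A\rightarrow A\times B\rightarrow B\rightarrow 0$ with $A=0$. Step 2 is routine.
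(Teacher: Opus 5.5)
Your proof is correct and is the paper's argument with the details filled in. Lifting against $0\to G$ for $G$ in the admissible generating class, together with Lemma~\ref{lem:adepicheckgen}, makes $p$ an admissible epimorphism, and stability of $\mathbf{AdMon}_{\mathcal{F}}^{\llp}$ under pullback puts $\mathrm{Ker}(p)\to 0$ in it. Of your several justifications that $0\to G\in\mathbf{AdMon}_{\mathcal{F}}$, only one is sound: $\mathrm{Id}_G\in\mathcal{E}_{\mathcal{Q}}$, and its kernel is $0\to G$. The pushout of $\mathrm{Id}_0$ along $0\to G$ actually gives $\mathrm{Id}_G$, and the proposition on $0\to A\to A\times B\to B\to 0$ assumes right totality, which is not a hypothesis here.
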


    \begin{proof}
        Suppose $g:X\rightarrow Y$ has the right lifting property against all maps in $\mathbf{AdMon}_{\mathcal{F}}$. Since $\mathcal{F}$ contains an admissible generating class, $g$ is an admissible epimorphism by Lemma \ref{lem:adepicheckgen}. Indeed, the lifting property against the maps $0\rightarrow G$ for $G\in\mathcal{G}$ precisely means that $\mathrm{Hom}(G,f)$ is an epimorphism. Now let $K=\mathrm{Ker}(g)$. 
        
     The class $\mathbf{AdMon}_{\mathcal{F}}^{\llp}$ is closed under pullbacks, so $K\rightarrow 0$ is in $\mathbf{AdMon}_{\mathcal{F}}^{\llp}$, as required.
    \end{proof}

Again, as effective epimorphisms in the category $\mathrm{Set}_{*}$ are just surjections, the following is clear.

    \begin{proposition}
        Let $\mathcal{F}$ be a class of objects in $\mathcal{C}$. Let $T$ be an object of $\mathcal{C}$.  The following are equivalent.
        \begin{enumerate}
            \item 
            The object $T$ is in $\mathcal{F}^{\perp}$.
            \item 
            For any exact sequence
            $$0\rightarrow X\rightarrow Y\rightarrow F\rightarrow 0$$
            with $F\in\mathcal{F}$, the sequence
            $$0\rightarrow\mathrm{Hom}(F,T)\rightarrow\mathrm{Hom}(Y,T)\rightarrow\mathrm{Hom}(X,T)\rightarrow 0$$
            is right semi-exact.
        \end{enumerate}
    \end{proposition}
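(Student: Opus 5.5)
We unwind both conditions into statements about extending morphisms along an admissible monomorphism, and check they coincide. Fix an exact sequence
$$0\rightarrow X\xrightarrow{i} Y\xrightarrow{p} F\rightarrow 0$$
with $F\in\mathcal{F}$. Then $i\in\mathbf{AdMon}_{\mathcal{F}}$, and every element of $\mathbf{AdMon}_{\mathcal{F}}$ arises this way, since an admissible monomorphism sits in some sequence of $\mathcal{Q}$ whose third term is its cokernel. A lifting problem for $T$ against $i$, as in the definition of $\mathcal{F}^{\perp}$, is just a morphism $a:X\rightarrow T$, because the map $Y\rightarrow 0$ is forced. A solution is a morphism $b:Y\rightarrow T$ with $b\circ i=a$. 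So $T\in\mathcal{F}^{\perp}$ holds exactly when $i^{*}:\mathrm{Hom}(Y,T)\rightarrow\mathrm{Hom}(X,T)$ is surjective for every such sequence.

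Next we unpack right semi-exactness of the sequence of pointed sets
$$0\rightarrow\mathrm{Hom}(F,T)\xrightarrow{p^{*}}\mathrm{Hom}(Y,T)\xrightarrow{i^{*}}\mathrm{Hom}(X,T)\rightarrow 0.$$
The notion uses the canonical parabelian structure on $\mathrm{Set}_{*}$, and there are three clauses to check.

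\begin{enumerate}
\item[(a)] $p^{*}$ should be an effective monomorphism. This always holds, since $p$ is an epimorphism (being a cokernel), so $p^{*}$ is injective, and injections of pointed sets are effective (indeed strict) monomorphisms.
\item[(b)] $i^{*}$ should be an admissible, i.e.\ strict, epimorphism.
\item[(c)] $i^{*}$ should be isomorphic to the canonical map $\mathrm{Hom}(Y,T)\rightarrow\mathrm{Coker}(p^{*})$.
\end{enumerate}

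The implication (2)$\Rightarrow$(1) is then immediate: a strict epimorphism of pointed sets is surjective, so every $a$ extends and $T\in\mathcal{F}^{\perp}$.

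For (1)$\Rightarrow$(2), surjectivity of $i^{*}$ is given by hypothesis, so we must verify (b) and (c). The cokernel of $p^{*}$ in $\mathrm{Set}_{*}$ is $\mathrm{Hom}(Y,T)$ with the image of $p^{*}$ collapsed to the basepoint. That image is $\{b : b\circ i=0\}$, because $p$ is the cokernel of $i$. So (c), and with it (b) by Proposition \ref{prop:strictnessequiv}, reduces to the following condition: $i^{*}$ is injective away from $(i^{*})^{-1}(0)$. In other words, if $b\circ i=b'\circ i\neq 0$ then $b=b'$.

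This injectivity is the main obstacle. It does not follow from the lifting property alone; in the additive case it would be automatic by subtracting. The first attempt would be to read the statement with the intended meaning suggested by the remark preceding it: effective epimorphisms in $\mathrm{Set}_{*}$ are just surjections. Under that reading, right semi-exactness in the relevant sense amounts to $p^{*}$ being the kernel of $i^{*}$ together with $i^{*}$ being surjective. The kernel condition holds because $\mathrm{Hom}(-,T)$ turns the cokernel $p$ of $i$ into a kernel: morphisms $Y\rightarrow T$ killing $i$ factor uniquely through $F$. The equivalence then follows from the computation in the first paragraph.

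If instead the strict-epimorphism condition (c) is really required, I would note that the statement needs the additional injectivity. I would then try to deduce it from the exactness of the sequence in $\mathcal{C}$ when $\mathcal{C}$ is additive-like, or else flag the statement as requiring that reading.
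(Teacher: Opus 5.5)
Under your ``intended reading'' ($i^{*}$ surjective, and $p^{*}$ the kernel of $i^{*}$) your argument is the paper's argument. The paper's whole justification is that effective epimorphisms in $\mathrm{Set}_{*}$ are surjections. The kernel condition is automatic because $\mathrm{Hom}(-,T)$ turns the cokernel $p$ of $i$ into a kernel. You can state that reading in the paper's own terms rather than as ``the relevant sense''. It is exactly the paper's definition of \emph{left} semi-exactness applied to the displayed sequence: $g=i^{*}$ is an effective epimorphism, and $f=p^{*}$ is an admissible monomorphism isomorphic to $\mathrm{Ker}(i^{*})\rightarrow\mathrm{Hom}(Y,T)$. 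This is the same notion used for projectives just before, and the same one the paper later relies on to get short exact sequences of Hom-groups in the scalable cotorsion argument.

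Your suspicion about the literal definition is correct, but your proposed rescue goes the wrong way. With the paper's definition of right semi-exact, (1)$\Rightarrow$(2) is false, and additivity is exactly where it fails. Suppose $b\circ i\neq 0$ and $0\neq c\in\mathrm{Hom}(F,T)$. Then $b$ and $b+c\circ p$ are distinct, since $p$ is an epimorphism, yet they have the same nonzero image under $i^{*}$. So $i^{*}$ is not a strict epimorphism of pointed sets. A concrete instance: work in $\mathrm{Ab}$ with $\mathcal{F}=\{\mathbb{Z}\}$ and $T=\mathbb{Z}$, which lies in $\mathcal{F}^{\perp}$ because $\mathbb{Z}$ is projective. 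In the split sequence $0\rightarrow\mathbb{Z}\rightarrow\mathbb{Z}^{2}\rightarrow\mathbb{Z}\rightarrow 0$ with $i(n)=(n,0)$, the functionals $(1,0)$ and $(1,1)$ on $\mathbb{Z}^{2}$ have the same image $1$ under $i^{*}$. Your remark that injectivity is ``automatic by subtracting'' in the additive case conflates two cokernels. Subtraction shows that the cokernel of $p^{*}$ computed in $\mathrm{Ab}$ is $\mathrm{Hom}(X,T)$; it says nothing about the cokernel in $\mathrm{Set}_{*}$, which is the one the definition refers to. So drop the fallback of deducing the injectivity in additive-like situations. The correct conclusion is that the statement holds only under the left-semi-exact reading, and the literal version has the counterexample above.
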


    \begin{corollary}
        Let 
        $$0\rightarrow X\rightarrow Y\rightarrow Z\rightarrow 0$$
        be an exact sequence with $Z\in\mathcal{F}$ and $X\in\mathcal{F}^{\perp}$. Then the map $X\rightarrow Y$ is split.
    \end{corollary}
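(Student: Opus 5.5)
Since $X \in \mathcal{F}^{\perp}$ and $Z \in \mathcal{F}$, the plan is to apply the defining lifting property of $\mathcal{F}^{\perp}$ directly to the admissible monomorphism $f\colon X\rightarrow Y$. Write $g\colon Y\rightarrow Z$ for the second map of the exact sequence. Then $f$ is an admissible monomorphism with $\mathrm{Coker}(f)\cong Z\in\mathcal{F}$, because $(f,g)$ is a strict pair in $\mathcal{Q}$. Hence $f\in\mathbf{AdMon}_{\mathcal{F}}$.

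Next, consider the commutative square
\begin{displaymath}
    \xymatrix{
    X\ar[d]^{f}\ar[r]^{\mathrm{Id}_{X}} & X\ar[d]\\
    Y\ar[r] & 0
    }
\end{displaymath}
It commutes trivially because $0$ is terminal in the pointed category $\mathcal{C}$. Since $X\in\mathcal{F}^{\perp}$ and $f\in\mathbf{AdMon}_{\mathcal{F}}$, the definition of $\mathcal{F}^{\perp}$ gives a lift $r\colon Y\rightarrow X$ with $r\circ f=\mathrm{Id}_{X}$. The lower triangle $Y\rightarrow X\rightarrow 0$ commutes automatically. Thus $f$ is a split monomorphism in the sense of the Definition of split morphisms above.

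Equivalently, one could use the preceding Proposition: $\mathrm{Hom}(Y,X)\rightarrow\mathrm{Hom}(X,X)$ is surjective, so $\mathrm{Id}_X$ has a preimage. Either route is immediate, and I expect no real obstacle. The only point to check is that the cokernel of $f$ is indeed (isomorphic to) $Z$, so that $f$ lies in $\mathbf{AdMon}_{\mathcal{F}}$. This holds because $\mathcal{F}$ may be taken closed under isomorphism, or because the paper's convention identifies $Z$ with $\mathrm{Coker}(f)$ for a strict pair.

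Note that we only claim $f$ is split. We do not claim $g$ is split, nor that $Y\cong X\times Z$ or $X\vee Z$; as the earlier Example shows, such statements can fail in the non-additive setting.
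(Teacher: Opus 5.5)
Your proposal is correct and is essentially the paper's argument. The paper states this corollary, without proof, right after the proposition characterising $\mathcal{F}^{\perp}$ via $\mathrm{Hom}(-,T)$, so the intended proof is to take $T=X$ and lift $\mathrm{Id}_{X}$ along $\mathrm{Hom}(Y,X)\rightarrow\mathrm{Hom}(X,X)$. That is exactly the lifting property of $\mathcal{F}^{\perp}$, which you apply directly.
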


\begin{definition}
        Let $\mathcal{A}$ be a class of objects in a pointed category $\mathcal{C}$. 
        \begin{enumerate}
            \item 
            An
            $\mathcal{A}$-\textit{precover} of an object $E$ is a map $A\rightarrow E$ with $A\in\mathcal{A}$ such that $\mathrm{Hom}(A',A)\rightarrow\mathrm{Hom}(A',E)$ is an epimorphism for any $A'\in\mathcal{A}$.
            \item 
            An $\mathcal{A}$-\textit{preenvelope} of an object $E$ is a map $E\rightarrow A$ such that $A\in\mathcal{A}$ and $\mathrm{Hom}(A,A')\rightarrow\mathrm{Hom}(E,A')$ is an epimorphism for all $A'\in\mathcal{A}$.
        \end{enumerate} Now suppose that $(\mathcal{C},\mathcal{Q})$ is a proto-exact category.
        
        \begin{enumerate}
            \item 
            An $\mathcal{A}$-special precover is an admissible epimorphism $\pi:A\rightarrow E$ with $A\in\mathcal{A}$ and $\mathrm{Ker}(\pi)\in\mathcal{A}^{\perp}$.
            \item
            An $\mathcal{A}$-special preenvelope is an admissible monomorphism $i:E\rightarrow A$ such that $A\in\mathcal{A}$ and $\mathrm{Coker}(i)\in{}^{\perp}\mathcal{A}$.
        \end{enumerate}.
\end{definition}

We will address the existence of covers and envelopes in the next section.

\section{Coherence and deconstructibility}\label{sec:cohdec}

In this section we develop the tools necessary to establish the existence of covers and envelopes.

\subsection{Deconstructibility}\label{sec:deconstruct}

In this subsection we generalise the notion of deconstructibility for exact categories pioneered in \cite{saorin2011exact} and \cite{vst2012exact}. Deconstructibility is essentially a minimal condition that allows one to run small object arguments.

\subsubsection{The definition}

For cotorsion theory, we will need some control over
the objects in our categories. More precisely, we will need them to all be ‘unions’ of a set of objects.

\begin{definition}\label{defn:dec}
Let $\mathcal{C}$ be a category, and let $\mathcal{T}$ be a class of morphisms in $\mathcal{C}$ that is closed under transfinite compositions and pushouts.

\begin{enumerate}
    \item  The class $\mathcal{T}$ in $\mathcal{C}$ is said to be \textit{deconstructible in itself} if there is a \textit{set} of morphisms $\tilde{\mathcal{T}}\subset\mathcal{T}$ such that any morphism in $\mathcal{T}$ can be written as a transfinite composition of pushouts of maps in $\tilde{\mathcal{T}}$.
    
    \item 
The class $\mathcal{T}$ is said to be \textit{presentably deconstructible in itself} if $\tilde{\mathcal{T}}$ can be chosen such that every object in the domain of a map in $\tilde{\mathcal{T}}$ is small relative to $\mathcal{T}$.
\end{enumerate}
\end{definition}

Note that if $\mathcal{C}$ is an accessible category then all classes that are deconstructible in themselves are presentably deconstructible in themselves. 

\comment{
For an exact category $\mathpzc{E}$, a class of objects $\mathcal{A}\subset\mathpzc{E}$, and a class of admissible monomorphisms $\mathbf{I}$, we denote by $\mathbf{I}_{\mathcal{A}}$ the class of morphisms in $\mathbf{I}$ with cokernel in $\mathcal{A}$.}
  
 \begin{definition}\label{defn:deconstrexact}[c.f. \cite{vst2012exact} Definition 3.9]
Let $(\mathcal{C},\mathcal{Q})$ be a proto-exact category. A class of objects $\mathcal{A}$ in $\mathcal{C}$ is said to be \textit{(presentably) deconstructible in itself} if the class of maps $\mathbf{AdMon}_{\mathcal{A}}$ is deconstructible/ presentably deconstructible in itself. If the set $\tilde{\mathcal{T}}$ can be chosen such that the domains and codomains of maps in $\tilde{\mathcal{T}}$ are in $\mathcal{A}$, then the class $\mathcal{A}$ is said to be \textit{hereditarily (presentably) deconstructible in itself}.

\end{definition}

It turns out that the hereditary property is useful for proving left properness of the model category of commutative algebras in the flat model structure for monoidal exact categories. This is addressed in \cite{kelly2024flat}.

\subsubsection{Deconstructibility in exact categories}
Let us here precisely clarify the relationship between the definition of deconstructibility above in the setting of exact categories, and \cite{vst2012exact}*{Definition 3.9}.

\begin{proposition}
    Let $(\mathpzc{E},\mathcal{Q})$ be an exact category, and $\mathcal{A}$ a class of objects in $\mathpzc{E}$. If $\mathcal{A}$ is deconstructible in itself in the sense of Definition \ref{defn:deconstrexact}, then it is deconstructible in itself in the sense of \cite{vst2012exact}*{Definition 3.9}.
\end{proposition}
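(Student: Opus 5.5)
We need to recall Šťovíček's Definition 3.9: a class $\mathcal{A}$ in an exact category is deconstructible in itself if there is a set $\mathcal{S}\subset\mathcal{A}$ such that $\mathcal{A}=\mathrm{Filt}(\mathcal{S})$. An object $X$ is $\mathcal{S}$-filtered if $0\to X$ is a transfinite composition of admissible monomorphisms $X_\alpha\to X_{\alpha+1}$ with cokernels in $\mathcal{S}$ (in \cite{vst2012exact} the transfinite composition is required to consist of admissible monomorphisms, with colimits at limit stages). Implicit in that definition is also that $\mathcal{A}$ is closed under $\mathcal{S}$-filtrations, i.e.\ $\mathrm{Filt}(\mathcal{S})\subset\mathcal{A}$.

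The plan is as follows. Let $\tilde{\mathcal{T}}\subset\mathbf{AdMon}_{\mathcal{A}}$ be the set from Definition \ref{defn:deconstrexact}, and set $\mathcal{S}=\{\mathrm{Coker}(t):t\in\tilde{\mathcal{T}}\}$. This is a set of objects of $\mathcal{A}$, since each $t$ has cokernel in $\mathcal{A}$. We then prove the two inclusions $\mathcal{A}\subset\mathrm{Filt}(\mathcal{S})$ and $\mathrm{Filt}(\mathcal{S})\subset\mathcal{A}$.

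For $\mathcal{A}\subset\mathrm{Filt}(\mathcal{S})$, take $A\in\mathcal{A}$. The map $0\to A$ is an admissible monomorphism, since $0\to A\to A$ is exact, and its cokernel is $A$. So $0\to A$ lies in $\mathbf{AdMon}_{\mathcal{A}}$, and by hypothesis it is a transfinite composition $0=X_0\to X_1\to\cdots\to X_\lambda=A$ in which each step $X_\alpha\to X_{\alpha+1}$ is a pushout of some $t\in\tilde{\mathcal{T}}$. Two things must be checked for each successor step.
\begin{itemize}
\item It is an admissible monomorphism. In an exact category, admissible monomorphisms are closed under arbitrary pushouts; also $\mathcal{T}=\mathbf{AdMon}_{\mathcal{A}}$ is assumed closed under pushouts in Definition \ref{defn:dec}.
\item Its cokernel is isomorphic to $\mathrm{Coker}(t)\in\mathcal{S}$. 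This follows from the dual of Proposition \ref{prop:pullker}: pushouts preserve cokernels.
\end{itemize}
At limit stages, $X_\beta=\colim_{\alpha<\beta}X_\alpha$ by the definition of a transfinite composition. Hence $A$ is $\mathcal{S}$-filtered.

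For $\mathrm{Filt}(\mathcal{S})\subset\mathcal{A}$, suppose $X$ has an $\mathcal{S}$-filtration with steps $X_\alpha\to X_{\alpha+1}$. Each step is an admissible monomorphism with cokernel in $\mathcal{S}\subset\mathcal{A}$, so it lies in $\mathbf{AdMon}_{\mathcal{A}}$. Since $\mathbf{AdMon}_{\mathcal{A}}$ is closed under transfinite compositions (again a standing hypothesis of Definition \ref{defn:dec}), the composite $0\to X$ is in $\mathbf{AdMon}_{\mathcal{A}}$. Its cokernel is $X$, so $X\in\mathcal{A}$.

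The main obstacle is the mismatch of conventions. Šťovíček's notion of a filtration requires the transfinite composites $X_\alpha\to X_\beta$ to be admissible monomorphisms, or the relevant colimits to exist and behave well. Here the colimits are supplied by the transfinite-composition hypothesis, and admissibility of the composites follows from closure of $\mathbf{AdMon}_{\mathcal{A}}$ under transfinite composition, applied to the truncated sequences. I would carefully verify this truncation step, namely that an initial segment of a transfinite composition of pushouts of maps in $\tilde{\mathcal{T}}$ is again such a composition. This is immediate from the definition.
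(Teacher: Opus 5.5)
Your proposal is correct and follows the same route as the paper: take $\mathcal{S}$ to be the set of cokernels of the maps in $\tilde{\mathcal{T}}$. You also verify the two inclusions $\mathcal{A}\subseteq\mathrm{Filt}(\mathcal{S})$ and $\mathrm{Filt}(\mathcal{S})\subseteq\mathcal{A}$, using pushout-stability of cokernels and closure of $\mathbf{AdMon}_{\mathcal{A}}$ under pushouts and transfinite compositions; the paper's one-line proof leaves these checks to the reader.
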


\begin{proof}
    For the set of objects required by \cite{vst2012exact}*{Definition 3.9}, take the set of cokernels of maps appearing in the set $\tilde{\mathcal{T}}$, furnished by Definition \ref{defn:deconstrexact}.
\end{proof}

Now let us establish a partial converse. 

\begin{definition}[\cite{saorin2011exact}*{Definition 2.3}]
Let $(\mathpzc{E},\mathcal{Q})$ be an exact category and $\mathcal{I}$ a class of admissible monomorphisms. We say that $\mathcal{I}$ is
\begin{enumerate}
\item
 \textit{homological} if the following conditions are equivalent for any object $T\in\mathpzc{E}$:
\begin{enumerate}
\item
$\mathrm{Ext}^{1}(S,T)=0$ for all $S\in\mathrm{Coker}(\mathcal{I})$.
\item
The map $T\rightarrow0$ has the right lifting property against maps in $\mathcal{I}.$
\end{enumerate}
\item
\textit{strongly homological} if given any $j:A\rightarrow B\in\mathbf{AdMon}_{\mathrm{Coker}(\mathcal{I})}$, there is a morphism $i:A'\rightarrow B'$ in $\mathcal{I}$ giving rise to a commutative diagram whose rows are exact sequences
\begin{displaymath}
\xymatrix{
0\ar[r] & A'\ar[r]\ar[d] & B'\ar[r]\ar[d] & S\ar[d]^{\mathrm{Id}_{S}}\ar[r] & 0\\
0\ar[r] & A\ar[r] & B\ar[r] & S\ar[r] & 0.
}
\end{displaymath}
\end{enumerate}
\end{definition}

\begin{remark}
Note that in Condition $(2)$ of the above definition, the left hand square is a pushout diagram.
\end{remark}

In general, we have the following.

\begin{proposition}[\cite{saorin2011exact}*{Proposition 2.7}]\label{prop:gens}
Let $(\mathpzc{E},\mathcal{Q})$ be an exact category such that either
\begin{enumerate}
    \item 
    $\mathpzc{E}$ has enough projectives, or
    \item 
    $\mathpzc{E}$ has a set of generators, arbitrary coproducts of which exist, and such that every section in $\mathpzc{E}$ has a cokernel.
\end{enumerate}
 Then, for each set of objects $\mathcal{A}$, there is a strongly homological set of admissible monomorphisms $\mathcal{I}$ such that $\mathrm{Coker}(\mathcal{I})$ is the class of objects isomorphic to objects of $\mathcal{A}$. Moreover, if $\mathcal{A}\subset\mathcal{B}$, where $\mathcal{B}$ is a larger class that is closed under kernels of admissible epimorphisms between objects in $\mathcal{B}$, and 
 \begin{enumerate}
     \item 
     in case (1) above, projectives are contained in $\mathcal{B}$, or
     \item 
     in case (2) above $\mathcal{B}$ contains a set $\mathcal{G}$ of generators and arbitrary coproducts of objects of $\mathcal{G}$ exist in $\mathcal{B}$,
 \end{enumerate}
then $\mathcal{I}$ can be chosen such that the domains and codomains of maps in $\mathcal{I}$ are in $\mathcal{B}$. Finally, any admissible monomorphism with cokernel in $\mathcal{A}$ is a pushout of a map in $\mathcal{I}$.
\end{proposition}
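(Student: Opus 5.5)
For each $S\in\mathcal{A}$ I will build a single admissible monomorphism $i_S:A'_S\to B'_S$ with cokernel $S$, and then show that every admissible monomorphism $j:A\to B$ with cokernel $S$ arises from $i_S$ by pushout. The key fact is the Remark after the definition of strongly homological sets: in an exact category, a morphism of short exact sequences with identity on the cokernel has a pushout left-hand square. So strong homologicality and the final claim both reduce to producing, for each admissible epimorphism $p:B\to S$, a map $B'_S\to B$ over $S$.

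In case (1), for each $S\in\mathcal{A}$ I choose an admissible epimorphism $\pi_S:P_S\to S$ with $P_S$ projective. I set $A'_S=\mathrm{Ker}(\pi_S)$ and let $i_S:A'_S\to P_S$ be the inclusion, and take $\mathcal{I}=\{i_S\}_{S\in\mathcal{A}}$, which is a set because $\mathcal{A}$ is. Given $0\to A\to B\xrightarrow{p} S\to0$, projectivity of $P_S$ lifts $\pi_S$ through $p$. This gives $P_S\to B$, which restricts to $A'_S\to A$ on kernels. The Remark then makes the left square a pushout, so $j$ is a pushout of $i_S$; this proves strong homologicality and the final sentence simultaneously. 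Clearly $\mathrm{Coker}(\mathcal{I})$ is the isomorphism closure of $\mathcal{A}$. For the refinement, projectives lie in $\mathcal{B}$, so $P_S\in\mathcal{B}$; since $S\in\mathcal{A}\subset\mathcal{B}$ and $\mathcal{B}$ is closed under kernels of admissible epimorphisms between its objects, $A'_S\in\mathcal{B}$ as well.

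In case (2), I replace $P_S$ by the canonical object $P_S=\coprod_{G\in\mathcal{G}}\coprod_{\mathrm{Hom}(G,S)}G$ with its evaluation map $e_S$. In the refined version I take $\mathcal{G}\subset\mathcal{B}$, so that $P_S\in\mathcal{B}$. This step is the main obstacle: $e_S$ must be an admissible epimorphism and not merely an epimorphism. For this I would use the exact-category obscure axiom (Bühler 2.16), which holds in any exact category. Its hypotheses are that $e_S$ has a kernel and that some map into it is composed to an admissible epimorphism. The assumption that sections have cokernels should let me instead work with $P_S\vee S\to S$ (split, hence admissible) and arrange the lifting via the generators. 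I expect to need to follow Saorín--Šťovíček here, since having a generator set does not alone make $e_S$ admissible.

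Once $e_S$ is admissible, the lifting against $p:B\to S$ no longer comes from projectivity, because $\mathrm{Hom}(G,B)\to\mathrm{Hom}(G,S)$ need not be surjective. To repair this I would enlarge the construction and put the objects $P_S\vee G$, with $G$ ranging over $\mathcal{G}$ or over all coproducts of generators, into $\mathcal{I}$. The difficulty is keeping $\mathcal{I}$ a set. Failing that, I would define homological-type maps directly and prove strong homologicality by showing that any extension of $S$ is a pushout of the universal extension, taking the pullback of $p$ along $e_S$ and using that its middle term admits a map from a coproduct of generators. I regard this lifting and set-size issue in case (2) as the crux; all other steps are routine applications of the pushout Remark and the closure properties of $\mathcal{B}$.
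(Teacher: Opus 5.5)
\paragraph{Case (1) is fine; case (2) is not proved.} Your case (1) is correct and is the standard argument. You lift $\pi_S$ along $p$ by projectivity, restrict to kernels, and use the fact that a morphism of conflations which is the identity on the cokernel has a pushout left square. Your $\mathcal{B}$-refinement there is also fine. The gap is case (2): you leave it unresolved, and the repairs you sketch do not close it.

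Admissibility of $e_S$ is not the real difficulty. ``Every section has a cokernel'' says the exact category is weakly idempotent complete. In that setting, if $g\circ f$ is an admissible epimorphism then so is $g$, with no kernel hypothesis. So, reading generators in the admissible sense, any map onto $S$ through which a deflation $\coprod G_j\to S$ factors is admissible.

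The genuine obstruction is the one you name. With a single conflation $\mathrm{Ker}(e_S)\to P_S\to S$ per $S$, there is in general no map $P_S\to Y$ over $S$, because $\mathrm{Hom}(G,Y)\to\mathrm{Hom}(G,S)$ need not be surjective. Pulling $p$ back along $e_S$ only produces an extension of $P_S$ by $X$, which need not split since generators are not projective. Adjoining objects such as $P_S\vee G$ still gives no map into $Y$ over $S$.

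\paragraph{The missing idea.} Let the collection of maps $G\to S$ being used vary, but only over a set. For $S\in\mathcal{A}$ put $H_S=\bigsqcup_{G\in\mathcal{G}}\mathrm{Hom}(G,S)$. For $F\subseteq H_S$ let $e_F:\coprod_{(G,f)\in F}G\to S$ be the induced map. Let $\mathcal{I}$ consist of the maps $\mathrm{Ker}(e_F)\to\coprod_{(G,f)\in F}G$, over all $S\in\mathcal{A}$ and all $F$ for which $e_F$ is an admissible epimorphism. This is a set, and $F=H_S$ always qualifies.

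Now take a conflation $0\to X\to Y\xrightarrow{p}S\to 0$.
\begin{enumerate}
\item Choose an admissible epimorphism $q:\coprod_{j\in J}G_j\to Y$ and let $F=\{(G_j,pq\iota_j):j\in J\}$.
\item Choose one index $j$ for each element of $F$. This gives $y:\coprod_{(G,f)\in F}G\to Y$ with $p\circ y=e_F$. Collapsing repeated values is exactly what keeps the index set inside $H_S$, however large $J$ is.
\item We have $pq=e_F\circ\phi$, where $\phi$ sends the $j$-th summand identically onto the summand indexed by $(G_j,pq\iota_j)$. Hence $e_F$ is admissible by the property above.
\item Restricting $y$ to kernels gives a morphism of conflations that is the identity on $S$. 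The pushout Remark then yields strong homologicality and the final sentence of the statement.
\end{enumerate}
For the refinement, $\coprod_{(G,f)\in F}G\in\mathcal{B}$ by hypothesis, and $\mathrm{Ker}(e_F)\in\mathcal{B}$ by closure under kernels of admissible epimorphisms. (The paper itself only cites \cite{saorin2011exact} and says the $\mathcal{B}$-clause can be read off from the constructive proof; with a construction of this kind that is immediate.)
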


\begin{remark}
The claim of the above proposition concerning the domain and codomain being in $\mathpzc{B}$ is not stated explicitly in \cite{saorin2011exact}*{Proposition 2.7}, but may be immediately deduced from the constructive proof. Typically $\mathpzc{B}$ will be the class $\mathrm{Filt}(\mathcal{A})$ of $\mathcal{A}$-filtered objects, discussed in loc. cit..
\end{remark}

\begin{corollary}
 Let $(\mathpzc{E},\mathcal{Q})$ be an exact category such that either
\begin{enumerate}
    \item 
    $(\mathpzc{E},\mathcal{Q})$ has enough projectives, or
    \item 
     $(\mathpzc{E},\mathcal{Q})$ has a set of generators, arbitrary coproducts of which exist, and such that every section in $\mathpzc{E}$ has a cokernel.
\end{enumerate}
If $\mathcal{A}$ is deconstructible in itself in the sense of \cite{vst2012exact}*{Definition 3.9}, then it is deconstructible in itself in the sense of Definition \ref{defn:deconstrexact}. 
\end{corollary}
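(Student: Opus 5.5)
We would argue directly from the two definitions. Recall that deconstructibility in the sense of \cite{vst2012exact}*{Definition 3.9} provides a \emph{set} $\mathcal{S}\subset\mathcal{A}$ of objects with $\mathcal{A}=\mathrm{Filt}(\mathcal{S})$. Here $\mathrm{Filt}(\mathcal{S})$ denotes the class of objects admitting an $\mathcal{S}$-filtration, that is, a transfinite composition of admissible monomorphisms with cokernels in $\mathcal{S}$ starting at $0$. Apply Proposition \ref{prop:gens} to the set $\mathcal{S}$; either of the two hypotheses of the corollary is exactly one of the hypotheses there. This yields a set $\mathcal{I}$ of admissible monomorphisms with $\mathrm{Coker}(\mathcal{I})$ equal, up to isomorphism, to $\mathcal{S}$, such that every admissible monomorphism with cokernel in $\mathcal{S}$ is a pushout of a map in $\mathcal{I}$. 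We take $\tilde{\mathcal{T}}=\mathcal{I}$. Since $\mathcal{S}\subset\mathcal{A}$, we have $\tilde{\mathcal{T}}\subset\mathbf{AdMon}_{\mathcal{A}}$.

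Now let $f:X\rightarrow Y$ be in $\mathbf{AdMon}_{\mathcal{A}}$, and put $A=\mathrm{Coker}(f)\in\mathcal{A}$. Choose an $\mathcal{S}$-filtration $0=A_{0}\rightarrow A_{1}\rightarrow\cdots\rightarrow A_{\lambda}=A$. This is a continuous chain of admissible monomorphisms with $A_{\alpha+1}/A_{\alpha}\in\mathcal{S}$ and $A_\beta=\colim_{\alpha<\beta}A_\alpha$ at limit ordinals. Pull the admissible epimorphism $Y\rightarrow A$ back along each $A_{\alpha}\rightarrow A$, and set $Y_{\alpha}=Y\times_{A}A_{\alpha}$. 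In an exact category this gives $Y_{0}=X$, $Y_{\lambda}=Y$, and admissible monomorphisms $Y_{\alpha}\rightarrow Y_{\alpha+1}$. The cokernel of $Y_{\alpha}\rightarrow Y_{\alpha+1}$ is $A_{\alpha+1}/A_{\alpha}\in\mathcal{S}$, by the $3\times3$ lemma or the Noether isomorphism (compare Proposition \ref{prop:cokernelcomp} and Lemma \ref{lem:pullbacker}). Each successor step is therefore a pushout of a map in $\mathcal{I}$ by the last clause of Proposition \ref{prop:gens}, and so $f$ is a transfinite composition of pushouts of maps in $\tilde{\mathcal{T}}$.

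The main obstacle is the limit stages: we must show that $(Y_{\alpha})$ is continuous, i.e.\ $Y_{\beta}\cong\colim_{\alpha<\beta}Y_{\alpha}$ for limit $\beta$. This is a statement about pullbacks along $Y\to A$ commuting with the transfinite colimit, which need not hold in a general exact category. To handle it we would use the convention in \cite{vst2012exact} that filtrations are taken with colimits existing and behaving exactly. Concretely, the colimit $\colim_{\alpha<\beta}Y_\alpha\rightarrow Y_\beta$ is compared by the cokernel sequences $0\to X\to Y_\alpha\to A_\alpha\to0$ passing to the colimit. If needed, we would avoid the issue altogether by building the chain inductively from the bottom instead. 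We set $Y'_{\beta}=\colim_{\alpha<\beta}Y'_{\alpha}$ at limit stages, and we produce the successor stages by pushouts. This presents $f$ as a transfinite composition by construction, at the cost of having to check that the resulting map $\colim Y'_\alpha\to Y$ is an isomorphism. We would verify this using the five-lemma for exact categories applied to $0\to X\to Y'_\lambda\to A\to 0$ and $0\to X\to Y\to A\to0$.
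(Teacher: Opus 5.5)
Your strategy is the paper's. Its entire proof is to take $\tilde{\mathcal{T}}=\mathcal{I}$ from Proposition \ref{prop:gens}, applied to the set $\mathcal{S}$, and it is silent on why each map in $\mathbf{AdMon}_{\mathcal{A}}$ is a transfinite composition of pushouts of maps in $\mathcal{I}$. Your pullback construction $Y_\alpha=Y\times_A A_\alpha$ is the right way to make that explicit, and your successor steps are fine. The gap is the limit step, which you leave open, and neither proposed fix works under hypothesis (1) or (2). Comparing cokernel sequences ``in the colimit'' presupposes that $\colim_{\alpha<\beta}Y_\alpha$ exists, but only the chain $(A_\alpha)$ is assumed continuous. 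The bottom-up construction presupposes the same existence, and also that $0\to X\to\colim_\alpha Y'_\alpha\to A\to 0$ is a conflation. That amounts to transfinite compositions of inflations being inflations, an extra axiom not assumed here.

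Your remark that continuity ``need not hold in a general exact category'' is mistaken. It holds in any exact category because the kernel $X$ is constant along the chain, and this closes the gap with no further hypotheses. Write $f_\alpha:X\to Y_\alpha$ for the inflations, $p_\alpha:Y_\alpha\to A_\alpha$ for the deflations, and $y_{\alpha\beta}$, $a_{\alpha\beta}$ for the transition maps. Let $\beta$ be a limit ordinal and let $g_\alpha:Y_\alpha\to Z$ ($\alpha<\beta$) be compatible maps; then the maps $h=g_\alpha f_\alpha$ all coincide. Push out the conflation $X\to Y_\beta\to A_\beta$ along $h$. This gives a conflation $Z\to P\to A_\beta$ with inflation $\iota$ and deflation $q$, together with $k:Y_\beta\to P$ satisfying $kf_\beta=\iota h$ and $qk=p_\beta$. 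Each $ky_{\alpha\beta}-\iota g_\alpha$ vanishes on $X$, so it equals $\sigma_\alpha p_\alpha$ for a unique $\sigma_\alpha:A_\alpha\to P$. The $\sigma_\alpha$ are compatible, so they glue to $\sigma:A_\beta\to P$ because $A_\beta=\colim_{\alpha<\beta}A_\alpha$. Checking on each $A_\alpha$ gives $q\sigma a_{\alpha\beta}=a_{\alpha\beta}$, hence $q\sigma=\mathrm{id}$. Then $q(k-\sigma p_\beta)=0$, so $k-\sigma p_\beta=\iota g$ for some $g:Y_\beta\to Z$. From $\iota g y_{\alpha\beta}=ky_{\alpha\beta}-\sigma_\alpha p_\alpha=\iota g_\alpha$ we get $gy_{\alpha\beta}=g_\alpha$. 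For uniqueness, a map $Y_\beta\to Z$ killed by every $y_{\alpha\beta}$ kills $X$, so it has the form $up_\beta$. Then $ua_{\alpha\beta}p_\alpha=0$ forces $ua_{\alpha\beta}=0$ for all $\alpha$, so $u=0$. Thus $Y_\beta$ is the colimit of $(Y_\alpha)_{\alpha<\beta}$, existence included, and your first construction proves the corollary as stated.
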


\begin{proof}
    For the class of morphisms required by Definition \ref{defn:deconstrexact}, take the class $\mathcal{I}$ furnished by Proposition \ref{prop:gens}.
\end{proof}

\comment{
\subsection{Monoidal proto-exact categories}

\begin{definition}
    A \textit{monoidal proto-exact category} is a proto-exact category $\mathcal{C}$ equipped with a closed symmetric monoidal structure $(\otimes,\underline{\mathrm{Hom}},\mathbb{I})$.
\end{definition}

\begin{definition}
    An object $F$ of $\mathcal{C}$ is said to be \textit{flat} if the functor $F\otimes(-):\mathcal{C}\rightarrow\mathcal{C}$ is exact. 
\end{definition}
}

 \subsubsection{Deconstructibility, covers and envelopes}

We have the following general result.

\begin{proposition}\label{pgrop:precoverenv}
Let $(\mathcal{C},\mathcal{Q})$ be a proto-exact category. Suppose that a class $\mathcal{A}$ of objects in $\mathcal{C}$ is such that $\mathbf{AdMon}_{\mathcal{A}}$ is presentably deconstructible in itself. Then the following hold.
\begin{enumerate}
    \item 
    Any object $X$ admits an admissible monomorphism $i:X\rightarrow B$ with $B\in\mathcal{A}^{\perp}$. Moreover, $i$ can be chosen to be an $\mathcal{A}^{\perp}$-special pre-envelope. 
    \item 
    If $(\mathcal{C},\mathcal{Q})$ satisfies the strong right obscure axiom, then any object $X$ admits an $\mathcal{A}$-cover $\pi:A\rightarrow X$. If $\mathcal{A}$ contains a generator, the map $\pi$ can be chosen to be an admissible epimorphism. If further $(\mathcal{C},\mathcal{Q})$ is an exact category, then $\pi$ can be chosen to be an $\mathcal{A}$-special precover.
\end{enumerate}
\end{proposition}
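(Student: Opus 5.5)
The plan is to run Quillen's small object argument against the set $\tilde{\mathcal{T}}\subset\mathbf{AdMon}_{\mathcal{A}}$ supplied by presentable deconstructibility. Because every map of $\mathbf{AdMon}_{\mathcal{A}}$ is a transfinite composition of pushouts of maps in $\tilde{\mathcal{T}}$, a morphism has the right lifting property against $\tilde{\mathcal{T}}$ if and only if it has it against all of $\mathbf{AdMon}_{\mathcal{A}}$. So $\tilde{\mathcal{T}}^{\llp}=\mathbf{AdMon}_{\mathcal{A}}^{\llp}$. The smallness of the domains of maps in $\tilde{\mathcal{T}}$ relative to $\mathbf{AdMon}_{\mathcal{A}}$ is exactly what makes the small object argument terminate. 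I will use it in the form: every morphism $f$ factors as $f=p\circ j$ with $j$ a transfinite composition of pushouts of coproducts of maps in $\tilde{\mathcal{T}}$, and $p\in\tilde{\mathcal{T}}^{\llp}$. One subtlety is that the paper only assumes $\mathcal{T}$ is closed under transfinite compositions and pushouts, not coproducts. If needed, I will attach cells one at a time in a transfinite sequence, so that $j$ lies in $\mathbf{AdMon}_{\mathcal{A}}$ by the closure hypotheses in Definition \ref{defn:dec}.

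For (1), apply the factorisation to $X\rightarrow 0$, giving $X\xrightarrow{i}B\xrightarrow{p}0$ with $i\in\mathbf{AdMon}_{\mathcal{A}}$ and $p\in\mathbf{AdMon}_{\mathcal{A}}^{\llp}$. The lifting property of $B\rightarrow 0$ against $\mathbf{AdMon}_{\mathcal{A}}$ is precisely the statement $B\in\mathcal{A}^{\perp}$. Since $i$ is an admissible monomorphism with $\mathrm{Coker}(i)\in\mathcal{A}$, and $\mathcal{A}\subset{}^{\perp}(\mathcal{A}^{\perp})$ by the definition of the orthogonals, $i$ is an $\mathcal{A}^{\perp}$-special pre-envelope.

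For (2), factor $0\rightarrow X$ instead, giving $0\rightarrow A\xrightarrow{\pi}X$ with $0\rightarrow A$ in $\mathbf{AdMon}_{\mathcal{A}}$ (so $A\in\mathcal{A}$) and $\pi\in\mathbf{AdMon}_{\mathcal{A}}^{\llp}$. For any $A'\in\mathcal{A}$, the map $0\rightarrow A'$ lies in $\mathbf{AdMon}_{\mathcal{A}}$, so every $A'\rightarrow X$ lifts through $\pi$. Thus $\mathrm{Hom}(A',A)\rightarrow\mathrm{Hom}(A',X)$ is surjective and $\pi$ is an $\mathcal{A}$-precover (the \emph{cover} in the statement should be read in this sense). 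If $\mathcal{A}$ contains an admissible generating class, Lemma \ref{lem:rlpF}, which uses the strong right obscure axiom, gives $\pi\in\mathbf{AdEpi}_{\mathcal{A}^{\perp}}$. Hence $\pi$ is an admissible epimorphism with kernel in $\mathcal{A}^{\perp}$, which is an $\mathcal{A}$-special precover. In the exact case with only an epimorphic generator, I would first take a special precover of the generator-coproduct, or apply Salce's trick using the pre-envelope from (1). Concretely, cover $X$ by an admissible epimorphism from a projective-like object and push out along the envelope of its kernel.

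The main obstacle is the small object argument itself. I need to check that the transfinite sequence really stabilises, using that domains of $\tilde{\mathcal{T}}$ are small relative to $\mathcal{T}$ and that the required colimits exist. This is where $\mathcal{T}$ being closed under transfinite composition is essential, and where missing coproducts force the cell-by-cell version. The identification $\tilde{\mathcal{T}}^{\llp}=\mathbf{AdMon}_{\mathcal{A}}^{\llp}$ is the other point to verify, by the standard retract-free argument that lifting properties are stable under pushout and transfinite composition.
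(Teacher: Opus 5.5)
Your skeleton is the paper's: run the small object argument on $X\rightarrow 0$ and on $0\rightarrow X$, then read off $B\in\mathcal{A}^{\perp}$ and the precover property from the lifting properties. The cell-by-cell variant and the identification $\tilde{\mathcal{T}}^{\llp}=\mathbf{AdMon}_{\mathcal{A}}^{\llp}$ are fine.

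\textbf{The gap: the ``special'' clause of (1).} The factorisation only gives $\mathrm{Coker}(i)\in\mathcal{A}$. The inclusion $\mathcal{A}\subset{}^{\perp}(\mathcal{A}^{\perp})$ does \emph{not} follow from the definitions in a proto-exact category. Membership in $\mathcal{A}^{\perp}$ is a right lifting property against admissible monomorphisms with cokernel in $\mathcal{A}$. Membership in ${}^{\perp}(\mathcal{A}^{\perp})$ is a left lifting property against admissible epimorphisms with kernel in $\mathcal{A}^{\perp}$. The usual bridge runs as follows: pull $f\colon Y\rightarrow Z$ back along $h\colon C\rightarrow Z$; use $\mathrm{Ker}(f)\in\mathcal{A}^{\perp}$ to get a retraction $r$ of $\iota\colon\mathrm{Ker}(f)\rightarrow P$; turn $r$ into a section of $P\rightarrow C$ via $\mathrm{Id}_{P}-\iota r$. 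This needs admissible epimorphisms to be stable under arbitrary pullback, and it needs additivity.

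It genuinely fails in the paper's motivating example. In $\mathrm{Ban}^{\le1}_{\mathbb{R}}$ with $\mathcal{A}=\mathrm{Ob}(\mathcal{C})$, the class $\mathcal{A}^{\perp}$ consists of the injectives, which contain $\mathbb{R}$ by Hahn--Banach. Take $P=(\mathbb{R}^{3},\|\cdot\|_{4})$. The quotient map $P\rightarrow P\big\slash\mathbb{R}(1,1,1)$ is an admissible epimorphism with injective kernel. It admits no section of norm $\le1$: the image of such a section would be a plane on which $t\mapsto\|w+t(1,1,1)\|_{4}$ is minimised at $t=0$. That forces the plane into the cone $x^{3}+y^{3}+z^{3}=0$, which contains no plane. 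Hence $P\big\slash\mathbb{R}(1,1,1)$ lies in $\mathcal{A}$ but not in ${}^{\perp}(\mathcal{A}^{\perp})$.

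The paper's own proof of (1) stops at $B\in\mathcal{A}^{\perp}$ and does not address the cokernel condition, so there is no argument there to fall back on. As written, your reasoning gives the ``moreover'' clause only when $(\mathcal{C},\mathcal{Q})$ is an additive exact category.

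\textbf{Part (2) is correct, and its last step improves on the paper.} The paper uses Lemma \ref{lem:adepicheckgen} only to make the precover an admissible epimorphism. It then gets special precovers under the extra hypothesis that $(\mathcal{C},\mathcal{Q})$ is exact, by deferring to \cite{saorin2011exact}*{Theorem 2.13}.

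You instead apply Lemma \ref{lem:rlpF}. Since $\mathrm{Ker}(\pi)\rightarrow 0$ is a pullback of $\pi\in\mathbf{AdMon}_{\mathcal{A}}^{\llp}$, and right lifting properties are pullback-stable, $\mathrm{Ker}(\pi)\in\mathcal{A}^{\perp}$. So $\pi$ is already an $\mathcal{A}$-special precover in the proto-exact setting. Exactness is superfluous once $\mathcal{A}$ contains an admissible generating class and the strong right obscure axiom holds.

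This is precisely the asymmetry with (1). The kernel condition for special precovers is a right lifting condition and comes free from the small object argument. The cokernel condition for special pre-envelopes is a left lifting condition against a different class, and does not. Your closing remark about an epimorphic generator and Salce's trick is not required by the statement and, as sketched, is not yet an argument.
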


\begin{proof}
\begin{enumerate}
\item 
By the small object argument, we may factor any map $X\rightarrow 0$ as $X\rightarrow B\rightarrow 0$, where $X\rightarrow B\in\mathbf{AdMon}_{\mathcal{A}}$, and $B\rightarrow 0$ has the right-lifting property against all maps in $\mathbf{AdMon}_{\mathcal{A}}$. In particular, $B\in\mathcal{A}^{\perp}$.
\item
By the small object argument we may factor any map $0\rightarrow X$ as $0\rightarrow A\rightarrow X$ with $0\rightarrow A\in\mathbf{AdMon}_{\mathcal{A}}$ and $A\rightarrow X$ having the right-lifting property with respect to all maps in $\mathbf{AdMon}_{\mathcal{A}}$. In particular, $A\in\mathcal{A}$, and $A\rightarrow X$ has the right lifting property against all maps of the form $0\rightarrow A'$, with $A'\in\mathcal{A}$. In other words, $A\rightarrow X$ is an $\mathcal{A}$-precover. If $\mathcal{A}$ contains a generating set and $(\mathcal{C},\mathcal{Q})$ satisfies the strong right obscure axiom, then an $\mathcal{A}$-precover is an admissible epimorphism by Lemma \ref{lem:adepicheckgen}. The final claim concerning the exact category case proceeds identically to \cite{saorin2011exact} Theorem 2.13.

\end{enumerate}
\end{proof}

\subsection{Weakly elementary proto-exact categories}
We will need certain colimits to be exact.

\begin{definition}
    Let $(\mathcal{C},\mathcal{Q})$ be a proto-exact category, and $\mathcal{A}$ a class of objects in $\mathcal{C}$. We say that $(\mathcal{C},\mathcal{Q})$ is \textit{weakly} $\mathcal{A}$\textit{-elementary} if whenever 
    \begin{displaymath}
        \xymatrix{
        A_{0}\ar[d]\ar[r] & A_{1}\ar[r]\ar[d] &\ldots\ar[r]\ar[d] & A_{\lambda}\ar[d]\ar[r] & \ldots\ar[d]\ar[r] &\\
         B_{0}\ar[d]\ar[r] & B_{1}\ar[r]\ar[d] &\ldots\ar[r]\ar[d] & B_{\lambda}\ar[d]\ar[r] & \ldots\ar[d]\ar[r] &\\
              C_{0}\ar[r] & C_{1}\ar[r] &\ldots\ar[r] & C_{\lambda}\ar[r] & \ldots\ar[r] &.
        }
    \end{displaymath}
    is a transfinite sequence indexed by some $\Lambda$, in which each row is exact, and each $C_{\lambda}\in\mathcal{A}$, then 
    $$0\rightarrow\colim_{\lambda\in\Lambda}A_{\lambda}\rightarrow \colim_{\lambda\in\Lambda}B_{\lambda}\rightarrow \colim_{\lambda\in\Lambda}C_{\lambda}\rightarrow0$$
    is short exact, and $\colim_{\lambda\in\Lambda}C_{\lambda}\in\mathcal{A}$.
\end{definition}

\begin{definition}
   Let $(\mathcal{C},\mathcal{Q})$ be a proto-exact category, and $\mathcal{A}$ a class of objects in $\mathcal{C}$. We say that $\mathcal{A}$ is \textit{closed under extensions} if, whenever 
   $$0\rightarrow X\rightarrow Y\rightarrow Z\rightarrow 0$$
   is an exact sequence with $X$ and $Z$ in $\mathcal{A}$, then $Y\in\mathcal{A}$.
 \end{definition}

\begin{proposition}
         Let $(\mathcal{C},\mathcal{Q})$ be a proto-exact category, and $\mathcal{A}$ a class of objects in $\mathcal{C}$ such that $(\mathcal{C},\mathcal{Q})$ is weakly $\mathcal{A}$-elementary, and $\mathcal{A}$ is closed under extensions. Let $\Lambda$ be an ordinal, and $F:\Lambda\rightarrow\mathcal{C}$ a cocontinuous functor with $F(\lambda)\rightarrow F(\lambda+1)$ being in $\mathbf{AdMon}_{\mathcal{A}}$ for every $\lambda\in\Lambda$. Then the map $F(0)\rightarrow\colim_{\lambda\in\Lambda}F(\lambda)$ is in $\mathbf{AdMon}_{\mathcal{A}}$.
\end{proposition}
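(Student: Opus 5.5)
We argue by transfinite induction on $\mu\le\Lambda$ that the map $F(0)\rightarrow F(\mu)$ is in $\mathbf{AdMon}_{\mathcal{A}}$ (with the convention that for $\mu=\Lambda$, $F(\Lambda)$ means $\colim_{\lambda\in\Lambda}F(\lambda)$). We set $C_{\mu}=\mathrm{Coker}(F(0)\rightarrow F(\mu))$. The base case $\mu=0$ is the identity, which is admissible with cokernel $0$. We will take for granted that $0\in\mathcal{A}$, which is implicit in the statement: otherwise no identity would lie in $\mathbf{AdMon}_{\mathcal{A}}$. In any case, $0$ is the colimit of a constant sequence of zeros, so weak $\mathcal{A}$-elementarity essentially forces it once $\mathcal{A}$ is nonempty.

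For the successor step $\mu\to\mu+1$, the composite $F(0)\rightarrow F(\mu)\rightarrow F(\mu+1)$ is a composite of admissible monomorphisms, hence admissible. By Proposition \ref{prop:cokernelcomp} we get an exact sequence
\[
0\rightarrow C_{\mu}\rightarrow C_{\mu+1}\rightarrow\mathrm{Coker}(F(\mu)\rightarrow F(\mu+1))\rightarrow 0.
\]
Its outer terms lie in $\mathcal{A}$: the left one by induction, the right one by hypothesis. Closure under extensions then gives $C_{\mu+1}\in\mathcal{A}$.

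For a limit ordinal $\mu$ (including $\mu=\Lambda$ when it is a limit), we consider the transfinite diagram indexed by $\lambda<\mu$ with rows
\[
0\rightarrow F(0)\rightarrow F(\lambda)\rightarrow C_{\lambda}\rightarrow 0.
\]
The first column is constant at $F(0)$, the second is $F$, and the third consists of the induced maps on cokernels. These rows are exact by the inductive hypothesis, and each $C_{\lambda}\in\mathcal{A}$. Weak $\mathcal{A}$-elementarity then says that
\[
0\rightarrow\colim_{\lambda<\mu}F(0)\rightarrow\colim_{\lambda<\mu}F(\lambda)\rightarrow\colim_{\lambda<\mu}C_{\lambda}\rightarrow 0
\]
is short exact with $\colim_{\lambda<\mu}C_{\lambda}\in\mathcal{A}$. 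The colimit of the constant diagram over the nonempty connected ordinal $\mu$ is $F(0)$, and by cocontinuity of $F$ we have $\colim_{\lambda<\mu}F(\lambda)\cong F(\mu)$. So $F(0)\rightarrow F(\mu)$ is admissible with cokernel in $\mathcal{A}$. If $\Lambda$ is a successor ordinal, the colimit is just $F$ evaluated at the top element, which is already handled by the induction.

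The main point needing care is this limit step. We must check that the map $F(0)\rightarrow F(\mu)$ produced by elementarity really is the canonical one. This holds because the identification of $\colim$ of the constant diagram with $F(0)$ is compatible with the colimit cocone, so the induced first map agrees with the cocone leg at $0$. We must also check that the cokernel maps $C_{\lambda}\rightarrow C_{\lambda'}$ assemble into a functor. This follows from the universal property of cokernels, using the naturality of the pushout squares defining them.
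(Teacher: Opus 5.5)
Your proposal is correct and follows the paper's own argument: transfinite induction over the columns of the same three-row diagram (constant $F(0)$, then $F$, then the cokernels). The successor step uses Proposition~\ref{prop:cokernelcomp} together with closure under extensions, and the limit step uses weak $\mathcal{A}$-elementarity. You also make explicit some points the paper leaves tacit: the base case needs $0\in\mathcal{A}$, the induced map must be the canonical one, and the cokernels must form a functor.

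Your aside that a constant sequence of zeros forces $0\in\mathcal{A}$ is circular, since that sequence already needs $0\in\mathcal{A}$ as input. What does work is to take rows $0\to 0\to C\to C\to 0$ (middle map $\mathrm{Id}_{C}$) for some $C\in\mathcal{A}$, with all transition maps zero. The colimit of that sequence is $0$, so weak $\mathcal{A}$-elementarity gives $0\in\mathcal{A}$.
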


\begin{proof}
    Consider the diagram
        \begin{displaymath}
        \xymatrix{
        F(0)\ar[d]\ar[r] & F(0)\ar[r]\ar[d] &\ldots\ar[r]\ar[d] & F(0)\ar[d]\ar[r] & \ldots\ar[d]\ar[r] &\\
         F(0)\ar[d]\ar[r] & F(1)\ar[r]\ar[d] &\ldots\ar[r]\ar[d] & F(\lambda)\ar[d]\ar[r] & \ldots\ar[d]\ar[r] &\\
              C_{0}\ar[r] & C_{1}\ar[r] &\ldots\ar[r] & C_{\lambda}\ar[r] & \ldots\ar[r] &.
        }
    \end{displaymath}
    By transfinite induction each column is in $\mathbf{AdMon}_{\mathcal{A}}$. Indeed, the successor ordinal case is a consequence of Proposition \ref{prop:cokernelcomp}, and the limit ordinal case is a consequence of weak $\mathcal{A}$-elementarity. Thus the map $F(0)\rightarrow\colim_{\lambda\in\Lambda}F(\lambda)$ is in $\mathbf{AdMon}_{\mathcal{A}}$.
\end{proof}

In particular, in the situation of the above proposition, $\mathcal{A}$ is closed under trans-finite extensions.

\begin{proposition}

\begin{enumerate}
    \item 
    Suppose that $(\mathcal{C},\mathcal{Q})$ is left totally proto-exact. Then the class $\mathbf{AdMon}_{\mathcal{A}}$ is closed under pushouts.
     \item 
    Suppose that $(\mathcal{C},\mathcal{Q})$ satisfies the left obscure axiom and is finitely cocomplete. If the class $\mathcal{A}$ is closed under retracts, then the class $\mathbf{AdMon}_{\mathcal{A}}$ is closed under retracts.
\end{enumerate}
  
\end{proposition}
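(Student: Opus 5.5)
For part (1), I take an admissible monomorphism $i:X\rightarrow Y$ with $\mathrm{Coker}(i)\in\mathcal{A}$ and an arbitrary morphism $f:X\rightarrow X'$, and form the pushout $i':X'\rightarrow Y'$. Left totality gives that $i'$ is an admissible monomorphism. For the cokernel, I use the dual of Proposition \ref{prop:pullker}: the natural map $\mathrm{Coker}(i)\rightarrow\mathrm{Coker}(i')$ is an isomorphism. To see this, paste the pushout square with the square defining $\mathrm{Coker}(i')$; the outer rectangle is then a pushout of $i$ along $X\rightarrow 0$, which is $\mathrm{Coker}(i)$. Hence $\mathrm{Coker}(i')\in\mathcal{A}$, since $\mathcal{A}$ is closed under isomorphism.

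For part (2), let $g:X'\rightarrow Y'$ be a retract of $i:X\rightarrow Y$ in $\mathbf{AdMon}_{\mathcal{A}}$. This means there are maps $s_X:X'\rightarrow X$, $r_X:X\rightarrow X'$, $s_Y:Y'\rightarrow Y$, $r_Y:Y\rightarrow Y'$ commuting with $g$ and $i$, with $r_X s_X=\mathrm{Id}$ and $r_Y s_Y=\mathrm{Id}$.

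\emph{Step 1: $g$ is an admissible monomorphism.} The composite $i\circ s_X=s_Y\circ g$ is admissible, because $s_X$ is a split monomorphism and hence admissible by the strong left obscure axiom (which is equivalent to the left obscure axiom under finite cocompleteness), and admissible monomorphisms compose. Since $\mathcal{C}$ is finitely cocomplete, $g$ has a cokernel, so the left obscure axiom applied to $s_Y\circ g$ shows that $g$ is admissible.

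\emph{Step 2: the cokernel.} Cokernels are functorial for morphisms of arrows, so the retraction data induce maps $\mathrm{Coker}(g)\rightarrow\mathrm{Coker}(i)\rightarrow\mathrm{Coker}(g)$. Their composite is the map induced by $(r_Xs_X,r_Ys_Y)=(\mathrm{Id},\mathrm{Id})$, which is the identity by the universal property of the pushout. Therefore $\mathrm{Coker}(g)$ is a retract of $\mathrm{Coker}(i)\in\mathcal{A}$, and so lies in $\mathcal{A}$.

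The main obstacle is Step 1, because in the non-additive setting split monomorphisms are not automatically admissible. This is exactly where the left obscure axiom and finite cocompleteness are needed, both to make split monomorphisms admissible and to guarantee that $g$ has a cokernel.
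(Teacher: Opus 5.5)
Your proposal is correct and follows the paper's proof essentially step for step. In part (1) you use left totality together with the pushout dual of Proposition \ref{prop:pullker}. In part (2) you apply the left obscure axiom first to the split monomorphism and then to $s_Y\circ g=i\circ s_X$, and you exhibit $\mathrm{Coker}(g)$ as a retract of $\mathrm{Coker}(i)$ via functoriality of cokernels, which is the step the paper dismisses as "clearly".
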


\begin{proof}
\begin{enumerate}
\item 
This follows from Proposition \ref{prop:pullker}.
    \item 
     For retracts, consider a commutative diagram
         \begin{displaymath}
        \xymatrix{
        X\ar[d]^{f}\ar[r] & Z\ar[d]^{g}\ar[r] & X\ar[d]^{f}\\
        Y\ar[r] & W\ar[r] & Y
        }
    \end{displaymath}
    where the top and bottom rows compose to the identity, and $g$ is an admissible monomorphism with cokernel in $\mathcal{A}$. The obscure axiom implies that $X\rightarrow Z$ is an admissible monomorphism. Then the composition $X\rightarrow Z\rightarrow W$ is an admissible monomorphism. Again by the obscure axiom, $X\rightarrow Y$ is an admissible monomorphism. The cokernel of $f$ is then clearly a retract of the cokernel of $g$, and hence is in $\mathcal{A}$.
\end{enumerate}

\end{proof}

We also have the following important, but weaker, condition on exactness of colimits.

\begin{definition}\label{defn:coherent}
Let $(\mathcal{C},\mathcal{Q})$ be a proto-exact category and $\mathcal{B}$ a class of objects in $\mathcal{C}$. A class of objects $\mathcal{A}$ in $\mathcal{C}$ is said to be $\mathcal{B}$-\textit{unionable} if for any ordinal $\Lambda$, and any $\Lambda$-indexed transfinite sequence
\begin{displaymath}
\xymatrix{
M_{0}\ar[r]\ar[d] &\ldots\ar[r] & M_{\alpha}\ar[d]\ar[r] &  \ldots\\
M\ar[d]\ar[r] &\ldots\ar[r] & M\ar[r]\ar[d] & \ldots\\
M\big\slash M_{0}\ar[r]\ldots\ar[r] & &M\big\slash M_{\alpha}\ar[r] & \ldots
}
\end{displaymath}
where $M\in\mathcal{B}$, each $M\big\slash M_{\alpha}\in\mathcal{A}$ for all $\alpha\in\Lambda$, each $M_{\alpha}\rightarrow M_{\alpha+1}$ is an admissible monomorphism with cokernel in $\mathcal{A}$, and each row is exact, then the sequence
$$0\rightarrow\colim_{\alpha\in\Lambda}M_{\alpha}\rightarrow M\rightarrow\colim_{\alpha\in\Lambda}M\big\slash M_{\alpha}\rightarrow 0$$
is an exact sequence, and $\colim_{\alpha\in\Lambda}M\big\slash M_{\alpha}\in\mathcal{A}$.

In particular, we say that $\mathcal{A}$ is \textit{unionable} if it is $\mathrm{Ob}(\mathcal{C})$-unionable, where $\mathrm{Ob}(\mathcal{C})$ is the class of all objects of $\mathcal{C}$.
\end{definition}

We think of unionability as being able to take unions of admissible subobjects.

\subsection{Pure monomorphisms}

We will also need some tools from purity theory.

 \begin{definition}[\cite{adamek1994locally} Definition 2.27 (for Part (1))]
 Let $\mathcal{C}$ be any category.
 \begin{enumerate}
 \item
 A morphism $f:A\rightarrow B$ is said to be $\lambda$-\textit{pure} if for each commutative square
 \begin{displaymath}
 \xymatrix{
 A'\ar[d]^{f'}\ar[r]^{u} & A\ar[d]^{f}\\
 B'\ar[r]^{v} & B
 }
 \end{displaymath}
 with $A'$ and $B'$ $\lambda$-presentable, there is a morphism $\overline{u}:B'\rightarrow A$ such that $u=\overline{u}\circ f'$. 
 \item
  A morphism $f:X\rightarrow Y$ in $\mathcal{C}$ is said to be a $\lambda$-\textit{pure epimorphism} if for all $\lambda$-presentable objects $E$, $\mathrm{Hom}(E,f):\mathrm{Hom}(E,X)\rightarrow\mathrm{Hom}(E,Y)$ is an epimorphism of sets. 
 \end{enumerate}
 \end{definition}

 By \cite{adamek1994locally}*{Proposition 2.29}, in a $\lambda$-accessible category, $\lambda$-pure morphisms are monomorphisms. In a $\lambda$-presentable category, we further have the following.
 
   \begin{proposition}[\cite{adamek1994locally} Proposition 2.30 (2)]\label{prop:lambdadirected}
Let $\mathcal{C}$ be a locally $\lambda$-presentable category. A morphism $f:X\rightarrow Y$ in $\mathcal{C}$ is a $\lambda$-pure monomorphism if and only if it is a $\lambda$-directed colimit in $\mathrm{Mor}(\mathcal{C})$ of split monomorphisms. 
\end{proposition}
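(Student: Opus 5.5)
We prove the two implications separately, working throughout with the characterisation of locally $\lambda$-presentable categories: every object is a $\lambda$-directed colimit of $\lambda$-presentable objects, and $\lambda$-presentable objects commute with $\lambda$-directed colimits. The same holds in the arrow category $\mathrm{Mor}(\mathcal{C})$, which is again locally $\lambda$-presentable. Its $\lambda$-presentable objects are exactly the morphisms between $\lambda$-presentable objects of $\mathcal{C}$.

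\emph{Split monomorphisms are closed under $\lambda$-directed colimits of pure maps.} Every split monomorphism $f:A\rightarrow B$, with retraction $r$, is $\lambda$-pure: given a square $v\circ f'=f\circ u$, put $\overline{u}=r\circ v$, so that $\overline{u}\circ f'=r\circ f\circ u=u$. Now let $f=\colim_{i}f_{i}$ be a $\lambda$-directed colimit in $\mathrm{Mor}(\mathcal{C})$ of $\lambda$-pure $f_{i}:A_{i}\rightarrow B_{i}$, and take a square $(u,v)$ from $f':A'\rightarrow B'$ with $A',B'$ $\lambda$-presentable. Then $f'$ is $\lambda$-presentable in $\mathrm{Mor}(\mathcal{C})$, so the square factors through some $f_{i}$ via $u_{i}:A'\rightarrow A_{i}$ and $v_{i}:B'\rightarrow B_{i}$. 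Purity of $f_{i}$ gives $\overline{u}_{i}:B'\rightarrow A_{i}$ with $\overline{u}_{i}\circ f'=u_{i}$. Composing with the colimit injection $A_{i}\rightarrow A$ gives the required lift. Hence $\lambda$-directed colimits of split monomorphisms are $\lambda$-pure.

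\emph{A $\lambda$-pure $f:A\rightarrow B$ is a $\lambda$-directed colimit of split monomorphisms.} Write $f=\colim_{i\in I}(f_{i}:A_{i}\rightarrow B_{i})$ as a $\lambda$-directed colimit of $\lambda$-presentable arrows, with structure maps $u_{i}:A_{i}\rightarrow A$ and $v_{i}:B_{i}\rightarrow B$. For each $i$, form the pushout $P_{i}=A\sqcup_{A_{i}}B_{i}$, and let $g_{i}:A\rightarrow P_{i}$ be the pushout of $f_{i}$ along $u_{i}$. Purity applied to the square $(u_{i},v_{i})$ gives $t_{i}:B_{i}\rightarrow A$ with $t_{i}\circ f_{i}=u_{i}$. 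By the pushout property, the pair $(\mathrm{Id}_{A},t_{i})$ induces a retraction $P_{i}\rightarrow A$ of $g_{i}$, so each $g_{i}$ is a split monomorphism. The assignment $i\mapsto g_{i}$ is functorial in $I$: a transition map $i\rightarrow j$ induces $P_{i}\rightarrow P_{j}$ compatible with the maps out of $A$. Since colimits commute with pushouts, and the domain is constant, $\colim_{i}g_{i}=\big(A\rightarrow A\sqcup_{\colim A_{i}}\colim B_{i}\big)=\big(A\rightarrow A\sqcup_{A}B\big)=f$.

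The main subtlety is this last identification. We need it in $\mathrm{Mor}(\mathcal{C})$, i.e.\ we must also check that the colimit of the constant diagram at $A$ is $A$, which holds because $I$ is directed and hence connected. We must also check that the retractions $P_{i}\rightarrow A$ are not required to be compatible across $I$. They are not: the claim is only that $f$ is a colimit of split monomorphisms, not of split pairs, so each $g_{i}$ needs only its own retraction.
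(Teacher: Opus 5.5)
Your argument is correct and is essentially the standard Ad\'amek--Rosick\'y proof that the paper cites instead of proving: split monomorphisms are $\lambda$-pure and $\lambda$-pure maps are closed under $\lambda$-directed colimits in $\mathrm{Mor}(\mathcal{C})$, and conversely you push out $\lambda$-presentable approximations $f_i$ along $A_i\rightarrow A$ and use purity to split them. The one point left implicit is that the colimit in the ``if'' direction is a monomorphism; this follows from the fact, recalled just before the statement, that $\lambda$-pure morphisms in a $\lambda$-accessible category are monomorphisms.
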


The class of all $\lambda$-pure monomorphisms will be denoted $\mathbf{PureMon}_{\lambda}$.

\begin{proposition}
    The class of $\lambda$-pure monomorphisms in a locally $\lambda$-presentable category is
    \begin{enumerate}
    \item
    closed under composition;
        \item 
            closed under pushout along all morphisms. 
    \end{enumerate}
\end{proposition}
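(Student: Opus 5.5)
Both closure properties can be checked directly from the definition of $\lambda$-purity (Part (1) of the definition above), with no need for Proposition \ref{prop:lambdadirected}. Throughout, $\lambda$-pure means: every square from a morphism $f':A'\rightarrow B'$ between $\lambda$-presentable objects to $f$ admits a diagonal $\overline{u}$ with $\overline{u}\circ f'=u$. No commutativity of the lower triangle is required.

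\textbf{Composition.} Let $f:A\rightarrow B$ and $g:B\rightarrow C$ be $\lambda$-pure, and take a square $u:A'\rightarrow A$, $v:B'\rightarrow C$ with $g\circ f\circ u=v\circ f'$.
\begin{enumerate}
\item The outer square $(f\circ u,\,v)$ is a square from $f'$ to $g$. Purity of $g$ gives $w:B'\rightarrow B$ with $w\circ f'=f\circ u$.
\item Then $(u,w)$ is a commutative square from $f'$ to $f$. Purity of $f$ gives $\overline{u}:B'\rightarrow A$ with $\overline{u}\circ f'=u$, which is exactly the required diagonal.
\end{enumerate}
This step is immediate.

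\textbf{Pushout.} Let $f:A\rightarrow B$ be $\lambda$-pure and $h:A\rightarrow D$ arbitrary. Form the pushout $P=B\sqcup_{A}D$, with maps $f_{*}:D\rightarrow P$ and $h_{*}:B\rightarrow P$. Suppose given $f':A'\rightarrow B'$ between $\lambda$-presentable objects and a square $u:A'\rightarrow D$, $v:B'\rightarrow P$ with $f_{*}\circ u=v\circ f'$. The difficulty is that $v$ lands in $P$ rather than $B$, so we first factor it through $\lambda$-presentable data.

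\begin{enumerate}
\item Write the span $B\leftarrow A\rightarrow D$ as a $\lambda$-filtered colimit of spans $B_i\leftarrow A_i\rightarrow D_i$ of $\lambda$-presentable objects. This is possible because the diagram category (spans) is locally $\lambda$-presentable. Then $P=\colim_i P_i$ with $P_i=B_i\sqcup_{A_i}D_i$, and each $P_i$ is $\lambda$-presentable, being a finite colimit of $\lambda$-presentable objects.
\item Since $B'$ is $\lambda$-presentable, $v$ factors through some $P_i$.
\item Since $A'$ is $\lambda$-presentable and the colimit is $\lambda$-filtered, after enlarging $i$ we can also assume that $u$ factors through $D_i$ and that the square commutes already at level $P_i$. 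This uses that $\mathrm{Hom}(A',-)$ preserves $\lambda$-filtered colimits, so two maps into $P$ that agree must agree at some stage.
\item Now apply purity of $f$ to the square with top $A_i\rightarrow A$ and bottom $B_i\rightarrow B$ (source $A_i\rightarrow B_i$). This gives $t:B_i\rightarrow A$ with $t\circ(A_i\rightarrow B_i)=(A_i\rightarrow A)$.
\item Hence $h\circ t:B_i\rightarrow D$ and the map $D_i\rightarrow D$ agree on $A_i$, so they induce $r:P_i\rightarrow D$.
\item Set $\overline{u}=r\circ(B'\rightarrow P_i)$. Check that $\overline{u}\circ f'$ equals $D_i$-component composed with $(A'\rightarrow D_i)$, which is $u$. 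This uses the commutativity arranged at level $P_i$ in step 3.
\end{enumerate}

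The main obstacle is step 3: ensuring the square genuinely factors and commutes at a single finite stage, which requires care with the indexing of the $\lambda$-filtered colimit.
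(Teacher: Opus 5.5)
Your proof is correct. Part (1) is the standard two-step lift. For part (2) you take a genuinely different route from the paper.

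The paper cites \cite{adamek1994locally} for composition. For pushouts it implicitly goes through Proposition~\ref{prop:lambdadirected}. Writing the $\lambda$-pure monomorphism $f$ as a $\lambda$-directed colimit in $\mathrm{Mor}(\mathcal{C})$ of split monomorphisms, the pushout of $f$ becomes the corresponding colimit of pushouts of split monomorphisms. These are again split (citing \cite{positselski2025pure}), so the characterisation shows the pushout is $\lambda$-pure.

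You instead work straight from the lifting definition. Your composition argument needs no hypotheses on the category at all. For pushouts, you decompose the span into $\lambda$-presentable spans and use presentability of $A'$ and $B'$ to move the test square to a single stage $P_i$. Purity of $f$ at that stage then gives the map $r:P_i\rightarrow D$. This is essentially a local version of the fact that a pushout of a split monomorphism is split.

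The trade-off is as follows. The paper's route is shorter and explains conceptually why the statement holds, but it rests on the nontrivial characterisation in Proposition~\ref{prop:lambdadirected}, used in both directions. Your route is self-contained and shows exactly which presentability is needed: only $A_i$ and $B_i$. Your remark that each $P_i$ is $\lambda$-presentable is true but never used.

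Two small things to tidy up:
\begin{enumerate}
\item In Step 1 you need that $\lambda$-presentable objects of the span category are pointwise $\lambda$-presentable. This holds because the shape is finite. Each evaluation functor has a right adjoint whose components are either the given object or the terminal object, and this right adjoint preserves $\lambda$-filtered colimits. The paper asserts the analogous fact for cospans in Lemma~\ref{lem:mukapppull}.
\item To conclude that the pushout is a $\lambda$-pure \emph{monomorphism}, add that $\lambda$-pure morphisms are monomorphisms \cite{adamek1994locally}*{Proposition 2.29}, as the paper recalls just before Proposition~\ref{prop:lambdadirected}.
\end{enumerate}
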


\begin{proof}
    The first claim is \cite{adamek1994locally}*{Remark (1) after 2.28 Examples}. The second is 
    
    For the second, let $f:X\rightarrow Y$ be a $\lambda$-pure monomorphism, and $g:X\rightarrow Z$ be any morphism. It clearly suffices to prove that pushouts of split monomorphisms are split. This is \cite{positselski2025pure}*{Example 6.2 (3)}.
\end{proof}

\begin{theorem}[\cite{adamek1994locally} Theorem 2.33]\label{thm:deconstructpres}
Let $\mathcal{K}$ be a $\lambda$-accessible category. There exist arbitrary large regular cardinals $\gamma\ge\lambda$ such that every map $A\rightarrow B$ in $\mathcal{K}$ with $A$ $\gamma$-presentable factors through a $\lambda$-pure monomorphism $\overline{f}:\overline{A}\rightarrow B$ with $\overline{A}$ $\gamma$-presentable.
\end{theorem}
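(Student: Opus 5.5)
The approach is the standard one from Adámek–Rosický, built from three ingredients: the representation of a $\lambda$-accessible category as $\lambda$-directed colimits of $\lambda$-presentable objects, the fact that $\lambda$-presentable objects are $\gamma$-presentable for $\gamma\ge\lambda$, and a closure argument of length $\lambda$ (or $\omega$).

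First, I will fix the cardinals. Since $\mathcal{K}$ is $\lambda$-accessible, it has, up to isomorphism, only a set $\mathcal{P}$ of $\lambda$-presentable objects. The hom-sets between them are sets. I will choose $\gamma$ to satisfy two conditions. First, $\gamma$ is sharply larger than $\lambda$, so that $\lambda$-directed colimits of $\gamma$-presentable objects behave well and $\lambda$-accessibility implies $\gamma$-accessibility. Second, $\gamma$ exceeds the cardinality of $\coprod_{P,P'\in\mathcal{P}}\mathrm{Hom}(P,P')$, so that every $\gamma$-presentable object is a $\gamma$-small $\lambda$-directed colimit of objects of $\mathcal{P}$. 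The standard fact that arbitrarily large such $\gamma$ exist, for instance $\gamma$ with $\gamma^{<\lambda}=\gamma$ and $\lambda\lhd\gamma$, is taken from the theory of accessible categories.

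Next comes the construction. Write $B=\colim_{d\in D}P_d$ as a $\lambda$-directed colimit of objects of $\mathcal{P}$. Given $A\to B$ with $A$ $\gamma$-presentable, the map factors through the colimit of a $\gamma$-small $\lambda$-directed subdiagram $D_0\subset D$. I then enlarge $D_0$ to $D_1\supset D_0$, still $\gamma$-small and $\lambda$-directed, as follows. For each of the fewer than $\gamma$ commutative squares with $\lambda$-presentable corners $A'\to A_0:=\colim_{D_0}P_d$ and $B'\to B$, the map $B'\to B$ factors through some $P_d$. I adjoin such $d$ for each square, and also enough indices so that the relevant equalities of composites hold in the subdiagram. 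This is possible because $B'$ is $\lambda$-presentable and $A'$ factors through the $\lambda$-directed diagram $D_0$. Iterating $\lambda$ times and taking the union gives $\overline{D}$, which remains $\gamma$-small because $\gamma$ is regular and larger than $\lambda$. Set $\overline{A}=\colim_{\overline{D}}P_d$. Then $\overline{A}$ is $\gamma$-presentable and $A\to B$ factors through $\overline{A}\to B$.

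Finally, I will verify purity. Take a square with $A'\to\overline{A}$ and $B'\to B$, where $A'$ and $B'$ are $\lambda$-presentable. Because the union over stages is $\lambda$-directed of length $\lambda$ and $A'$ is $\lambda$-presentable, $A'\to\overline{A}$ factors through some stage. The corresponding witness $d$ added at the next stage gives $\overline{u}:B'\to P_d\to\overline{A}$. The remaining check is $\overline{u}\circ f'=u$, which requires the essential uniqueness of factorizations through $\lambda$-directed colimits. I expect this bookkeeping to be the main obstacle: each square must be made to commute genuinely in $\overline{A}$, not only after composing to $B$. The fix is to adjoin further indices at each stage that equalize pairs of maps out of $\lambda$-presentable objects which become equal in $B$, and the same cardinality bound on $\gamma$ handles these. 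The alternative is to cite \cite{adamek1994locally}*{Theorem 2.33} directly, as the statement does.
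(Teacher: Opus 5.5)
Your overall architecture is the standard Ad\'amek--Rosick\'y argument, which the paper simply cites without proof: present $B$ as a $\lambda$-directed colimit $\colim_{d\in D}P_d$ over a poset with each $P_d$ in a representative set $\mathcal{P}$ of $\lambda$-presentables, factor $A\to B$ through a $\gamma$-small $\lambda$-directed $D_0\subseteq D$, then close up in $\lambda$ stages. But the closure step fails as you have written it.

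You index it by commutative squares $u:A'\to A_0$, $f':A'\to B'$, $v:B'\to B$, and you assert there are fewer than $\gamma$ of them. There are not, since $v$ ranges over $\mathrm{Hom}(B',B)$, which is not bounded in terms of $\gamma$. Worse, the witness you adjoin is an index $d$ through which $v$ factors. So your $\overline{u}:B'\to P_d\to\overline{A}$ satisfies $f\overline{u}=v$ in addition to $\overline{u}f'=u$. Demanding this two-sided condition for every square is in general incompatible with $\overline{A}$ being $\gamma$-presentable. For instance, take $\mathrm{Set}$ with $\lambda=\aleph_0$, $D$ the finite subsets of a set $B$ with $|B|\ge\gamma$, $A'=\emptyset$ and $B'=1$. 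There is one square for each element of $B$, and your procedure forces $\overline{A}=B$. The equalizing indices you propose at the end are also chosen per square, so they do not help: the obstacle is cardinality, not commutativity.

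The missing observation is that $\lambda$-purity only asks for the upper triangle $\overline{u}f'=u$, so the witness need not depend on $v$. Write $\iota_d$, $\iota'_d$ for the colimit maps into $B$ and $\overline{A}$, and $p_{de}$ for the transition maps.

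Index the closure by triples $(d_0,u_0,f')$ with $d_0$ in the current stage, $u_0:A'\to P_{d_0}$ and $f':A'\to B'$, where $A',B'\in\mathcal{P}$. There are fewer than $\gamma$ of these once $\gamma>|\mathrm{mor}\,\mathcal{P}|$. Call a triple admissible if there are $e\ge d_0$ in $D$ and $w:B'\to P_e$ with $wf'=p_{d_0e}u_0$; this is an equation in $P_e$, not merely in $B$. For each admissible triple, choose one such pair $(e,w)$ and adjoin $e$.

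Now take any square. Write $u=\iota'_{d_0}u_0$ with $d_0$ in some stage, and $v=\iota_{e_1}v_0$. Apply essential uniqueness of factorizations through the $\lambda$-directed colimit $B$ to $\iota_{d_0}u_0=\iota_{e_1}v_0f'$; this shows the triple is admissible. The chosen $(e,w)$, with $e$ in the next stage, gives $\overline{u}=\iota'_ew$ and $\overline{u}f'=\iota'_ep_{d_0e}u_0=u$. No separate equalization is needed.

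With this change the rest of your plan goes through:
\begin{itemize}
\item $\lambda$-directed closure at successor stages, using $\lambda\triangleleft\gamma$;
\item a union of length $\lambda$;
\item $\gamma$-presentability of a $\gamma$-small $\lambda$-directed colimit of $\lambda$-presentables, because $\gamma$-small limits commute with $\gamma$-directed colimits in $\mathrm{Set}$.
\end{itemize}

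Incidentally, your remark that every $\gamma$-presentable object is a $\gamma$-small $\lambda$-directed colimit of objects of $\mathcal{P}$ holds only up to retracts, but you never use it.
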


\begin{corollary}\label{cor:diagramofsbuboejcts}
Let $\mathcal{C}$ be a locally $\lambda$-presentable category. There are arbitrarily large regular cardinals $\gamma\ge\lambda$ such that any object $E$ of $\mathcal{C}$ can be written as a $\gamma$-filtered colimit $E\cong\colim_{\mathcal{J}}E_{j}$ where 
\begin{enumerate}
\item
each $E_{j}$ is $\gamma$-presentable;
\item
each map $E_{j}\rightarrow E$ is a $\lambda$-pure monomorphism. 
\end{enumerate}
\end{corollary}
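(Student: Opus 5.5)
We derive the corollary from Theorem \ref{thm:deconstructpres} together with the standard presentation of an object of a locally presentable category as a filtered colimit of presentable objects. Since $\mathcal{C}$ is locally $\lambda$-presentable, it is $\lambda$-accessible, so Theorem \ref{thm:deconstructpres} gives arbitrarily large regular cardinals $\gamma\ge\lambda$ with the factorisation property. Fix such a $\gamma$. Because $\gamma\ge\lambda$ is regular, $\mathcal{C}$ is also locally $\gamma$-presentable (\cite{adamek1994locally}*{Remark 1.20}; if needed, restrict to $\gamma$ sharply larger than $\lambda$, which can still be taken arbitrarily large). Hence every object $E$ is the canonical $\gamma$-filtered colimit of the diagram $\mathcal{C}_{\gamma}\downarrow E$ of $\gamma$-presentable objects over $E$.

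Now let $\mathcal{J}$ be the full subcategory of $\mathcal{C}_{\gamma}\downarrow E$ on those arrows $A\rightarrow E$ that are $\lambda$-pure monomorphisms. I will show that $\mathcal{J}$ is cofinal in $\mathcal{C}_{\gamma}\downarrow E$. Then $\mathcal{J}$ is itself $\gamma$-filtered and its colimit is still $E$. For cofinality, take any $A\rightarrow E$ with $A$ $\gamma$-presentable. Theorem \ref{thm:deconstructpres} factors it as $A\rightarrow\overline{A}\rightarrow E$ with $\overline{A}$ $\gamma$-presentable and $\overline{A}\rightarrow E$ $\lambda$-pure, so every object of the comma category maps to an object of $\mathcal{J}$.

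The remaining part of cofinality is connectedness: two maps $(A\rightarrow E)\rightrightarrows(\overline{A}\rightarrow E)$ into an object of $\mathcal{J}$ must be equalised. This is where I expect the main (mild) obstacle, and purity handles it. By \cite{adamek1994locally}*{Proposition 2.29}, $\lambda$-pure morphisms in a $\lambda$-accessible category are monomorphisms. Any two such parallel maps compose with the monomorphism $\overline{A}\rightarrow E$ to the same map, so they already coincide. Hence each relevant comma category is nonempty and connected, and $\mathcal{J}\hookrightarrow\mathcal{C}_{\gamma}\downarrow E$ is final.

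Filteredness of $\mathcal{J}$ follows in the same way. A $\gamma$-small diagram in $\mathcal{J}$ has a cocone in $\mathcal{C}_{\gamma}\downarrow E$, which we then push into $\mathcal{J}$ by the factorisation above. Parallel arrows are automatically equal, since all the structure maps to $E$ are monomorphisms. Finally, set $E\cong\colim_{\mathcal{J}}E_{j}$. Each $E_{j}$ is $\gamma$-presentable and each $E_{j}\rightarrow E$ is a $\lambda$-pure monomorphism by construction, which gives (1) and (2).
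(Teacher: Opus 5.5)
Your proof is correct, but it reaches $\colim_{\mathcal{J}}E_{j}\cong E$ by a different route from the paper.

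\textbf{The paper's route.} It takes $\mathcal{J}$ to be the poset of $\lambda$-pure, $\gamma$-presentable subobjects of $E$, and gets $\gamma$-filteredness by factoring $\coprod_{i}E_{i}\rightarrow E$ through such a subobject. It then shows the comparison map $\colim_{\mathcal{J}}E_{j}\rightarrow E$ is two things at once:
\begin{enumerate}
\item a pure, hence regular, monomorphism (via \cite{adamek1994locally}*{Proposition 2.31});
\item an epimorphism, because every map from a $\gamma$-presentable generator factors through some $E_{j}$.
\end{enumerate}
It concludes that the comparison map is an isomorphism.

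\textbf{Your route.} You start from the canonical presentation of $E$ as the colimit of $\mathcal{C}_{\gamma}\downarrow E$ and show that the full subcategory of $\lambda$-pure arrows is $\gamma$-filtered and final. That identifies the colimit directly. Since pure maps are monomorphisms, your $\mathcal{J}$ is thin, so the parallel-arrow condition is automatic. The same thinness makes your $\mathcal{J}$ equivalent to the paper's poset of subobjects.

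\textbf{Comparison.} Your approach never has to show that the comparison map is pure or regular. The paper states that step briskly; it really rests on $\lambda$-pure maps being closed under directed colimits in $\mathrm{Mor}(\mathcal{C})$. In exchange you use the canonical-colimit theorem and a finality criterion. The paper's version keeps the subobject picture explicit, which is the form used later, for instance in Lemma \ref{lem:subepi} and Lemma \ref{lem:decomp}.

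\textbf{Minor points.} The parallel-arrow criterion for finality presupposes that $\mathcal{J}$ is filtered. You prove that in the next paragraph, independently, so there is no circularity, only an ordering issue. Your hedge about restricting to $\gamma$ sharply larger than $\lambda$ is unnecessary. Because $\mathcal{C}$ is cocomplete, it is locally $\gamma$-presentable for every regular $\gamma\ge\lambda$, so any $\gamma$ supplied by Theorem \ref{thm:deconstructpres} works.
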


\begin{proof}
By Theorem \ref{thm:deconstructpres} there are arbitrarily large regular cardinals $\gamma$ such that whenever $D\rightarrow E$ is a map with $D$ $\gamma$-presentable, then $D\rightarrow E$ factors as $D\rightarrow\overline{D}\rightarrow E$ where $\overline{D}$ is $\gamma$-presentable and $\overline{D}\rightarrow E $ is $\lambda$-pure. Now, $\mathcal{C}$ is also locally $\gamma$-presentable. 
Let $\mathcal{J}$ denote the poset of $\lambda$-pure, $\gamma$-presentable subobjects of $E$. For each $j\in\mathcal{J}$ fix a representative $E_{j}$ of the corresponding equivalence class of subobjects. We claim that $E\cong\colim_{E_{j}\in\mathcal{J}}E_{j}$. The category $\mathcal{J}$ is $\gamma$-filtered. Let $\{E_{i}\rightarrow E\}_{i\in\mathcal{I}}$ be a $\gamma$-small collection of $\lambda$-pure, $\gamma$-presentable subobjects. Consider the coproduct $\coprod_{i\in\mathcal{I}}E_{i}$. This is $\gamma$-presentable. The induced morphism $\coprod_{i\in\mathcal{I}}E_{i}\rightarrow E$ factors through a $\gamma$-presentable, $\lambda$-pure subobject $E_{\mathcal{I}}\rightarrow E$. Then all of the maps $E_{i}\rightarrow E$ factor as $E_{i}\rightarrow E_{\mathcal{I}}\rightarrow E$. Thus, $\colim_{E_{j}\in\mathcal{J}}E_{j}\rightarrow E$ is a $\gamma$-pure monomorphism and hence a regular monomorphism by \cite{adamek1994locally}*{Proposition 2.31}. Let $G$ be a $\gamma$-presentable object. Then $\mathrm{Hom}(G,\colim_{E_{j}\in\mathcal{J}}E_{j})\rightarrow \mathrm{Hom}(G,E)$ is an epimorphism. Indeed any map $G\rightarrow E$ factors through a $\lambda$-pure, $\gamma$-presentable subobject $\overline{G}\rightarrow E$. Since $\mathcal{C}$ is also $\gamma$-presentable, the $\gamma$-presentable objects provide a set of generators, which means that $\colim_{E_{j}\in\mathcal{J}}E_{j}\rightarrow E$ is an epimorphism, and hence an isomorphism.
\end{proof}

\begin{definition}
    Let $\lambda$ be a regular cardinal, and $(\mathcal{C},\mathcal{Q})$ a proto-exact category with $\mathcal{C}$ $\lambda$-accessible (resp. locally $\lambda$-presentable). The category $(\mathcal{C},\mathcal{Q})$ is said to be \textit{purely }$\lambda$-\textit{locally accessible} (resp. \textit{purely locally} $\lambda$-\textit{presentable}) if $\lambda$-pure monomorphisms are admissible monomorphisms.
\end{definition}

\begin{remark}
    If $(\mathcal{C},\mathcal{Q})$  is a proto-exact category such that $\mathcal{C}$ is $\lambda$-presentable, split monomorphisms are admissible, and $\lambda$-filtered colimits are exact - that is the colimit of any diagram of term-wise short exact sequences is a short exact sequence - then by Proposition \ref{prop:lambdadirected}, $(\mathcal{C},\mathcal{Q})$ is purely locally $\lambda$-presentable. 
\end{remark}

\subsection{Coherence}\label{subsubsec:coherence}

The various notions of coherence in this subsection are generalisations of similar concepts introduced in \cite{positselski2023locally} (in the exact setting) and \cite{positselski2025pure}, that are applicable in the proto-exact setting.

For $\mathcal{C}$ a $\lambda$-accessible category, we denote by $\mathcal{C}_{\lambda}$ the subcategory of $\lambda$-presentable objects. 

\begin{definition}\label{defn:coherentproto}
\begin{enumerate}
    \item 
       Let $(\mathcal{C},\mathcal{Q})$ be a proto-exact category with $\mathcal{C}$ $\lambda$-accessible. Let $\gamma\ge\lambda$ be a cardinal. We say that $(\mathcal{C},\mathcal{Q})$ is \textit{ locally }$(\gamma,\lambda)$-\textit{subobject pre-coherent} (resp. \textit{ locally }$(\gamma,\lambda)$-\textit{pre-coherent}) if any diagram
    \begin{displaymath}
        \xymatrix{
         & S\ar[d]^{f}\\
         D\ar[r]^{i} & E        
        }
    \end{displaymath}
    with $f$ an admissible epimorphism, $i$ an admissible monomorphism (resp. $i$ an arbitrary morphism), and $D\in\mathcal{C}_{\lambda}$ extends to a diagram
        \begin{displaymath}
        \xymatrix{
        X\ar[d]^{g}\ar[r]^{j} & S\ar[d]^{f}\\
         D\ar[r]^{i} & E        
        }
    \end{displaymath}
    with $g$ an admissible epimorphism, $j$ an admissible monomorphism (resp. an arbitrary morphism), and $X\in\mathcal{C}_{\gamma}$.
    \item 
    We say that $(\mathcal{C},\mathcal{Q})$ is \textit{upwardly locally }$(\gamma,\lambda)$-\textit{subobject pre-coherent} (resp. \textit{ upwardly locally }$(\gamma,\lambda)$-\textit{pre-coherent}) if there are arbitrarily large $\mu\ge\gamma$, $\kappa\ge\lambda$ such that $(\mathcal{C},\mathcal{Q})$ is locally  $(\mu,\kappa)$-pre-coherent (resp. locally $(\mu,\kappa)$-subobject pre-coherent).
    \item
        We say that $(\mathcal{C},\mathcal{Q})$ is \textit{locally }$\lambda$-\textit{pre-coherent} (resp.  \textit{locally }$\lambda$-\textit{subobject pre-coherent}) if it is $(\lambda,\lambda)$-pre-coherent (resp. $(\lambda,\lambda)$-subobject pre-coherent).
        \item 
            We say that $(\mathcal{C},\mathcal{Q})$ is \textit{upwardly locally }$\lambda$-\textit{subobject pre-coherent} (resp. \textit{ upwardly locally }$\lambda$-\textit{pre-coherent}) if there are arbitrarily large $\kappa\ge\lambda$ 
            such that $(\mathcal{C},\mathcal{Q})$ is locally $\kappa$-subobject pre-coherent (resp. locally  $\kappa$-pre-coherent).
\end{enumerate}

\begin{remark}
     Note that if $(\mathcal{C},\mathcal{Q})$ is upwardly locally $\lambda$-pre-coherent, then there are arbitrarily large $\kappa\ge\lambda$ such that $(\mathcal{C},\mathcal{Q})$ is locally $(\kappa,\kappa)$-pre-coherent. In particular, $(\mathcal{C},\mathcal{Q})$ is upwardly locally $(\lambda,\lambda)$-pre-coherent. The same is true for the subobject variants.
\end{remark}

\end{definition}
 \begin{example}\label{ex:projcoh}
     Let $(\mathcal{C},\mathcal{Q})$ be a proto-exact category with $\mathcal{C}$ $\lambda$-accessible, in which every  object is projective. Then $(\mathcal{C},\mathcal{Q})$ is locally $(\lambda,\lambda)$-pre-coherent. If in addition $(\mathcal{C},\mathcal{Q})$ satisfies the strong left obscure axiom, then $(\mathcal{C},\mathcal{Q})$ locally $(\lambda,\lambda)$-subobject pre-coherent for any $\lambda$. 
       For example, $\mathrm{Set}_{*}$ is locally $\lambda$-pre-coherent and locally $\lambda$-subobject pre-coherent for any $\lambda$.
     \end{example}

\begin{proposition}
    Let $(\mathpzc{E},\mathcal{Q})$ be a locally $\lambda$-coherent exact category in the sense of \cite{positselski2023locally}. Then $(\mathpzc{E},\mathcal{Q})$ is upwardly locally $\lambda$-pre-coherent.
\end{proposition}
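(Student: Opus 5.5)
The approach is to realise the square required by Definition \ref{defn:coherentproto} as a pullback at a \emph{$\lambda$-presentable stage} of the admissible epimorphism $f$, and then use coherence to see that this pullback is still $\lambda$-presentable. Recall from \cite{positselski2023locally} the features of a locally $\lambda$-coherent exact category $(\mathpzc{E},\mathcal{Q})$ that we will use:
\begin{enumerate}
\item $\mathpzc{E}$ is locally $\lambda$-presentable;
\item $\lambda$-directed colimits of admissible short exact sequences are admissible;
\item every admissible short exact sequence in $\mathpzc{E}$ is a $\lambda$-directed colimit of admissible short exact sequences of $\lambda$-presentable objects;
\item the full subcategory $\mathpzc{E}_{\lambda}$ is closed under kernels of admissible epimorphisms. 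Equivalently, it inherits an exact structure.
\end{enumerate}
Since $\mathpzc{E}$ is additive and exact, it is totally proto-exact. So we do not need to distinguish an admissible monomorphism $i$ from an arbitrary morphism; we prove the stronger, non-subobject version of pre-coherence.

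First we prove local $(\lambda,\lambda)$-pre-coherence. Let $f:S\rightarrow E$ be an admissible epimorphism, $i:D\rightarrow E$ any morphism, and $D\in\mathpzc{E}_{\lambda}$.
\begin{enumerate}
\item By (3), write $f$ in $\mathrm{Mor}(\mathpzc{E})$ as a $\lambda$-directed colimit $\colim_{k}(f_{k}:S_{k}\rightarrow E_{k})$. Here each $f_{k}$ is an admissible epimorphism and $S_{k},E_{k}\in\mathpzc{E}_{\lambda}$.
\item Since $D$ is $\lambda$-presentable, $i$ factors as $D\xrightarrow{i_{k}}E_{k}\rightarrow E$ for some $k$.
\item Form the pullback $X=D\times_{E_{k}}S_{k}$, with projections $g:X\rightarrow D$ and $j':X\rightarrow S_{k}$. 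Then $g$ is an admissible epimorphism, as a pullback of $f_{k}$ in an exact category.
\item Moreover $X\cong\mathrm{Ker}(D\oplus S_{k}\rightarrow E_{k})$. The map $D\oplus S_{k}\rightarrow E_{k}$ is an admissible epimorphism, because it is a composite of a split epimorphism and $\mathrm{Id}_{D}\oplus f_{k}$, or by the obscure axiom. Its source and target lie in $\mathpzc{E}_{\lambda}$, so $X\in\mathpzc{E}_{\lambda}$ by (4).
\item Put $j=(S_{k}\rightarrow S)\circ j'$. Then $f\circ j=i\circ g$, by commutativity of the colimit cocone and of the pullback square.
\end{enumerate}
This gives the required square with $X\in\mathpzc{E}_{\lambda}$.

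For the \emph{upward} statement, we need arbitrarily large $\kappa\ge\lambda$ for which the same argument runs. The plan is to show that locally $\lambda$-coherent implies locally $\kappa$-coherent for all sufficiently large regular $\kappa$, or at least for arbitrarily large ones. Alternatively, it suffices to verify (1)--(4) directly with $\kappa$ in place of $\lambda$.
\begin{enumerate}
\item Local $\kappa$-presentability holds for all regular $\kappa\ge\lambda$.
\item Exactness of $\kappa$-directed colimits follows, since $\kappa$-directed colimits are in particular $\lambda$-directed.
\item For the decomposition property, regroup the $\lambda$-directed system of $\lambda$-presentable sequences into its $\kappa$-small $\lambda$-directed subsystems. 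Their colimits are $\kappa$-presentable and admissible by (2), and they form a $\kappa$-directed system with the same colimit.
\item Closure of $\mathpzc{E}_{\kappa}$ under kernels of admissible epimorphisms is the step I expect to be the main obstacle. I would first take a sequence $0\rightarrow K\rightarrow B\rightarrow C\rightarrow 0$ with $B,C$ $\kappa$-presentable and write it as a $\lambda$-directed colimit of $\lambda$-presentable sequences via (3). Then I would use presentability of $B$ and $C$ to cut down to a $\kappa$-small subsystem, chosen for arbitrarily large $\kappa$ (say $\kappa$ with $\kappa^{<\lambda}=\kappa$, or via the sharply-larger cardinals of \cite{adamek1994locally}*{Theorem 2.33}). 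This exhibits $K$ as a $\kappa$-small $\lambda$-directed colimit of $\lambda$-presentable objects, hence $\kappa$-presentable. If this direct argument becomes too delicate, I would instead cite Positselski's result that local $\lambda$-coherence propagates to larger cardinals.
\end{enumerate}
Rerunning the first part with $\kappa$ then gives local $(\kappa,\kappa)$-pre-coherence for arbitrarily large $\kappa$, which is upward local $\lambda$-pre-coherence.
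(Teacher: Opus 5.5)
Your proof is correct. It takes a more self-contained route than the paper, whose proof is only the citation \cite{positselski2023locally}*{Lemma 1.5, Lemma 1.9}. You instead derive everything from the defining property of local $\lambda$-coherence: admissible sequences are exactly the $\lambda$-directed colimits of admissible sequences of $\lambda$-presentable objects. Your level-$\lambda$ step factors $i$ through a stage $E_k$, pulls back $f_k$, and recognises $D\times_{E_k}S_k$ as the kernel of the admissible epimorphism $D\oplus S_k\rightarrow E_k$. This makes explicit that only two inputs are used: the decomposition property and closure of $\mathpzc{E}_{\lambda}$ under kernels of admissible epimorphisms. Your regrouping step then shows what is needed to move up in cardinality. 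The citation buys brevity; your version buys transparency and independence from the exact wording of Positselski's lemmas.

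Two points to tighten.

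\emph{The cardinal condition.} It is needed already in your upward step (3), not only in (4). The $\kappa$-small $\lambda$-directed subsets of a $\lambda$-directed poset form a $\kappa$-directed system only under a hypothesis such as $\lambda\triangleleft\kappa$, and this can fail for some regular $\kappa\ge\lambda$. Also, if Positselski's $\mathpzc{E}$ is only assumed $\lambda$-accessible, you get $\kappa$-accessibility only for $\lambda\triangleleft\kappa$. Since there are arbitrarily large such $\kappa$, both issues are harmless for the upward statement.

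\emph{Your step (4).} The step you flag as the main obstacle is easy once the level-$\kappa$ decomposition is in hand. Take $0\rightarrow K\rightarrow B\xrightarrow{p} C\rightarrow 0$ with $B,C\in\mathpzc{E}_{\kappa}$. The arrow $p$ is $\kappa$-presentable in $\mathrm{Mor}(\mathpzc{E})$, so $\mathrm{Id}_{p}$ factors through some stage $p_J$ of the $\kappa$-directed system. The resulting compatible sections $B\rightarrow B_J$ and $C\rightarrow C_J$ restrict to a section $K\rightarrow K_J$, so $K$ is a retract of the $\kappa$-presentable object $K_J$. No further cardinal arithmetic is needed. Using presentability of $B$ and $C$ separately would need an extra refinement to get this compatibility, and the conclusion is a retract rather than literally a $\kappa$-small colimit. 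The same retract argument at level $\lambda$ proves your property (4), if it is not already part of the definition.
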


\begin{proof}
    This follows immediately from \cite{positselski2023locally}*{Lemma 1.5, Lemma 1.9}.
\end{proof}

Later we will give a partial converse. 

The reader may wonder why we require the additional flexibility of $(\gamma,\lambda)$-pre-coherence and its upward variant. Hopefully the results below will make this clear.

\begin{proposition}\label{prop:obscuresub}
    Let $(\mathcal{C},\mathcal{Q})$ be a purely $\lambda$-accessible exact category satisfying the strong right obscure axiom. If $(\mathcal{C},\mathcal{Q})$ is (upwardly) $(\gamma,\lambda)$-pre-coherent then there is $\mu\ge\gamma$ such that it is (upwardly) $(\mu,\lambda)$-subobject pre-coherent.
\end{proposition}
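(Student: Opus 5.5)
Starting from the data for subobject pre-coherence: an admissible epimorphism $f:S\rightarrow E$, an admissible monomorphism $i:D\rightarrow E$ with $D\in\mathcal{C}_{\lambda}$. The proof has three steps.

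\textbf{Step 1 (apply pre-coherence).} Since $i$ is in particular an arbitrary morphism, $(\gamma,\lambda)$-pre-coherence gives a commutative square $f\circ j'=i\circ g'$. Here $X'\in\mathcal{C}_{\gamma}$, $g':X'\rightarrow D$ is an admissible epimorphism, and $j':X'\rightarrow S$ is an arbitrary morphism. The task is to replace $j'$ by an admissible monomorphism while keeping the bottom edge an admissible epimorphism onto $D$.

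\textbf{Step 2 (replace $j'$ by a pure subobject).} Use Theorem \ref{thm:deconstructpres}: there are arbitrarily large regular $\mu\ge\gamma$ such that every map from a $\mu$-presentable object into $S$ factors through a $\lambda$-pure monomorphism $\overline{X}\rightarrow S$ with $\overline{X}$ $\mu$-presentable. Since $\gamma$-presentable objects are $\mu$-presentable, $j'$ factors as
$$X'\xrightarrow{k}\overline{X}\xrightarrow{j}S,$$
with $j$ $\lambda$-pure. By the purity assumption, $j$ is an admissible monomorphism, and $\overline{X}\in\mathcal{C}_{\mu}$.

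\textbf{Step 3 (produce the bottom map).} We need $g:\overline{X}\rightarrow D$ with $i\circ g=f\circ j$, and $g$ must be an admissible epimorphism. This is the main obstacle. If $f\circ j$ factored through $i$, then $g\circ k=g'$. Since $g'$ is an admissible epimorphism, the strong right obscure axiom would make $g$ an admissible epimorphism.

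To get that factorisation, I would not use $\overline{X}$ directly. Instead I would pull back along $i$: set $X=\overline{X}\times_{E}D$, the pullback of $f\circ j$ along $i$. The pair $(k,g')$ induces a map $X'\rightarrow X$. The projection $X\rightarrow D$ composed with $X'\rightarrow X$ is $g'$, so by the strong right obscure axiom the projection $g:X\rightarrow D$ is an admissible epimorphism. The other projection $X\rightarrow\overline{X}$ is a pullback of the admissible monomorphism $i$, so it is a monomorphism. Composing it with $j$ gives the top map into $S$.

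Two points remain to check:
\begin{itemize}
\item that this composite $X\rightarrow S$ is admissible;
\item that $X$ is $\mu$-presentable.
\end{itemize}
For the second, I would take $\mu$ large enough, using arbitrarily-large-cardinal arguments so that $\mu$-presentables are closed under finite limits in the accessible category. For the first, the plan is to use the exact (additive) structure, as the statement says \emph{exact category}. There, pullbacks of admissible monomorphisms along arbitrary maps with admissible cokernel comparison are admissible, since $X\rightarrow\overline{X}$ is the kernel of $\overline{X}\rightarrow E\rightarrow\mathrm{Coker}(i)$. Then the composite with $j$ is admissible, and $\mu$ witnesses $(\mu,\lambda)$-subobject pre-coherence. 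In the upward variant, the same argument runs for each of the arbitrarily large pairs.
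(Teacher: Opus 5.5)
There is a genuine gap in Step 3, caused by doing the steps in the wrong order. Your $X=\overline{X}\times_{E}D$ is the intersection inside $S$ of the two admissible subobjects $\overline{X}$ and $P:=S\times_{E}D$. Your reason for $X\to\overline{X}$ being admissible, namely that it is the kernel of $\overline{X}\to E\to\mathrm{Coker}(i)$, is not valid. In an exact category only kernels of \emph{admissible epimorphisms} are guaranteed to be admissible, and nothing makes $\overline{X}\to\mathrm{Coker}(i)$ one.

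This fails even under the hypotheses of the statement. Take $\mathrm{Ab}$ with the pure exact structure: it is locally finitely presentable, pure monomorphisms are admissible, and the strong right obscure axiom holds. Let $S=\mathbb{Z}\oplus\mathbb{Z}/2$, $E=\mathbb{Z}/2$ and $f(a,b)=a+b \bmod 2$, whose kernel $\mathbb{Z}(1,1)$ is a direct summand. Let $D=X'=0$. Theorem~\ref{thm:deconstructpres} allows the choice $\overline{X}=\mathbb{Z}(1,0)$, which is a summand. Then $X=\langle(2,0)\rangle$, which is pure neither in $\overline{X}$ nor in $S$.

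Your second check also does not go through as stated. $X$ is a pullback over the arbitrary object $E$, so closure of $\mathcal{C}_{\mu}$ under finite limits of diagrams of $\mu$-presentable objects says nothing about it. (The paper only obtains that closure for locally presentable $\mathcal{C}$, Lemma~\ref{lem:mukapppull}.) Moreover, a pullback along $f\circ j$, which is neither an admissible monomorphism nor an admissible epimorphism, need not exist in an accessible proto-exact category.

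The repair, which is exactly the paper's argument, is to pull back \emph{before} taking the pure factorisation.
\begin{enumerate}
\item Form $P=D\times_{E}S$. It exists, and $P\to S$ is an admissible monomorphism, being the pullback of the admissible monomorphism $i$ along the admissible epimorphism $f$ (Lemma~\ref{lem:pullbacker}).
\item The pair $(g',j')$ induces $X'\to P$. Apply Theorem~\ref{thm:deconstructpres} to \emph{this} map, obtaining $X'\to\overline{X}\to P$ with $\overline{X}\in\mathcal{C}_{\mu}$ and $\overline{X}\to P$ $\lambda$-pure, hence admissible.
\item Then $\overline{X}\to P\to S$ is an admissible monomorphism as a composite, and $\overline{X}$ is $\mu$-presentable by construction, so no finite-limit closure is needed.
\item Finally, $\overline{X}\to P\to D$ is an admissible epimorphism by the strong right obscure axiom, because its composite with $X'\to\overline{X}$ is $g'$.
\end{enumerate}
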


\begin{proof}
Since for any $\kappa\ge\lambda$, $(\mathcal{C},\mathcal{Q})$ is also a purely $\kappa$-accessible exact category satisfying the right obscure axiom, the upward case will follows from the non-upward case.
  Let $\mu\ge\gamma$ be such that any morphism $A\rightarrow B$ with $A$ being $\mu$-presentable factors through a $\lambda$-pure monomorphism $\overline{A}\rightarrow B$ with $\overline{A}$ being $\mu$-presentable. Consider a diagram
        \begin{displaymath}
        \xymatrix{
         & S\ar[d]^{f}\\
         D\ar[r]^{i} & E        
        }
    \end{displaymath}
    with $f$ an admissible epimorphism, $i$ an admissible monomorphism, and $D$ being $\lambda$-presentable. By $(\gamma,\lambda)$-pre-coherence, extend to a diagram 
            \begin{displaymath}
        \xymatrix{
        X\ar[d]^{g}\ar[r]^{j} & S\ar[d]^{f}\\
         D\ar[r]^{i} & E        
        }
    \end{displaymath}
    with $g$ an admissible epimorphism and $X$ $\gamma$-presentable. 
    
    Consider the fibre product $D\times_{E}S$. The map $i':D\times_{E}S\rightarrow S$ is an admissible monomorphism. By the universal property of the pullback the maps $X\rightarrow D$ and $X\rightarrow S$ factor through $X\rightarrow D\times_{E}S$. 
    The map $X\rightarrow D\times_{E}S$ in turn factors through $X\rightarrow\overline{X}\rightarrow D\times_{E}S$ with $\overline{X}$ being $\mu$-presentable, and $\overline{X}\rightarrow D\times_{E}S$ being a $\lambda$-pure, and hence admissible, monomorphism. We therefore get a commutative diagram
           \begin{displaymath}
        \xymatrix{
        \overline{X}\ar[d]^{g'}\ar[r]^{j'} & S\ar[d]^{f}\\
         D\ar[r]^{i} & E        
        }
    \end{displaymath}
    where $j'$ is now an admissible monomorphism, and $g'$ is an admissible epimorphism by the strong right obscure axiom. 
\end{proof}

\begin{proposition}\label{prop:monadprecoh}
      Let $(\mathcal{C},\mathcal{Q})$ be upwardly $(\gamma,\lambda)$-pre-coherent. Let $\mathrm{T}:\mathcal{C}\rightarrow\mathcal{C}$ be a monad that commutes with cokernels and is $\lambda$-accessible. Then there are arbitrary large regular cardinals $\mu\ge\gamma$, $\kappa\ge\lambda$ such that the category of algebras for $\mathrm{T}$, $\mathrm{Alg_{T}}$, is $(\mu,\kappa)$-pre-coherent. In other words, $\mathrm{Alg_{T}}$ is upwardly $(\gamma,\lambda)$-pre-coherent.
\end{proposition}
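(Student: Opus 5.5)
We work throughout with the proto-exact structure $\mathcal{Q}_{T}$ on $\mathrm{Alg}_{\mathrm{T}}$, in which a sequence is exact precisely when its image under the forgetful functor $|-|:\mathrm{Alg}_{\mathrm{T}}\rightarrow\mathcal{C}$ is exact. The plan is to transfer the pre-coherence square from $\mathcal{C}$ to $\mathrm{Alg}_{\mathrm{T}}$ via the free--forgetful adjunction $F\dashv|-|$.

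\textbf{Choice of cardinals.} Since $\mathrm{T}$ is $\lambda$-accessible, $\mathrm{Alg}_{\mathrm{T}}$ is accessible, and $F$ sends $\kappa$-presentable objects to $\kappa$-presentable algebras for every regular $\kappa\ge\lambda$. By the uniformization theorem for accessible functors, there are arbitrarily large regular $\kappa\ge\lambda$ such that $\mathrm{T}$ preserves $\kappa$-presentable objects. For such $\kappa$, an algebra $A$ is $\kappa$-presentable in $\mathrm{Alg}_{\mathrm{T}}$ if and only if $|A|$ is $\kappa$-presentable in $\mathcal{C}$, up to enlarging $\kappa$ further. Using upward pre-coherence of $\mathcal{C}$, we pick such $\kappa$ together with $\mu\ge\gamma$, $\mu\ge\kappa$, such that $\mathcal{C}$ is locally $(\mu,\kappa)$-pre-coherent and the presentability comparison holds at both $\kappa$ and $\mu$. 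This bookkeeping of cardinals is routine but fiddly.

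\textbf{Lifting a square.} Take a diagram $D\xrightarrow{i}E\xleftarrow{f}S$ in $\mathrm{Alg}_{\mathrm{T}}$ with $f$ admissible and $D$ $\kappa$-presentable.
\begin{enumerate}
\item Apply $|-|$. Then $|D|$ is $\kappa$-presentable and $|f|$ is admissible, so pre-coherence in $\mathcal{C}$ gives $X\in\mathcal{C}_{\mu}$ with an admissible epimorphism $g:X\rightarrow|D|$ and a map $j:X\rightarrow|S|$ making the square commute.
\item Pass to adjoints: $\tilde g:F(X)\rightarrow D$ and $\tilde j:F(X)\rightarrow S$. The square still commutes by naturality, and $F(X)$ is $\mu$-presentable.
\item The map $\tilde g$ need not be admissible in $\mathcal{Q}_{T}$. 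Instead, use the canonical admissible epimorphism $|F|D||=\mathrm{T}|D|\rightarrow|D|$ (the counit, which is split by the unit $|D|\rightarrow\mathrm{T}|D|$). Combine $X$ with $\mathrm{T}$ applied to data: replace $X$ by $X'=X\vee\mathrm{T}X$, or work with the bar construction, so that the composite $\mathrm{T}X\rightarrow\mathrm{T}|D|\rightarrow|D|$ is admissible. Here $\mathrm{T}g$ is admissible because $\mathrm{T}$ preserves cokernels, hence preserves cokernel presentations of admissible epimorphisms, and the counit is admissible.
\end{enumerate}

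\textbf{Main obstacle.} The delicate point is showing that the underlying map $|F(X)|=\mathrm{T}X\rightarrow|D|$ is an admissible epimorphism in $\mathcal{C}$, since exactness in $\mathcal{Q}_{T}$ is detected on underlying objects. I would write this map as the composite $\mathrm{T}X\xrightarrow{\mathrm{T}g}\mathrm{T}|D|\xrightarrow{\epsilon}|D|$. The second map is a split epimorphism that is a cokernel, namely the coequalizer of the bar construction, which $\mathrm{T}$ preserves. For the first map, I would argue that $\mathrm{T}g$ is isomorphic to $\mathrm{T}X\rightarrow\mathrm{Coker}(\mathrm{T}\,\mathrm{Ker}(g))$. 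If admissibility cannot be obtained directly, I would fall back on Proposition \ref{prop:strictnessequiv} (1)(c) to obtain strictness, and use closure of $\mathcal{E}_{\mathcal{Q}}$ under composition. If this still fails in general, the natural extra hypothesis to invoke is that $\mathrm{T}$ preserves admissible epimorphisms; this is plausibly what ``commutes with cokernels'' is intended to encode.
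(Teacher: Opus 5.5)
Your Steps 1 and 2 are exactly the paper's proof. You forget to $\mathcal{C}$, use pre-coherence there to obtain an admissible epimorphism $g\colon X\to|D|$ and a map $j\colon X\to|S|$, and transpose along $\mathrm{Free_{T}}\dashv|-|$. Here $\mathrm{Free_{T}}(X)$ is $\mu$-presentable because $|-|$ preserves $\mu$-filtered colimits. Your cardinal bookkeeping can be made to work, and only the direction ``$D$ small in $\mathrm{Alg_{T}}$ $\Rightarrow$ $|D|$ small in $\mathcal{C}$'' is actually needed. The paper's proof ends with the transposed square and never checks that $\tilde g\colon\mathrm{Free_{T}}(X)\to D$ lies in $\mathcal{E}_{\mathcal{Q}_{\mathrm{T}}}$, which the definition of pre-coherence requires. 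You correctly identify this as the crux, but your Step 3 does not prove it. So the proposal has a genuine gap precisely at the point you call the main obstacle.

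Concretely, $|\tilde g|=a_{D}\circ\mathrm{T}g$, where $a_{D}\colon\mathrm{T}|D|\to|D|$ is the structure map, and neither factor is shown to be admissible.
\begin{itemize}
\item Preservation of cokernels only gives $\mathrm{T}g\cong\mathrm{Coker}(\mathrm{T}(\mathrm{Ker}\,g)\to\mathrm{T}X)$, i.e.\ that $\mathrm{T}g$ is a \emph{strict} epimorphism; this is all Proposition~\ref{prop:strictnessequiv} yields. For a general proto-exact structure, strict does not imply admissible, and closure of $\mathcal{E}_{\mathcal{Q}}$ under composition requires admissible factors.
\item The map $a_{D}$ is a split epimorphism and the coequaliser of the parallel pair $\mathrm{T}^{2}|D|\to\mathrm{T}|D|$ given by $\mathrm{T}(a_{D})$ and the monad multiplication. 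Outside the additive setting, such a coequaliser need not be the cokernel of any single morphism. Split epimorphisms also need not be admissible without the strong right obscure axiom (see the paper's $\mathrm{Set}_{*}$ example).
\item The replacements $X\vee\mathrm{T}X$ and ``the bar construction'' are never turned into an argument.
\item Your fallback hypothesis that $\mathrm{T}$ preserves admissible epimorphisms would handle $\mathrm{T}g$ but still not $a_{D}$.
\end{itemize}

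What does close the gap is the strong right obscure axiom, used directly. By the adjunction, $|\tilde g|\circ\eta_{X}=g$ is admissible, so $|\tilde g|$ is admissible. Its kernel is created in $\mathrm{Alg_{T}}$. Since $\mathrm{T}|\tilde g|$ is again a cokernel, hence an epimorphism, the factorisation through $|\tilde g|$ of any algebra map killing that kernel is itself an algebra map. Therefore $\tilde g\in\mathcal{E}_{\mathcal{Q}_{\mathrm{T}}}$. This axiom holds in all of the paper's normed-module examples (Theorem~\ref{cor:pushpullnonexp}(5)). Without some hypothesis of this kind, neither your argument nor the paper's shows that the left vertical map is admissible.
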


\begin{proof}
    The category $\mathrm{Alg_{T}}(\mathcal{C})$ is also $\gamma$-accessible for some $\gamma$ by \cite{adamek1994locally}*{Theorem 2.78}. The set of $\gamma$-small objects, $\mathrm{Alg_{T}}(\mathcal{C})_{\gamma}$ therefore forms a set. There is therefore a $\kappa\ge\gamma$ such that for any object $A$ of $\mathrm{Alg_{T}}(\mathcal{C})_{\gamma}$,  the underlying object $|A|$ is $\kappa$-presentable. 
    
    

   Let  

    \begin{displaymath}
        \xymatrix{
         & S\ar[d]^{f}\\
         D\ar[r]^{i} & E        
        }
    \end{displaymath}
    be a commutative diagram in $\mathrm{Alg_{T}}$ with $D$ $\gamma$-presentable. Pass to the diagram in the underlying category
        \begin{displaymath}
        \xymatrix{
         & |S|\ar[d]^{|f|}\\
         |D|\ar[r]^{|i|} & |E|.        
        }
    \end{displaymath}
There is $\mu\ge\kappa$ such that we can find a lift
        \begin{displaymath}
        \xymatrix{
         X\ar[r]\ar[d] & |S|\ar[d]^{|f|}\\
         |D|\ar[r]^{|i|} & |E|.        
        }
    \end{displaymath}
    with $X$ $\mu$-presentable. Then we get a commutative diagram 
            \begin{displaymath}
        \xymatrix{
         \mathrm{Free_{T}}(X)\ar[r]\ar[d] & S\ar[d]^{|f|}\\
         D\ar[r]^{|i|} & E       
        }
    \end{displaymath}
    in $\mathrm{Alg_{T}}(\mathcal{C})$ with $\mathrm{Free_{T}}(X)$ being $\mu$-presentable.
\end{proof}

\begin{remark}\label{rem:failuresubT}
    When $(\mathcal{C},\mathcal{Q})$ satisfies the strong right obscure axiom, Proposition \ref{prop:monadprecoh} does remain true after replacing pre-coherence by subobject pre-coherence, thanks to Proposition \ref{prop:obscuresub}.  However, absent the right obscure axiom, it cannot possibly be the case in general that the statement of Proposition \ref{prop:monadprecoh} remains true when we replace pre-coherence by subobject pre-coherence. Indeed, if this were the case one could combine such a result with Example \ref{ex:projcoh} and Corollary \ref{cor:enoghinj} to prove that the category of modules for $\mathbb{N}_{0}$, which is precisely the category of commutative monoids, has enough injectives. This is false by \cite{277284}.
\end{remark}

\comment{
\begin{proposition}
    Let $(\mathcal{C},\mathcal{Q})$ be a proto-exact category in which all objects are projective. Let $T:\mathcal{C}\rightarrow\mathcal{C}$ be a monad which preserves cokernels. Then every object of $(\mathrm{Alg_{T}}(\mathcal{C}),\mathcal{Q}_{\mathrm{T}})$ is projective.
    
  In particular, if $(\mathcal{C},\mathcal{Q})$ is $\lambda$-accessible proto-exact category in which all objects are projective, and $T:\mathcal{C}\rightarrow\mathcal{C}$ is a monad which preserves cokernels and is $\lambda$-accessible, then $(\mathrm{Alg_{T}}(\mathcal{C}),\mathcal{Q})$ is $(\lambda,\lambda)$-pre-coherent and $(\lambda,\lambda)$-subobject pre-coherent. 
\end{proposition}
}

The next result is inspired by the techniques of \cite{gillespie2006flat}*{Lemma 4.9}.

\begin{lemma}\label{lem:subepi}
Let $(\mathcal{C},\mathcal{Q})$ be a purely $\lambda$-accessible proto-exact category satisfying the strong obscure axiom. Then there is $\gamma\ge\lambda$ such that $(\mathcal{C},\mathcal{Q})$ is upwardly $\gamma$-subobject pre-coherent. 
 \end{lemma}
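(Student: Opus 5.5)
Given the square with $f:S\rightarrow E$ an admissible epimorphism, $i:D\rightarrow E$ an admissible monomorphism and $D$ $\kappa$-presentable, I plan to replace $S$ by the pullback $P=D\times_{E}S$ and then cut $P$ down to a small pure subobject. By the pullback axiom, $f':P\rightarrow D$ is an admissible epimorphism and $i':P\rightarrow S$ is an admissible monomorphism. So it suffices to find a $\mu$-presentable $X$ with an admissible monomorphism $X\rightarrow P$ such that $X\rightarrow P\rightarrow D$ is an admissible epimorphism. Composing with $i'$ then yields the required admissible monomorphism $j:X\rightarrow S$.

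I would choose $\gamma$ by Theorem \ref{thm:deconstructpres}: there are arbitrarily large regular $\mu\ge\lambda$ such that every map from a $\mu$-presentable object into any object factors through a $\lambda$-pure monomorphism with $\mu$-presentable domain. Any such map is admissible by pure $\lambda$-accessibility. Take $\gamma$ to be the first such cardinal and $\kappa=\mu$ for the later ones; upwardness follows because for every such $\mu\ge\gamma$ the category is still purely $\lambda$-accessible, and $\lambda$-pure implies admissible.

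The key step is to produce a $\mu$-presentable $Y$ with a map $Y\rightarrow P$ whose composite to $D$ is an admissible epimorphism. Given that, factor $Y\rightarrow P$ through a $\lambda$-pure, hence admissible, monomorphism $X\rightarrow P$ with $X$ $\mu$-presentable. The composite $Y\rightarrow X\rightarrow D$ equals $Y\rightarrow D$, which is an admissible epimorphism, so $X\rightarrow D$ is an admissible epimorphism by the strong right obscure axiom.

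To get $Y$, I would write $P$ by Corollary \ref{cor:diagramofsbuboejcts} as a $\mu$-filtered colimit of $\mu$-presentable $\lambda$-pure subobjects $P_{j}$, and use that $D$ is $\mu$-presentable together with pure epimorphism behaviour. Concretely, $f'$ is an admissible epimorphism and $D$ is a colimit of $\lambda$-presentables. I expect this to be the main obstacle, because an admissible epimorphism need not be $\lambda$-pure, so one cannot simply lift $\mathrm{Id}_{D}$ to $P$.

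My first attempt at the obstacle would use the strong left obscure axiom. Write $D=\colim_{\mathcal{J}} f'(P_{j})$, meaning the images, taken as the colimit of the $P_{j}\rightarrow D$. Then argue by $\mu$-presentability of $D$, up to enlarging $\mu$ so that $D$ is $\mu$-presentable with $\mu$ strictly above its presentability rank, that $\mathrm{Id}_{D}$ factors through the colimit of some $P_{j}$. This would give a section-type factorisation $D\rightarrow P_{j}\rightarrow D$ only up to colimit, so I would instead compare $\colim_{j}P_{j}\cong P\rightarrow D$ with the filtered colimit of the maps $P_{j}\rightarrow D$. I would use that in a locally presentable category an epimorphism from a filtered colimit onto a $\mu$-presentable object is already achieved by a single stage after closing under $\mu$-small joins, choosing $Y=\coprod$ of a $\mu$-small family of $P_{j}$ covering generators of $D$. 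This reduces to $\mathrm{Hom}(G,-)$ surjectivity for $G$ in a generating set of $\lambda$-presentables, which I would check via the strong right obscure axiom and Lemma \ref{lem:adepicheckgen} style arguments.
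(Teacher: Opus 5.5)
Your reduction to $P=D\times_{E}S$, the choice of cardinals via Theorem \ref{thm:deconstructpres}, and the last step are all sound; that last step factors $Y\rightarrow P$ through a $\lambda$-pure, hence admissible, monomorphism and invokes the strong right obscure axiom. The gap is exactly the step you flag: producing a $\mu$-presentable $Y\rightarrow P$ with $Y\rightarrow D$ \emph{admissible}. Your single-stage argument can at best produce some $P_{j}\rightarrow D$ that is an epimorphism, or, via images, a strong epimorphism. Nothing in the proposal upgrades this to admissibility.

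The upgrade you suggest, via $\mathrm{Hom}(G,-)$-surjectivity and Lemma \ref{lem:adepicheckgen}, cannot work. First, that lemma needs an admissible generating class, which is not among the hypotheses. More fundamentally, if $Y\rightarrow D$ factors through $f'$ and is surjective on $\mathrm{Hom}(G,-)$, then so is $f'$. That is precisely the pure-epimorphism-type lifting property that, as you yourself noted, admissible epimorphisms need not have. For instance, in $\mathrm{SNrm}^{\le1}_{R}$ the $R_{\delta}$ are admissible generators, yet a quotient semi-norm need not be attained. So $\mathrm{Hom}(R_{\delta},P)\rightarrow\mathrm{Hom}(R_{\delta},D)$ can fail to be surjective for a strict epimorphism $P\rightarrow D$. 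Tellingly, the strong left obscure axiom, which you say you will use, never actually enters your argument.

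The missing idea is to approximate the kernel as well. Admissibility is then manufactured from the two sources the hypotheses actually provide: cokernels of admissible monomorphisms, and split epimorphisms. Let $K=\mathrm{Ker}(f')$. Using Corollary \ref{cor:diagramofsbuboejcts} for both $K$ and $P$ and passing to a cofinal diagram, write $K\rightarrow P$ as a $\mu$-filtered colimit of maps $K_{i}\rightarrow P_{i}$. Here $K_{i},P_{i}$ are $\mu$-presentable, and $K_{i}\rightarrow K$, $P_{i}\rightarrow P$ are $\lambda$-pure.

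Since $K_{i}\rightarrow K\rightarrow P$ is an admissible monomorphism, the strong left obscure axiom makes $K_{i}\rightarrow P_{i}$ admissible. Hence $E_{i}=\mathrm{Coker}(K_{i}\rightarrow P_{i})$ is $\mu$-presentable and $P_{i}\rightarrow E_{i}$ is an admissible epimorphism. As cokernels commute with colimits, $\colim_{i}E_{i}\cong\mathrm{Coker}(K\rightarrow P)\cong D$. By $\mu$-presentability of $D$, $\mathrm{Id}_{D}$ then factors through some $E_{i}$.

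It follows that $E_{i}\rightarrow D$ is a split epimorphism, hence admissible by the strong right obscure axiom. So $P_{i}\rightarrow E_{i}\rightarrow D$ is the required admissible epimorphism, while $P_{i}\rightarrow P\rightarrow S$ is already an admissible monomorphism. This is how the paper proceeds.
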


 \begin{proof}

 By passing to an (arbitrarily) larger cardinal, we may assume that any morphism $A\rightarrow B$ with $A$ $\gamma$-presentable factors through a $\lambda$-pure monomorphism $\overline{A}\rightarrow B$ with $\overline{A}$ $\gamma$-presentable.
 Consider a diagram
     \begin{displaymath}
        \xymatrix{
         & S\ar[d]^{f}\\
         D\ar[r]^{i} & E        
        }
    \end{displaymath}
    with $D$ $\gamma$-presentable, $i$ an admissible monomorphism, and $f$ an admissible epimorphism.
  Since $i$ is an admissible monomorphism, we can pull back $i$ along $f$. The pullback of $i$, $i':D\times_{E}S\rightarrow S$ is still an admissible monomorphism. In particular, we may assume from the outset that $D=E$ and $i$ is the identity map. 
 
 Let $K=\mathrm{Ker}(f)$. Using Corollary \ref{cor:diagramofsbuboejcts}, write $K\cong\colim_{i\in\mathcal{I}}K_{i}$ as a $\gamma$-filtered colimit with each $K_{i}$ being $\gamma$-presentable, and each $K_{i}\rightarrow K$ being a $\lambda$-pure monomorphism. 

Also write $S\cong\colim_{j\in\mathcal{J}}S_{j}$ as a $\gamma$-filtered colimit, with each $S_{j}$ being $\gamma$-presentable and each $S_{j}\rightarrow S$ being a $\lambda$-pure monomorphism. By passing to a cofinal diagram we may assume that $\mathcal{I}=\mathcal{J}$, and that $K\rightarrow S$ is a colimit of maps $K_{i}\rightarrow S_{i}$. By the left obscure axiom, each $K_{i}\rightarrow S_{i}$ is an admissible monomorphism. Let $E_{i}= \mathrm{Coker}(K_{i}\rightarrow S_{i})$. Then $S_{i}\rightarrow E_{i}$ is an admissible epimorphism, and $E_{i}$ is $\gamma$-presentable. We then have $E\cong\colim_{i\in\mathcal{I}}E_{i}$ with each $E_{i}$ being $\gamma$-presentable. Since $E$ is $\lambda$-presentable, it is in particular $\gamma$-presentable. The map $\mathrm{Id}:E\rightarrow E$ must then factor through some $E_{i}$. In particular, by the right obscure axiom, $E_{i}\rightarrow E$ is an admissible epimorphism. Thus $S_{i}\rightarrow E_{i}\rightarrow E$ is an admissible epimorphism. 



     \end{proof}



\subsubsection{Pre-coherence and coherence}

Here we prove some results that compare our notion of pre-coherence with Positselski's notions of coherence \cite{positselski2025pure} (and, in the exact case, \cite{positselski2023locally}).

Before proceeding, we need one more definition.

\begin{definition}[\cite{positselski2025pure}*{After Lemma 6.1}]
   Let $\mathcal{M}$ be a class of morphisms in a category $\mathcal{C}$. We call $\mathcal{M}$ a \textit{QE-mono class} if 
   \begin{enumerate}
       \item 
       all pushouts from of morphisms in $\mathcal{M}$ exist in $\mathcal{C}$, and such pushouts are in $\mathcal{M}$,
       \item
       each morphism in $\mathcal{M}$ is the equaliser of its cokernel pair (that exists by $(i)$), and
       \item 
       the class $\mathcal{M}$ contains all identity morphisms and is stable by composition.
   \end{enumerate}
\end{definition}

Note that a QE-mono class necessarily consists of regular monomorphisms.

\begin{example}
    If $(\mathcal{C},\mathcal{Q})$ is a left totally proto-exact category, then the class $\mathbf{AdMon}$ of admissible monomorphisms is a QE-mono class. Condition $(i)$ is by the left totally proto-exact category, and condition (iii) is by the axioms of a proto-exact category. Condition (ii) follows because any admissible monomorphism has a cokernel pair, and, as an equaliser, any admissible monomorphism is also regular. 
\end{example}

\begin{definition}[\cite{positselski2025pure}*{Before Lemma 6.4}]
    Let $\mathcal{C}$ be a $\kappa$-accessible exact category, and let $\mathcal{M}$ be a QE-mono class on $\mathcal{C}$. We say that $\mathcal{M}$ is \textit{locally }$\kappa$-\textit{coherent} if any map $f\in\mathcal{M}$ is a $\kappa$-directed colimit of a diagram of admissible monomorphism $f_{i}:C_{i}\rightarrow D_{i}$ with $C_{i},D_{i},$ and $\mathrm{Coker}(f_{i})$ being $\kappa$-presentable. 
\end{definition}

\begin{definition}
    Say that a proto-exact category $(\mathcal{C},\mathcal{Q})$, with $\mathcal{C}$ $\kappa$-accessible, is locally $\kappa$-coherent, if $\mathbf{AdMon}$ is a locally $\kappa$-coherent QE-mono class. 
\end{definition}

In particular, an exact category $(\mathpzc{E},\mathcal{Q})$ with $\mathpzc{E}$ $\kappa$-accessible is locally $\kappa$-coherent in the sense of \cite{positselski2023locally}, if and only if it is locally $\kappa$-accessible as a proto-exact category in the sense above. 

\begin{proposition}\label{prop:fpreserves}
    Let $F:\mathcal{C}\rightarrow\mathcal{D}$ be an accessible functor between locally presentable categories. Then there is a regular cardinal $\kappa\ge\lambda$, such that for any regular cardinal $\mu$ with $\kappa\triangleleft\mu$, $F$ preserves $\mu$-presentable objects. 
\end{proposition}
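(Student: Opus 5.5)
The plan is to deduce this from the standard theory of accessible functors in \cite{adamek1994locally}, with the sharply-smaller relation $\kappa\triangleleft\mu$ supplying the cardinal arithmetic. Here $\lambda$ is a regular cardinal such that $\mathcal{C}$ and $\mathcal{D}$ are locally $\lambda$-presentable and $F$ preserves $\lambda$-filtered colimits. Such a $\lambda$ exists, since an accessible functor between locally presentable categories is $\lambda_0$-accessible for some $\lambda_0$, and we can enlarge it.

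\textbf{Step 1: choose $\kappa$.} The subcategory $\mathcal{C}_{\lambda}$ is essentially small. I would take $\kappa\ge\lambda$ regular with $\lambda\triangleleft\kappa$, large enough that $F(A)$ is $\kappa$-presentable for every $A\in\mathcal{C}_{\lambda}$. This is possible because there is only a set of such $A$ up to isomorphism, and every object of $\mathcal{D}$ is $\nu$-presentable for some $\nu$. I would also enlarge $\kappa$ so that $\kappa$ exceeds the cardinality of the set of morphisms of (a skeleton of) $\mathcal{C}_{\lambda}$.

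\textbf{Step 2: presentation of $\mu$-presentable objects.} Let $\mu$ be regular with $\kappa\triangleleft\mu$. Then also $\lambda\triangleleft\mu$, by transitivity of $\triangleleft$ on regular cardinals \cite{adamek1994locally}*{2.13}. By \cite{adamek1994locally}*{Remark 2.15 / Proposition 2.11}, since $\lambda\triangleleft\mu$ and $\mathcal{C}$ is locally $\lambda$-presentable, every $\mu$-presentable object $C$ is a retract of a $\lambda$-filtered colimit of a $\lambda$-filtered diagram $D:\mathcal{I}\to\mathcal{C}_{\lambda}$ with $|\mathrm{Mor}(\mathcal{I})|<\mu$. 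Equivalently, $C$ is a $\mu$-small $\lambda$-filtered colimit of $\lambda$-presentables, up to retract.

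\textbf{Step 3: apply $F$.} Since $F$ preserves $\lambda$-filtered colimits, $F(\colim_{\mathcal{I}}D)\cong\colim_{\mathcal{I}}F\circ D$. This is a colimit of fewer than $\mu$ objects, each $\kappa$-presentable and hence $\mu$-presentable (as $\kappa<\mu$). A $\mu$-small colimit of $\mu$-presentable objects is $\mu$-presentable \cite{adamek1994locally}*{Proposition 1.16}. Retracts of $\mu$-presentable objects are $\mu$-presentable, and $F$ preserves retracts, so $F(C)$ is $\mu$-presentable.

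The main obstacle is Step 2: getting the precise form of the representation of $\mu$-presentable objects as small $\lambda$-filtered colimits. This needs the hypothesis $\lambda\triangleleft\mu$, which is why the statement asks for $\kappa\triangleleft\mu$ rather than merely $\kappa\le\mu$. I would cite \cite{adamek1994locally}*{Remark 2.15} for it. If that citation does not directly apply, I would instead use the standard proof that a locally $\lambda$-presentable category is locally $\mu$-presentable for $\lambda\triangleleft\mu$, with $\mu$-presentables exactly the retracts of $\mu$-small $\lambda$-filtered colimits of $\lambda$-presentables. Alternatively, the whole statement is essentially \cite{adamek1994locally}*{Theorem 2.19}, suitably adapted, and I would fall back on quoting it.
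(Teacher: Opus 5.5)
Your argument is correct and is essentially the paper's proof: fix $\lambda$ so that $F$ is $\lambda$-accessible and both categories are locally $\lambda$-presentable, choose $\kappa$ with $\lambda\triangleleft\kappa$ so that $F$ sends $\mathcal{C}_{\lambda}$ into $\kappa$-presentable objects, then use \cite{adamek1994locally}*{Remark 2.15} to write a $\mu$-presentable object as a retract of a $\mu$-small $\lambda$-directed colimit of $\lambda$-presentables, and push this through $F$. Two side remarks, neither affecting the main argument: enlarging $\kappa$ past the size of $\mathcal{C}_{\lambda}$ is unnecessary, and falling back on \cite{adamek1994locally}*{Theorem 2.19} would only give arbitrarily large $\mu$, not every $\mu$ with $\kappa\triangleleft\mu$.
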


\begin{proof}
This argument is due to \cite{ncatcafe:mythsharp}. As we cannot find a more citable reference, we include the proof. By passing to a larger cardinal if necessary, we may assume that $F$ is $\lambda$-accessible, and both $\mathcal{C}$ and $\mathcal{D}$ are $\lambda$-presentable. Now, there is a set of isomorphism classes of objects of the form $F(X)$ for $X$ $\lambda$-presentable. There is then $\kappa\ge\lambda$ such that all objects of the form $F(X)$ are $\kappa$-presentable. By passing again to a larger cardinal using \cite{adamek1994locally}*{Corollary 2.14}, we may assume that $\lambda\triangleleft\kappa$. Now, by \cite{adamek1994locally}*{Remark 2.15}, for $\kappa\triangleleft\mu$ any $\mu$-presentable object $X$ is a retract of a $\mu$-small $\lambda$-directed colimit of $\lambda$-presentable objects. Then $F(X)$ is a retract of a $\mu$-small $\lambda$-directed colimit of $\kappa$-presentable, and hence $\mu$-presentable, objects. Thus $F(X)$ is itself $\mu$-presentable, again by \cite{adamek1994locally}*{Remark 2.15}.
\end{proof}

\begin{lemma}\label{lem:mukapppull}
    Let $\mathcal{C}$ be a locally $\lambda$-presentable category. Then there is  a cardinal $\kappa\ge\lambda$ such that for all cardinals $\mu$ with $\kappa\triangleleft\mu$, the class $\mathcal{C}_{\mu}$ of $\mu$-presentable objects is closed under finite limits. 
\end{lemma}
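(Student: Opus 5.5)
The plan is to reduce the statement to Proposition \ref{prop:fpreserves}, applied to the functors that compute finite limits. Finite limits are built from the terminal object, binary products and equalisers, so it suffices to treat these three. Since $\mathcal{C}$ is pointed in our applications (and in any case the terminal object of a locally presentable category is presentable), the terminal object is $\mu$-presentable for all sufficiently large $\mu$.

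For binary products, consider the functor $\times:\mathcal{C}\times\mathcal{C}\rightarrow\mathcal{C}$. Here $\mathcal{C}\times\mathcal{C}$ is locally $\lambda$-presentable, and its $\mu$-presentable objects are exactly the pairs $(A,B)$ with $A$ and $B$ both $\mu$-presentable. The functor $\times$ is a right adjoint between locally presentable categories, so it is accessible (\cite{adamek1994locally}*{Proposition 1.66}).

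For equalisers (or equivalently pullbacks), consider the limit functor $\lim:\mathrm{Fun}(\mathcal{P},\mathcal{C})\rightarrow\mathcal{C}$, where $\mathcal{P}$ is the finite cospan category $\bullet\rightarrow\bullet\leftarrow\bullet$. The diagram category is locally $\lambda$-presentable, and $\lim$ is right adjoint to the diagonal functor, hence accessible. Moreover, for $\mu$ large, a diagram of $\mu$-presentable objects is $\mu$-presentable in $\mathrm{Fun}(\mathcal{P},\mathcal{C})$. This holds because $\mathcal{P}$ is finite: the hom-sets of the diagram category are finite limits of hom-sets of $\mathcal{C}$, and $\mu$-filtered colimits of sets commute with finite limits. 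Since pullbacks together with the terminal object generate all finite limits, the products and equalisers can alternatively be obtained from pullbacks alone.

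Next, apply Proposition \ref{prop:fpreserves} to each of these finitely many accessible functors. This yields cardinals $\kappa_{1},\kappa_{2}$ such that, whenever $\kappa_{i}\triangleleft\mu$, the functor sends $\mu$-presentable objects to $\mu$-presentable objects. Take $\kappa$ to be a regular cardinal that is $\ge\lambda$, bounds every $\kappa_{i}$, and makes the terminal object $\kappa$-presentable. Then check that $\kappa\triangleleft\mu$ implies $\kappa_{i}\triangleleft\mu$.

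The main obstacle is this last transfer of the sharp inequality. The relation $\triangleleft$ is not monotone in general, so knowing $\kappa\ge\kappa_{i}$ and $\kappa\triangleleft\mu$ does not automatically give $\kappa_{i}\triangleleft\mu$. To get around this, I would instead choose $\kappa$ with $\kappa_{i}\triangleleft\kappa$ for every $i$, which is possible by \cite{adamek1994locally}*{Corollary 2.14}. I would then use transitivity of $\triangleleft$, which holds in the form: if $\alpha\triangleleft\beta\triangleleft\gamma$ then $\alpha\triangleleft\gamma$, since $\beta$-directed posets are $\alpha$-directed and the cofinality argument composes. If transitivity proves awkward, the fallback is to rerun the proof of Proposition \ref{prop:fpreserves} directly for the single functor $\mathrm{Fun}(\mathcal{P},\mathcal{C})\rightarrow\mathcal{C}$, so that only one cardinal $\kappa$ is needed.

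A secondary point is the identification of the $\mu$-presentable objects of $\mathrm{Fun}(\mathcal{P},\mathcal{C})$ with the pointwise $\mu$-presentable diagrams. Only the direction "pointwise implies presentable" is needed here, and it follows from the commutation of finite limits with $\mu$-filtered colimits in $\mathrm{Set}$.
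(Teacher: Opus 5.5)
Your proposal is correct and follows the paper's proof. You reduce to the terminal object plus pullbacks, note that the pullback functor on the cospan category is a right adjoint and hence accessible, and apply Proposition \ref{prop:fpreserves}. The paper avoids the cardinal-combination issue the same way your fallback does: it uses only the pullback functor and first arranges for the terminal object to be $\lambda$-presentable. So your separate treatment of products and the appeal to transitivity of $\triangleleft$ are correct but unnecessary.
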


\begin{proof}
    Again, this is well-known. See, for example, \cite{324621} and \cite{306129}. The locally finitely presentable case is a consequence of \cite{beke2012abstract}*{Proposition 4.3}. Once more we include the proof for the general case due to our inability to find a citable reference. First we may assume that the terminal object is $\lambda$-presentable. We therefore only need to consider pullbacks. Let $\mathcal{C}^{(\rightarrow\leftarrow)}$ be the category of cospans in $\mathcal{C}$. This category is locally $\lambda$-presentable, and the $\lambda$-presentable objects are those diagrams that are pointwise $\lambda$-presentable. 
    
    The pullback functor
    $$P:\mathcal{C}^{(\rightarrow\leftarrow)}\rightarrow\mathcal{C}$$
    is a right adjoint, and hence is accessible. By Proposition \ref{prop:fpreserves} there is a $\kappa\ge\lambda$ such that whenever $\kappa\triangleleft\mu$, $P$ preserves $\mu$-presentable objects. This completes the proof.

\end{proof}

\begin{proposition}
   Let $(\mathcal{C},\mathcal{Q})$ be a right totally proto-exact category that is $\lambda$-accessible, is locally $\lambda$-pre-coherent, and satisfies the strong right obscure axiom. Suppose further that the category $\mathcal{C}_{\lambda}$ is closed under finite limits in $\mathcal{C}$. Then the class of exact sequences is contained within the class of $\lambda$-filtered colimits of exact sequences between $\lambda$-presentable objects. In particular, if $\lambda$-filtered colimits are exact, then the class of exact sequences coincides with the class of $\lambda$-filtered colimits of exact sequences between $\lambda$-presentable objects.
\end{proposition}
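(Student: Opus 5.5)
Fix an exact sequence $0\rightarrow K\xrightarrow{k} S\xrightarrow{f} E\rightarrow 0$. The plan is to index approximations of it by pairs consisting of a $\lambda$-presentable object over $E$ together with a $\lambda$-presentable lift over $S$, and then show the induced diagram of small exact sequences is $\lambda$-filtered with colimit the original sequence.

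Since $\mathcal{C}$ is $\lambda$-accessible, write $E\cong\colim_{i\in I}D_{i}$ as a $\lambda$-filtered colimit of $\lambda$-presentable objects, with structure maps $d_{i}:D_{i}\rightarrow E$. For each $i$, apply local $\lambda$-pre-coherence to $d_{i}$ and the admissible epimorphism $f$. This gives $X_{i}\in\mathcal{C}_{\lambda}$, an admissible epimorphism $g_{i}:X_{i}\rightarrow D_{i}$, and a map $j_{i}:X_{i}\rightarrow S$ with $f\circ j_{i}=d_{i}\circ g_{i}$. Set $K_{i}=\mathrm{Ker}(g_{i})$. This kernel is a finite limit of $\lambda$-presentable objects, hence lies in $\mathcal{C}_{\lambda}$ by the closure hypothesis. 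So $0\rightarrow K_{i}\rightarrow X_{i}\rightarrow D_{i}\rightarrow 0$ is an exact sequence of $\lambda$-presentable objects, and it maps to the given sequence, since $j_{i}$ restricts to $K_{i}\rightarrow K$.

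To make these choices functorial, let $\mathcal{J}$ be the category of all such data: triples $(D,X,g)$ with $D,X\in\mathcal{C}_{\lambda}$, $g$ an admissible epimorphism, and compatible maps $D\rightarrow E$, $X\rightarrow S$. Morphisms are compatible pairs of maps. Next I will show $\mathcal{J}$ is $\lambda$-filtered. Given a $\lambda$-small diagram in $\mathcal{J}$:
\begin{enumerate}
\item take a $\lambda$-small colimit of the $D$'s and of the $X$'s, which remain $\lambda$-presentable;
\item use $\lambda$-filteredness of the canonical diagrams over $E$ and over $S$ to factor both through some $D'\rightarrow E$ and $X''\rightarrow S$;
\item re-apply pre-coherence to $D'\rightarrow E$ to get $X'\rightarrow D'$, and replace it by $X'\vee X''$.
\end{enumerate}
The map $X'\vee X''\rightarrow D'$ is an admissible epimorphism by the strong right obscure axiom, because it restricts on $X'$ to an admissible epimorphism. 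Equalising parallel morphisms is handled the same way, by passing further along the filtered diagrams over $E$ and $S$.

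Finally, compute the colimits over $\mathcal{J}$. The $D$-components are cofinal in the canonical diagram of $E$, so $\colim D=E$. The $X$-components are cofinal in the canonical diagram of $S$, because any $Y\rightarrow S$ with $Y\in\mathcal{C}_{\lambda}$ can be adjoined via a coproduct with $X$, again using the obscure axiom; hence $\colim X=S$. The kernels then assemble to $K$: the kernel of $S\rightarrow E$ is the $\lambda$-filtered colimit of the kernels, because $\lambda$-filtered colimits commute with finite limits in a locally $\lambda$-presentable category. This gives the first claim. The "in particular" follows because exactness of $\lambda$-filtered colimits gives the reverse inclusion.

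The main obstacle is the filteredness and cofinality bookkeeping, specifically keeping $g$ an admissible epimorphism after enlarging $X$. I expect the coproduct-plus-strong-right-obscure trick to be what handles this. Right totality may be needed if one instead enlarges $X$ by pulling back along maps $D\rightarrow D'$.
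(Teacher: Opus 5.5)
Your route differs from the paper's. The paper fixes a presentation $g\cong\colim_{\mathcal{I}}(g_{i}:Y_{i}\rightarrow Z_{i})$ of the arrow by arrows between $\lambda$-presentable objects, indexed by a directed set. It uses $Z_{i}\times_{Z}Y\cong\colim_{j\ge i}Z_{i}\times_{Z_{j}}Y_{j}$ and applies pre-coherence to $Z_{i}\times_{Z}Y\rightarrow Z_{i}$, which is admissible by right totality. Then $\lambda$-presentability of the witness and the strong right obscure axiom give some $j$ with $Z_{i}\times_{Z_{j}}Y_{j}\rightarrow Z_{i}$ admissible, and the paper restricts to the cofinal set of such pairs $(i,j)$. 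Closure of $\mathcal{C}_{\lambda}$ under finite limits makes the corrected terms and their kernels $\lambda$-presentable, and that argument only ever forms finite limits. Your category $\mathcal{J}$ of all approximations is more canonical: no presentation is chosen, and pre-coherence enters directly as the statement that every $D\rightarrow E$ in $\mathcal{C}_{\lambda}\downarrow E$ is hit. Your overall strategy is sound.

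As written, though, your filteredness argument needs things the hypotheses do not provide. Step (1) uses $\lambda$-small colimits, step (3) uses the coproduct $X'\vee X''$, and at the end you invoke local $\lambda$-presentability. The statement only assumes $\mathcal{C}$ is $\lambda$-accessible, and right totality gives finite products and pullbacks of admissible epimorphisms, not coproducts. Step (2) is also inconsistent: if $X''\rightarrow S$ is chosen independently of $D'$, there is no map $X''\rightarrow D'$ with which to define $X'\vee X''\rightarrow D'$. In the cocomplete case you should simply take $X''=\colim X_{k}$ with its induced map to $\colim D_{k}$.

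The repair uses exactly the hypothesis you noticed was idle. Choose a cocone $D'\rightarrow E$ for the $D_{k}$ in the $\lambda$-filtered canonical diagram $\mathcal{C}_{\lambda}\downarrow E$, and a pre-coherence witness $g':X'\rightarrow D'$. Form $P=D'\times_{E}S$, which exists by right totality. Objects of $\mathcal{C}_{\lambda}\downarrow P$ are exactly $\lambda$-presentable objects with compatible maps to $D'$ and $S$. So a cocone $X'''\rightarrow P$ for the $X_{k}$ together with $X'$, taken in the canonical diagram of $P$, gives a cocone in $\mathcal{J}$. The map $X'''\rightarrow D'$ is admissible by the strong right obscure axiom, since its restriction to $X'$ is $g'$.

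The same device gives the equalising conditions needed for finality of $\mathcal{J}\rightarrow\mathcal{C}_{\lambda}\downarrow E$ and $\mathcal{J}\rightarrow\mathcal{C}_{\lambda}\downarrow S$, which you only sketched. For the second projection, two maps $s,t:Y\rightarrow X_{1}$ over $S$ need not agree after composing with $g_{1}$. You must first coequalise $g_{1}s,g_{1}t$ in $\mathcal{C}_{\lambda}\downarrow E$, and then coequalise $s,t$ in $\mathcal{C}_{\lambda}\downarrow P$. For object-wise finality over $S$, note that $(Y,Y,\mathrm{Id}_{Y})$ is already an object of $\mathcal{J}$, so no coproduct is needed.

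Finally, for the kernel step without local presentability, use the restricted Yoneda embedding $\mathcal{C}\rightarrow\mathrm{Set}^{\mathcal{C}_{\lambda}^{op}}$. It is fully faithful and preserves existing limits and $\lambda$-filtered colimits, so $\lambda$-filtered colimits commute with the kernels that exist in $\mathcal{C}$.
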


\begin{proof}
Let $g:Y\rightarrow Z$ be an admissible epimorphism. We may assume that 
$$(g:Y\rightarrow Z)\cong\colim_{\mathcal{I}}(g_{i}:Y_{i}\rightarrow Z_{i})$$
where $\mathcal{I}$ is $\lambda$-filtered, and all $Y_{i},Z_{i}$ are $\lambda$-presentable. By considering a cofinal functor to $\mathcal{I}$ from a directed set, we may assume that $\mathcal{I}$ is a directed set. Now, fix some $i$. Then by $\lambda$-presentability, we have
$Z_{i}\times_{Z}Y\cong\colim_{j\ge i}Z_{i}\times_{Z_{j}}Y_{j}$. Moreover, $Z_{i}\times_{Z}Y\rightarrow Z_{i}$ is an admissible epimorphism. By $\lambda$-precoherence, there is a commutative diagram
\begin{displaymath}
    \xymatrix{
    X\ar[dr]\ar[r] & Z_{i}\times_{Z}Y\ar[d]\\
     & Z_{i}
    }
\end{displaymath}
in which $X$ is locally $\lambda$-presentable, and the map $X\rightarrow Z_{i}$ is an admissible epimorphism. By $\lambda$-presentability of $X$, the map $X\rightarrow Z_{i}$ factors through some $Z_{i}\times_{Z_{j}}Y_{j}\rightarrow Z_{i}$ that, by the strong right obscure axiom, is also an admissible epimorphism. Consider the category $\mathcal{U}$ consisting of pairs $(i,j)\in\mathcal{I}\times\mathcal{I}$, where $j\ge i$ is such that $Z_{i}\times_{Z_{j}}Y_{j}\rightarrow Z_{i}$ is an admissible epimorphism. This is $\lambda$-filtered. Indeed, let $(i_{\alpha},j_{\alpha})_{\alpha\in\lambda}$ be a collection of objects in $\mathcal{U}$. Pick an $i_{\lambda}\ge i_{\alpha}$ for all $\alpha$, and a $j_{\lambda}\ge j_{\alpha}$ for all $\alpha$. By what we have shown above, there is $j\ge\mathrm{max}\{i_{\lambda},j_{\lambda}\}$ such that the map 
$Z_{i_{\lambda}}\times_{Z_{j}}Y_{j}\rightarrow Z_{i_{\lambda}}$ is an admissible epimorphism. Then $(i_{\lambda},j)$ gives us our upper bound for all of the $(i_{\alpha},j_{\alpha})$. Consider the functors $\tilde{Y}:\mathcal{U}\rightarrow\mathcal{C}, (i,j)\mapsto Z_{i}\times_{Z_{j}}Y_{j}$, and $\tilde{Z}:\mathcal{U}\rightarrow\mathcal{C},(i,j)\mapsto Z_{i}\times_{Z_{j}}Z_{j}\cong Z_{j}$. There is an obvious natural transformation $\tilde{\eta}:\tilde{Y}\rightarrow\tilde{Z}$. Let $\mathcal{U}'\subset\mathcal{I}^{2}$ consist of pairs $(i,j)$ with $j\ge i$. Note that above we have shown that $\mathcal{U}\subset\mathcal{U}'$ is cofinal. Consider the functors $\tilde{Y}':\mathcal{U}'\rightarrow\mathcal{C}, (i,j)\mapsto Z_{i}\times_{Z_{j}}Y_{j}$ and $\tilde{Z}':\mathcal{U}'\rightarrow\mathcal{C},(i,j)\mapsto Z_{i}\times_{Z_{j}}Z_{j}\cong Z_{j}$. Again there is an obvious natural transformation $\tilde{g}':\tilde{Y}'\rightarrow\tilde{Z}'$ given by projection, and this satisfies $\colim_{\mathcal{U}'}\tilde{g}'\cong g$. Indeed, by $\lambda$-presentability, we have for fixed $i$,
$$\colim_{j}Z_{i}\times_{Z_{j}}Y_{j}\cong Z_{i}\times_{Z}Y.$$
Then, taking the colimit of $i$, gives 
$$\colim_{i}Z_{i}\times_{Z}Y\cong Y.$$
Clearly, 
$$\colim_{\mathcal{U}'i}\tilde{Z}'\cong Z,$$
and it is easy to see that the induced map on colimits is $g$.

By cofinality, we have $\colim_{\mathcal{U}}\tilde{\eta}\cong g$, and  $\tilde{\eta}$ is component-wise an admissible epimorphism between $\lambda$-presentable objects, with $\lambda$-presentable kernel.  
\end{proof}

Now, using Lemma \ref{lem:mukapppull}, we get the following.

\begin{corollary}
    Suppose that a right totally proto-exact category $(\mathcal{C},\mathcal{Q})$, with $\mathcal{C}$ being locally $\lambda$-presentable, is upwardly locally $\lambda$-pre-coherent and satisfies the right obscure axiom. Then there is $\kappa$ with $\lambda\triangleleft\kappa$, such that for any regular cardinal $\mu$ with $\kappa\triangleleft\mu$, the class of exact sequences is contained within the class of $\mu$-filtered colimits of exact sequences between $\mu$-presentable objects. In particular, if $\kappa$-filtered colimits are exact, then for all $\kappa\triangleleft\mu$, the class of exact sequences coincides with the class of $\mu$-filtered colimits of exact sequences between $\mu$-presentable objects.
\end{corollary}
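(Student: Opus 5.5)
The plan is to reduce the corollary to the preceding proposition, applied at a suitably large cardinal $\mu$, with Lemma \ref{lem:mukapppull} supplying the needed closure of $\mathcal{C}_{\mu}$ under finite limits. The cardinal $\kappa$ will be chosen as follows.
\begin{enumerate}
\item Lemma \ref{lem:mukapppull} gives $\kappa_{0}\ge\lambda$ such that $\mathcal{C}_{\mu}$ is closed under finite limits in $\mathcal{C}$ whenever $\kappa_{0}\triangleleft\mu$.
\item By \cite{adamek1994locally}*{Corollary 2.14}, enlarge $\kappa_{0}$ to some $\kappa$ with $\lambda\triangleleft\kappa$ and $\kappa_{0}\triangleleft\kappa$.
\end{enumerate}
Since the sharply-less relation is transitive, every $\mu$ with $\kappa\triangleleft\mu$ satisfies $\kappa_{0}\triangleleft\mu$ and $\lambda\triangleleft\mu$. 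For such $\mu$, the category $\mathcal{C}$ is therefore locally $\mu$-presentable, and in particular $\mu$-accessible.

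Next, for such a $\mu$ I will check the hypotheses of the previous proposition with $\mu$ in place of $\lambda$.
\begin{enumerate}
\item Right totality does not depend on the cardinal.
\item The right obscure axiom, together with finite completeness (local presentability), gives the strong right obscure axiom by the remark following its definition.
\item $\mathcal{C}_{\mu}$ is closed under finite limits by the choice of $\kappa$.
\item The remaining hypothesis is local $\mu$-pre-coherence.
\end{enumerate}
Upward local $\lambda$-pre-coherence only provides arbitrarily large cardinals at which pre-coherence holds, not all of them. So I would either restrict to those $\mu$ at which pre-coherence holds, or, preferably, show that pre-coherence propagates upward. The cleanest route is to choose $\kappa$ to be one of the pre-coherent cardinals, and beyond the $\kappa_{0}$ of step 1, using again that such cardinals are arbitrarily large.

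The main obstacle is passing from $\kappa$-pre-coherence to $\mu$-pre-coherence for every $\kappa\triangleleft\mu$. Take a square with $D$ $\mu$-presentable. By \cite{adamek1994locally}*{Remark 2.15}, $D$ is a retract of a $\mu$-small $\kappa$-directed colimit $\colim_{k}D_{k}$ of $\kappa$-presentable objects. The argument would go in three steps.
\begin{enumerate}
\item For each $k$, apply $\kappa$-pre-coherence to $D_{k}\rightarrow D\rightarrow E$ and $f$, obtaining $X_{k}\rightarrow S$ with $X_{k}$ $\kappa$-presentable and $X_{k}\rightarrow D_{k}$ an admissible epimorphism.
\item Form $X=\coprod_{k}X_{k}$. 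This is a $\mu$-small coproduct, hence $\mu$-presentable, and it comes with the induced maps $X\rightarrow S$ and $X\rightarrow\coprod_{k}D_{k}\rightarrow D$.
\item Show $X\rightarrow D$ is an admissible epimorphism. The map $\coprod_{k}D_{k}\rightarrow D$ hits every $\kappa$-presentable generator, so by Lemma \ref{lem:adepicheckgen} (using an admissible generating set of $\kappa$-presentables, or else the strong right obscure axiom through a splitting of the retraction) it is an admissible epimorphism. The composite $X\rightarrow D$ then factors maps that are admissible epimorphisms, and the strong right obscure axiom finishes.
\end{enumerate}
If this propagation fails in general, I would weaken the statement to hold for all sufficiently large pre-coherent $\mu$ with $\kappa\triangleleft\mu$, which still suffices for the intended applications.

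With the hypotheses in place, the previous proposition yields that every exact sequence is a $\mu$-filtered colimit of exact sequences between $\mu$-presentable objects. For the final clause, suppose $\kappa$-filtered colimits are exact. Since $\kappa\triangleleft\mu$ implies $\kappa\le\mu$, every $\mu$-filtered diagram is $\kappa$-filtered, so $\mu$-filtered colimits of exact sequences are exact; this gives the reverse inclusion and hence equality.
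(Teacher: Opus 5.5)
Your reduction is the paper's. There the corollary is read off in one line from the preceding proposition applied at $\mu$, with Lemma~\ref{lem:mukapppull} supplying closure of $\mathcal{C}_{\mu}$ under finite limits. Your handling of the right obscure axiom (strong form, by finite completeness) and of the final clause is fine. You correctly notice that upward local $\lambda$-pre-coherence only gives local $\mu$-pre-coherence for an unbounded class of $\mu$, not for every $\mu$ with $\kappa\triangleleft\mu$. The paper gives no argument for this point and tacitly uses pre-coherence at each such $\mu$. So the one ingredient you add beyond the paper is the propagation from $\kappa$ to $\mu$, and that is where your proposal has a genuine gap.

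Step 3 of the propagation does not follow from the hypotheses.
\begin{enumerate}
\item $\coprod_{k}D_{k}\rightarrow D$ need not be an admissible epimorphism. Lemma~\ref{lem:adepicheckgen} requires an admissible generating class, here of $\kappa$-presentable objects, and none is assumed; local presentability only makes the canonical maps $\coprod G\rightarrow C$ from $\kappa$-presentables regular epimorphisms. The retraction only shows that $r\colon\colim_{k}D_{k}\rightarrow D$ is admissible, being split. The canonical map $\coprod_{k}D_{k}\rightarrow\colim_{k}D_{k}$ is a coequaliser of a pair, which in a pointed non-additive category need not be a cokernel, let alone admissible. To apply the strong right obscure axiom to $\coprod_{k}D_{k}\rightarrow D$ you need a map into $\coprod_{k}D_{k}$ whose composite to $D$ is an admissible epimorphism. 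The section $D\rightarrow\colim_{k}D_{k}$ lands in the colimit, not the coproduct, so it does not supply one.
\item Even granting (1), $\coprod_{k}X_{k}\rightarrow\coprod_{k}D_{k}$ is a coproduct of admissible epimorphisms. No axiom of a (right totally) proto-exact category makes such coproducts admissible. The obscure axiom cannot substitute, since no single $X_{k}$ maps admissibly onto $D$.
\end{enumerate}
Your argument therefore establishes only your fallback: the conclusion for those $\mu$ with $\kappa\triangleleft\mu$ at which local $\mu$-pre-coherence happens to hold, which is weaker than the statement. Closing the gap needs extra input. One option is an admissible generating set of $\kappa$-presentable objects together with admissibility of $\mu$-small coproducts of admissible epimorphisms. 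Another is a pre-coherence hypothesis that holds for all $\mu$ sharply above a fixed cardinal.
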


In particular, if $(\mathcal{C},\mathcal{Q})$ is a totally proto-exact category with $\mathcal{C}$ locally $\lambda$-presentable, and is upwardly locally $\lambda$-pre-coherent, then there are arbitrarily large $\kappa\ge\lambda$ such that it is locally $\kappa$-coherent.

\comment{
\subsection{Monoidal proto-exact categories}

\begin{definition}
    A \textit{monoidal proto-exact category} is a proto-exact category $\mathcal{C}$ equipped with a closed symmetric monoidal structure $(\otimes,\underline{\mathrm{Hom}},\mathbb{I})$.
\end{definition}

\begin{definition}
    An object $F$ of $\mathcal{C}$ is said to be \textit{flat} if the functor $F\otimes(-):\mathcal{C}\rightarrow\mathcal{C}$ is exact. 
\end{definition}
}

\subsection{Deconstructibility from coherence}

Now we use coherence techniques to establish deconstructibility of certain classes of admissible monomorphisms. Let $(\mathcal{C},\mathcal{Q})$ be a purely locally $\lambda$-presentable proto-exact category, and $\mathcal{A}$ a class of objects in $\mathcal{C}$.

\begin{definition}
\begin{enumerate}
\item
Let $\mathcal{S}$ be a class of admissible monomorphisms in $(\mathcal{C},\mathcal{Q})$. We say that a class $\mathcal{A}$ of objects in $\mathcal{C}$ is $\mathcal{S}$-\textit{subobject stable} if whenever $M\rightarrow N$ is a map in $\mathcal{S}$ with $N\in\mathcal{A}$, then $M$ and $N\big\slash M$ are in $\mathcal{A}$.
\item
Let $(\mathcal{C},\mathcal{Q})$ be a purely $\lambda$-accessible proto-exact category. A class of objects $\mathcal{A}$ in $\mathcal{C}$ is said to be $\lambda$-\textit{pure subobject stable} if it is $\mathbf{PureMon}_{\lambda}$-subobject stable.
\end{enumerate}
\end{definition}


The following result can be seen as a generalisation of \cite{Gillespie2} Proposition 4.9. Compare with a similar result, \cite{MR3649814} Lemma 4.14, in the abelian case (the proof uses some similar ideas, but is different).

\begin{lemma}\label{lem:decomp}
    Let $(\mathcal{C},\mathcal{Q})$ be a purely $\lambda$-accessible proto-exact category. Let $\mathcal{A},\mathcal{B}$ be classes of objects in $\mathcal{C}$ such that
    \begin{enumerate}
\item 
$\mathcal{A}$ is $\lambda$-pure subobject stable, and
\item 
$\mathcal{A}$ is $\mathcal{B}$-unionable.
        
    \end{enumerate}Let $G$ be an epimorphic generator that is $\kappa$-presentable, and let $\gamma\ge\mathrm{max}\{\lambda,\kappa\}$ be such that any morphism $X\rightarrow Y$ with $X$ $\gamma$-presentable factors through a $\lambda$-pure monomorphism $\overline{X}\rightarrow Y$, with $\overline{X}$ $\gamma$-presentable. Let $N\rightarrow M\in\mathbf{AdMon}_{\mathcal{A}}$, with $M\in\mathcal{B}$. Then there is a transfinite sequence
    $$M_{\alpha\in A}\rightarrow M$$
    in the overcategory $\mathcal{C}_{\big\slash M}$
    such that:
    \begin{enumerate}
    \item 
    $M_{0}=N$,
    \item 
    for each ordinal $\alpha$, $M_{\alpha}\rightarrow M$ is an admissible monomorphism with cokernel in $\mathcal{A}$,
    \item 
     for any $\beta\in A$, $M_{\beta}\rightarrow M_{\beta+1}$ is an admissible monomorphism with cokernel being $\gamma$-presentable and in $\mathcal{A}$,
    \item 
    and $\mathrm{colim}_{\alpha\in A}M_{\alpha}\rightarrow M$ is an isomorphism.
    \end{enumerate}
\end{lemma}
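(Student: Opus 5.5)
We construct the filtration by transfinite recursion in the overcategory $\mathcal{C}_{\big\slash M}$. Set $M_{0}=N$. At a limit ordinal $\alpha$ set $M_{\alpha}=\colim_{\beta<\alpha}M_{\beta}$. By the $\mathcal{B}$-unionability of $\mathcal{A}$, applied to the sequence $(M_{\beta})_{\beta<\alpha}$ over the fixed object $M\in\mathcal{B}$, the induced map $M_{\alpha}\rightarrow M$ is an admissible monomorphism with cokernel $\colim_{\beta<\alpha}M\big\slash M_{\beta}\in\mathcal{A}$. The inductive hypotheses (2) and (3) are exactly what unionability requires. The recursion stops at the first $\alpha$ with $M_{\alpha}\rightarrow M$ an isomorphism. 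Since the $M_{\alpha}$ form a strictly increasing chain of subobjects of $M$, this happens at some ordinal bounded by the cardinality of the subobject lattice of $M$, which is a set because $\mathcal{C}$ is accessible.

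The successor step is the heart of the argument. Suppose $M_{\beta}\rightarrow M$ lies in $\mathbf{AdMon}_{\mathcal{A}}$ and is not an isomorphism, and put $Q=M\big\slash M_{\beta}\in\mathcal{A}$. We have $Q\not\cong 0$, since otherwise $M_{\beta}\rightarrow M$ would be an admissible monomorphism and an epimorphism, hence an isomorphism by Proposition~\ref{prop:admonepiisio}.

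We first find a nonzero map $G\rightarrow Q$, which exists because $G$ is an epimorphic generator. Since $G$ is $\kappa$-presentable and hence $\gamma$-presentable, the choice of $\gamma$ lets us factor this map through a $\lambda$-pure monomorphism $\overline{Q}\rightarrow Q$ with $\overline{Q}$ $\gamma$-presentable and $\overline{Q}\not\cong 0$. Because $(\mathcal{C},\mathcal{Q})$ is purely $\lambda$-accessible, $\overline{Q}\rightarrow Q$ is admissible. By $\lambda$-pure subobject stability, both $\overline{Q}$ and $Q\big\slash\overline{Q}$ lie in $\mathcal{A}$.

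Next we pull back along the admissible epimorphism $M\rightarrow Q$ to define $M_{\beta+1}=M\times_{Q}\overline{Q}$. By Lemma~\ref{lem:pullbacker}, the map $M_{\beta+1}\rightarrow M$ is an admissible monomorphism, and its cokernel is identified with $\mathrm{Coker}(\overline{Q}\rightarrow Q)=Q\big\slash\overline{Q}\in\mathcal{A}$. By Proposition~\ref{prop:pullker}, the map $M_{\beta}\rightarrow M_{\beta+1}$ is the kernel inclusion of the admissible epimorphism $M_{\beta+1}\rightarrow\overline{Q}$. It is therefore an admissible monomorphism with cokernel $\overline{Q}$, which is $\gamma$-presentable, nonzero, and in $\mathcal{A}$. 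Thus conditions (2) and (3) hold at $\beta+1$, and $M_{\beta+1}$ is a strictly larger subobject of $M$.

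It remains to verify (4) at the terminal stage and to check that the limit stages do not stall. At a limit stage the chain might already exhaust $M$, and then we stop. Otherwise its colimit is a proper admissible subobject of $M$ with cokernel in $\mathcal{A}$, and the successor construction applies again. Properness of each successor step, together with the bound on the size of the subobject lattice of $M$, forces the process to terminate at some ordinal $A$ with $\colim_{\alpha\in A}M_{\alpha}\rightarrow M$ an isomorphism.

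The main obstacle is the successor step: we must ensure that the chosen $\gamma$-presentable piece $\overline{Q}$ is nonzero and sits in $Q$ via a pure, hence admissible, monomorphism. We also need the bicartesian identification of Lemma~\ref{lem:pullbacker} to transport the cokernels correctly in the non-additive setting, without a $3\times3$ lemma.
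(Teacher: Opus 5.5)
Your proof is correct. Its successor and limit steps are the paper's. You factor a map from $G$ into $M\big\slash M_{\beta}$ through a $\gamma$-presentable $\lambda$-pure subobject. You pull back along $M\rightarrow M\big\slash M_{\beta}$ using Lemma~\ref{lem:pullbacker} and Proposition~\ref{prop:pullker}. At limit stages you pass to colimits by $\mathcal{B}$-unionability.

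Where you differ is in how exhaustion is guaranteed. The paper well-orders $\mathrm{Hom}(G,M)$ and spends each successor step on the next map $G\rightarrow M$, so that every such map factors through some $M_{\alpha}$. At the end, unionability makes $\colim_{\alpha}M_{\alpha}\rightarrow M$ an admissible monomorphism, the generator property makes it an epimorphism, and Proposition~\ref{prop:admonepiisio} makes it an isomorphism. This gives an explicit bound on the length of the filtration.

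You instead use $G$ only to see that a nonzero quotient receives a nonzero map from $G$. You make every successor step proper and stop at the first isomorphism, so (4) is immediate. Termination then rests on a Hartogs-type argument plus well-poweredness of accessible categories. That fact is valid: the canonical functor $\mathcal{C}\rightarrow\mathrm{Set}^{\mathcal{C}_{\lambda}^{op}}$ is fully faithful and preserves monomorphisms. Your route avoids the enumeration bookkeeping altogether. As written, the paper's version only processes $f_{\beta+1}$ at stage $\beta+1$, so $f_{0}$ and the maps indexed by limit ordinals are never handled; this is easily fixed by reindexing.

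You can even drop well-poweredness and recover the paper's bound. The subsets $\mathrm{Hom}(G,M_{\alpha})\subseteq\mathrm{Hom}(G,M)$ strictly increase along your chain, because an admissible monomorphism that is surjective on $\mathrm{Hom}(G,-)$ is an epimorphism, hence an isomorphism.

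One small correction. In the non-additive setting, $Q\cong 0$ does not by itself make $M_{\beta}\rightarrow M$ an epimorphism. Argue instead that $M_{\beta}\rightarrow M$ is the kernel of $M\rightarrow Q$, which is $\mathrm{Id}_{M}$ when $Q\cong 0$.
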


\begin{proof}
    Consider the set $\mathrm{Hom}(G,M)$. Fix a well-order $A$ on $\mathrm{Hom}(G,M)$. For $\alpha\in A$ write $X_{\alpha}=\bigcup_{\beta\le\alpha}\{f_{\beta}\}$ so that $X_{\alpha+1}\setminus X_{\alpha}=\{f_{\alpha+1}\}$ is a singleton. Then $\mathrm{Hom}(G,M)=\bigcup_{\alpha\in A}X_{\alpha}$. We construct each $M_{\alpha}$ by transfinite induction on $A$. Define $M_{0}=N$. Let $\alpha\in A$ and suppose that for all $\beta<\alpha$, $M_{\beta}\rightarrow M$ has been constructed.

    Suppose $\alpha=\beta+1$ is a successor ordinal. Consider $M\big\slash M_{\beta}$ and write $\{f_{\alpha}\}= X_{\alpha}\setminus X_{\beta}\}$. The map $\overline{f}_{\alpha}:G\rightarrow M\rightarrow M\big\slash M_{\beta}$ given by composing $f_{\alpha}:G\rightarrow M$ with the quotient map factors through some $\overline{M_{\alpha}}\rightarrow M\big\slash M_{\beta}$ with $\overline{M_{\alpha}}\rightarrow M\big\slash M_{\beta}$ being a $\lambda$-pure monomorphism and $\overline{M_{\alpha}}$ being $\gamma$-presentable. Now $M\big\slash M_{\beta}$ is in $\mathcal{A}$, and since $\overline{M_{\alpha}}\rightarrow M\big\slash M_{\beta}$ is $\lambda$-pure, both $\overline{M_{\alpha}}$ and $(M\big\slash M_{\beta})\big\slash\overline{M_{\alpha}}$ are in $\mathcal{A}$. Consider the diagram
\begin{displaymath}
\xymatrix{
M_{\beta}\ar[d]\ar[r] & M_{\alpha}\ar[d]\ar[r] & \overline{M_{\alpha}}\ar[d]\\
M_{\beta}\ar[r] & M\ar[r] & M\big\slash M_{\beta},
}
\end{displaymath}
where $M_{\alpha}$ is defined such that the right-hand square is a pullback, and we are implicitly identifying the kernel of the top-right map with $M_{\beta}$ using Proposition \ref{prop:pullker}. In particular, both rows are exact. Then $M_{\beta}\rightarrow M_{\alpha}$ is an admissible monomorphism with cokernel $\overline{M_{\alpha}}\in\mathcal{A}$. Moreover, $M_{\alpha}\rightarrow M$ is an admissible monomorphism, as it is the pullback of an admissible monomorphism along an admissible epimorphism. Finally, the right-hand square is also a pushout by Lemma \ref{lem:pullbacker}, so we have $(M\big\slash M_{\alpha})\cong (M\big\slash M_{\beta})\big\slash\overline{M_{\alpha}}$ by Proposition \ref{prop:pullker}. Hence $(M\big\slash M_{\alpha})$ is in $\mathcal{A}$. This concludes the successor case. 

Next suppose that $\alpha$ is a limit ordinal. Define $M_{\alpha}=\colim_{\beta<\alpha}M_{\beta}$. By $\mathcal{B}$-unionability of $\mathcal{A}$, and transfinite induction, we find that $M_{\alpha}\rightarrow M$ is in $\mathbf{AdMon}_{\mathcal{A}}$. For the same reason, the map $\colim_{\alpha\in A}M_{\alpha}\rightarrow M$ is an admissible monomorphism with cokernel in $\mathcal{A}$. Finally, 
$$\mathrm{Hom}(G,\colim_{\alpha\in A} M_{\alpha})\rightarrow\mathrm{Hom}(G,M)$$
is an epimorphism. Indeed, let $f\in\mathrm{Hom}(G,M)$. Then $f=f_{\beta}$ for some $\beta$. By construction the following diagram commutes
\begin{displaymath}
    \xymatrix{
    G\ar[d]^{f_{\beta}}\ar[r] & \overline{M}_{\alpha}\ar[d]\\
    M\ar[r] & M\big\slash M_{\beta}
    }
\end{displaymath}
commutes, and therefore $f_{\beta}:G\rightarrow M$ factors through $M_{\alpha}\cong \overline{M}_{\alpha}\times_{M\big\slash M_{\beta}}M$. Hence $\colim_{\alpha\in A}M_{\alpha}\rightarrow M$ is an epimorphism. It is therefore an isomorphism.

\end{proof}


For a pair of cardinals $(\kappa,\gamma)$, and a class of objects $\mathcal{A}$ in a proto-exact category $(\mathcal{C},\mathcal{Q})$, let $\mathbf{I}^{\kappa,\gamma}_{\mathcal{A}}$ denote the class of admissible monomorphisms $i:X\rightarrow Y$ with $Y\in\mathcal{C}_{\kappa}$, and $\mathrm{Coker}(i)\in\mathcal{A}\cap\mathcal{C}_{\gamma}$. Note that, up to isomorphism, there is a set of such morphisms. 

\begin{proposition}\label{prop:pushoutadmonA}
     Let $(\mathcal{C},\mathcal{Q})$ be a purely $\lambda$-accessible exact category that is upwardly $(\lambda,\lambda)$-subobject pre-coherent. Let $\mathcal{A}$ be a $\lambda$-pure subobject stable class closed under transfinite extensions and unionable. Then there are arbitrarily large cardinals $\gamma\ge\lambda$ and $\kappa\ge\gamma\ge\lambda$ such that any map in $\mathbf{AdMon}_{\mathcal{A}}$ may be written as a transfinite composition of pushouts of maps in $\mathbf{I}^{\kappa,\gamma}_{\mathcal{A}}$. In fact in our construction of $(\kappa,\gamma)$, we have $\mathbf{AdMon}_{\mathcal{A}}=\mathbf{I}^{\kappa,\gamma}_{\mathcal{A}}$-cof, where we use the notation of \cite{hoveybook}*{Section 2.1.2}.
     
     
\end{proposition}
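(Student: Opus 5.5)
The natural route is to combine Lemma \ref{lem:decomp} with upward subobject pre-coherence. Lemma \ref{lem:decomp} writes any $N\rightarrow M$ in $\mathbf{AdMon}_{\mathcal{A}}$ as a transfinite composition of admissible monomorphisms $M_{\beta}\rightarrow M_{\beta+1}$ whose cokernels are $\gamma$-presentable and lie in $\mathcal{A}$. Its hypotheses hold here: $\mathcal{A}$ is $\lambda$-pure subobject stable, and it is unionable, hence $\mathrm{Ob}(\mathcal{C})$-unionable. For the generator, a locally presentable category has a set of presentable generators, whose coproduct $G$ is a $\kappa_{0}$-presentable epimorphic generator. We then choose $\gamma\ge\max\{\lambda,\kappa_{0}\}$ using Theorem \ref{thm:deconstructpres}, arranged large enough that the category is also locally $(\kappa,\gamma)$-subobject pre-coherent for some $\kappa\ge\gamma$. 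This is where ``upwardly'' gets used; possibly we need $\gamma$ to be one of the arbitrarily large cardinals appearing in that definition.

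It then remains to show that each step $M_{\beta}\rightarrow M_{\beta+1}$, which has small cokernel but possibly large domain, is a pushout of a map in $\mathbf{I}^{\kappa,\gamma}_{\mathcal{A}}$.
\begin{enumerate}
\item Set $C=\mathrm{Coker}(M_{\beta}\rightarrow M_{\beta+1})$, which is $\gamma$-presentable. Apply subobject pre-coherence to the admissible epimorphism $M_{\beta+1}\rightarrow C$ and the identity $C\rightarrow C$. This gives an admissible monomorphism $X\rightarrow M_{\beta+1}$ with $X$ $\kappa$-presentable and $X\rightarrow C$ an admissible epimorphism.
\item Let $K=\mathrm{Ker}(X\rightarrow C)$. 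Then $K\rightarrow X$ lies in $\mathbf{I}^{\kappa,\gamma}_{\mathcal{A}}$, since $X\in\mathcal{C}_{\kappa}$ and its cokernel is $C\in\mathcal{A}\cap\mathcal{C}_{\gamma}$.
\item By Proposition \ref{prop:pullker}, applied to the pullback of $M_{\beta}\rightarrow M_{\beta+1}$ along $X\rightarrow M_{\beta+1}$, we get $K\cong X\times_{M_{\beta+1}}M_{\beta}$.
\item The square $K\rightarrow X$, $M_{\beta}\rightarrow M_{\beta+1}$ is then a pushout.
\end{enumerate}
For step (4), I would form the pushout $P=X\vee_{K}M_{\beta}$. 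Since $K\rightarrow M_{\beta}$ is not an admissible epimorphism, the pushout axiom does not apply directly, so I would use that we are in an \emph{exact} category, where the total axioms give that $M_{\beta}\rightarrow P$ is an admissible monomorphism with cokernel $C$. The comparison $P\rightarrow M_{\beta+1}$ is a morphism of short exact sequences that is the identity on both $M_{\beta}$ and $C$. Hence it is an isomorphism by the five lemma in exact categories. This additivity is exactly why the statement assumes an exact category.

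The final clause, $\mathbf{AdMon}_{\mathcal{A}}=\mathbf{I}^{\kappa,\gamma}_{\mathcal{A}}\text{-cof}$, follows from two inclusions.
\begin{enumerate}
\item[$\subseteq$] By the above, every map in $\mathbf{AdMon}_{\mathcal{A}}$ is a transfinite composition of pushouts of maps in $\mathbf{I}^{\kappa,\gamma}_{\mathcal{A}}$, so it lies in the cofibrations.
\item[$\supseteq$] Pushouts of maps in $\mathbf{AdMon}_{\mathcal{A}}$ stay in $\mathbf{AdMon}_{\mathcal{A}}$, since an exact category is left totally proto-exact and Proposition \ref{prop:pullker} preserves cokernels. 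Transfinite compositions stay in it because $\mathcal{A}$ is closed under transfinite extensions, which I read as meaning transfinite compositions of maps in $\mathbf{AdMon}_{\mathcal{A}}$ are in $\mathbf{AdMon}_{\mathcal{A}}$. Retracts stay in it by the retract proposition, using the obscure axiom, which exact categories satisfy. This last point needs $\mathcal{A}$ closed under retracts; that should follow from closure under transfinite extensions or unionability, or be obtainable from pure subobject stability, since split monomorphisms are $\lambda$-pure.
\end{enumerate}

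I expect the main obstacle to be the cardinal bookkeeping: choosing $\gamma$ simultaneously to satisfy the factorization property of Lemma \ref{lem:decomp} and to be one of the cardinals at which $(\kappa,\gamma)$-subobject pre-coherence holds. The intended remedy is to take both sufficiently large. Theorem \ref{thm:deconstructpres} supplies arbitrarily large good $\gamma$, and upward pre-coherence supplies arbitrarily large cardinals, so the task is to pick a common one. If the two families need not meet, I would instead use pre-coherence at some $\gamma'\ge\gamma$, with $C$ $\gamma'$-presentable, and put $\gamma'$ in the index of $\mathbf{I}$.
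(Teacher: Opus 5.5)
Your proposal is correct and follows the paper's proof essentially step for step. You use Lemma \ref{lem:decomp} to reduce to steps with $\gamma$-presentable cokernel in $\mathcal{A}$, realise each step as a pushout of the kernel map $K\to X$ in $\mathbf{I}^{\kappa,\gamma}_{\mathcal{A}}$ obtained from subobject pre-coherence, and get the reverse inclusion from closure of $\mathbf{AdMon}_{\mathcal{A}}$ under pushouts, transfinite compositions and retracts. Your five-lemma argument that the square is a pushout is in fact the apt one: the paper cites Lemma \ref{lem:pullbacker}, which concerns pulling back along an admissible epimorphism, whereas $X\to M_{\beta+1}$ is an admissible monomorphism, so additivity is genuinely what is used here. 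Likewise, your remark that $\mathcal{A}$ is closed under retracts because split monomorphisms are $\lambda$-pure fills a step the paper leaves implicit.
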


\begin{proof}
Let $A\rightarrow B$ be an arbitrary morphism in $\mathbf{AdMon}_{\mathcal{A}}$. By Lemma \ref{lem:decomp}, there are arbitrarily large $\gamma\ge\lambda$ such that we may write $B\cong\colim_{\alpha<\Gamma}B_{\alpha}$ with $B_{0}=A$, where $B_{\alpha}\rightarrow B_{\alpha+1}\in\mathbf{AdMon}_{\mathcal{A}}$, and each $B_{\alpha+1}\big\slash B_{\alpha}$ is $\gamma$-presentable and in $\mathcal{A}$.

   Let $C\rightarrow D$ be an admissible monomorphism with $D\big\slash C$ in $\mathcal{A}$, and $D\big\slash C\in\mathcal{C}_{\gamma}$. By upward $(\lambda,\lambda)$-sub-object pre-coherence, there are arbitrarily large $\kappa\ge\gamma$ such that there exists
   $D'\rightarrow D$ with $D'\in\mathcal{C}_{\kappa}$ such that $D'\rightarrow D\rightarrow D\big\slash C$ is an admissible epimorphism. Let $C'=\mathrm{Ker}(D'\rightarrow D\big\slash C)$. Then $C'\rightarrow D'$ is in $\mathbf{I}^{\kappa,\gamma}_{\mathcal{A}}$. Note that
   
\begin{displaymath}
    \xymatrix{
    C'\ar[d]\ar[r] & C\ar[d]\\
    D'\ar[r] & D
    }
\end{displaymath}
is a pushout diagram by Lemma \ref{lem:pullbacker}. Since $\mathbf{I}^{\kappa,\gamma}_{\mathcal{A}}$-cof is stable by pushouts and contains $\mathbf{I}^{\kappa,\gamma}$, we find that $C\rightarrow D$ is in $\mathbf{I}^{\kappa,\gamma}$-cof.



 Thus $A\rightarrow B$ is a transfinite extension of maps in $\mathbf{I}_{\mathcal{A}}^{\gamma,\lambda}$-cof, and hence is in  $\mathbf{I}_{\mathcal{A}}^{\gamma,\lambda}$-cof.

Since $(\mathcal{C},\mathcal{Q})$ is weakly $\mathbf{AdMon}_{\mathcal{A}}$-elementary and $\mathcal{A}$ is closed under transfinite extensions, we clearly have $\mathbf{I}_{\mathcal{A}}^{\gamma,\lambda}$, and hence $\mathbf{I}_{\mathcal{A}}^{\gamma,\lambda}$-cell, is contained in $\mathbf{AdMon}_{\mathcal{A}}$. Now let $f\in\mathbf{I}^{\gamma,\lambda}_{\mathcal{A}}$-cof. By \cite{hoveybook} Corollary 2.1.15, $f$ is a retract of a map in $\mathbf{I}_{\mathcal{A}}^{\gamma,\lambda}$-cell, and hence is in $\mathbf{AdMon}_{\mathcal{A}}$.
\end{proof}


\begin{corollary}\label{cor:deconAcoversenvs}
      Let $(\mathcal{C},\mathcal{Q})$ be a purely locally $\lambda$-presentable, totally proto-exact category that satisfies the obscure axiom.  Let $\mathcal{A}$ be a $\lambda$-pure subobject stable class closed under transfinite extensions and unionable. Then $\mathcal{A}$ is presentably deconstructible in itself.
      
In particular for any object $X$ of $\mathcal{C}$ there exists 
      \begin{enumerate}
            \item 
            an $\mathcal{A}^{\perp}$-special pre-envelope $X\rightarrow B$ that is an admissible monomorphism, and
            \item
               an $\mathcal{A}$-cover $A\rightarrow X$ that, if $\mathcal{A}$ contains a collection of generators, is an admissible epimorphism.
      \end{enumerate}
\end{corollary}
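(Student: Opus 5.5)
The plan is to reduce the statement to Proposition \ref{prop:pushoutadmonA} and then to Proposition \ref{pgrop:precoverenv}. The hypotheses of Proposition \ref{prop:pushoutadmonA} are: purely $\lambda$-accessible, upwardly $(\lambda,\lambda)$-subobject pre-coherent, and $\mathcal{A}$ $\lambda$-pure subobject stable, closed under transfinite extensions and unionable. The first holds since $\mathcal{C}$ is purely locally $\lambda$-presentable, and in particular $\lambda$-accessible with $\lambda$-pure monomorphisms admissible. The conditions on $\mathcal{A}$ are assumed. Only upward subobject pre-coherence needs an argument.

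For that, I would invoke Lemma \ref{lem:subepi}. Since $(\mathcal{C},\mathcal{Q})$ is purely $\lambda$-accessible and satisfies the obscure axiom, it satisfies the strong obscure axiom, because $\mathcal{C}$ is finitely complete and cocomplete, being locally presentable. The lemma therefore gives some $\gamma_0\ge\lambda$ such that $(\mathcal{C},\mathcal{Q})$ is upwardly $\gamma_0$-subobject pre-coherent. The category is also purely $\gamma_0$-accessible, as $\lambda$-pure monomorphisms are $\gamma_0$-pure in a locally $\lambda$-presentable category. So we may replace $\lambda$ by $\gamma_0$ throughout.

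The other hypotheses must survive this change of cardinal. $\lambda$-pure subobject stability passes to $\gamma_0$-pure subobject stability because every $\gamma_0$-pure monomorphism is $\lambda$-pure. Unionability and closure under transfinite extensions do not mention the cardinal. This cardinal bookkeeping is where I expect the main, if mild, obstacle. The proof of Proposition \ref{prop:pushoutadmonA} also uses weak $\mathbf{AdMon}_{\mathcal{A}}$-elementarity, and I would deduce this from unionability together with closure under transfinite extensions, as in the inclusion $\mathbf{I}$-cell $\subset\mathbf{AdMon}_{\mathcal{A}}$. Total proto-exactness ensures that $\mathbf{AdMon}_{\mathcal{A}}$ is closed under pushouts, by the earlier proposition via Proposition \ref{prop:pullker}.

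Proposition \ref{prop:pushoutadmonA} then yields a set $\tilde{\mathcal{T}}=\mathbf{I}^{\kappa,\gamma}_{\mathcal{A}}$ such that every map in $\mathbf{AdMon}_{\mathcal{A}}$ is a transfinite composition of pushouts of maps in $\tilde{\mathcal{T}}$. Hence $\mathbf{AdMon}_{\mathcal{A}}$ is deconstructible in itself, and presentably so because $\mathcal{C}$ is locally presentable and every object is small. Finally, Proposition \ref{pgrop:precoverenv}(1) gives the $\mathcal{A}^{\perp}$-special pre-envelope. Proposition \ref{pgrop:precoverenv}(2) gives the $\mathcal{A}$-cover, using the strong right obscure axiom, and shows via Lemma \ref{lem:adepicheckgen} that it is an admissible epimorphism when $\mathcal{A}$ contains generators.
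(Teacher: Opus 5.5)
Your proposal is correct and follows the paper's own proof. The paper likewise obtains upward subobject pre-coherence from Lemma \ref{lem:subepi}, using that the obscure axiom is strong because $\mathcal{C}$ is complete and cocomplete; it then gets deconstructibility from Proposition \ref{prop:pushoutadmonA}, whose proof runs through Lemma \ref{lem:decomp}, and concludes with Proposition \ref{pgrop:precoverenv}.

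There are three small corrections. First, $\lambda$-pure monomorphisms need not be $\gamma_0$-pure. Only the reverse inclusion holds, and it is that inclusion which makes $\mathcal{C}$ purely $\gamma_0$-accessible.

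Second, the change of cardinal is unnecessary anyway. Upward $\gamma_0$-subobject pre-coherence already yields arbitrarily large $\kappa\ge\lambda$ with local $(\kappa,\kappa)$-subobject pre-coherence, which is exactly upward $(\lambda,\lambda)$-subobject pre-coherence.

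Third, you need not derive weak elementarity, and it is not clear that unionability gives it. All that is used is that $\mathbf{I}^{\kappa,\gamma}_{\mathcal{A}}$-cell complexes lie in $\mathbf{AdMon}_{\mathcal{A}}$, and this follows from closure under transfinite extensions together with closure under pushouts.
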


\begin{proof}
    Deconstructibility of $\mathcal{A}$ follows from Lemma \ref{lem:subepi}, Lemma \ref{lem:decomp}, and Proposition \ref{prop:pushoutadmonA}. The claim regarding covers and envelopes follows from Proposition \ref{pgrop:precoverenv}.
\end{proof}


By applying the previous result to the class $\mathrm{Ob}(\mathcal{C})$ of all objects of $\mathcal{C}$, we get the following.

\begin{corollary}\label{cor:enoghinj}
      Let $(\mathcal{C},\mathcal{Q})$ be a purely locally $\lambda$-presentable, totally proto-exact category that satisfies the obscure axiom and is weakly elementary. Then $\mathcal{C}$ has enough injectives.
\end{corollary}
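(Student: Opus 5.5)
We apply Corollary \ref{cor:deconAcoversenvs}, part (1), with $\mathcal{A}=\mathrm{Ob}(\mathcal{C})$, the class of all objects. Then $\mathbf{AdMon}_{\mathcal{A}}$ is the class of all admissible monomorphisms. The orthogonal class $\mathcal{A}^{\perp}$ consists of objects $K$ such that $K\rightarrow 0$ has the right lifting property against every admissible monomorphism, i.e. precisely the injective objects. An $\mathcal{A}^{\perp}$-special pre-envelope $X\rightarrow B$ is in particular an admissible monomorphism into an injective object, which is exactly what ``enough injectives'' requires.

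So the work is to verify the hypotheses of Corollary \ref{cor:deconAcoversenvs} for $\mathcal{A}=\mathrm{Ob}(\mathcal{C})$. In order:
\begin{enumerate}
\item $\lambda$-pure subobject stability is trivial, since every object lies in $\mathcal{A}$. Note that $\lambda$-pure monomorphisms are admissible because $\mathcal{C}$ is purely locally $\lambda$-presentable, so their cokernels exist.
\item Closure under transfinite extensions is also trivial for the class of all objects, since the membership condition on the colimit is vacuous. What actually matters later, in Proposition \ref{prop:pushoutadmonA}, is that a transfinite composition of admissible monomorphisms is again an admissible monomorphism. This follows from the proposition preceding Definition \ref{defn:coherent}, using weak $\mathrm{Ob}(\mathcal{C})$-elementarity, which is what ``weakly elementary'' should mean here.
\item For unionability, take a transfinite sequence of admissible subobjects $M_{\alpha}\rightarrow M$ with exact rows as in Definition \ref{defn:coherent}. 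Weak elementarity, applied to the diagram of rows $M_{\alpha}\rightarrow M\rightarrow M/M_{\alpha}$ with constant middle term, gives that $0\rightarrow\colim M_{\alpha}\rightarrow M\rightarrow\colim M/M_{\alpha}\rightarrow 0$ is exact. The condition that the last term lies in $\mathcal{A}$ is vacuous.
\end{enumerate}
The remaining hypotheses (purely locally $\lambda$-presentable, totally proto-exact, obscure axiom) are assumed directly.

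The main obstacle is bookkeeping rather than mathematics: checking that the weak elementarity hypothesis really matches the shape of diagram used in unionability, with the middle row constant and transition maps in each row of the prescribed kind. If it did not, I would instead argue directly that a constant middle row together with the colimits of the outer rows gives an exact sequence, via the left semi-exact/right semi-exact characterisation. Once these checks are done, Corollary \ref{cor:deconAcoversenvs} (1), which itself rests on Proposition \ref{pgrop:precoverenv} (1) and the small object argument, produces for each $X$ an admissible monomorphism $X\rightarrow B$ with $B$ injective, completing the proof.
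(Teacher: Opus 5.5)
Your proposal is correct and follows the paper's argument exactly. The paper derives this corollary in one line by applying Corollary \ref{cor:deconAcoversenvs} to $\mathcal{A}=\mathrm{Ob}(\mathcal{C})$, so that $\mathbf{AdMon}_{\mathcal{A}}$ is all admissible monomorphisms and $\mathcal{A}^{\perp}$ is the injectives; your hypothesis checks, including using weak $\mathrm{Ob}(\mathcal{C})$-elementarity with a constant middle row for unionability and for transfinite compositions of admissible monomorphisms, simply make explicit what the paper leaves implicit. (Minor slip: the result that gives $F(0)\rightarrow\colim F\in\mathbf{AdMon}_{\mathcal{A}}$ is the proposition right after the definition of closure under extensions, not the pushout/retract proposition immediately preceding Definition \ref{defn:coherent}.)
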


In the additive case, we get the following result. 

\begin{corollary}\label{cor:injectivemodel}
    Let $(\mathpzc{E},\mathcal{Q})$ be a purely locally $\lambda$-presentable exact category in the class of all objects is unionable. Suppose further that
        the exact category $(\mathpzc{E},\mathcal{Q})$ has exact countable products.
    Then  $\mathrm{Ch}(\mathpzc{E})$ may be equipped with the injective model structure.
\end{corollary}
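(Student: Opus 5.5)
To equip $\mathrm{Ch}(\mathpzc{E})$ with the injective model structure, I would follow the standard route via cotorsion pairs and Hovey's correspondence, in the form used by Gillespie and Šťovíček for exact categories. The cofibrations should be the degreewise admissible monomorphisms and the weak equivalences the quasi-isomorphisms. It therefore suffices to produce a complete hereditary cotorsion pair $(\mathrm{Ch}(\mathpzc{E}),\mathcal{W}^{\perp})$ on $\mathrm{Ch}(\mathpzc{E})$ with its degreewise split exact structure lifted to the degreewise exact one. Here $\mathcal{W}$ is the class of acyclic complexes, and we need both $(\mathrm{Ch}(\mathpzc{E}),\mathcal{W}^{\perp})$ and $(\mathcal{W},(\mathcal{W})^{\perp})$ to be complete, compatible cotorsion pairs; equivalently we need an injective cotorsion pair in the sense of Gillespie. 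Concretely, I would show:
\begin{enumerate}
\item $\mathrm{Ch}(\mathpzc{E})$, with the degreewise exact structure, is again a purely locally $\lambda$-presentable exact category in which all objects are unionable.
\item The class $\mathcal{W}$ of acyclic complexes is $\lambda'$-pure subobject stable, closed under transfinite extensions and unionable for a suitable $\lambda'$.
\end{enumerate}
Then Corollary \ref{cor:deconAcoversenvs} gives enough $\mathcal{W}^{\perp}$-special preenvelopes, and Corollary \ref{cor:enoghinj} (applied in $\mathrm{Ch}(\mathpzc{E})$) gives enough injectives.

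For step (1), local presentability of $\mathrm{Ch}(\mathpzc{E})$ is standard, being a category of additive functors out of a small category into $\mathpzc{E}$. Pure monomorphisms of complexes are degreewise pure: evaluation at degree $n$ has left adjoints given by disk complexes, which preserve presentability. Hence pure monos are degreewise admissible. Unionability transfers degreewise, since colimits and exactness in $\mathrm{Ch}(\mathpzc{E})$ are computed degreewise. Additivity gives the obscure axiom (Bühler) and totality, and weak elementarity for $\mathrm{Ob}$ is exactly unionability plus the transfinite extension argument. So Corollary \ref{cor:enoghinj} applies once the hypotheses of Corollary \ref{cor:deconAcoversenvs} hold for $\mathrm{Ob}$, which is trivial.

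For step (2), pure subobject stability of acyclics follows since a $\lambda'$-pure mono $M\to N$ of complexes, for $\lambda'$ large enough that cycle objects commute with $\lambda'$-filtered colimits, is a filtered colimit of split monos. Split subobjects and quotients of acyclic complexes in an exact category are acyclic when $\mathpzc{E}$ is weakly idempotent complete, which holds since $\mathpzc{E}$ is cocomplete. Exactness of $\lambda'$-filtered colimits (from unionability) then passes acyclicity along the filtered colimit. Closure under extensions is the $3\times3$ lemma, and closure under transfinite extensions and unionability follow from the exactness of those transfinite colimits. This yields the cotorsion pair $(\mathcal{W},\mathcal{W}^{\perp})$, complete by Proposition \ref{pgrop:precoverenv}, and hence (by the exact-category version of Gillespie's theorem) the cotorsion pair $(\mathrm{Ch}(\mathpzc{E}),\mathcal{W}\cap\mathcal{W}^{\perp}\text{-complement})$ structure. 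Heredity holds because acyclics are closed under kernels of admissible epis.

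The main obstacle is verifying that $\mathcal{W}^{\perp}$ consists of dg-injective complexes and that $\mathcal{W}\cap \mathcal{W}^{\perp}$ equals the class of injective objects of $\mathrm{Ch}(\mathpzc{E})$, i.e.\ contractible complexes of injectives. This is where exact countable products enter. To show that a trivially fibrant object has enough structure, one uses that the complex of injectives built by totalising an injective resolution (obtained via products) is acyclic when the source is. Without exact countable products, the product totalisation might fail to be a quasi-isomorphism. I would first try to reduce to Šťovíček's criterion (\cite{vst2012exact}, Theorem 7.11-type statements): it requires only that $\mathpzc{E}$ has enough injectives (from Corollary \ref{cor:enoghinj}), the deconstructibility established above, and exactness of countable products to identify fibrant objects with dg-injectives.
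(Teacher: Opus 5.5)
Your argument depends on step (2), the deconstructibility of the class $\mathcal{W}$ of acyclic complexes. Completeness of $(\mathcal{W},\mathcal{W}^{\perp})$ rests on it, and your closing reduction to \v{S}\v{t}ov\'{\i}\v{c}ek explicitly uses ``the deconstructibility established above''. Under the stated hypotheses this is not established, for two reasons.

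First, unionability of all objects only says that a transfinite union of admissible subobjects of a \emph{fixed} object is again admissible, with the expected cokernel. It does not make $\lambda'$-filtered colimits of conflations exact. Yet that exactness is what you invoke for pure-subobject stability. It is also what closure of $\mathcal{W}$ under transfinite extensions and unions would require, since colimits of the cycle conflations $Z_{n}\to X_{n}\to Z_{n-1}$ must remain conflations.

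Second, even granting such exactness, the purity argument fails as written. Suppose a pure monomorphism $M\to N$ with $N$ acyclic is a directed colimit of split monomorphisms $M_{i}\to N_{i}$. Nothing forces the $N_{i}$ to be acyclic, so ``split subobjects of acyclic complexes are acyclic'' tells you nothing about the $M_{i}$.

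You have also placed exact countable products at the wrong step. That $\mathcal{W}^{\perp}$ consists of dg-injective complexes, and that $\mathcal{W}\cap\mathcal{W}^{\perp}$ is the class of contractible complexes of injectives, is formal:
\begin{itemize}
\item The disk $X\xrightarrow{\mathrm{Id}}X$ in degrees $n,n-1$ is acyclic, and $\mathrm{Ext}^{1}$ against it is $\mathrm{Ext}^{1}(X,I_{n})$. Hence objects of $\mathcal{W}^{\perp}$ are degreewise injective, and $\mathrm{Ext}^{1}(E,I)$ then reduces to homotopy classes of maps.
\item For $I\in\mathcal{W}\cap\mathcal{W}^{\perp}$, the conflation $I\to\mathrm{Cone}(\mathrm{Id}_{I})\to I[1]$ splits.
\end{itemize}
The non-formal point is the \emph{existence} of dg-injective replacements. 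This is exactly where products enter: they are used to assemble injective resolutions into a replacement, and their exactness makes the comparison map a quasi-isomorphism.

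The paper proves the corollary without any deconstructibility in $\mathrm{Ch}(\mathpzc{E})$. It applies Corollary \ref{cor:enoghinj} to $\mathpzc{E}$ itself to get enough injectives. The hypotheses hold because exact categories are totally proto-exact, $\mathpzc{E}$ is complete and hence weakly idempotent complete so it satisfies the obscure axiom, and unionability of all objects is the colimit hypothesis that corollary needs. It then invokes the dual of Kelly's Theorem 4.59, which turns enough injectives plus exact countable products into the injective model structure. So drop steps (1)--(2) and keep only the ingredients of your final paragraph, without the appeal to deconstructibility.
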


\begin{proof}
This is an immediate consequence of Corollary \ref{cor:enoghinj} and the dual of \cite{kelly2016homotopy}*{Theorem 4.59}.
\end{proof}

\section{Semi-normed, normed, and Banach modules}\label{sec:bancoh}

We give our main examples, the categories of semi-normed, normed, and Banach modules. For more on semi-normed rings, one can consult \cite{dang}, for example.

\begin{definition}
A \textit{semi-normed ring} is a unital commutative ring $R$ equipped with a norm $|-|:R\rightarrow\mathbb{R}_{\ge0}$ such that for all $r,s\in R$

\begin{enumerate}

    \item 
    $|rs|\le |r| |s|,$
    \item 
    $|r+s|\le |r|+|s|.$
\end{enumerate}
We say that $R$ is \textit{normed} if $|r|=0\Leftrightarrow r=0$, and \textit{Banach} if it is complete with respect to the metric induced by the norm. Finally we say that $R$ is \textit{non-Archimedean} if $|r+s|\le\textrm{max}\{|r|,|s|\}$. Let $R$ be a semi-normed ring.
\end{definition}

\begin{definition}
\begin{enumerate}
    \item 
A \textit{semi-normed} $R$-\textit{module} is an $R$-module $M$ equipped with a function $\rho_{M}:M\rightarrow\mathbb{R}_{\ge0}$ such that there is a $C>0$ satisfying, for all $r\in R$ and $m,n\in M$, 

\begin{enumerate}
\item 
    $\rho_{M}(rm)\le C|r| \rho_{M}(m),$
    \item 
    $\rho_{M}(m+n)\le \rho_{M}(m)+\rho_{M}(n).$
\end{enumerate}
 A semi-normed $R$-module $M$ is said to be \textit{normed} if $\rho_{M}(m)=0\Leftrightarrow m=0$, and Banach if it is complete with respect to the metric induced by $\rho_{M}$. We call $M$ \textit{non-Archimedean} if $\rho_{M}(m+n)\le\mathrm{max}\{\rho_{M}(m),\rho_{M}(n)\}$ for all $m,n\in M$.
 \item 
 By a \textit{bounded morphism of semi-normed} $R$-modules, we mean a morphism of $R$-modules $f:M\rightarrow N$ such that there is $C_{f}>0$ with $\rho_{N}(f(m))\le C_{f}\rho_{M}(m)$ for all $m\in M$. Such a morphism is said to be \textit{non-expanding} if $C_{f}$ can be taken to be $1$.
 \end{enumerate}
  We denote by $\mathrm{SNrm}_{R}$ the category of semi-normed $R$-modules and bounded morphisms, and $\mathrm{SNrm}_{R}^{\le1}$ the category of semi-normed $R$-modules with non-expanding morphisms. We denote by $\mathrm{Nrm}_{R}$ (resp. $\mathrm{Nrm}_{R}^{\le1}$) and $\mathrm{Ban}_{R}$ (resp. $\mathrm{Ban}_{R}^{\le1}$) the full subcategories consisting of normed and Banach $R$-modules respectively.
\end{definition}

\begin{remark}
    If $R$ is a non-zero non-Archimedean semi-normed/ normed/ Banach ring, then the corresponding categories of non-Archimedean modules over it are non-zero. If $R=\mathbb{Z}$ is the Banach ring of integers equipped with the restriction of the Euclidean norm, then a non-Archimedean semi-normed module over $\mathbb{Z}$ is precisely a non-Archimedean semi-normed abelian group. This includes any abelian group equipped with the trivial norm, whereby any non-zero element has norm $1$. On the other hand, suppose that $R=\mathbb{R}$. If $V$ is a non-Archimedean semi-normed $R$-module, then for any $v\in V$ and any integer $n$, we have 
    $$\frac{|n|}{C}\rho_{V}(v)\le \rho_{V}(nv)\le \rho_{V}(v).$$
    This can only hold for all $n$ if $\rho_{V}(v)=0$. Hence, the only non-Archimedean $\mathbb{R}$-modules are those for which the semi-norms of all elements are $0$.
\end{remark}

\comment{
 Note that if $R$ is a unital ring and $|1|=1$, then in the above definition of a semi-normed module, the constant $C$ can be taken to be $1$.
 }

 The natural inclusion $\mathrm{Nrm}_{R}\rightarrow\mathrm{SNrm}_{R}$ has a left adjoint $\mathrm{Sep}$, which sends $M$ to $M\big\slash\{m:\rho_{M}(m)=0\}$. The inclusion $\mathrm{Ban}_{R}\rightarrow\mathrm{Nrm}_{R}$ also has a left adjoint $\mathrm{Cpl}$, given by completion. We denote 
$$\overline{\mathrm{Cpl}}\defeq\mathrm{Cpl}\circ\mathrm{Sep}:\mathrm{SNrm}_{R}\rightarrow\mathrm{Ban}_{R}.$$

We further have the full subcategories
$$\mathrm{SNrm}_{R}^{nA},\mathrm{Nrm}_{R}^{nA},\mathrm{Ban}_{R}^{nA}$$
of $\mathrm{SNrm}_{R}$, and the full subcategories
$$\mathrm{SNrm}_{R}^{nA,\le1},\mathrm{Nrm}_{R}^{nA,\le1},\mathrm{Ban}_{R}^{nA,\le1}$$
of $\mathrm{SNrm}_{R}^{\le1},$
consisting of non-Archimedean modules. 

The functors $\mathrm{Sep}$,  $\mathrm{Cpl}$ and $\overline{\mathrm{Cpl}}$ all restrict to left adjoints on the non-expanding categories, and all of the non-Archimedean variants.

When our discussions apply to the semi-normed, normed, and Banach categories at once, and we do not wish to repeat ourselves, we will write $\mathrm{xNrm}_{R}$, $\mathrm{xNrm}_{R}^{\le1},\mathrm{xNrm}_{R}^{nA},\mathrm{xNrm}_{R}^{nA,\le1}$.

Recall that if $M$ is a semi-normed module, with subspace $N$, then the quotient semi-norm on $M\big\slash N$ is $\rho_{M\big\slash N}([m])=\mathrm{inf}_{n\in N}\rho_{M}(m+n)$.

\begin{remark}
    Note that the categories $\mathrm{xNrm}_{R}$ and $\mathrm{xNrm}_{R}^{nA}$ are additive. On the other hand $\mathrm{xNrm}_{R}^{\le1}$ will not be, simply because the sum of two maps of norm $\le1$ need not be of norm $\le1$. However, the categories $\mathrm{xNrm}_{R}^{nA,\le1}$ \textit{are additive}. In fact, as we shall see below, they are \textit{quasi-abelian}.
\end{remark}

The functor of \textit{rescaling} is useful.
For $M$ an object of $\mathrm{xNrm}_{R}$ and $\delta\in\mathbb{R}_{>0}$ a positive real number, we denote by $M_{\delta}$ the object with the same underlying module as $M$, but with $\rho_{M_{\delta}}(m)=\delta\rho_{M}(m)$. Note that that for $\epsilon,\delta>0$ the identity on the underlying module
$$M_{\epsilon}\rightarrow M_{\delta}$$
is a map of norm $\frac{\delta}{\epsilon}$. In $\mathrm{xNrm}_{R}$ it is an isomorphism. However, it is only defined in $\mathrm{xNrm}_{R}^{\le1}$ when $\epsilon\ge\delta$. Moreover, it is only an isomorphism in $\mathrm{xNrm}_{R}^{\le1}$ when either $\epsilon=\delta$ or every element of $M$ has semi-norm equal to $0$.

\begin{remark}\label{rem:rescalingprops}
    The construction $M\mapsto M_{\delta}$ is functorial on both $\mathrm{xNrm}_{R}^{\le1}$ and $\mathrm{xNrm}$, with inverse $N\mapsto N_{\frac{1}{\delta}}$. Note that we then have 
$$\mathrm{Hom}(M_{\delta},N)\cong\mathrm{Hom}(M,N_{\frac{1}{\delta}}).$$
In fact $\mathrm{Hom}(M_{\delta},N)$ is the set of bounded maps $M\rightarrow N$ of semi-norm at most $\delta$. In particular 
$$\mathrm{Hom}(R_{\delta},M)=\overline{B}_{M}(0,\delta)$$
is the closed ball in $M$ of radius $\delta$. 
\end{remark}

\subsection{Categorical properties}

Let us establish some properties of the categories we have introduced. 

The following is standard over $\mathbb{R},\mathbb{C}$, and a non-Archimedean non-trivially valued field. The usual proof generalises to arbitrary Banach rings, and we record it here, again for the avoidance of doubt.

\begin{proposition}\label{prop:quotBanclosed}
    If $M$ is normed (resp. Banach) and $N\subset M$ is a submodule, then we have $\mathrm{Sep}(M\big\slash N)\cong M\big\slash\overline{N}$ (resp. $\overline{\mathrm{Cpl}}(M\big\slash N)\cong M\big\slash\overline{N}$), where $\overline{N}$ denotes the closure.
\end{proposition}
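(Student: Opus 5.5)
The plan is to reduce both statements to a single fact: the kernel of the quotient semi-norm on $M\big\slash N$ is exactly $\overline{N}\big\slash N$. Everything else then follows from universal properties.

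\textbf{Step 1 (the null space).} For $m\in M$, the quotient semi-norm vanishes,
$$\rho_{M\big\slash N}([m])=\inf_{n\in N}\rho_{M}(m+n)=0,$$
if and only if there are $n_{k}\in N$ with $\rho_{M}(m+n_{k})\to 0$. This says exactly that $-n_{k}\to m$, i.e.\ $m\in\overline{N}$; the argument only uses that $\rho_{M}$ is a metric-inducing norm. Hence $\{x\in M\big\slash N:\rho(x)=0\}=\overline{N}\big\slash N$.

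\textbf{Step 2 (separation).} By Step 1,
$$\mathrm{Sep}(M\big\slash N)=(M\big\slash N)\big\slash(\overline{N}\big\slash N)\cong M\big\slash\overline{N}$$
as $R$-modules. For the norms, the induced norm of $[m]$ is $\inf_{n\in N}\rho_{M}(m+n)$. This agrees with $\inf_{n'\in\overline{N}}\rho_{M}(m+n')$: one inequality is trivial since $N\subset\overline{N}$. For the other, given $n'\in\overline{N}$, choose $n\in N$ close to $n'$ and apply the triangle inequality. So the isomorphism is isometric, proving the normed case.

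\textbf{Step 3 (completeness).} By definition $\overline{\mathrm{Cpl}}(M\big\slash N)=\mathrm{Cpl}(\mathrm{Sep}(M\big\slash N))=\mathrm{Cpl}(M\big\slash\overline{N})$. It therefore suffices to show that $M\big\slash\overline{N}$ is already complete when $M$ is Banach and $\overline{N}$ is closed, since then $\mathrm{Cpl}$ acts as the identity on it. I would use the standard absolutely-convergent-series criterion. Take a Cauchy sequence in the quotient and pass to a subsequence $[x_{k}]$ with $\rho([x_{k+1}]-[x_{k}])<2^{-k}$. Then choose lifts inductively: lift $[x_{k+1}]-[x_{k}]$ to some $y_{k}\in M$ with $\rho_{M}(y_{k})<2^{-k}$, which is possible by the definition of the infimum. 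The partial sums $x_{1}+\sum_{j<k}y_{j}$ are Cauchy in $M$, hence converge to some $x\in M$. Since the quotient map is non-expanding, $[x_{k}]\to[x]$, and a Cauchy sequence with a convergent subsequence converges.

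The only delicate point is Step 3. It relies on the triangle inequality only, with no scalar homogeneity needed, and the constant $C$ in the module axiom plays no role. I expect this step to go through verbatim over an arbitrary Banach ring, so the main care needed is just checking that no field-specific argument has slipped in.
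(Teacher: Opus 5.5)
Your proposal is correct and follows the paper's route: you identify the null space of the quotient semi-norm as $\overline{N}\big\slash N$ to get $\mathrm{Sep}(M\big\slash N)\cong M\big\slash\overline{N}$, then show that $M\big\slash\overline{N}$ is already complete, so that $\overline{\mathrm{Cpl}}$ does nothing further. The only differences are that you check the identification is isometric and write out the standard absolutely-convergent-series argument for completeness, where the paper skips the isometry check and cites a textbook proof for completeness.
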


\begin{proof}

        First, let $M$ be a normed $R$-module, and $N\subset M$ a closed submodule. Let $[m]\in M\big\slash N$ be such that $\rho_{M\big\slash N}([m])=\mathrm{inf}_{n\in N}\rho_{M}(m-n)=0$. Then there exists a sequence $(n_{k})_{k\in\mathbb{N}_{0}}$ in $N$ such that $\lim_{k\rightarrow\infty}\rho_{M}(m-n_{k})=0$. In other words, $n_{k}$ converges to $m$, and $\{[m]:\rho_{M\big\slash N}([m])=0\}=\{[m]:m\in\overline{N}\}$. Thus $\mathrm{Sep}(M\big\slash N)\cong M\big\slash\overline{N}.$
    
    Let $M$ be a Banach $R$-module, and $N\subset M$ a closed subspace. The same proof as \cite{319920} Proposition 11.8 shows that $M\big\slash N$ is already Banach. Returning to the case of arbitrary submodules $N$, we see that $\mathrm{Sep}(M\big\slash N)\cong M\big\slash\overline{N}$ is already complete, and is therefore the completion of $M\big\slash N$.

\end{proof}

For the case of a Banach field, most of the statements in the following Lemma are already in \cite{koren}*{Appendix A}, and the proofs are the same.

\begin{lemma}\label{lem:catpropnon}
\begin{enumerate}
    \item 
    Let $f,g:X\rightarrow Y$ be morphisms in the following categories: $\mathrm{SNrm}_{R}$, $\mathrm{SNrm}_{R}^{\le1}$, $\mathrm{SNrm}_{R}^{nA}$, or $\mathrm{SNrm}_{R}^{nA,\le1}$. Then
    \begin{enumerate}
        \item 
        $\mathrm{Eq}(f,g)$ is the subspace of $X$ consisting of those $x\in X$ such that $f(x)=g(x)$, equipped with the subspace semi-norm;
        \item
        $\mathrm{Coeq}(f,g)$ is $Y\big\slash\{f(x)-g(x)\}$ equipped with the quotient semi-norm.
    \end{enumerate}
\item
     Let $f,g:X\rightarrow Y$ be morphisms in any of the following categories: $\mathrm{Nrm}_{R}$, $\mathrm{Nrm}_{R}^{\le1}$, $\mathrm{Nrm}_{R}^{nA}$, $\mathrm{Ban}_{R}$, $\mathrm{Ban}_{R}^{\le1}$, $\mathrm{Ban}_{R}^{nA}$, or $\mathrm{Ban}_{R}^{nA,\le1}$. Then
        \begin{enumerate}
        \item 
        $\mathrm{Eq}(f,g)$ is the subspace of $X$ consisting of those $x\in X$ such that $f(x)=g(x)$, equipped with the subspace semi-norm;
        \item
         $\mathrm{Coeq}(f)$ is $Y\big\slash\overline{\{f(x)-g(x)\}}$ equipped with the quotient norm, where $\overline{\{f(x)-g(x)\}}$ denotes the closure.
    \end{enumerate}
    \item
   If $\{(X_{i},\rho_{i})\}_{i\in\mathcal{I}}$ is a collection of objects of $\mathrm{SNrm}_{R}^{\le1}$, $\mathrm{Nrm}_{R}^{\le1}$, $\mathrm{Ban}_{R}^{\le1}$, $\mathrm{SNrm}_{R}^{nA,\le1}$, $\mathrm{Nrm}_{R}^{nA,\le1}$, or $\mathrm{Ban}_{R}^{nA,\le1}$, then the product,
   $$\prod_{i\in\mathcal{I}}^{\le1}(X_{i},\rho_{i}),$$
   is the semi-normed/ normed/ Banach module whose underlying module is the sub-module of the algebraic product consisting of those tuples $(x_{i})_{i\in\mathcal{I}}$ such that $\mathrm{sup}_{i\in\mathcal{I}}\rho_{i}(x_{i})<\infty$, with norm given by 
   $$\rho_{\prod}((x_{i})_{i\in\mathcal{I}})=\mathrm{sup}_{i\in\mathcal{I}}\rho_{i}(x_{i}).$$
   If the collection $\mathcal{I}$ is finite, then this is also the product in $\mathrm{SNrm}_{R}$, $\mathrm{Nrm}_{R}$, $\mathrm{Ban}_{R}$, $\mathrm{SNrm}_{R}^{nA}$, $\mathrm{Nrm}_{R}^{nA}$, or $\mathrm{Ban}_{R}^{nA}$.
   \item 
    If $\{(X_{i},\rho_{i})\}$ is a collection of objects of $\mathrm{SNrm}_{R}^{\le1}$ or $\mathrm{Nrm}_{R}^{\le1}$, then $\coprod_{i\in\mathcal{I}}^{\le1}(X_{i},\rho_{i})$ is the module whose underlying module is the algebraic coproduct, and with norm given by 
    $$\rho_{\coprod}((x_{i})_{i\in\mathcal{I}})=\sum_{i\in\mathcal{I}}\rho_{i}(x_{i}).$$
     If the collection $\mathcal{I}$ is finite, then this is also the coproduct in $\mathrm{SNrm}_{R}$, $\mathrm{Nrm}_{R}$, or $\mathrm{Ban}_{R}$.
   \item 
    If $\{(X_{i},\rho_{i})\}$ is a collection of objects of $\mathrm{SNrm}_{R}^{nA,\le1}$ or $\mathrm{Nrm}_{R}^{nA,\le1}$, then $\coprod_{i\in\mathcal{I}}^{nA,\le1}X_{i}$ is the module whose underlying module is the algebraic coproduct, and with norm given by 
    $$\rho_{\coprod}((x_{i})_{i\in\mathcal{I}})=\mathrm{max}_{i\in\mathcal{I}}\rho_{i}(x_{i}).$$
     If the collection $\mathcal{I}$ is finite, then this is also the coproduct in $\mathrm{SNrm}_{R}^{nA}$, $\mathrm{Nrm}_{R}^{nA}$, or $\mathrm{Ban}_{R}^{nA}$.
    \item 
    If $\{(X_{i},\rho_{i})\}$ is a collection of objects of $\mathrm{Ban}_{R}^{\le1}$ or $\mathrm{Ban}_{R}^{nA,\le1}$, then the coproduct is given by the completion of the coproduct as normed modules. 
   \end{enumerate}
\end{lemma}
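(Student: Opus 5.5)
The plan is to verify each universal property directly, one item at a time. In every case we exhibit the candidate object and check two things: that the structure maps lie in the relevant category, and that any compatible cone or cocone factors uniquely through the candidate. Uniqueness is automatic throughout, because the forgetful functor to $R$-modules is faithful and the underlying module of each candidate is the algebraic limit or colimit, or a dense image of it. So the real content is showing that the induced module map is bounded, non-expanding where required, and lands in the right subcategory.

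For (1), the equaliser is handled as follows. The inclusion of $\{x:f(x)=g(x)\}$ with the subspace semi-norm has norm $\le1$. Any $h:W\to X$ equalising $f,g$ factors set-theoretically through this subspace, with the same bound $C_h$.

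For the coequaliser, the quotient map $Y\to Y/\{f(x)-g(x)\}$ is non-expanding by the definition of the infimum semi-norm. The first point to check is that $Y/\{f(x)-g(x)\}$ is again a semi-normed module: the scalar axiom $\rho([rm])\le C|r|\rho([m])$ follows by taking infima. If $h:Y\to W$ kills the image of $f-g$, then $\rho_W(h(y))=\rho_W(h(y+n))\le C_h\rho_Y(y+n)$ for every $n$ in the image. Taking the infimum gives the bound on the quotient. Non-Archimedean-ness is preserved by both constructions, since the subspace and infimum operations respect the ultrametric inequality.

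For (2), the equaliser is a closed subspace, because it is the kernel of the continuous map $f-g$. Hence it is normed, and Banach when $X$ is. For the coequaliser, compose (1) with the left adjoint $\mathrm{Sep}$, respectively $\overline{\mathrm{Cpl}}$. Left adjoints preserve colimits and the inclusion is full, so the coequaliser in the subcategory is $\mathrm{Sep}$ or $\overline{\mathrm{Cpl}}$ of the semi-normed coequaliser. Proposition \ref{prop:quotBanclosed} then identifies this with $Y/\overline{\{f(x)-g(x)\}}$.

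For (3), the projections from the bounded product have norm $\le1$. Given non-expanding maps $h_i:W\to X_i$, the tuple $(h_i(w))_i$ has $\sup_i\rho_i(h_i(w))\le\rho_W(w)<\infty$, so it lies in the submodule and the induced map is non-expanding. The scalar constant is the point to watch: the candidate needs a uniform $C$. I would handle this either by noting that the constants may be taken uniformly in the $\le1$ setting (for instance $C=1$ after normalising), or, failing that, by inserting this as a standing assumption. I expect this to be the main subtlety. Completeness of the bounded product of Banach modules is the standard $\ell^\infty$ argument. For finite $\mathcal{I}$ with bounded maps, the induced map has bound $\max_iC_{h_i}$, so the same object is the product in the bounded categories.

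For (4) and (5), the inclusions are non-expanding. Given $h_i:X_i\to W$, the induced map on finite sums satisfies $\rho_W(\sum h_i(x_i))\le\sum\rho_i(x_i)$, or $\le\max_i\rho_i(x_i)$ when $W$ is non-Archimedean. This is exactly the bound needed. The same remark about a uniform scalar constant applies. For finitely many bounded $h_i$, the bound is $\sum_iC_{h_i}$, respectively $\max_iC_{h_i}$. Normedness is preserved because the norm is a sum or max of norms.

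For (6), $\mathrm{Cpl}$ is left adjoint to the inclusion $\mathrm{Ban}\subset\mathrm{Nrm}$, so it sends the normed coproduct to the Banach coproduct.
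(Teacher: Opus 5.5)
Your route is the paper's. You check the universal properties directly in the semi-normed setting for (1), (3), (4) and (5). For the coequalisers in (2) and for (6), you use that $\mathrm{Sep}$ and $\overline{\mathrm{Cpl}}$ are left adjoints to full inclusions, together with Proposition~\ref{prop:quotBanclosed}. Your observation that the equaliser is closed, being the kernel of the bounded map $f-g$, is an equally good substitute for the paper's remark that reflective subcategories are closed under limits.

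The uniform-constant issue you flag in (3)--(5) is genuine, and the paper's proof passes over it in silence. However, your first remedy is not available. Any normalisation to constant $1$ replaces $(M,\rho_M)$ by a module that is only boundedly, not isometrically, isomorphic to it, and such a map is not an isomorphism in the $\le1$ categories.

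In fact (3) and (4) are false without an extra hypothesis. Take $R=\mathbb{Z}[i]$ with the complex absolute value, and let $X_K$ be $\mathbb{Z}[i]$ with $\rho_K(a+bi)=|a|+K|b|$ for $K\ge1$.
\begin{itemize}
\item These are Banach $R$-modules for which $K+1$ is an admissible constant. Nothing below $K$ is admissible, since $\rho_K(i)=K$ and $\rho_K(1)=1$.
\item The tuple $(1)_K$ has bounded norms while $i\cdot(1)_K$ does not. So the bounded tuples do not form a submodule, and the sum norm on $\bigoplus_K X_K$ admits no constant.
\item Worse, the $X_K$ have no coproduct in $\mathrm{SNrm}_R^{\le1}$, $\mathrm{Nrm}_R^{\le1}$ or $\mathrm{Ban}_R^{\le1}$. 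Suppose $Q$ were one, with constant $C_Q$ and coprojections $\iota_K$. The map $\phi\colon Q\to X_K$ induced by $\mathrm{id}_{X_K}$ and zero maps would give $K=\rho_K(\phi\iota_K(i))\le\rho_Q(i\,\iota_K(1))\le C_Q\,\rho_Q(\iota_K(1))\le C_Q$ for every $K$. The dual argument rules out a product.
\end{itemize}
A non-Archimedean variant over $\mathbb{Z}[i]$ with the trivial norm does the same to (5).

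So the standing assumption is forced. With $C=1$ for every module, or at least $\sup_i C_i<\infty$ for the family at hand, your verification goes through exactly as written.
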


\begin{proof}
\begin{enumerate}
    \item 
    \begin{enumerate}
    \item 
        Denote by $\widetilde{\mathrm{Eq}}(f,g)$ the subspace of $X$ consisting of those $x\in X$ such that $f(x)=g(x)$ equipped with the subspace semi-norm. Let $h:Z\rightarrow X$ be a map such that $f\circ h=g\circ h$. Then clearly $i$ factors through the inclusion $\widetilde{\mathrm{Eq}}(f,g)\rightarrow X$. Since this latter map is a monomorphism, the factorisation must be unique.
    \item 
       Denote by $\widetilde{\mathrm{Coeq}}(f,g)$ the space $Y\big\slash\{f(x)-g(x)\}$ equipped with the quotient semi-norm. Let $h:Y\rightarrow Z$ be a morphism such that $h\circ f=h\circ z$. We get a unique map of $R$-modules $\tilde{h}:\widetilde{\mathrm{Coeq}}(f,g)\rightarrow Z$ such that $\tilde{h}\circ q=h$, where $q:Y\rightarrow\widetilde{\mathrm{Coeq}}(f,g)$ is the quotient map. This map is easily seen to be bounded. 
    \end{enumerate}
    \item 
    Both $\mathrm{Nrm}_{R}$ and $\mathrm{Ban}_{R}$ (and their variants) are reflective subcategories of $\mathrm{SNrm}_{R}$. Thus, they are closed under limits, and this gives the claim for equalisers. For coequalisers in $\mathrm{Nrm}_{R}$, we have 
    $$\mathrm{Coeq}(f,g)\cong\mathrm{Sep}(\mathrm{Coeq}_{\mathrm{SNrm}}(f,g)),$$
    where $\mathrm{Coeq}_{\mathrm{SNrm}}(f,g)$ denotes the coequaliser in the semi-normed category. In the Banach category, we have 
 $$\mathrm{Coeq}(f,g)\cong\overline{\mathrm{Cpl}}(\mathrm{Coeq}_{\mathrm{SNrm}}(f,g)).$$
   The claim now follows from Proposition \ref{prop:quotBanclosed}.

    \item 
    Let $\{f_{i}:(Y,\rho)\rightarrow (X_{i},\rho_{i})\}_{i\in\mathcal{I}}$ be a collection of non-expanding morphisms of $R$-modules. There is a unique map of modules to the product 
    $$f:Y\rightarrow\prod_{i\in\mathcal{I}}X_{i},\; y\mapsto (f_{i}(y))_{i\in\mathcal{I}}.$$
    This is obviously non-expanding for the supremum semi-norm.
    \item 
    Let $\{f_{i}:(X_{i},\rho_{i})\rightarrow (Y,\rho)\}$ be a collection of non-expanding morphisms of $R$-modules. We get a unique map of $R$-modules 
    $$f:\coprod_{i\in\mathcal{I}}X_{i}\rightarrow Y,\; (x_{i})_{i\in\mathcal{I}}\mapsto\sum_{i\in\mathcal{I}}f_{i}(x_{i}).$$
    For the sum norm this is evidently non-expanding.
    \item 
       Let $\{f_{i}:(X_{i},\rho_{i})\rightarrow (Y,\rho)\}$ be a collection of non-expanding morphisms of $R$-modules. We get a unique map of $R$-modules 
    $$f:\coprod_{i\in\mathcal{I}}X_{i}\rightarrow Y,\; (x_{i})_{i\in\mathcal{I}}\mapsto\sum_{i\in\mathcal{I}}f_{i}(x_{i}).$$
    Since $Y$ is non-Archimedean, for the supremum norm this is evidently non-expanding.
    \item 
    This again follows from the fact that the non-expanding categories of Banach spaces are reflective subcategories of the respective normed categories.
\end{enumerate}
\end{proof}

\begin{corollary}
    In the following categories:
    $$\mathrm{SNrm}_{R},\mathrm{Nrm}_{R},\mathrm{Ban}_{R},\mathrm{SNrm}_{R}^{nA}, \mathrm{Nrm}_{R}^{nA}, \mathrm{Ban}_{R}^{nA},\mathrm{SNrm}_{R}^{nA,\le1}, \mathrm{Nrm}_{R}^{nA,\le1}, \mathrm{Ban}_{R}^{nA,\le1},$$ 
    the natural map
    $$X\coprod Y\rightarrow X\prod Y$$
    is an isomorphism for any pair of objects $X$ and $Y$. In particular, these are all additive categories.
\end{corollary}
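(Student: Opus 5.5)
We have to show two things: that the canonical comparison map $X\coprod Y\rightarrow X\prod Y$ is an isomorphism in each listed category, and that each category is additive.

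\textbf{Step 1: identify the map.} We read off finite products and coproducts from Lemma \ref{lem:catpropnon}.
\begin{itemize}
\item In all cases the underlying $R$-module of both $X\coprod Y$ and $X\prod Y$ is the algebraic direct sum $X\oplus Y$.
\item For the Banach variants, completion of a finite direct sum of Banach modules changes nothing, so the same holds there.
\item The comparison map is the identity on $X\oplus Y$: it is characterised by the matrix $\begin{pmatrix}\mathrm{Id}&0\\0&\mathrm{Id}\end{pmatrix}$, and the algebraic identity satisfies these identities.
\end{itemize}
So the only question is whether the identity is an isomorphism for the two norms, in both directions.

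\textbf{Step 2: compare the norms, splitting into cases.}
\begin{itemize}
\item In $\mathrm{SNrm}_{R},\mathrm{Nrm}_{R},\mathrm{Ban}_{R}$ the coproduct carries the sum norm $\rho_X(x)+\rho_Y(y)$ and the product carries the max norm. We have
$$\max\{\rho_X(x),\rho_Y(y)\}\le\rho_X(x)+\rho_Y(y)\le 2\max\{\rho_X(x),\rho_Y(y)\}.$$
Hence the identity is bounded in both directions, and it is an isomorphism since all bounded maps are allowed.
\item In the non-Archimedean bounded categories, the coproduct norm given in Lemma \ref{lem:catpropnon}(5) is already the max norm. The identity is then an isometry, so it is an isomorphism.
\item In the $nA,\le1$ variants the coproduct and product norms are both the max norm. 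The identity is again an isometry, hence an isomorphism even with non-expanding maps.
\end{itemize}
The step needing care is this last case: the sum norm would fail to be isomorphic to the max norm with norm-$\le1$ maps. This is exactly why $\mathrm{xNrm}_R^{\le1}$ is excluded from the list.

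\textbf{Step 3: deduce additivity.} Hom-sets are already abelian groups, since sums and negatives of bounded maps are bounded. For non-expanding non-Archimedean maps, the ultrametric inequality gives $\rho(f(m)+g(m))\le\max\{\rho(f(m)),\rho(g(m))\}\le\rho(m)$, and negation preserves the norm because $\rho(-m)=\rho(m)$ holds in the non-Archimedean setting. Composition is bilinear, and the zero module is a zero object. Finally, a category with a zero object, finite products and coproducts, and isomorphic comparison maps is semi-additive. With abelian-group Hom-sets compatible with the induced addition, it is additive.
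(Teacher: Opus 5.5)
Your proof is correct and is essentially the paper's argument: both sides have underlying module $X\oplus Y$ and the comparison map is the identity. You then compare the sum and max norms via $\max\{a,b\}\le a+b\le 2\max\{a,b\}$ in the Archimedean cases, and note that the two norms coincide in the non-Archimedean cases, exactly as the paper does. Your Step 3 spells out the additivity that the paper leaves implicit. One correction there: $\rho(-m)=\rho(m)$ is the symmetry axiom of a semi-norm, not a consequence of the ultrametric inequality, which on its own does not force it.
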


\begin{proof}
    In the non-Archimedean cases this is immediate, as the norms on either side are equal. In the Archimedean cases it follows from the fact that for $a,b\ge0$
    $$\frac{1}{2}(a+b)\le \mathrm{max}\{a,b\}\le a+b.$$
\end{proof}

Even when working in the additive categories $\mathrm{xNrm}_{R}$, $X\oplus Y$ will always denote the product with the sum norm, and in $\mathrm{XNrm}^{nA}_{R}$ it will denote the product with the max norm. In both cases $X\prod Y$ will denote the product with the max norm.

\subsubsection{The quasi-abelian and parabelian properties}
 
The following is an immediate consequence of Proposition \ref{prop:strictnessequiv} and Lemma \ref{lem:catpropnon}.

\begin{proposition}
\begin{enumerate}
    \item 
        A morphism $f:X\rightarrow Y$ in $$\mathrm{SNrm}^{\le1}_{R},\mathrm{Nrm}^{\le1}_{R},\mathrm{Ban}^{\le1}_{R},\mathrm{SNrm}^{nA,\le1}_{R},\mathrm{Nrm}^{nA,\le1}_{R},\mathrm{Ban}^{nA,\le1}_{R}$$ is a strict epimorphism if and only if it is surjective, and $\rho_{Y}(y)=\mathrm{inf}_{x\in f^{-1}(y)}\rho_{X}(x)$ for all $y\in Y$.
        \item 
            A morphism $f:X\rightarrow Y$ in $\mathrm{SNrm}^{\le1}_{R}$ or $\mathrm{SNrm}^{nA,\le1}_{R}$ is a strict monomorphism if and only if it an isometry.
            \item 
            A morphism $f:X\rightarrow Y$ in $\mathrm{Nrm}^{\le1}_{R},\mathrm{Ban}^{\le1}_{R},\mathrm{Nrm}^{nA,\le1}_{R}$, or $\mathrm{Ban}^{nA,\le1}_{R}$ is a strict monomorphism if and only if it is an isometry with closed image.
            \item 
             A morphism $f:X\rightarrow Y$ in $\mathrm{SNrm}_{R},\mathrm{Nrm}_{R},\mathrm{Ban}_{R},\mathrm{SNrm}^{nA}_{R},\mathrm{Nrm}^{nA}_{R}$, or $\mathrm{Ban}^{nA}_{R}$ is a strict epimorphism if and only if it is surjective, and $\rho_{Y}$ is equivalent to the semi-norm $y\mapsto\mathrm{inf}_{x\in f^{-1}(y)}\rho_{X}(x)$ for all $y\in Y$.
              \item 
            A morphism $f:X\rightarrow Y$ in $\mathrm{SNrm}_{R}$ or $\mathrm{SNrm}_{R}$ is a strict monomorphism if and only if it the induced map $f:X\rightarrow\mathrm{Im}(f)$ is an isomorphism, where $\mathrm{Im}(f)\subset Y$ is the set-theoretic image of $f$, equipped with the sub-module semi-norm.
            \item 
            A morphism $f:X\rightarrow Y$ in $\mathrm{Nrm}_{R},\mathrm{Ban}_{R},\mathrm{Nrm}^{nA}_{R}$, or $\mathrm{Ban}^{nA}_{R}$ is a strict monomorphism if and only if the set-theoretic image $\mathrm{Im}(f)$ is closed in $Y$, and the induced map $f:X\rightarrow\mathrm{Im}(f)$ is an isomorphism, where $\mathrm{Im}(f)\subset Y$ is the set-theoretic image of $f$, equipped with the sub-module norm.
\end{enumerate}
\end{proposition}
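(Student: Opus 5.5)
The plan is to read off each characterisation from Proposition \ref{prop:strictnessequiv}, combined with the explicit descriptions of kernels and cokernels in Lemma \ref{lem:catpropnon}. Kernels and cokernels are the special cases $\mathrm{Eq}(f,0)$ and $\mathrm{Coeq}(f,0)$. So in every category listed, $\mathrm{Ker}(f)=f^{-1}(0)$ with the subspace semi-norm. In the semi-normed categories $\mathrm{Coker}(f)=Y/f(X)$ with the quotient semi-norm. In the normed and Banach categories $\mathrm{Coker}(f)=Y/\overline{f(X)}$ with the quotient norm, using Proposition \ref{prop:quotBanclosed}.

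For the epimorphism statements (1) and (4), we use criterion (1)(b) of Proposition \ref{prop:strictnessequiv}: $f$ is strict exactly when the canonical map $\mathrm{Coker}(\mathrm{Ker}(f)\to X)\to Y$ is an isomorphism. Since $\mathrm{Ker}(f)$ is closed, the cokernel is $X/f^{-1}(0)$ with the quotient semi-norm in all cases, with no closure or completion needed. The comparison map to $Y$ is the algebraic map induced by $f$. It is a bijection iff $f$ is surjective. When it is, the quotient semi-norm of $y$ is $\inf_{x\in f^{-1}(y)}\rho_X(x)$.
\begin{itemize}
\item In the non-expanding categories an isomorphism is a bijective isometry, which gives (1).
\item In the bounded categories an isomorphism is a bijection bounded in both directions, i.e.\ the two semi-norms are equivalent, which gives (4).
\end{itemize}

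For the monomorphism statements (2), (3), (5) and (6), we use criterion (2)(b): $f$ is strict iff it is isomorphic to $\mathrm{Ker}(Y\to\mathrm{Coker}(f))\to Y$. In the semi-normed case this kernel is $f(X)$ with the subspace semi-norm. In the normed and Banach cases it is $\overline{f(X)}$ with the subspace norm. So $f$ is strict iff $X\to f(X)$, respectively $X\to\overline{f(X)}$, is an isomorphism in the relevant category.

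\begin{itemize}
\item Non-expanding case: being an isomorphism means bijective and isometric. In the normed and Banach settings this forces $f(X)=\overline{f(X)}$, i.e.\ closed image. This gives (2) and (3).
\item Bounded case: being an isomorphism means a topological isomorphism onto the image with the subspace norm, plus closedness of the image in the normed and Banach settings. This gives (5) and (6).
\end{itemize}

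Conversely, an isometry with closed image is injective on the separated side. Here we should check that injectivity holds in the normed case: an isometry from a normed module kills only elements of norm zero, so it is injective. We then simply exhibit the required isomorphism with the kernel.

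There is no real obstacle here. The only care points are:
\begin{itemize}
\item in the semi-normed case an isometry need not be injective. However, Lemma \ref{lem:catpropnon} describes the kernel as the set-theoretic preimage $f^{-1}(0)$, so strict monomorphisms are injective, and the characterisation should be read as an injective isometry;
\item in the Banach case we must invoke Proposition \ref{prop:quotBanclosed} to avoid a spurious completion in the cokernel.
\end{itemize}
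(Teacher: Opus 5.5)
Your proposal is correct and takes essentially the paper's route: the paper simply declares the result an immediate consequence of Proposition \ref{prop:strictnessequiv} and Lemma \ref{lem:catpropnon}, and you carry out that deduction, including closedness of kernels and the use of Proposition \ref{prop:quotBanclosed} in the normed and Banach cases. Your remark that (2) must be read as ``injective isometry'' in the semi-normed setting is a correct sharpening of the statement as written; for instance, for $R\neq 0$ the map $R_{0}\rightarrow 0$ is an isometry but not a monomorphism.
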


\begin{proposition}\label{prop:isosio}
    Let 
\begin{displaymath}
    \xymatrix{
    0\ar[r] & X\ar[r]^{f} & Y\ar[r]^{g} & Z\ar[r] & 0
    }
\end{displaymath}
be a kernel-cokernel pair in any of the categories
$$\mathrm{SNrm}_{R},\mathrm{Nrm}_{R},\mathrm{Ban}_{R},\mathrm{SNrm}^{nA}_{R},\mathrm{Nrm}^{nA}_{R},\mathrm{Ban}^{nA}_{R}.$$
Then there is an isomorphism of exact sequences
\begin{displaymath}
    \xymatrix{
    0\ar[r] & X\ar[r]^{f}\ar[d] & Y\ar[r]^{g}\ar@{=}[d] & Z\ar[d]\ar[r] & 0\\
    0\ar[r] & \mathrm{Im}(f)\ar[r]^{i_{f}} & Y\ar[r]^{q_{f}} & X\big\slash\mathrm{Im}(f)\ar[r] & 0
    }
\end{displaymath}
in which the bottom-row is a kernel-cokernel pair in
$$\mathrm{SNrm}^{\le1}_{R},\mathrm{Nrm}^{\le1}_{R},\mathrm{Ban}^{\le1}_{R},\mathrm{SNrm}^{nA,\le1}_{R},\mathrm{Nrm}^{nA,\le1}_{R},\mathrm{Ban}^{nA,\le1}_{R}.$$
\end{proposition}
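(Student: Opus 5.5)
Starting from a kernel-cokernel pair $(f,g)$ in one of the bounded categories, the plan is to build the bottom row by taking the set-theoretic image $\mathrm{Im}(f)\subset Y$ with the subspace semi-norm induced from $\rho_{Y}$, and the quotient $Y\big\slash\mathrm{Im}(f)$ with the quotient semi-norm. (The statement's $X\big\slash\mathrm{Im}(f)$ is evidently a typo for $Y\big\slash\mathrm{Im}(f)$.) The maps are $i_{f}$, the inclusion, and $q_{f}$, the projection. Both are non-expanding: $i_{f}$ is an isometry and $q_{f}$ satisfies $\inf_{n}\rho_{Y}(y+n)\le\rho_{Y}(y)$. In the normed and Banach cases, $\mathrm{Im}(f)$ is closed in $Y$ by the characterisation of strict monomorphisms in the bounded categories (part (6) of the preceding proposition). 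So the quotient is again normed, and is Banach when $Y$ is, by Proposition \ref{prop:quotBanclosed}. Non-Archimedean properties pass to subspaces and quotients, so every case stays in the correct category.

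Next I show the bottom row is a kernel-cokernel pair in the $\le1$ categories. By the preceding proposition, $i_{f}$ is an isometry (with closed image where relevant), hence a strict monomorphism in the $\le1$ category. It is also the equaliser description of $\mathrm{Ker}(q_{f})$ from Lemma \ref{lem:catpropnon}: its underlying module is $q_{f}^{-1}(0)=\mathrm{Im}(f)$ (using closedness in the normed cases) with the subspace semi-norm. Likewise $q_{f}$ is, by Lemma \ref{lem:catpropnon}, precisely the coequaliser of $i_{f}$ and $0$, i.e.\ $\mathrm{Coker}(i_{f})$. Hence $(i_{f},q_{f})$ is strict.

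Then I produce the vertical isomorphisms in the bounded category. Since $f$ is a strict monomorphism in the bounded category, part (5)/(6) of the preceding proposition gives that the corestriction $X\rightarrow\mathrm{Im}(f)$ is an isomorphism, i.e.\ a bounded bijection with bounded inverse. For the right-hand map, $g$ is a cokernel of $f$ in the bounded category. The bounded cokernel of $f$ is computed by the same formula as in Lemma \ref{lem:catpropnon}, namely $Y\big\slash\overline{\mathrm{Im}(f)}=Y\big\slash\mathrm{Im}(f)$ with the quotient semi-norm. Indeed, the universal property argument there only needs that an induced module map is bounded, and in the bounded setting the constant $C_{h}$ simply carries over. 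By uniqueness of cokernels, there is then a unique isomorphism $Z\rightarrow Y\big\slash\mathrm{Im}(f)$ compatible with $g$ and $q_{f}$. Commutativity of the left square holds by construction, and of the right square by the cokernel factorisation.

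The main obstacle is checking carefully that the limit and colimit formulas of Lemma \ref{lem:catpropnon}, which are stated partly for the $\le1$ categories, also compute kernels and cokernels in the bounded categories $\mathrm{xNrm}_{R}$ and $\mathrm{xNrm}_{R}^{nA}$. The point that makes this work is that an abstract isomorphism in the bounded category only guarantees equivalence of semi-norms, not equality. The construction avoids this by always replacing $X$ and $Z$ with the canonical image and quotient carrying the norms inherited from $Y$, so the bottom row is strict with respect to exact norms and not merely equivalent ones.
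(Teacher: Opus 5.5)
Your proof is correct and takes the same approach as the paper: replace $X$ by $\mathrm{Im}(f)$ with the subspace semi-norm from $Y$, and $Z$ by $Y\big\slash\mathrm{Im}(f)$ with the quotient semi-norm (you are right that $X\big\slash\mathrm{Im}(f)$ is a typo), then get the left vertical isomorphism from the strict-monomorphism characterisation and the right one from uniqueness of cokernels. Your write-up is more careful than the paper's brief sketch, and the obstacle you flag is already handled, since Lemma \ref{lem:catpropnon} explicitly computes equalisers and coequalisers in the bounded categories $\mathrm{SNrm}_{R}$, $\mathrm{Nrm}_{R}$, $\mathrm{Ban}_{R}$ and their non-Archimedean variants.
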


\begin{proof}
    Since $f$ is a strict monomorphism, we have $X\rightarrow\mathrm{Im}(f)$ is an isometry (with closed image in the normed and Banach cases), and $Z\cong\mathrm{Coker}(f)\cong X\big\slash\mathrm{Im}(f)$. The map $\mathrm{Im}(f)\rightarrow X$  is an isometry (with closed image in the normed and Banach cases), and $X\big\slash\mathrm{Im}(f)$ has the quotient norm.
\end{proof}

\begin{theorem}\label{cor:pushpullnonexp}
\begin{enumerate}
    \item 
    Let 
      \begin{displaymath}
        \xymatrix{
         F\ar[d]^{f'}\ar[r]^{g'}& M\ar[d]^{f}\\
         L\ar[r]^{g} & N
        }
    \end{displaymath}
    be a fibre-product diagram in $\mathrm{SNrm}^{\le1}_{R},\mathrm{Nrm}^{\le1}_{R},\mathrm{Ban}^{\le1}_{R},\mathrm{SNrm}^{nA,\le1}_{R},\mathrm{Nrm}^{nA,\le1}_{R}$, or $\mathrm{Ban}^{nA,\le1}_{R}$.
    If $f$ is a strict epimorphism then so is $f'$.
\comment{
    Let \begin{displaymath}
        \xymatrix{
         & M\ar[d]^{f}\\
         L\ar[r]^{g} & N
        }
    \end{displaymath}
    be a diagram in $\mathrm{SNrm}_{R},\mathrm{Nrm}_{R},\mathrm{Ban}_{R},\mathrm{SNrm}^{nA}_{R},\mathrm{Nrm}^{nA}_{R}$, or $\mathrm{Ban}^{nA}_{R}$ with $g$ and $f$ non-expanding. Then there is a fibre product diagram
     \begin{displaymath}
        \xymatrix{
         F\ar[d]^{f'}\ar[r]^{g'}& M\ar[d]^{f}\\
         L\ar[r]^{g} & N
        }
    \end{displaymath}
    in which both $g'$ and $f'$ are non-expanding. Moreover if $f$ is a strict epimorphism in the non-expanding category, then so is $f'$. 
    }
    \item 
Let
 \begin{displaymath}
        \xymatrix{
         K\ar[d]^{g}\ar[r]^{i}&M\ar[d]^{g'}\\
         L\ar[r]^{i'}& P 
        }
    \end{displaymath}
    be a pushout diagram in $\mathrm{SNrm}^{\le1}_{R},\mathrm{Nrm}^{\le1}_{R},\mathrm{Ban}^{\le1}_{R},\mathrm{SNrm}^{nA,\le1}_{R},\mathrm{Nrm}^{nA,\le1}_{R}$, or $\mathrm{Ban}^{nA,\le1}_{R}$. If $i$ is a strict monomorphism then so is $i'$.
    \comment{
      Let \begin{displaymath}
        \xymatrix{
         K\ar[d]^{g}\ar[r]^{i}&M\\
         L& 
        }
    \end{displaymath}
    be a diagram in $\mathrm{SNrm}_{R},\mathrm{Nrm}_{R},\mathrm{Ban}_{R},\mathrm{SNrm}^{nA}_{R},\mathrm{Nrm}^{nA}_{R}$, or $\mathrm{Ban}^{nA}_{R}$ with $g$ and $i$ being non-expanding. Then there is a pushout product diagram
  \begin{displaymath}
        \xymatrix{
         K\ar[d]^{g}\ar[r]^{i}&M\ar[d]^{g'}\\
         L\ar[r]^{i'}& P 
        }
    \end{displaymath}
    in which both $g'$ and $i'$ are non-expanding. Moreover if $i$ is a strict monomorphism in the non-expanding category, then so is $i'$.
    }
    \item 
    Let $g$ and $f$ be strict epimorphisms in $\mathrm{SNrm}^{\le1}_{R},\mathrm{Nrm}^{\le1}_{R},\mathrm{Ban}^{\le1}_{R},\mathrm{SNrm}^{nA,\le1}_{R},\mathrm{Nrm}^{nA,\le1}_{R}$, or $\mathrm{Ban}^{nA,\le1}_{R}$. Then $g\circ f$ is a strict epimorphism.
        \item 
    Let $g$ and $f$ be strict monomorphisms in $\mathrm{SNrm}^{\le1}_{R},\mathrm{Nrm}^{\le1}_{R},\mathrm{Ban}^{\le1}_{R},\mathrm{SNrm}^{nA,\le1}_{R},\mathrm{Nrm}^{nA,\le1}_{R}$, or $\mathrm{Ban}^{nA,\le1}_{R}$. Then $g\circ f$ is a strict monomorphism.
    \item 
        Let $g$ and $f$ be morphisms in $\mathrm{SNrm}^{\le1}_{R},\mathrm{Nrm}^{\le1}_{R},\mathrm{Ban}^{\le1}_{R},\mathrm{SNrm}^{nA,\le1}_{R},\mathrm{Nrm}^{nA,\le1}_{R}$, or $\mathrm{Ban}^{nA,\le1}_{R}$. If $g\circ f$ is a strict epimorphism, then $g$ is a strict epimorphism.
                \item 
    Let $g$ and $f$ be morphisms in $\mathrm{SNrm}^{\le1}_{R},\mathrm{Nrm}^{\le1}_{R},\mathrm{Ban}^{\le1}_{R},\mathrm{SNrm}^{nA,\le1}_{R},\mathrm{Nrm}^{nA,\le1}_{R}$, or $\mathrm{Ban}^{nA,\le1}_{R}$. If $g\circ f$ is a strict monomorphism, then $f$ is a strict monomorphism.
    \end{enumerate}
Consequently, all the listed categories are totally parabelian and satisfy the obscure axiom. The non-Archimedean versions are in fact quasi-abelian.
\end{theorem}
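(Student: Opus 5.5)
Each of the six items is an elementwise check on underlying modules, using the descriptions of limits, colimits, strict epimorphisms and strict monomorphisms obtained above. Write \emph{quotient map} for a surjection $f:X\to Y$ with $\rho_Y(y)=\inf_{x\in f^{-1}(y)}\rho_X(x)$. Strict epis are exactly the quotient maps. Strict monos are exactly the isometries, with closed image in the normed and Banach cases. The closure condition will be handled alongside the isometry condition throughout.

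For (1), the fibre product is computed as an equaliser of the two maps out of the product $L\prod M$ with the sup norm. So $F=\{(l,m):g(l)=f(m)\}$ with norm $\max\{\rho_L(l),\rho_M(m)\}$, and this set is closed if $L,M$ are. Surjectivity of $f'$ is immediate. For the norm, fix $l$ and $\epsilon>0$ and pick $m\in f^{-1}(g(l))$ with $\rho_M(m)\le\rho_N(g(l))+\epsilon\le\rho_L(l)+\epsilon$, using that $g$ is non-expanding. Then $\rho_F(l,m)\le\rho_L(l)+\epsilon$, so the infimum over the fibre of $f'$ is $\rho_L(l)$.

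For (2), first work semi-normed. The pushout is $(L\oplus M)/\{(g(k),-i(k))\}$ with the quotient of the sum norm, or of the max norm in the non-Archimedean case. We need $\rho_P(i'(l))=\rho_L(l)$. The inequality $\le$ is clear. For $\ge$, take $\rho_L(l+g(k))+\rho_M(i(k))=\rho_L(l+g(k))+\rho_K(k)$, using that $i$ is an isometry, and note this is at least $\rho_L(l+g(k))+\rho_L(g(k))\ge\rho_L(l)$ because $g$ is non-expanding. The max-norm case goes the same way.

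\textbf{The main obstacle} is the normed and Banach case of (2). There the pushout is the separated completion, so we must check that $i'$ has closed image and that the isometry survives. Since $i'$ is an isometry on the semi-normed pushout, it is injective after separation and stays isometric after completion. Its image is complete in the Banach case, hence closed. In the normed case, closedness should follow by showing that the preimage of $i'(L)$ in $L\oplus M$ is $L\oplus i(K)$ plus the relation submodule, which is closed because $i(K)$ is closed. I would check that last step carefully.

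Item (3) is a direct composition of infima. Item (4) holds because isometries compose and closed embeddings compose. For (5), suppose $g\circ f$ is a quotient map. Then $g$ is surjective, and $\rho_Z(z)=\inf_{x}\rho_X(x)\ge\inf_{y\in g^{-1}(z)}\rho_Y(y)\ge\rho_Z(z)$, the first inequality because $f$ is non-expanding (so each $f(x)$ with $g(f(x))=z$ has $\rho_Y(f(x))\le\rho_X(x)$) and the second because $g$ is non-expanding. For (6), suppose $g\circ f$ is an isometry. Then $\rho(x)=\rho(gf(x))\le\rho(f(x))\le\rho(x)$, so $f$ is an isometry. For closedness, note $f(X)=g^{-1}(gf(X))\cap\ldots$ will not hold in general. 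Instead, if $f(x_n)\to y$ then $(x_n)$ is Cauchy, since $f$ is an isometry. In the Banach case it converges, giving closed image. In the normed case, $gf(x_n)\to g(y)$ and $gf(X)$ is closed, so $g(y)=gf(x)$ for some $x$; then $x_n\to x$ by the isometry of $gf$, and hence $y=f(x)$.

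Finally, \cite{mozgovoy}*{Theorem 1.10} together with (1)--(4) gives that these categories are totally parabelian. Items (5) and (6) are the strong obscure axioms. In the non-Archimedean case the categories are additive by the earlier Corollary, so totally parabelian means quasi-abelian.
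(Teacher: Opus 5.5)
Your proposal is correct and follows essentially the same route as the paper: the same explicit models of fibre products and pushouts, the same $\epsilon$-argument for (1), the same triangle/ultrametric estimate for (2), the same elementwise checks for (3)--(6), and the same deduction of total parabelianity, the obscure axiom and quasi-abelianity. The one divergence is the closedness part of (2), where the paper instead shows the relation submodule is already closed; your preimage argument is valid and in fact supplies the closed-image check the paper leaves implicit, because the preimage of $i'(L)$ is exactly $L\oplus i(K)$, which is closed, contains the relation submodule and hence its closure, and the quotient semi-norm induces the quotient topology.
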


\begin{proof}
\begin{enumerate}
   \item
    A model for the fibre product is given by $M\times_{L}N=\{(m,l)\in M\times L:f(m)=g(l)\}$. Then $f'$ and $g'$ are the respective projections, and are clearly non-expanding. We have $\rho_{N}(f(m))=\mathrm{inf}_{k\in\mathrm{Ker}(f)}\rho_{M}(m+k)$. 
    
    The kernel of $f'$ is $\{(m,0):m\in\mathrm{Ker}(g)\}$. We have $\rho_{M\times_{N}L}(m,l)=\mathrm{max}\{\rho_{M}(m),\rho_{L}(l)\}$. For any $m$ such that $f(m)=g(l)$, which exists since $f$ is surjective, the element $(m,l)$ maps to $l$. Then $\mathrm{max}\{\rho_{M}(m),\rho_{L}(l)\}\ge\rho_{L}(l)$. 
    
    Thus $\mathrm{inf}\{\rho_{M\times L}(m,l):f'(m,l)=l\}\ge\rho_{L}(l)$. Fix any $(m,l)$ with $g(m)=f(l)$. Now $\rho_{N}(g(m))=\rho_{N}(f(l))\le \rho_{L}(l)$. Since $f$ is a strict epimorphism, for any $\epsilon>0$, there is $k_{\epsilon}\in\mathrm{Ker}(f)$ such that $\rho_{M}(m+k_{\epsilon})\le\rho_{L}(l)+\epsilon$. Then $\mathrm{max}\{\rho_{M}(m+k_{\epsilon}),\rho_{L}(l)\}\le \rho_{L}(l)+\epsilon$, and $f'(m+k_{\epsilon},l)=l$. Hence $\mathrm{inf}\{\rho_{M\times L}(m,l):f'(m,l)=l\}\le\rho_{L}(l)$. Consequently, $\rho_{L}$ in fact coincides with the quotient semi-norm for the map $\mathrm{Ker}(g)\rightarrow M\times_{N}L$. In particular $g'$ is a strict epimorphism. 

\item
    The map from a semi-normed module to its separation/ completion is non-expanding, so we may work in the semi-normed categories. 
    The pushout $P$ has a model given by $M\oplus N\big\slash\{(f(x),-g(x))\}$. The map $i'$ is the composition 
    $$L\rightarrow M\oplus L\rightarrow M\oplus L\big\slash\{(i(x),-g(x))\}$$
    which is non-expanding, since the inclusion $L\rightarrow L\oplus M$ is non-expanding, and the quotient map is non-expanding. This is similarly true for $g'$. 

    Now let $i:K\rightarrow M$ be a strict monomorphism, and let $g:K\rightarrow L$ be any map. Recall that $i$ is then an isometry. We have $L\oplus_{K}M$ is the quotient of $M\oplus L$ by $\{(i(k),g(k)):k\in K\}$. First observe that if the image of $i$ in $M$ is closed, then the image of the map $k\mapsto(i(k),g(k))$ is closed. Indeed if $(i(k_{n}),g(k_{n}))$ is a sequence converging to some $(m,l)$, then $i(k_{n})$ converges to $m$. But the image of $i$ is closed in $M$, so $k_{n}$ converges to some $k$ in $K$. Thus $(i(k_{n}),g(k_{n}))$ converges to $(i(k),g(k))$.
In particular, in the normed and Banach categories we do not need to take the closure. Now we need to show that $L\rightarrow M\oplus L\big\slash\{(i(k),g(k)):k\in K\}, l\mapsto[(0,l)]$ is a strict monomorphism. First, it is a monomorphism since the underlying module is the usual pushout in the category of modules. It suffices to show that it is an isometry onto its image. First let us prove the Archimedean version. Consider an element of the form $[(0,l)]$.  If $[(0,l)]=[(m,l')]$ then $(m,l'-l)=(i(k),g(k))$. Thus $m=i(k)$ and $\rho_{M}(m)=\rho_{K}(k)$. Then $\rho_{L}(l'-l)\le \rho_{K}(k)$ so $\rho_{L}(l)\le\rho_{L}(l')+\rho_{K}(k)=\rho_{L}(l')+\rho_{M}(m)$. Hence $\rho_{L}(l)=\rho([(0,l)])$ and we are done.

Now let us prove the non-Archimedean version. Again if $[(0,l)]=[(m,l')]$ then $(m,l'-l)=(i(k),g(k))$. Thus $m=i(k)$ and $\rho_{M}(m)=\rho_{K}(k)$.  Suppose that $\rho_{L}(l)>\rho_{L}(l')$. Then $\rho_{L}(l)=\rho_{L}(l-l')\le\rho_{K}(k)=\rho_{M}(m)$. Hence, either $\rho_{L}(l')\ge\rho_{L}(l)$ or $\rho_{M}(m)\ge\rho_{L}(l)$. In either case, $\rho_{M\oplus L}(m,l')\ge\rho_{L}(l)$. Hence, $\rho_{M\oplus _{K}L}([0,l])=\rho_{L}(l)$.
\item 
Let $f:X\rightarrow Y$ and $g:Y\rightarrow Z$ be strict epimorphisms in $$\mathrm{SNrm}^{\le1}_{R},\mathrm{Nrm}^{\le1}_{R},\mathrm{Ban}^{\le1}_{R},\mathrm{SNrm}^{nA,\le1}_{R},\textrm{ or }\mathrm{Nrm}^{nA,\le1}_{R}.$$ That is, they are surjections, and satisfy $\rho_{Y}(y)=\mathrm{inf}_{x\in f^{-1}(y)}\{\rho_{X}(x)\}$ and $\rho_{Z}(z)=\mathrm{inf}_{y\in g^{-1}(z)}\{\rho_{Y}(y)\}$. In particular, both $f$ and $g$ are non-expanding. Consider $g\circ f:X\rightarrow Z$. Let $z\in Z$. Then
$$\rho_{Z}(z)=\mathrm{inf}_{y\in g^{-1}(z)}\rho_{Y}(y)=\mathrm{inf}_{y\in g^{-1}(z)}\mathrm{inf}_{x\in f^{-1}(y)}\rho_{X}(x)=\mathrm{inf}_{x\in (g\circ f)^{-1}(y)}\rho_{X}(x),$$
as required.

\item 
Let $f:X\rightarrow Y$ and $g:Y\rightarrow Z$ be morphisms in $\mathrm{SNrm}^{\le1}_{R},\mathrm{Nrm}^{\le1}_{R},\mathrm{Ban}^{\le1}_{R},\mathrm{SNrm}^{nA,\le1}_{R},\mathrm{Nrm}^{nA,\le1}_{R}$ that are isometries. Let $x\in X$. Then $\rho_{Z}(g(f(x))=\rho_{Y}(f(x))=\rho_{X}(x)$, so $g\circ f$ is an isometry. Suppose that $X,Y,Z$ are all normed, and that both $g$ and $f$ have closed image. Let $(g(f(x_{n}))$ be a sequence in $Z$ that converges to some $z$. Then $z=g(y)$ for some $y\in Y$. Since $g$ is an isometry, $f(x_{n})$ converges to $y$, so $y=f(x)$ for some $x$. Then $x_{n}$ converges to $x$. Hence $g(f(x_{n}))$ converges to $g(f(x))=z$. 
\item 
Let $f:X\rightarrow Y$ and $g:Y\rightarrow Z$ be morphisms in $\mathrm{SNrm}^{\le1}_{R},\mathrm{Nrm}^{\le1}_{R},\mathrm{Ban}^{\le1}_{R},\mathrm{SNrm}^{nA,\le1}_{R},\mathrm{Nrm}^{nA,\le1}_{R}$ such that $g\circ f$ is a strict epimorphism. In particular, $g\circ f$ is surjective, so $g$ is also surjective. Consider $z\in Z$. Then 
$$\rho_{Z}(z)\le\mathrm{inf}_{y\in g^{-1}(z)}\rho_{Y}(y)\le\mathrm{inf}_{x\in(g\circ f)^{-1}(z)}\rho_{X}(x)=\rho_{Z}(z).$$
So all inequalities are equalities, and $g$ is strict. 
\item 
Let $f:X\rightarrow Y$ and $g:Y\rightarrow Z$ be morphisms in $\mathrm{SNrm}^{\le1}_{R},\mathrm{Nrm}^{\le1}_{R},\mathrm{Ban}^{\le1}_{R},\mathrm{SNrm}^{nA,\le1}_{R},\mathrm{Nrm}^{nA,\le1}_{R}$ such that $g\circ f$ is an  isometry. Let $x\in X$. Then 
$$\rho_{X}(x)=\rho_{Z}(g(f(x))\le\rho_{Y}(f(x)))\le\rho_{X}(x).$$
Hence, $\rho_{X}(x)=\rho_{Y}(f(x))$, and $f$ is an isometry. Suppose that $X,Y$, and $Z$ are all normed, and that $g\circ f$ is an isometry with closed image. We claim that $f$ has closed image. Let $f(x_{n})$ converge to some $y\in Y$. Then $g(f(x_{n}))$ converges to $g(y)$. Since $g\circ f$ is an isometry with closed image, $g(y)=g(f(x))$ for some $x\in X$. Since $g\circ f$ is an isometry, $x_{n}$ converges to $x$, so $f(x_{n})$ converges to $f(x)$. 
\end{enumerate}
\end{proof}

\comment{
\begin{lemma}\label{lem:snrmqac}
    The categories
    $$\mathrm{SNrm}_{R},\mathrm{Nrm}_{R},\mathrm{Ban}_{R},\mathrm{SNrm}^{nA}_{R},\mathrm{Nrm}^{nA}_{R},\textrm{ and }\mathrm{Ban}^{nA}_{R}$$
    are all quasi-abelian.
\end{lemma}

\begin{proof}
    Let $g:M\rightarrow N$ be a strict epimorphism and $f:L\rightarrow N$ any morphism. By Proposition \ref{prop:isosio}, we may assume that $N=M\big\slash\mathrm{Ker}(g)$. In particular, $g$ is a strict epimorphism in the non-expanding category as well. By rescaling $M$ if necessary, we may also assume that $f$ is non-expanding.




\end{proof}
}

\begin{lemma}\label{lem:snrmqac}
    The categories
    $$\mathrm{SNrm}_{R},\mathrm{Nrm}_{R},\mathrm{Ban}_{R},\mathrm{SNrm}^{nA}_{R},\mathrm{Nrm}^{nA}_{R},\mathrm{Ban}^{nA}_{R}$$
    are all quasi-abelian.
\end{lemma}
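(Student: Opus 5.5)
We need to show that in each of the bounded categories $\mathrm{SNrm}_{R},\dots,\mathrm{Ban}^{nA}_{R}$, strict epimorphisms are stable under pullback along arbitrary morphisms, and strict monomorphisms are stable under pushout along arbitrary morphisms. These categories are additive and have kernels and cokernels (Lemma \ref{lem:catpropnon}), so this is exactly quasi-abelianness. The plan is to reduce both statements to Theorem \ref{cor:pushpullnonexp}(1),(2) in the corresponding non-expanding category, by rescaling and by Proposition \ref{prop:isosio}. The underlying modules of finite limits and colimits agree in the bounded and non-expanding settings, and so do the explicit models for pullbacks and pushouts: the pullback is the submodule of $M\times L$ with the max norm, and the pushout is the quotient of $M\oplus L$, closed where relevant. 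Only the norms may differ up to equivalence, so each construction can be done in the non-expanding category and then compared.

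For pullbacks, let $f:M\to N$ be a strict epimorphism and $g:L\to N$ any bounded map. By Proposition \ref{prop:isosio}, after replacing $N$ by the isomorphic object $M/\mathrm{Ker}(f)$ with the quotient norm, we may assume $f$ is a strict epimorphism in the non-expanding category. Next, choose $\delta>0$ with $\|g\|\le\delta$ and replace $L$ by the rescaled object $L_{\delta}$. The identity $L\to L_{\delta}$ is an isomorphism in the bounded category, so this does not change the pullback up to isomorphism, and it makes $g$ non-expanding (Remark \ref{rem:rescalingprops}). Theorem \ref{cor:pushpullnonexp}(1) then shows that $f':M\times_{N}L_{\delta}\to L_{\delta}$ is a strict epimorphism in the non-expanding category: it is surjective, and the norm on $L_\delta$ is the quotient norm. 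In the bounded category the pullback is the same module with the max norm, so it is the same object. By Proposition \ref{prop:strictnessequiv} and the characterisation of strict epimorphisms (surjective, with norm equivalent to the quotient norm), $f'$ is strict in the bounded category as well.

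For pushouts, let $i:K\to M$ be a strict monomorphism and $g:K\to L$ bounded. By Proposition \ref{prop:isosio}, after replacing $K$ by $\mathrm{Im}(i)$ with the subspace norm, we may take $i$ to be an isometry with closed image. We then rescale $L$ (replacing $L$ by $L_{1/\delta}$-type rescaling on the codomain side) so that $g$ becomes non-expanding. Theorem \ref{cor:pushpullnonexp}(2) gives that $i':L\to P$ is an isometry with closed image. The proof of that theorem already shows that no closure needs to be taken, so $P$ is also the pushout in the bounded category: its underlying module and norm agree up to equivalence with the bounded model $M\oplus L/\{(i(k),-g(k))\}$. Hence $i'$ is a strict monomorphism there as well.

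The main obstacle I expect is checking that the bounded pushout and pullback agree, up to equivalence of norms, with the non-expanding constructions after rescaling. In the Archimedean case, $M\oplus L$ carries the sum norm in the non-expanding coproduct but may be taken with the max norm in the bounded category. These norms are equivalent by the inequality used in the preceding Corollary, so the quotient norms are equivalent too. With this in hand, being a strict monomorphism or strict epimorphism is invariant under bounded isomorphism, and the argument goes through uniformly for the semi-normed, normed, and Banach cases and their non-Archimedean variants.
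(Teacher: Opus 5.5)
Your proposal is correct and is essentially the paper's argument. The paper's proof is the one-line remark that the lemma follows from rescaling together with the total parabelianity of the non-expanding categories. You spell this out: Proposition~\ref{prop:isosio} makes the given strict morphism strict in the non-expanding sense, rescaling the other leg makes it non-expanding, Theorem~\ref{cor:pushpullnonexp}(1),(2) then applies, and you check that the non-expanding pullback and pushout agree up to equivalence of norms with the bounded ones. That last check, via equivalence of the sum and max norms and the closedness of $\{(i(k),-g(k))\}$, is exactly what the paper leaves implicit.
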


\begin{proof}
    This easily follows from rescaling, and the fact that the non-expanding variants are totally parabelian categories.
\end{proof}





\subsubsection{Presentability}

Here we show that the non-expanding categories are all locally $\aleph_{1}$-presentable.

\begin{proposition}\label{prop:filtcolim}
    Let $F:\mathcal{I}\rightarrow\mathrm{SNrm}_{R}^{\le1}$ or $F:\mathcal{I}\rightarrow\mathrm{SNrm}_{R}^{nA,\le1}$ with $\mathcal{I}$ filtered. Then the following hold.
    \begin{enumerate}
        \item 
        The colimit $\colim^{\le1}_{i\in\mathcal{I}}F(i)$ has as underlying $R$-module the colimit of the underlying $R$-modules of each $F(i)$, with semi-norm given by 
    $$\rho([x])=\mathrm{inf}\{\rho_{j}(f_{ji}(x)):i\rightarrow j\},$$
    where $x\in F(i)$, and $f_{ji}:F(i)\rightarrow F(j)$ is the connecting map.
    \item 
    If $\mathcal{I}$ is $\aleph_{1}$-filtered and each $F(i)$ is normed, then $\colim^{\le1}_{i\in\mathcal{I}}F(i)$ is normed.
        \item 
    If $\mathcal{I}$ is $\aleph_{1}$-filtered and each $F(i)$ is Banach, then $\colim^{\le1}_{i\in\mathcal{I}}F(i)$ is Banach.
    \end{enumerate}
\end{proposition}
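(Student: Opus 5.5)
For (1), I would build the candidate colimit directly and then verify the universal property. Let $M=\colim_{i}F(i)$ be the colimit of underlying $R$-modules; since $\mathcal{I}$ is filtered, each element is $[x]$ with $x\in F(i)$, and $[x]=[y]$ for $y\in F(i')$ exactly when the images agree in some common $F(j)$. Define $\rho([x])=\inf_{i\to j}\rho_j(f_{ji}(x))$.

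The first task is to show $\rho$ is well defined. Because the connecting maps are non-expanding, the values $\rho_j(f_{ji}(x))$ only decrease along the diagram. Filteredness then shows the infimum does not depend on the representative: given $x$ and $y$ representing the same class, any $i\to j$ can be pushed further to a common $k$ receiving both, where their images coincide. The semi-norm axioms follow by the same kind of reasoning. For the triangle inequality, move $x$ and $y$ to a common index, and use that for any two stages there is a later stage dominating both infima up to $\epsilon$. The scalar bound $\rho(rm)\le C|r|\rho(m)$ needs a uniform constant $C$; I would either take the normalisation $C=1$ implicit in the non-expanding setting, or note that the constants are uniform. In the non-Archimedean case the max inequality passes to the infimum in the same way.

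Next I would check the universal property. The maps $F(i)\to M$ are non-expanding, since $\rho([x])\le\rho_i(x)$. Now take a compatible family of non-expanding maps $g_i:F(i)\to N$. This induces a unique module map $g:M\to N$, and we have $\rho_N(g[x])=\rho_N(g_j f_{ji}x)\le\rho_j(f_{ji}x)$ for every $j$. Taking the infimum gives $\rho_N(g[x])\le\rho([x])$, so $g$ is non-expanding.

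For (2), which I expect to be the main obstacle, suppose $\rho([x])=0$ with $x\in F(i)$. Then for each $n$ there is $i\to j_n$ with $\rho_{j_n}(f_{j_n i}x)<1/n$. Using $\aleph_1$-filteredness, choose a single $j$ receiving all the $j_n$, compatibly with the maps from $i$; this requires handling the countably many parallel paths, which $\aleph_1$-filteredness allows us to coequalise. Then $\rho_j(f_{ji}x)<1/n$ for all $n$, so $f_{ji}x=0$ because $F(j)$ is normed, and hence $[x]=0$.

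For (3), let $([x_n])$ be a Cauchy sequence in $M$. Passing to a subsequence, I would arrange $\rho([x_{n+1}]-[x_n])<2^{-n}$. By $\aleph_1$-filteredness, all $x_n$ lift to a single $F(j)$; pushing further along countably many witnesses (again a countable family, so a common upper bound exists), I can ensure $\rho_j(x_{n+1}-x_n)<2^{-n}$ in that single $F(j)$. The sequence is then Cauchy in the Banach module $F(j)$, so it converges to some $y$ there, and $[x_n]\to[y]$ because the map $F(j)\to M$ is non-expanding. A Cauchy sequence with a convergent subsequence converges, so $M$ is complete. Normedness comes from (2), so the colimit computed in semi-normed modules already lies in the Banach subcategory, and is therefore the colimit there.
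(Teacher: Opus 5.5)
Your outline follows the paper's proof, with more detail. For (1) the paper only verifies $\rho_{N}(g[x])\le\rho_{j}(f_{ji}(x))$ for the induced map. For (2) it takes a cocone under the countably many arrows $i\rightarrow j_{n}$, and for (3) it moves the Cauchy sequence into a single $F(i)$. Your well-definedness checks, the $2^{-n}$ subsequence, and the remark that the colimit then lies in the Banach subcategory are correct additions.

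\textbf{The gap.} The step that fails is the scalar axiom, and neither justification you offer is available. In the paper's definition the constant $C$ in $\rho_{M}(rm)\le C|r|\rho_{M}(m)$ depends on $M$. ``Non-expanding'' constrains morphisms, not this constant, and the constants need not be uniform along a diagram.

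In fact, with the paper's definitions, part (1) is false for merely filtered $\mathcal{I}$. Take $R=\mathbb{R}$, $\mathcal{I}=\mathbb{N}$, and $F(n)=(\mathbb{R},\rho_{n})$ with $\rho_{n}(t)=\min\{|t|,1\}+2^{-n}|t|$ and identity transition maps.
\begin{itemize}
\item Since $2^{-n}|t|\le\rho_{n}(t)\le(1+2^{-n})|t|$, each $F(n)$ is a Banach module with constant $C_{n}=2^{n}+1$, and $\rho_{n+1}\le\rho_{n}$.
\item The formula in (1) gives $\rho(t)=\min\{|t|,1\}$. This admits no constant: $\rho(s^{-1}\cdot s)=1$ while $s^{-1}\rho(s)=s^{-1}$ for $s\ge1$.
\item The actual colimit is $(\mathbb{R},0)$. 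A cocone into $P$ is a single linear map $g$, and $\tau=\rho_{P}\circ g$ satisfies $\tau(t)\le\min\{|t|,1\}\le1$. Also $\tau(1)\le C_{P}|r|^{-1}\tau(r)\le C_{P}|r|^{-1}$ for all $r\neq0$, so $\tau(1)=0$ and hence $\tau\equiv0$.
\end{itemize}
The paper's proof never checks that its candidate is an object. So this is a defect of the statement, not only of your write-up.

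\textbf{Two repairs.}
\begin{itemize}
\item Build $C=1$ (or one fixed $C$) into the definition of a semi-normed module. Then $\rho([rx])\le|r|\rho([x])$ follows directly from the formula, and your argument is complete.
\item Assume $\mathcal{I}$ is $\aleph_{1}$-filtered, which is all that (2) and (3) need. Suppose no constant worked. Pick $x_{k}\in F(i_{k})$ and $r_{k}$ with $\rho([r_{k}x_{k}])>k|r_{k}|\rho([x_{k}])$. Exactly as in your proof of (2), a cocone over the countably many approximating arrows out of the $i_{k}$ gives one index $j$ and maps $i_{k}\rightarrow j$. Under these maps the images of $x_{k}$ and $r_{k}x_{k}$ have seminorms $\rho([x_{k}])$ and $\rho([r_{k}x_{k}])$. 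The constant $C_{j}$ of $F(j)$ then gives $\rho([r_{k}x_{k}])\le C_{j}|r_{k}|\rho([x_{k}])$ for every $k$, a contradiction once $k\ge C_{j}$.
\end{itemize}
With this in place, your arguments for (2) and (3) go through as written.
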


\begin{proof}
\begin{enumerate}
    \item
        Let $g_{i}:F(i)\rightarrow G$ be a compatible system of non-expanding maps. We need to verify that the canonical map $g:\colim_{i\in\mathcal{I}}F(i)\rightarrow G$ is non-expanding. Let $x\in F(i)$. Then $g([x])=g_{i}(x)=g_{j}(f_{ji}(x))$ for all $i\rightarrow j$. Thus $\rho_{G}(g_{j}(f_{ji}(x)))\le\rho_{j}(f_{ji}(x))$. This is true for all $i\rightarrow j$, so $\rho_{G}(g[x])\le\rho([x]).$
        \item
        Suppose that $\mathcal{I}$ is $\aleph_{1}$-filtered and each $F(i)$ is normed. Let $x\in F(i)$ be such that $\rho([x])=0$. This means that for every $n\in\mathbb{N}$ there is $i\rightarrow j_{n}$ such that $\rho_{j_{n}}(f_{j_{n}i}(x))\le\frac{1}{n}$. Let $\{j_{n}\rightarrow j_{\infty}\}$ be a cocone for the maps $i\rightarrow j_{n}$. Then $\rho_{j}(f_{ji}(x))\le\frac{1}{n}$ for all $n$, and so $f_{ji}(x)=0$. Hence $[x]=0$.
        \item 
         Suppose that $\mathcal{I}$ is $\aleph_{1}$-filtered and each $F(i)$ is Banach. We already know that $\colim^{\le1}_{i\in\mathcal{I}}F(i)$ is normed. Let $([x_{n}])$ be a Cauchy sequence in $\colim^{\le1}_{i\in\mathcal{I}}F(i)$. By the $\aleph_{1}$-filtered condition, we may assume they are all representable by elements of the same $F(i)$. Also by the $\aleph_{1}$-condition, we may assume that the form a Cauchy sequence in $F(i)$. Therefore they converge there, and hence in $\colim^{\le1}_{i\in\mathcal{I}}F(i)$.
\end{enumerate}

\end{proof}

\begin{lemma}\label{lem:alpeh1compact}
    In $\mathrm{SNrm}_{R}^{\le1}$, $\mathrm{Nrm}_{R}^{\le1}$, $\mathrm{Ban}_{R}^{\le1}$, $\mathrm{SNrm}_{R}^{nA,\le1}$, $\mathrm{Nrm}_{R}^{nA,\le1}$, and $\mathrm{Ban}_{R}^{nA,\le1}$ each object of the form $R_{\delta}$ is $\aleph_{1}$-presentable. 
\end{lemma}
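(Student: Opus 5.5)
By Remark \ref{rem:rescalingprops}, for any object $M$ of the listed categories we have a natural bijection
$$\mathrm{Hom}(R_{\delta},M)\cong\overline{B}_{M}(0,\delta),$$
the closed ball of radius $\delta$. So it suffices to show that for any $\aleph_{1}$-filtered diagram $F:\mathcal{I}\rightarrow\mathcal{C}$, the canonical map
$$\colim_{i\in\mathcal{I}}\overline{B}_{F(i)}(0,\delta)\rightarrow\overline{B}_{\colim^{\le1}_{i}F(i)}(0,\delta)$$
is a bijection of sets. The plan is to use the explicit description of $\aleph_{1}$-filtered colimits from Proposition \ref{prop:filtcolim}. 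In all six categories, the underlying module of the colimit is the algebraic colimit, with semi-norm
$$\rho([x])=\mathrm{inf}_{i\rightarrow j}\rho_{j}(f_{ji}(x)).$$
For the normed and Banach categories, parts (2) and (3) of that proposition show that no separation or completion is needed. The non-Archimedean versions are identical, since the colimit computed in $\mathrm{SNrm}^{\le1}_{R}$ of non-Archimedean objects is again non-Archimedean, and $\mathrm{SNrm}_{R}^{nA,\le1}$ is covered by the proposition anyway.

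\emph{Injectivity.} Suppose $x\in\overline{B}_{F(i)}(0,\delta)$ and $x'\in\overline{B}_{F(i')}(0,\delta)$ have the same image in the colimit. Then they become equal in the algebraic colimit of modules, so they are identified at some stage $j$ of the filtered diagram. Their images there still lie in the $\delta$-ball, because the transition maps are non-expanding. Hence they are already equal in $\colim_{i}\overline{B}_{F(i)}(0,\delta)$, which is a filtered colimit of sets.

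\emph{Surjectivity, the main step.} Let $[x]$ with $x\in F(i)$ satisfy $\rho([x])\le\delta$. We need a single $j$ with $\rho_{j}(f_{ji}(x))\le\delta$, but the infimum need not be attained at any finite stage. This is where $\aleph_{1}$-filteredness enters, exactly as in the proof of Proposition \ref{prop:filtcolim}(2).
\begin{itemize}
\item For each $n\in\mathbb{N}$, choose $i\rightarrow j_{n}$ with $\rho_{j_{n}}(f_{j_{n}i}(x))\le\delta+\frac{1}{n}$.
\item Since $\mathcal{I}$ is $\aleph_{1}$-filtered, there is a cocone $j_{n}\rightarrow j_{\infty}$ over this countable family. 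Using $\aleph_{1}$-filteredness again (coequalising countably many parallel pairs), we may assume that the composites $i\rightarrow j_{n}\rightarrow j_{\infty}$ all agree.
\item Since the maps are non-expanding, $\rho_{j_{\infty}}(f_{j_{\infty}i}(x))\le\delta+\frac{1}{n}$ for all $n$, hence $\le\delta$.
\end{itemize}
So $f_{j_{\infty}i}(x)$ is a preimage of $[x]$ in the $\delta$-ball of $F(j_{\infty})$.

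Together these give $\mathrm{Hom}(R_{\delta},\colim F)\cong\colim\mathrm{Hom}(R_{\delta},F)$ for every $\aleph_{1}$-filtered diagram $F$, which is the claimed $\aleph_{1}$-presentability. The only subtle points are the coequalising step in surjectivity and confirming that the colimit formula from Proposition \ref{prop:filtcolim} is valid in each of the six categories.
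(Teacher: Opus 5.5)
Your proof is correct and follows the same approach as the paper. You reduce to the semi-normed case via Proposition \ref{prop:filtcolim} and identify $\mathrm{Hom}(R_{\delta},-)$ with the closed $\delta$-ball. You then use $\aleph_{1}$-filteredness to obtain a single stage $u$ through which countably many stages approximating the infimum $\rho([x])$ factor compatibly, so that non-expansiveness gives $\rho_{u}(f_{ui}(x))\le\delta$. Your write-up is slightly more complete than the paper's: it also checks injectivity, and it handles $\rho([x])\le\delta$ (rather than $\rho([x])=\delta$) through the approximations $\delta+\frac{1}{n}$.
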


\begin{proof}
By Proposition \ref{prop:filtcolim} it suffices to consider the semi-normed case. Let $F:\mathcal{I}\rightarrow\mathrm{SNrm}_{R}^{\le1}$ be an $\aleph_{1}$-filtered diagram. We need to show that $\overline{B}_{\colim_{\mathcal{I}}F(i)}(0,\delta)=\colim_{\mathcal{I}}\overline{B}_{F(i)}(0,\delta)$. Let $x\in F(i)$ with $\rho([x])=\delta$. Therefore $\mathrm{inf}_{i\rightarrow j}\rho_{j}(\alpha_{ij}(x))=\delta$.  Pick a countable subset $\{i\rightarrow j_{k}:k\in\mathbb{N}\}$ such that $\mathrm{inf}_{k\in\mathbb{N}}\rho_{j}(\alpha_{ij_{k}}(x))=\delta$. Let $j_{k}\rightarrow u$ be a cocone, for the maps $i\rightarrow j_{k}$, i.e., such that the compositions $i\rightarrow j_{k}\rightarrow u$ all coincide for all $k$. Then $\rho_{u}(\alpha_{ui}(x))\le\rho_{j}(\alpha_{ij_{k}}(x))$ for all $k$, so $\rho_{u}(\alpha_{ui}(x))\le\delta$, and this completes the proof.
\end{proof}

\begin{proposition}
      For $\mathcal{C}$ one of the categories $\mathrm{SNrm}_{R}^{\le1}$, $\mathrm{Nrm}_{R}^{\le1}$, $\mathrm{SNrm}_{R}^{nA,\le1}$, or $\mathrm{Nrm}_{R}^{nA,\le1}$, and for each $\delta\in\mathbb{R}_{\ge0}$ (or for each $\delta\in\mathbb{R}_{>0}$ in the normed cases) the functor
      $$\mathrm{Hom}(R_{\delta},-):\mathcal{C}\rightarrow\mathrm{Set}$$
      commutes with filtered colimits of functors $F:\mathcal{I}\rightarrow\mathcal{C}$ such that for each map $i\rightarrow j$ in $\mathcal{I}$, the map $F(i)\rightarrow F(j)$ is an isometry. In particular, in $\mathrm{SNrm}_{R}^{nA,\le1}$ and $\mathrm{Nrm}_{R}^{nA,\le1}$, for each $\delta\in\mathbb{R}_{\ge0}$ (or for each $\delta\in\mathbb{R}_{>0}$ in the normed cases)  the functor
       $$\mathrm{Hom}(R_{\delta},-):\mathcal{C}\rightarrow\mathrm{Ab}$$
       commutes with arbitrary direct sums.
\end{proposition}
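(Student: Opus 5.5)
The plan is to use Proposition \ref{prop:filtcolim}, which describes filtered colimits in $\mathrm{SNrm}_{R}^{\le1}$ and $\mathrm{SNrm}_{R}^{nA,\le1}$ explicitly, together with the identification $\mathrm{Hom}(R_{\delta},M)=\overline{B}_{M}(0,\delta)$ from Remark \ref{rem:rescalingprops}. Under this identification the statement reduces to a concrete claim. Let $F:\mathcal{I}\rightarrow\mathcal{C}$ be filtered with all transition maps isometries, and put $L=\colim_{\mathcal{I}}F(i)$. Then we must show that the canonical map
$$\colim_{\mathcal{I}}\overline{B}_{F(i)}(0,\delta)\rightarrow\overline{B}_{L}(0,\delta)$$
is a bijection of sets.

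First we need the underlying set of $L$. The forgetful functor to $R$-modules commutes with filtered colimits by Proposition \ref{prop:filtcolim}(1), and filtered colimits of sets are computed the same way. So the map above is the restriction of the bijection $\colim_{\mathcal{I}}|F(i)|\cong|L|$. Injectivity is therefore automatic, and only surjectivity onto the ball needs proof.

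The key step is the norm computation. By Proposition \ref{prop:filtcolim}(1), for $x\in F(i)$ we have $\rho_{L}([x])=\inf_{i\rightarrow j}\rho_{j}(f_{ji}(x))$. Since every $f_{ji}$ is an isometry, each term of this infimum equals $\rho_{i}(x)$, so $\rho_{L}([x])=\rho_{i}(x)$. Hence $[x]$ lies in $\overline{B}_{L}(0,\delta)$ exactly when $x\in\overline{B}_{F(i)}(0,\delta)$, which gives surjectivity.

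For the normed categories there is one extra point: the colimit in $\mathrm{Nrm}^{\le1}_{R}$ is $\mathrm{Sep}$ of the semi-normed colimit. Because the norm on the semi-normed colimit is computed by the isometric formula above, a class of norm zero comes from an element of norm zero in some normed $F(i)$, so it is zero. Thus the semi-normed colimit is already normed and $\mathrm{Sep}$ does nothing. The hypothesis $\delta>0$ in the normed cases just keeps $R_{\delta}$ inside the category.

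For the final claim about direct sums in the non-Archimedean categories, I would write an arbitrary direct sum as the filtered colimit of its finite sub-sums. The inclusions of finite sub-sums are isometries for the max norm of Lemma \ref{lem:catpropnon}(5). Since these categories are additive, finite sums are preserved by $\mathrm{Hom}(R_{\delta},-)$ into $\mathrm{Ab}$, so the first part gives the result.

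The only delicate step is identifying the underlying module of the normed colimit, that is, checking that no separation quotient is needed. Even that follows directly from the isometric norm formula.
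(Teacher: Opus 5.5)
Your proposal is correct and follows the paper's proof essentially step for step. You reduce via $\mathrm{Hom}(R_{\delta},M)=\overline{B}_{M}(0,\delta)$, use the isometric transition maps to get $\rho([x])=\rho_{i}(x)$ (which gives both surjectivity onto the ball and separatedness of the colimit in the normed case), and treat direct sums as filtered colimits of finite sub-sums along isometric inclusions. Your injectivity argument, restricting the bijection $\colim_{\mathcal{I}}|F(i)|\cong|L|$ and using that filtered colimits of sets preserve injections, is slightly more robust than the paper's. The paper instead asserts that each $\overline{B}_{F(i)}(0,\delta)\rightarrow\overline{B}_{L}(0,\delta)$ is injective, which can fail in the semi-normed case, since an isometry there may kill elements of semi-norm zero.
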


\begin{proof}
Using Remark \ref{rem:rescalingprops}, we need to prove that for any $\delta$, the natural map
$$\colim_{i\in\mathcal{I}}\overline{B}_{F(i)}(0,\delta)\rightarrow\overline{B}_{\colim_{i\in\mathcal{I}}F(i)}(0,\delta)$$
is bijective. Note that, by the explicit description of the semi-norm on the colimit, if $[x]\in\colim_{i\in\mathcal{I}}F(i)$, with $x$ a representative in $F(i)$, then $\rho([x])=\rho_{i}(x)$. In particular, if each $F(i)$ is normed then so is the colimit. Furthermore, each $F(i)\rightarrow\colim_{i\in\mathcal{I}}F(i)$ is itself an isometry. We get injections 
$$\overline{B}_{F(i)}(0,\delta)\rightarrow\overline{B}_{\colim_{i\in\mathcal{I}}F(i)}(0,\delta).$$
Since colimits of injections in $\mathrm{Set}$ are injections, we find that
$$\colim_{i\in\mathcal{I}}\overline{B}_{F(i)}(0,\delta)\rightarrow\overline{B}_{\colim_{i\in\mathcal{I}}F(i)}(0,\delta)$$
is an injection. Thus, it remains to prove that this map is surjective. Let $[x]\in\overline{B}_{\colim_{i\in\mathcal{I}}F(i)}(0,\delta)$ with $x$ a representative in $F(i)$. We have $\delta=\rho([x])=\rho_{i}(x_{i})$. Hence $[x]$ is in the image of $\overline{B}_{F(i)}(0,\delta)\rightarrow\overline{B}_{\colim_{i\in\mathcal{I}}F(i)}(0,\delta)$.

For the non-Archimedean case, observe that a direct sum may be written as a filtered colimit of finite sums, and that the inclusion of a sum into a larger sum is an isometry. Thus, the claim reduces to the fact that in the non-Archimedean, and hence additive, setting $\mathrm{Hom}(R_{\delta},-):\mathrm{xNrm}_{R}^{nA,\le1}\rightarrow\mathrm{Ab}$ commutes with finite sums. 
\end{proof}

\begin{lemma}\label{lem:genR}
    In $\mathrm{SNrm}_{R}^{\le1}$ and $\mathrm{SNrm}_{R}^{nA,\le1}$, the collection of objects $\{R_{\delta}:\delta\in\mathbb{R}_{\ge0}\}$ is a set of admissible generators. In $\mathrm{Nrm}_{R}^{\le1}$, $\mathrm{Ban}_{R}^{\le1}$, $\mathrm{Nrm}_{R}^{nA,\le1}$, and $\mathrm{Ban}_{R}^{nA,\le1}$ the collection of objects $\{R_{\delta}:\delta\in\mathbb{R}_{>0}\}$ is a set of admissible generators.
\end{lemma}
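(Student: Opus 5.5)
We need an admissible epimorphism from a coproduct of objects $R_{\delta}$ onto any object $M$. Let $\mathcal{C}$ be one of the listed categories, and use the non-Archimedean or Archimedean coproduct from Lemma \ref{lem:catpropnon} as appropriate. For each $m\in M$ pick a radius $\delta_m$: in the semi-normed cases take $\delta_m=\rho_M(m)$, which is allowed since $\delta\ge0$ is permitted. In the normed and Banach cases take $\delta_m=\rho_M(m)$ for $m\neq0$, and omit $m=0$ (or use any positive $\delta$). Remark \ref{rem:rescalingprops} identifies $\mathrm{Hom}(R_{\delta},M)$ with the closed ball $\overline{B}_M(0,\delta)$, so $m$ gives a non-expanding map $R_{\delta_m}\to M$, $r\mapsto rm$. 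Strictly this needs $\rho_M(rm)\le|r|\rho_M(m)$, i.e.\ the constant $C$ in the module axiom equal to $1$; otherwise one rescales by $C$ and uses $R_{C\delta_m}$, which only changes the indexing. These maps assemble to a morphism $\pi:\coprod_{m}R_{\delta_m}\to M$.

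Next we show that $\pi$ is a strict epimorphism, using the characterisation of strict epimorphisms in the non-expanding categories: surjective, with $\rho_M(y)=\inf_{x\in\pi^{-1}(y)}\rho(x)$. Surjectivity is clear, since $m$ is the image of $1$ in the $m$-th summand. In the semi-normed and normed cases the coproduct is algebraic, and the element $e_m$ maps to $m$. Its norm in $R_{\delta_m}$ is $\delta_m|1|$, so this element realises $\rho_M(m)$ provided $|1|=1$. If $|1|$ may exceed $1$, use the element $1$ in the summand indexed by $m$ with radius chosen as $\rho_M(m)/|1|$, or simply index over all pairs $(m,\delta)$ with $\delta\ge\rho_M(m)$. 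In that case the infimum over $\delta$ equals $\rho_M(m)$, and non-expansiveness holds by construction. The reverse inequality $\rho_M(\pi(x))\le\rho(x)$ is just non-expansiveness of $\pi$. Together these give equality of the quotient norm with $\rho_M$.

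In the Banach cases the coproduct is the completion of the normed coproduct, so surjectivity and the norm equality still hold. The norm equality holds because the algebraic elements $e_m$ already lie in the completion. The only care needed is that $\pi$ extends continuously to the completion, which is automatic since $M$ is complete and $\pi$ is non-expanding.

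The main obstacle is pinning down the exact normalisation of the constant $C$ and of $|1|$, so that the maps $R_{\delta}\to M$ are genuinely non-expanding and the infimum is attained. Indexing over all pairs $(m,\delta)$ with $\overline{B}_M(0,\delta)\ni m$ (i.e.\ using the canonical evaluation map $\coprod_{\delta}\coprod_{\mathrm{Hom}(R_\delta,M)}R_\delta\to M$) handles this uniformly. It also gives the statement that $\{R_\delta\}$ is a set, since $\delta$ ranges over a set of reals.
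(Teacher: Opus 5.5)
Under the normalisation $|1|=1$ and $\rho_M(rm)\le|r|\rho_M(m)$, your argument is the paper's. The map $\coprod_{m\neq0}R_{\rho_M(m)}\to M$, $(r_m)\mapsto\sum_m r_m m$, is surjective, and the unit vector in the $m$-th summand realises $\rho_M(m)$. The only difference is the Banach case. The paper applies the left adjoint $\overline{\mathrm{Cpl}}$, which preserves coproducts and strict epimorphisms, to the semi-normed presentation. You instead verify surjectivity and the norm identity directly on the completed coproduct. Both work.

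What fails is your claim that the normalisation can be engineered away by reindexing. Each of your three repairs breaks:
\begin{enumerate}
\item Rescaling to $R_{C\rho_M(m)}$ makes $r\mapsto rm$ non-expanding. But the unit vector then has norm $C|1|\rho_M(m)$, which exceeds $\rho_M(m)$ once $C|1|>1$.
\item Indexing over all $\delta\ge\rho_M(m)$ fails twice over. The unit vectors have norm $\delta|1|\ge|1|\rho_M(m)$. And for $C>1$ the maps need not be morphisms at all, because $r\mapsto rm$ is non-expanding on $R_\delta$ iff $\rho_M(rm)\le\delta|r|$ for \emph{all} $r$, not just $r=1$.
\item The canonical evaluation map is non-expanding by construction, but it need not be strict.
\end{enumerate}

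In fact no repair exists. Take $R=\mathbb{Z}$ with the trivial norm and $M=\mathbb{Z}/5$ with $\rho_M(0)=0$, $\rho_M(\pm1)=1$, $\rho_M(\pm2)=2$. This is a normed (indeed Banach) $R$-module, and it needs $C=2$. For $m\neq0$ the map $r\mapsto rm$ is onto $M$, so any morphism $R_\delta\to M$ with $1\mapsto m\neq0$ forces $\delta\ge2$. Hence, for any morphism $\coprod_i R_{\delta_i}\to M$, every preimage of $1$ has norm at least $2>\rho_M(1)$. For the completed coproduct, argue by density, using that nonzero elements of $M$ have norm at least $1$. So $M$ admits no admissible epimorphism from any coproduct of the $R_\delta$. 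The same happens in the non-Archimedean categories with $R=\mathbb{F}_2[t]$ trivially normed and $M=\mathbb{F}_2[t]/(t^2)$, $\rho_M(1)=1$, $\rho_M(t)=\rho_M(1+t)=2$.

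So the lemma genuinely needs this normalisation, and so does the identification $\mathrm{Hom}(R_\delta,M)=\overline{B}_M(0,\delta)$ of Remark~\ref{rem:rescalingprops}. The paper's proof uses it tacitly: it takes $r\mapsto rx$ to be a morphism $R_{\rho_X(x)}\to X$ and asserts that the corresponding unit vector has norm $\rho_X(x)$. You should state the normalisation as a hypothesis rather than claim that reindexing handles it.
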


\begin{proof}
    Let $(X,\rho_{X})$ be a semi-normed $R$-module. For each $x\in X$ with $x\neq 0$, consider the $R$-module $R_{\rho_{X}(x)}$. There is a natural map $R_{\rho_{X}(x)}\rightarrow X$ sending $r\mapsto rx$. We therefore get an induced map
    $$\pi:R_{X}=\coprod_{x\in X\setminus\{0\}}R_{\rho_{X}(x)}\rightarrow X$$
    $$(r_{x})_{x\in X\setminus\{0\}}\mapsto\sum_{x\in X\setminus\{0\}}r_{x}x.$$
    Let $x\in X$. Then the standard unit vector $\delta_{x}$ maps to $x$ under $\pi$, and $\rho_{R_{X}}(\delta_{x})=\rho_{X}(x)=\rho_{X}(\pi(\delta_{x}))$. Clearly then $\pi$ is a strict epimorphism. This proves the semi-normed case. Note that if $(X,\rho_{X})$ is separated, then for all $x\in X\setminus\{0\}$, $\rho_{X}(x)\neq0$ by definition. Thus, we have also proved the normed case. The Banach case follows from the fact that $\overline{\mathrm{Cpl}}$ is a left adjoint - it therefore preserves strict epimorphisms and coproducts.
\end{proof}

The next proposition is a consequence of Theorem \ref{thm:locpresentgen},  Lemma \ref{lem:catpropnon}, Theorem \ref{cor:pushpullnonexp} (5), Lemma \ref{lem:alpeh1compact}, and Lemma \ref{lem:genR}.

\begin{proposition}
    The categories  $\mathrm{SNrm}_{R}^{\le1}$, $\mathrm{Nrm}_{R}^{\le1}$, and $\mathrm{Ban}_{R}^{\le1}$, $\mathrm{SNrm}_{R}^{nA,\le1}$, $\mathrm{Nrm}_{R}^{nA,\le1}$, and $\mathrm{Ban}_{R}^{nA,\le1}$ are all locally $\aleph_{1}$-presentable. 
\end{proposition}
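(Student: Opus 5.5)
The plan is to apply Theorem \ref{thm:locpresentgen} with $\lambda=\aleph_{1}$. It requires four inputs: a proto-exact structure satisfying the strong right obscure axiom, cocompleteness, a set of admissible generators, and $\lambda$-presentability of those generators.

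\emph{Proto-exact structure and obscure axiom.} By Theorem \ref{cor:pushpullnonexp}, each of the six categories is totally parabelian, so the class of all strict pairs is a proto-exact structure. Part (5) of that theorem says: if $g\circ f$ is a strict epimorphism, then $g$ is a strict epimorphism. This is exactly the strong right obscure axiom, with no cokernel hypothesis required.

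\emph{Cocompleteness.} Lemma \ref{lem:catpropnon} gives coequalisers and arbitrary coproducts in all six categories. In the Banach cases the coproduct is the completion of the normed coproduct, and the coequaliser is the quotient by the closure. A category with coequalisers and all small coproducts is cocomplete, so each category is cocomplete.

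\emph{Generators and presentability.} Lemma \ref{lem:genR} shows that $\{R_{\delta}\}$, with $\delta$ ranging over $\mathbb{R}_{\ge0}$ or over $\mathbb{R}_{>0}$ as appropriate, is a set of admissible generators. It is indexed by a set of reals, so it is a genuine set. Lemma \ref{lem:alpeh1compact} shows each $R_{\delta}$ is $\aleph_{1}$-presentable. Theorem \ref{thm:locpresentgen} then yields that each category is locally $\aleph_{1}$-presentable.

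\emph{Main obstacle.} The step needing most care is the Banach and normed cases of Lemma \ref{lem:alpeh1compact}. Its proof reduces to the semi-normed case via Proposition \ref{prop:filtcolim}, which says $\aleph_{1}$-filtered colimits of normed (resp. Banach) objects computed in $\mathrm{SNrm}^{\le1}_{R}$ are already normed (resp. Banach). I would therefore check that the inclusions $\mathrm{Ban}^{\le1}_{R}\subset\mathrm{Nrm}^{\le1}_{R}\subset\mathrm{SNrm}^{\le1}_{R}$ preserve $\aleph_{1}$-filtered colimits, by comparing the universal properties. Once that holds, $\mathrm{Hom}(R_{\delta},-)$, which is the closed ball of radius $\delta$, commutes with $\aleph_{1}$-filtered colimits in each subcategory. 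The non-Archimedean variants are handled identically.
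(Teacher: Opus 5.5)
Your proposal is correct and follows the paper's own argument exactly. The paper proves this proposition in one line by combining Theorem \ref{thm:locpresentgen} with Lemma \ref{lem:catpropnon} (cocompleteness), Theorem \ref{cor:pushpullnonexp}(5) (strong right obscure axiom), Lemma \ref{lem:genR} (the admissible generators $R_{\delta}$) and Lemma \ref{lem:alpeh1compact} ($\aleph_{1}$-presentability), which is what you do; your further check that the normed and Banach subcategories are closed under $\aleph_{1}$-filtered colimits in $\mathrm{SNrm}^{\le1}_{R}$ (Proposition \ref{prop:filtcolim}), so that the inclusions preserve them, is precisely the justification for the reduction to the semi-normed case in Lemma \ref{lem:alpeh1compact}.
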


\comment{
\subsection{Definitions}

Let $R$ be a semi-normed ring. That it, it is a unital commutative ring $R$ equipped with a norm $|-|:R\rightarrow\mathbb{R}_{\ge0}$ such that for all $r,s\in R$

\begin{enumerate}

    \item 
    $|rs|\le |r| |s|,$
    \item 
    $|r+s|\le |r|+|s|.$
\end{enumerate}
We say that $R$ is \textit{normed} if $|r|=0\Leftrightarrow r=0$, and \textit{Banach} if it is complete with respect to the metric induced by the norm. Finally we say that $R$ is \textit{non-Archimedean} if $|r+s|\le\textrm{max}\{|r|,|s|\}$. Let $R$ be a semi-normed ring. A \textit{semi-normed} $R$-\textit{module} is an $R$-module $M$ equipped with a function $\rho_{M}:M\rightarrow\mathbb{R}_{\ge0}$ such that there is a $C>0$ such that for all $r\in R$ and $m,n\in M$, 

\begin{enumerate}
\item 
    $\rho_{M}(rm)\le C|r| \rho_{M}(m),$
    \item 
    $\rho_{M}(m+n)\le \rho_{M}(m)+\rho_{M}(n).$
\end{enumerate}
Note that if $R$ is a unital ring and $|1|=1$, then $C$ can be taken to be $1$. A semi-normed $R$-module $M$ is said to be \textit{normed} if $\rho_{M}(m)=0\Leftrightarrow m=0$, and Banach if it is complete with respect to the metric induced by $\rho_{M}$. When $R$ is non-Archimedean, we call $M$ \textit{non-Archimedean} if $\rho_{M}(m+n)\le\mathrm{max}(\rho_{M}(m),\rho_{M}(n)).$

By a \textit{bounded morphism of semi-normed} $R$-modules, we mean a morphism of $R$-modules $f:M\rightarrow N$ such that there is $C_{f}>0$ with $\rho_{N}(f(m))\le C_{f}\rho_{M}(m)$ for all $m\in M$. Such a morphism is said to be \textit{non-expanding} if $C_{f}$ can be taken to be $1$. We denote by $\mathrm{SNrm}_{R}$ the category of semi-normed $R$-modules and bounded morphisms, and $\mathrm{SNrm}_{R}^{\le1}$ the category of semi-normed $R$-modules with non-expanding morphisms. We denote by $\mathrm{Nrm}_{R}/\mathrm{Nrm}_{R}^{\le1}$ and $\mathrm{Ban}_{R},\mathrm{Ban}_{R}^{\le1}$ the full subcategories consisting of normed and Banach $R$-modules respectively. The natural inclusion $\mathrm{Nrm}_{R}\rightarrow\mathrm{SNrm}_{R}$ has a left adjoint $\mathrm{Sep}$, which sends $M$ to $M\big\slash\{m:\rho_{M}(m)=0\}$. The inclusion $\mathrm{Ban}_{R}\rightarrow\mathrm{Nrm}_{R}$ also has a left adjoint $\mathrm{Cpl}$, given by completion. We denote 
$$\overline{\mathrm{Cpl}}\defeq\mathrm{Cpl}\circ\mathrm{Sep}:\mathrm{SNrm}_{R}\rightarrow\mathrm{Ban}_{R}.$$

We further have the full subcategories
$$\mathrm{SNrm}_{R}^{nA},\mathrm{Nrm}_{R}^{nA},\mathrm{Ban}_{R}^{nA}$$
$$\mathrm{SNrm}_{R}^{nA,\le1},\mathrm{Nrm}_{R}^{nA,\le1},\mathrm{Ban}_{R}^{nA,\le1}$$
consisting of non-Archimedean modules. When our discussions apply to the semi-normed, normed, and Banach categories at once, and we do not wish to repeat ourselves, we will write $\mathrm{xNrm}_{R}$, $\mathrm{xNrm}_{R}^{\le1},\mathrm{xNrm}_{R}^{nA},\mathrm{xNrm}_{R}^{nA,\le1}$.

The functors $\mathrm{Sep}$ a $\mathrm{Cpl}$ and $\overline{\mathrm{Cpl}}$ all restrict to left adjoints on the non-expanding categories, and all of the non-Archimedean variants.

Recall that if $M$ is a semi-normed module, with subspace $N$, then the quotient semi-norm on $M\big\slash N$ is $\rho_{M\big\slash N}([m])=\mathrm{inf}_{n\in N}\rho_{M}(m+n)$. 

The following is standard over $\mathbb{R},\mathbb{C}$, and a non-Archimedean non-trivially valued field. The usual proofs generalise to arbitrary Banach rings.

\begin{proposition}\label{prop:quotBanclosed}
    If $M$ is normed (resp. Banach) then we have $\mathrm{Sep}(M\big\slash N)\cong M\big\slash\overline{N}$ (resp. $\overline{\mathrm{Cpl}}(M\big\slash N)\cong M\big\slash\overline{N}$).
\end{proposition}

\comment{
\begin{proof}
\textcolor{red}{put this earlier}
    For the normed case, we have to prove that $\{[m]\in M\big\slash N:\rho_{M\big\slash N}([m])=0\}$ coincides with $\{[m]:m\in\overline{N}\}$. Indeed, let $\rho_{M\big\slash N}([m])=0$. Then $\mathrm{inf}_{n\in N}\rho_{M}(m+n)=0$. In particular, there is a sequence $(m-n_{k})$ such that $\rho_{M}(m-n_{k})$ converges to $0$ as $k\rightarrow\infty$. In particular, $n_{k}$ converges to $m$. On the other hand if there exists a sequence $n_{k}\in N$ converging to $m$, then $[n_{k}]=0$.

    For the claim about completion, it suffices to prove that $M\big\slash N$ is complete when $N$ is closed. Let $[m_{n}]$ be a Cauchy sequence in $M$. The standard proof from
\end{proof}
}

\begin{remark}
    Note that the categories $\mathrm{xNrm}_{R}$ and $\mathrm{xNrm}_{R}^{nA}$ are additive. On the other hand $\mathrm{xNrm}_{R}^{\le1}$ will not be in general, simply because the sum of two maps of norm $\le1$ need not be of norm $\le1$. However for $R$ non-Archimedean the categories $\mathrm{xNrm}_{R}^{nA,\le1}$ \textit{are additive}. In fact, as we shall see below, they are \textit{quasi-abelian}. In other words, the collection of all kernel-cokernel pairs in each category determines an exact structure. In this exact structure, a morphism $f:M\rightarrow N$ is an admissible or \textit{strict epimorphism}, if $N$ is isometrically isomorphic to $\mathrm{Coker}(\mathrm{Ker}(f)\rightarrow M)$.
\end{remark}

For $M$ an object of $\mathrm{xNrm}_{R}$ and $\delta\in\mathbb{R}_{>0}$, we denote by $M_{\delta}$ the object with the same underlying module as $M$, but with $\rho_{M_{\delta}}(m)=\delta\rho_{M}(m)$. 

\begin{lemma}
    In $\mathrm{SNrm}_{R}^{\le1}$, $\mathrm{Nrm}_{R}^{\le1}$, and $\mathrm{Ban}_{R}^{\le1}$ each object of the form $R_{\delta}$ is $\aleph_{1}$-compact. 
\end{lemma}

\begin{proof}
Consider first the semi-normed case. Let $F:\mathcal{I}\rightarrow\mathrm{SNrm}_{R}^{\le1}$ be an $\aleph_{1}$-filtered diagram. We need to show that $\overline{B}_{\colim_{\mathcal{I}}F(i)}(0,\delta)=\colim_{\mathcal{I}}\overline{B}_{F(i)}(0,\delta)$. Let $x\in F(i)$ with $\rho([x])=\delta$. Therefore $\mathrm{inf}_{i\rightarrow j}\rho_{j}(\alpha_{ij}(x))=\delta$.  Pick a countable subset $\{i\rightarrow j_{k}:k\in\mathbb{N}\}$ such that $\mathrm{inf}_{k\in\mathbb{N}}\rho_{j}(\alpha_{ij_{k}}(x))=\delta$. Let $j_{k}\rightarrow u$ be maps such that the compositions $i\rightarrow j_{k}\rightarrow u$ all coincide for all $k$. Then $\rho_{u}(\alpha_{ui}(x))\le\rho_{j}(\alpha_{ij_{k}}(x))$ so $\rho_{u}(\alpha_{ui}(x))\le\delta$, and this completes the proof.

Next we claim that $\mathrm{Nrm}_{R}^{\le1}$ and $\mathrm{Ban}_{R}^{\le1}$ are closed in $\mathrm{SNrm}_{R}^{\le1}$ under $\aleph_{1}$-filtered colimits. If all $F(i)$ are separated then the proof above shows that $\colim_{i\in\mathcal{I}}F(i)$ is also separated. Next let $F(i)$ all be Banach spaces and let $([x_{n}])$ be a Cauchy sequence in $\colim_{i\in\mathcal{I}}F(i)$. By a similar argument to the above we may assume that all $x_{n}$ live in the same $M_{i}$. Moreover they converge there.
\end{proof}

\begin{proposition}
    The categories  $\mathrm{SNrm}_{R}^{\le1}$, $\mathrm{Nrm}_{R}^{\le1}$, and $\mathrm{Ban}_{R}^{\le1}$ are locally $\aleph_{1}$-presentable. 
\end{proposition}

\begin{proof}
     Let $M\in\mathrm{xNrm}_{R}^{\le1}$. Consider $R_{M}\defeq\coprod_{m\in M\setminus\{0\}}R_{\rho_{M}(m)}$. There is a strict epimorphism $\phi:R_{M}\rightarrow M$ sending $(r_{m})\mapsto\sum_{m\in M}r_{m}m$. Set $K=\mathrm{Ker}(\phi)$. Write $R_{M}\cong\colim_{N\subset M\setminus\{0\}}\coprod_{m\in N}R_{\rho_{M}(m)}$, where the colimit rungs over all countable subsets $N$ of $M\setminus_{0}$. This is an $\aleph_{1}$-filtered colimit. Further set $R_{N}=\coprod_{m\in N}R_{\rho_{M}(m)}$, and $K_{N}=R_{N}\times_{R_{M}}K$. Note that $K_{N}\rightarrow K$ is a strict mono, and for $N\subset N'$, $K_{N}\rightarrow K_{N'}$ is a strict mono. For each $N$, again consider the strict epimorphism
    $$\phi_{N}:\coprod_{k\in  K_{N}\setminus\{0\}}R_{\rho_{K}(k)}\rightarrow K_{N}.$$
Let $\mathcal{I}_{N}$ denote the set of countable subsets of $K_{N}\setminus\{0\}$. Let $\mathcal{I}$ be the category whose objects are pairs 
$(I_{N},N)$, where $N\subset M\setminus\{0\}$ is countable, and $I_{N}\in\mathcal{I}_{N}$. There is a morphism $(I_{N},N)\rightarrow (I_{N'},N')$ precisely if $N\subset N'$ and $I_{N}\subset I_{N'}$. This is $\aleph_{1}$-filtered. Define $F:\mathcal{I}\rightarrow\mathrm{xNrm}_{R}^{\le1}$,
$$(I_{N},N)\mapsto M_{(N,I_{N})}=\mathrm{Coker}(\coprod_{k\in I_{N}}R_{\rho_{K}(k)}\rightarrow\coprod_{n\in N}R_{\rho_{M}(n)}).$$
Each $\mathrm{Coker}(\coprod_{k\in I_{N}}R_{\rho_{K}(k)}\rightarrow\coprod_{n\in N}R_{\rho_{M}(n)})$ is $\aleph_{1}$-compact. We claim that $\colim_{(I_{N},N)}M_{(N,I_{N})}\cong M$. Consider also the functor 
$$\tilde{F}:\mathcal{I}\rightarrow\mathrm{xNrm}_{R}^{\le1}, (I_{N},N)\mapsto R_{N}.$$
There is a natural strict epimorphism $\tilde{F}\rightarrow F$. Note that $\colim_{\mathcal{I}}\tilde{F}\cong R_{M}$. The map $\phi$ then factors as 
$$R_{M}\rightarrow\colim_{\mathcal{I}}F\rightarrow M.$$
By the obscure axiom, $\tilde{\phi}:\colim_{\mathcal{I}}F\rightarrow M$ is a strict epimorphism. Let us now show that it is a monomorphism. Suppose that $\tilde{\phi}([x])=\phi(x)=0$.  
   \textcolor{red}{finish}
\end{proof}
}

\subsubsection{Exactness of filtered colimits}

Next we consider exactness of filtered colimits.

\begin{proposition}
    If $u:X\rightarrow Y$ is a strict monomorphism in $\mathrm{SNrm}_{R}^{\le1}$ (resp. in $\mathrm{SNrm}_{R}^{nA,\le1}$), then $\overline{\mathrm{Cpl}}(u):\overline{\mathrm{Cpl}}(X)\rightarrow\overline{\mathrm{Cpl}}(Y)$ is a strict monomorphism in $\mathrm{Ban}_{R}^{\le1}$ (resp. in $\mathrm{Ban}_{R}^{nA,\le1}$).
\end{proposition}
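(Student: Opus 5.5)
We need to show that if $u:X\rightarrow Y$ is an isometry of semi-normed modules, then $\overline{\mathrm{Cpl}}(u)$ is an isometry with closed image between Banach modules. By the description of strict monomorphisms in the non-expanding categories (isometry in the semi-normed case; isometry with closed image in the normed and Banach cases), it suffices to prove these two properties. The plan is to factor $\overline{\mathrm{Cpl}}=\mathrm{Cpl}\circ\mathrm{Sep}$ and treat each functor separately. The non-Archimedean case is identical, since both $\mathrm{Sep}$ and $\mathrm{Cpl}$ preserve the non-Archimedean inequality.

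\emph{Step 1: $\mathrm{Sep}$.} We have $\mathrm{Sep}(X)=X\big\slash X_{0}$ with $X_{0}=\{x:\rho_{X}(x)=0\}$, equipped with the quotient semi-norm. By the triangle inequality, $\rho_{X}(x+x_{0})=\rho_{X}(x)$ for $x_{0}\in X_{0}$, so this quotient semi-norm is simply $\rho([x])=\rho_{X}(x)$. Since $u$ is an isometry, $u(X_{0})\subset Y_{0}$, and the induced map $[x]\mapsto[u(x)]$ satisfies $\rho([u(x)])=\rho_{Y}(u(x))=\rho_{X}(x)=\rho([x])$. Hence $\mathrm{Sep}(u)$ is an isometry, and in particular it is injective because the spaces are normed.

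\emph{Step 2: $\mathrm{Cpl}$.} Suppose now $u:X\rightarrow Y$ is an isometry of normed modules. Model $\mathrm{Cpl}(X)$ as equivalence classes of Cauchy sequences with $\rho((x_{n}))=\lim\rho_{X}(x_{n})$; then $\mathrm{Cpl}(u)$ sends $(x_{n})$ to $(u(x_{n}))$. The limits agree, so $\mathrm{Cpl}(u)$ is an isometry. For closedness of the image, take $\hat{u}(a_{k})\rightarrow b$ in $\mathrm{Cpl}(Y)$. Then $(a_{k})$ is Cauchy in $\mathrm{Cpl}(X)$ because $\hat{u}$ is an isometry. Since $\mathrm{Cpl}(X)$ is complete, $a_{k}\rightarrow a$, and continuity gives $b=\hat{u}(a)$. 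This is the same argument already used in Theorem \ref{cor:pushpullnonexp}(4).

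The step needing the most care is checking that the completion really lies in the required category and that the maps agree with the abstract left adjoint. Concretely, one must verify that the sequence model of $\mathrm{Cpl}(X)$ is a Banach $R$-module with $\rho(rm)\le C|r|\rho(m)$, which passes to limits, and that $\mathrm{Cpl}(u)$ coincides with the functorially induced map. Both follow from the universal property of completion, since $\hat{u}$ is the unique continuous extension of $X\rightarrow Y\rightarrow\mathrm{Cpl}(Y)$. Composing Steps 1 and 2 then gives the result.
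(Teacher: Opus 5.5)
Your proof is correct and follows essentially the same route as the paper: $\overline{\mathrm{Cpl}}(u)$ is an isometry, and its image is closed because a sequence in the image that converges in $\overline{\mathrm{Cpl}}(Y)$ is Cauchy, hence Cauchy in the complete space $\overline{\mathrm{Cpl}}(X)$. Your factorisation $\overline{\mathrm{Cpl}}=\mathrm{Cpl}\circ\mathrm{Sep}$ only spells out the isometry step that the paper calls clear.
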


\begin{proof}
    Clearly $\overline{\mathrm{Cpl}}(u)$ is an isometry. We need to show that it has closed image. Let $[(x_{n,m})_{n}]_{m}$ be a sequence in $\overline{\mathrm{Cpl}}(X)\subset\overline{\mathrm{Cpl}}(Y)$ converging to some $y\in\overline{\mathrm{Cpl}}(Y)$. In particular, $[(x_{n,m})_{n}]_{m}$ is Cauchy in $\overline{\mathrm{Cpl}}(Y)$. But then it is Cauchy in $\overline{\mathrm{Cpl}}(X)$, and hence converges in $\overline{\mathrm{Cpl}}(X)$. The non-Archimedean case is identical.
\end{proof}

\begin{corollary}\label{cor:cplexact}
\begin{enumerate}
    \item 
        The functor $\overline{\mathrm{Cpl}}:\mathrm{SNrm}_{R}^{\le1}\rightarrow\mathrm{Ban}_{R}^{\le1}$ is exact for the parabelian proto-exact structure.
    \item 
     The functor $\overline{\mathrm{Cpl}}:\mathrm{SNrm}_{R}^{nA,\le1}\rightarrow\mathrm{Ban}_{R}^{nA,\le1}$ is exact for the quasi-abelian exact structure.
\end{enumerate}

\end{corollary}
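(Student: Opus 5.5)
Since $\overline{\mathrm{Cpl}}=\mathrm{Cpl}\circ\mathrm{Sep}$ is a left adjoint, it preserves all colimits, and in particular cokernels. The plan is to take a short exact sequence
$$0\rightarrow X\xrightarrow{u} Y\xrightarrow{p} Z\rightarrow 0$$
in $\mathrm{SNrm}_{R}^{\le1}$ (resp. $\mathrm{SNrm}_{R}^{nA,\le1}$) and show that its image under $\overline{\mathrm{Cpl}}$ is again a kernel-cokernel pair. Because the parabelian structure uses all strict pairs, it suffices to prove two things: $\overline{\mathrm{Cpl}}(u)$ is a strict monomorphism, and $\overline{\mathrm{Cpl}}(p)$ is isomorphic to the cokernel of $\overline{\mathrm{Cpl}}(u)$. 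Strict pairs are determined by their monomorphism via Proposition \ref{prop:strictnessequiv}, so these together give exactness.

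For the first point, the preceding proposition says directly that $\overline{\mathrm{Cpl}}(u)$ is a strict monomorphism in $\mathrm{Ban}_{R}^{\le1}$ (resp. $\mathrm{Ban}_{R}^{nA,\le1}$). For the second point, $Z\cong\mathrm{Coker}(u)$ in the semi-normed category, and $\overline{\mathrm{Cpl}}$ preserves pushouts and the zero object, hence cokernels. Therefore $\overline{\mathrm{Cpl}}(Z)\cong\mathrm{Coker}(\overline{\mathrm{Cpl}}(u))$ computed in the Banach category, with $\overline{\mathrm{Cpl}}(p)$ the canonical map. It follows that $\overline{\mathrm{Cpl}}(p)$ is a strict epimorphism whose kernel is the strict monomorphism $\overline{\mathrm{Cpl}}(u)$, since a strict monomorphism is the kernel of its cokernel by Proposition \ref{prop:strictnessequiv}(2). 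So the pair is strict.

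For (2), the same argument applies verbatim. In the additive quasi-abelian setting the exact sequences are exactly the kernel-cokernel pairs, so nothing further needs checking.

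The only step needing care is matching the concrete description of the cokernel with the abstract one. By Lemma \ref{lem:catpropnon}(2), $\mathrm{Coker}(\overline{\mathrm{Cpl}}(u))=\overline{\mathrm{Cpl}}(Y)\big\slash\overline{\mathrm{Cpl}}(X)$, with closed image and the quotient norm. The abstract argument via left adjoints makes this explicit identification unnecessary. I expect no real obstacle, beyond confirming that the closed-image property supplied by the previous proposition is exactly what is needed for $\overline{\mathrm{Cpl}}(u)$ to equal the kernel of this quotient map.
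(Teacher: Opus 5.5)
Your proposal is correct and follows the paper's proof: $\overline{\mathrm{Cpl}}$ preserves cokernels because it is a left adjoint, the preceding proposition makes $\overline{\mathrm{Cpl}}(u)$ a strict monomorphism, and a strict monomorphism is the kernel of its cokernel, which here is $\overline{\mathrm{Cpl}}(p)$. The non-Archimedean case is then identical, exactly as in the paper.
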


\begin{proof}
We prove the first claim, the second is identical.
   The functor $\overline{\mathrm{Cpl}}$ is left adjoint to the inclusion $\mathrm{Ban}_{R}^{\le1}\rightarrow\mathrm{SNrm}_{R}^{\le1}$. Thus it commutes with cokernels. Consider a short exact sequence
   \begin{displaymath}
       \xymatrix{
       0\ar[r] & X\ar[r]^{f} & Y\ar[r]^{g} & Z\ar[r] &0
       }
   \end{displaymath}
   in $\mathrm{SNrm}_{R}^{\le1}$. Then $\overline{\mathrm{Cpl}}(f)$ is a strict monomorphism, so it is the kernel of its cokernel. But its cokernel is $\overline{\mathrm{Cpl}}(g)$. This completes the proof.
\end{proof}

\begin{lemma}
\begin{enumerate}
    \item 
        Filtered colimits are exact in $\mathrm{SNrm}_{R}^{\le1}$ and $\mathrm{Ban}_{R}^{\le1}$. In $\mathrm{SNrm}_{R}^{\le1}$ filtered colimits are in fact strongly exact - that is, they commute with kernels and cokernels. 
        \item 
            Filtered colimits are exact in $\mathrm{SNrm}_{R}^{nA,\le1}$ and $\mathrm{Ban}_{R}^{nA,\le1}$. In $\mathrm{SNrm}_{R}^{nA,\le1}$ filtered colimits are in fact strongly exact - that is, they commute with kernels and cokernels.
            \item 
            In each of the categories $\mathrm{Nrm}_{R}^{\le1}$ and $\mathrm{Nrm}_{R}^{nA,\le1}$, the class of all objects is unionable.
\end{enumerate}

\end{lemma}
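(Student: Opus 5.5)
The plan is to reduce everything to the explicit description of filtered colimits in Proposition \ref{prop:filtcolim}, to the description of strict monomorphisms and epimorphisms as isometries and quotient-norm surjections, and to the fact that filtered colimits of $R$-modules are exact. For part (1) in $\mathrm{SNrm}_{R}^{\le1}$, I would take a filtered diagram of short exact sequences $0\rightarrow X_{i}\rightarrow Y_{i}\rightarrow Z_{i}\rightarrow 0$. Algebraically the colimit sequence is exact, so only the semi-norms need checking.

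For the monomorphism, I would compute with the formula $\rho([x])=\inf_{i\rightarrow j}\rho_{j}(f_{ji}(x))$. Since each $X_{j}\rightarrow Y_{j}$ is an isometry, the infimum computed in the $X_{j}$ equals the one computed in the $Y_{j}$, so $\colim X_{i}\rightarrow\colim Y_{i}$ is an isometry. For the epimorphism, the map $\colim Y_{i}\rightarrow\colim Z_{i}$ is non-expanding, which gives one inequality against the quotient semi-norm. For the reverse inequality, $\rho([z])=\inf_{j}\rho_{Z_{j}}(z_{j})$ and each $\rho_{Z_{j}}(z_{j})$ is an infimum over lifts in $Y_{j}$. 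Those lifts give lifts in $\colim Y_{i}$ whose norms dominate their colimit norms, so equality follows.

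For strong exactness I would argue as follows. Cokernels commute with colimits formally, since colimits commute with colimits. For kernels of an arbitrary compatible family $g_{i}:Y_{i}\rightarrow W_{i}$, an element of $\mathrm{Ker}(\colim g_{i})$ is represented by some $y\in Y_{i}$ with $g_{j}(f_{ji}y)=0$ for some $j$. Hence it comes from $\mathrm{Ker}(g_{j})$. Because $\mathrm{Ker}(g_{j})$ carries the subspace semi-norm, the two infimum formulas agree. The non-Archimedean case is word for word the same.

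For $\mathrm{Ban}_{R}^{\le1}$ I would note that the colimit there is $\overline{\mathrm{Cpl}}$ applied to the colimit in $\mathrm{SNrm}_{R}^{\le1}$, because $\overline{\mathrm{Cpl}}$ is a left adjoint to a full inclusion. Exactness then follows from the semi-normed case together with Corollary \ref{cor:cplexact}.

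For (3), take a transfinite chain $M_{\alpha}\rightarrow M$ of strict monomorphisms in $\mathrm{Nrm}_{R}^{\le1}$. The connecting maps are isometries, so the colimit in $\mathrm{SNrm}$ is the union $U=\bigcup M_{\alpha}$ with the restricted norm. This is already normed, so it is also the colimit in $\mathrm{Nrm}_{R}^{\le1}$, and $U\rightarrow M$ is an isometry. On the quotient side, $\colim M/M_{\alpha}$ should be $\mathrm{Sep}(M/U)\cong M/\overline{U}$ by Proposition \ref{prop:quotBanclosed}, and this is normed, so it lies in the class of all objects.

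I expect the main obstacle to be closedness of $U$ in $M$. Strict monomorphisms in $\mathrm{Nrm}_{R}^{\le1}$ must have closed image, and the kernel of $M\rightarrow M/\overline{U}$ is $\overline{U}$. When the cofinality of $\Lambda$ is uncountable, closedness should follow because any convergent sequence lies in some single $M_{\alpha}$, which is closed. For countable cofinality I would first try to show that the exactness condition is really being tested against $\overline{U}$ and that $\overline{U}$ can be identified with the colimit. If that fails, the statement as given would need the uncountable-cofinality hypothesis or a modified notion of union.
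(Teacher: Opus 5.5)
Your treatment of (1) and (2) is essentially the paper's argument. You use the explicit infimum formula for the colimit semi-norm and the agreement of underlying modules, since filtered colimits of $R$-modules commute with finite limits. For $\mathrm{Ban}_{R}^{\le1}$ you identify the colimit with $\overline{\mathrm{Cpl}}$ of the semi-normed colimit and apply Corollary~\ref{cor:cplexact}. The paper derives exactness from strong exactness rather than checking isometry and quotient semi-norm separately, but that difference is cosmetic.

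For (3), the obstacle you flag is genuine, and it cannot be overcome: in the countable-cofinality case the statement is false. Take $R=\mathbb{R}$ (or $R=\mathbb{Q}_{p}$ for the non-Archimedean version), $M=c_{0}$ with the sup norm, and $\Lambda=\omega$. Let $M_{n}\subset M$ be the sequences supported in the first $n$ coordinates. Then:
\begin{itemize}
\item each $M_{n}\rightarrow M$ is an isometry with closed image;
\item each $M_{n}\rightarrow M_{n+1}$ is admissible with cokernel $R$;
\item each $0\rightarrow M_{n}\rightarrow M\rightarrow M\big\slash M_{n}\rightarrow 0$ is exact in $\mathrm{Nrm}_{R}^{\le1}$.
\end{itemize}
So every hypothesis of unionability holds. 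Because the transition maps are isometries, the colimit in $\mathrm{SNrm}_{R}^{\le1}$ is $c_{00}$ with the sup norm. This is already normed, so $\mathrm{Sep}$ does nothing and it is also the colimit in $\mathrm{Nrm}_{R}^{\le1}$. But $c_{00}$ is dense and not closed in $c_{0}$, so $c_{00}\rightarrow c_{0}$ is not a strict monomorphism. Correspondingly $\colim M\big\slash M_{n}\cong\mathrm{Sep}(c_{0}\big\slash c_{00})=0$, and $0\rightarrow c_{00}\rightarrow c_{0}\rightarrow 0\rightarrow 0$ is not exact.

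The same example rules out your proposed rescue. The $\mathrm{Nrm}_{R}^{\le1}$-colimit of such a chain is always $U$ itself, not $\overline{U}$.

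The paper's proof of (3) has exactly this hole. It passes from ``the semi-normed colimit embeds isometrically in $N$'' to ``strict monomorphism in $\mathrm{Nrm}_{R}^{\le1}$'', ignoring the closed-image requirement for strict monomorphisms there.

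What is true is what you identified. For uncountable cofinality your argument works: a convergent sequence in $U$ lies in a single closed $M_{\beta}$, so $U$ is closed and $M\big\slash U$ is already normed. The analogous statement in $\mathrm{Ban}_{R}^{\le1}$ also follows directly from (1), because there the colimit is $\overline{U}$ and the cokernel is $M\big\slash\overline{U}$.

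However, Lemma~\ref{lem:decomp} invokes unionability at limit stages such as $\omega$, so the countable case is precisely the one that is needed. The normed-module conclusions that rest on (3) are therefore not justified by it, whereas the Banach case draws on (1) instead.
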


\begin{proof}
\begin{enumerate}
    \item 
      First we prove that filtered colimits in $\mathrm{SNrm}_{R}^{\le1}$ commute with all kernels and cokernels. Evidently they commute with cokernels. Let $F,G:\mathcal{I}\rightarrow\mathrm{SNrm}^{\le1}_{R}$ be filtered diagrams, and $\eta:F\rightarrow G$ a natural transformation. Let $K$ be the functor defined by $K(i)=\mathrm{Ker}(\eta(i))$. Let $|-|:\mathrm{SNrm}_{R}^{\le1}\rightarrow{}_{|R|}\mathrm{Mod}$ denote the forgetful functor to the category of $|R|$-modules, where $|R|$ denotes the underlying ring of $R$. This functor commutes with all limits and colimits. 
In particular, the underlying modules of 
    $$\colim_{i}\mathrm{Ker}(\eta(i))$$
    and
    $$\mathrm{Ker}(\colim_{i}\eta(i))$$
    coincide. It remains to check that the semi-norms coincide. This follows from the explicit description of the semi-norm on the colimit, and the submodule semi-norm. The claim for $\mathrm{Ban}_{R}^{\le1}$ now follows from the claim for $\mathrm{SNrm}_{R}^{\le1}$, and exactness of $\overline{\mathrm{Cpl}}$.
    \item 
    This is identical to the first claim.
    \item 
    Let $\mathcal{C}$ denote either $\mathrm{Nrm}_{R}^{\le1}$ or $\mathrm{Nrm}_{R}^{nA,\le1}$.
    We need to show that, for any semi-normed $R$-module $N$, and any transfinite sequence 
    $$F:\lambda\rightarrow\mathcal{C}$$
    equipped with a natural transformation $\eta:F\rightarrow\mathrm{cst}(N)$ to the constant diagram on $N$, such that for each $\alpha$, $\eta(\alpha):F(\alpha)\rightarrow N$ is a strict monomorphism, then the colimit
    $$\colim_{\alpha}F(\alpha)\rightarrow N$$
    is a strict monomorphism. By Part $(1)$, the colimit of semi-normed modules
     $$\colim^{\mathrm{SNrm}}_{\alpha}F(\alpha)\rightarrow N$$
     is a strict monomorphism. This means, in particular, that the colimit computed in the category of semi-normed modules, $\colim^{\mathrm{SNrm}}_{\alpha}F(\alpha)$, is isometrically isomorphic to a sub-module of $N$. Consequently, it is normed. Hence it is also the colimit of normed modules, and we are done.
\end{enumerate}


\end{proof}

\begin{corollary}\label{cor:exactfiltered}
\begin{enumerate}
\item 
    The category $\mathrm{SNrm}_{R}^{\le1}$ is purely $\aleph_{1}$-locally presentable, with strongly exact functors of filtered colimits. The category $\mathrm{Nrm}_{R}^{\le1}$ is purely $\aleph_{1}$-locally presentable, and the class of all objects is unionable. The category $\mathrm{Ban}_{R}^{\le1}$ is purely $\aleph_{1}$-locally presentable and has exact functors of filtered colimits. In particular, all categories have enough injectives. 
    \item 
    The category $\mathrm{SNrm}_{R}^{nA,\le1}$ is a purely $\aleph_{1}$-locally presentable,  with strongly exact functors of filtered colimits. The category $\mathrm{Nrm}_{R}^{nA,\le1}$ is purely $\aleph_{1}$-locally presentable, and the class of all objects is unionable. The category $\mathrm{Ban}_{R}^{nA,\le1}$ is purely $\aleph_{1}$-locally presentable and has exact functors of filtered colimits. In particular, all categories have enough injectives. 
\end{enumerate}
\end{corollary}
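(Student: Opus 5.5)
The plan is to check, for each category listed in Corollary \ref{cor:exactfiltered}, the hypotheses of Corollary \ref{cor:enoghinj} (and of Corollary \ref{cor:deconAcoversenvs} with $\mathcal{A}=\mathrm{Ob}(\mathcal{C})$). These hypotheses are:
\begin{enumerate}
\item local $\lambda$-presentability with $\lambda=\aleph_{1}$;
\item total proto-exactness;
\item the obscure axiom;
\item admissibility of $\aleph_{1}$-pure monomorphisms;
\item unionability of all objects (for the normed cases) or weak elementarity (for the exact-filtered-colimit cases).
\end{enumerate}
Local $\aleph_{1}$-presentability is the preceding proposition. Total parabelianness and the obscure axiom are Theorem \ref{cor:pushpullnonexp}. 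Since the listed categories are complete and cocomplete, the strong obscure axioms follow from the ordinary ones, so split monomorphisms are admissible.

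\emph{Purity.} By Proposition \ref{prop:lambdadirected}, an $\aleph_{1}$-pure monomorphism is an $\aleph_{1}$-directed colimit in $\mathrm{Mor}(\mathcal{C})$ of split monomorphisms. Split monomorphisms are admissible by the strong left obscure axiom, and any split monomorphism embeds into a short exact sequence because cokernels exist. When filtered colimits are exact (the $\mathrm{SNrm}$ and $\mathrm{Ban}$ cases, by the lemma immediately preceding), the colimit of these exact sequences is exact. Hence the colimit monomorphism is admissible, which is the content of the Remark after the definition of purely locally presentable categories.

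The normed case $\mathrm{Nrm}_{R}^{\le1}$ has no exactness of filtered colimits, so here I would argue directly. An $\aleph_{1}$-pure morphism $f:X\rightarrow Y$ is injective. I would test it against the generator squares $R_{\delta}\rightarrow X$ to get isometry. For a given $x$ with $\rho_{Y}(f(x))\le\delta$, the square $R_{\rho_X(x)}\rightarrow X$, $R_{\delta}\rightarrow Y$ (after slightly enlarging $\delta$) lifts, giving $\rho_{X}(x)\le\delta$. For closed image, I would use countable sequences: the module presented by a Cauchy sequence is $\aleph_{1}$-presentable, and purity lifts its limit. Alternatively, I would write $f$ as the $\aleph_{1}$-filtered colimit from $\mathrm{SNrm}$ and use that this colimit is already normed by Proposition \ref{prop:filtcolim}.

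\emph{Colimit condition.} For $\mathrm{SNrm}$ and $\mathrm{Ban}$ with exact filtered colimits, the category is weakly $\mathrm{Ob}(\mathcal{C})$-elementary. Indeed, transfinite sequences are filtered diagrams, so the colimit of the exact rows is exact; unionability follows the same way. For $\mathrm{Nrm}$, unionability of all objects is part (3) of the lemma. With $\mathcal{A}=\mathrm{Ob}(\mathcal{C})$, pure subobject stability and closure under transfinite extensions are trivial. Thus Corollary \ref{cor:deconAcoversenvs} yields an $\mathrm{Ob}(\mathcal{C})^{\perp}$-special pre-envelope $X\rightarrow B$, and $B$ is injective. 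The non-Archimedean variants are identical, using part (2) of the lemma.

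The main obstacle is the purity claim for $\mathrm{Nrm}_{R}^{\le1}$, specifically the closed-image part. There I would first try reducing to $\mathrm{SNrm}_{R}^{\le1}$ via the reflector $\mathrm{Sep}$ and Proposition \ref{prop:filtcolim}(2).
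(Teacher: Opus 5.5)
Your proposal is correct and is essentially the paper's own argument. The paper gives no separate proof; the corollary comes from combining local $\aleph_{1}$-presentability, Theorem \ref{cor:pushpullnonexp}, the preceding lemma on exact filtered colimits and unionability, the Remark on purely locally presentable categories, and Corollaries \ref{cor:deconAcoversenvs}/\ref{cor:enoghinj} with $\mathcal{A}=\mathrm{Ob}(\mathcal{C})$, which is exactly what you do.

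Your ``main obstacle'' for $\mathrm{Nrm}_{R}^{\le1}$ is settled by your alternative route, and that is the one to use. By Proposition \ref{prop:filtcolim}(2), an $\aleph_{1}$-filtered colimit of exact sequences in $\mathrm{Nrm}_{R}^{\le1}$ is computed in $\mathrm{SNrm}_{R}^{\le1}$, and the colimit of the cokernels is again normed, which forces the image to be closed. So $\aleph_{1}$-filtered colimits are exact in $\mathrm{Nrm}_{R}^{\le1}$, the Remark applies verbatim, and the direct test-square argument is unnecessary.
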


\comment{
\subsection{The weak serpentine property}

\begin{proposition}
    The categories $\mathrm{SNrm}_{R}^{\le1}$, $\mathrm{Nrm}_{R}^{\le1}$, and $\mathrm{Ban}_{R}^{\le1}$ are weakly serpentine.
\end{proposition}

\begin{proof}
    Consider a pushout square 
    \begin{displaymath}
        \xymatrix{
        A\ar[d]^{f}\ar[r]^{i} & B\ar[d]^{f'}\\
        A'\ar[r]^{i'} & B'
        }
    \end{displaymath}
    in which $i$ and $i'$ are admissible monics. We will in fact show that all such squares are bicartesian. We have $B'=(B\oplus A')\big\slash\{i(a),-f(a))\}$ where $B\oplus A'$ has the sum norm. Consider the map $ B\rightarrow B'=(B\oplus A')\big\slash((i(a),f(a)))$, $b\mapsto [(0,b)]$, and the map $ A\rightarrow B'=(B\oplus A')\big\slash((i(a),-f(a)))$, $a\mapsto [(a,0)]$. The equaliser of these maps is the submodule of $A\times B$ consisting of those pairs such $(a,b)$ such that $[(a,0)]=[(0,b)]$, i.e., $[(a,-b)]=0$. Thus the equaliser is the kernel of the map 
    $$A\times B\rightarrow B'$$
    $$(a,b)\mapsto [(a,-b)]$$.
    This kernel is $\{(i(a),f(a))\}$. Note that $A\times B$ has the maximum norm. Now the obvious map $A\rightarrow \{i(a),f(a)\}$ is a bijection. We need to check that it is an isometry. But we have $$||(i(a),f(a))||_{\times}=\mathrm{max}(||i(a)||_{B},||f(a)||_{A'})=\mathrm{max}(||a||_{A},||f(a)||_{A'})=||a||_{A},$$
    where we have used that $||f(a)||_{A'}\le ||a||_{A}$.

    Next consider a pullback square
     \begin{displaymath}
        \xymatrix{
        A\ar[d]^{f'}\ar[r]^{p'} & B\ar[d]^{f}\\
        A'\ar[r]^{p} & B'
        }
    \end{displaymath}
    in which $p$ and $p'$ are admissible epimorphisms. Again, we will show that all such squares are bicartesian. We have 
    $A=\{(a,b)\in A'\times B:p(a)=f(b)\}$ with the maximum norm. The coequaliser of the maps $f'$ and $p'$ is 
    $$(A'\oplus B)\big\slash\{(f'(a),-p'(a))\}.$$
    Consider the map $\phi:(A'\oplus B)\big\slash\{(f'(a),-p'(a))\}\rightarrow B'$, $(x,y)\mapsto p(x)+f(y)$. This is a bijection, and we need to show it is an isometry. But by the obscure axiom, it is an admissible epimorphism, and this is sufficient.
\end{proof}
}

\subsection{Scalable cotorsion theory}
Let $\mathcal{A}$ be a class of objects in $\mathrm{SNrm}_{R}^{\le1}$ (resp. in $\mathrm{SNrm}_{R}^{nA,\le1}$, $\mathrm{Nrm}_{R}^{\le1}$, $\mathrm{Nrm}_{R}^{nA,\le1}$ $\mathrm{Ban}_{R}^{\le1}$ or $\mathrm{Ban}_{R}^{nA,\le1}$). We say that $\mathcal{A}$ is \textit{scalable} if whenever $A\in\mathcal{A}$, and either $A'\rightarrow A$ or $A\rightarrow A'$ is an isomorphism in $\mathrm{SNrm}_{R}$ (resp. in $\mathrm{Ban}_{R}$, $\mathrm{SNrm}_{R}^{nA}$m or $\mathrm{Ban}_{R}^{nA}$) - \textit{not just in} $\mathrm{SNrm}_{R}^{\le1}$ - then $A'\in\mathcal{A}$. From now on we will focus on the case $\mathrm{SNrm}_{R}^{\le1}$, but the discussion below also applies to $\mathrm{SNrm}_{R}^{nA,\le1}$, $\mathrm{Nrm}_{R}^{\le1}$, $\mathrm{Nrm}_{R}^{nA,\le1}$, $\mathrm{Ban}_{R}^{\le1}$ and $\mathrm{Ban}_{R}^{nA,\le1}$. We let $\mathcal{A}^{\perp}$ denote the right orthogonal in $\mathrm{SNrm}_{R}^{\le1}$, and $\overline{\mathcal{A}}^{\perp}$ denote the right orthogonal in $\mathrm{SNrm}_{R}$. 

Let $\mathcal{A}$ be a scalable class in $\mathrm{SNrm}_{R}^{\le1}$. We show that $\mathcal{A}^{\perp}\subseteq\overline{\mathcal{A}}^{\perp}$. Let 
\begin{displaymath}
\xymatrix{
   0\ar[r] & K\ar[r]^{f} & X\ar[r]^{g} & Y\ar[r] & 0 
   }
\end{displaymath}
be an exact sequence in $\mathrm{SNrm}_{R}$.  Consider the isomorphic exact sequence

\begin{displaymath}
\xymatrix{
   0\ar[r] & \mathrm{Im}(f)\ar[r]^{i_{f}} & X\ar[r]^{q_{f}} & X\big\slash\mathrm{Im}(f)\ar[r] & 0. 
   }
\end{displaymath}
This is an exact sequence in $\mathrm{SNrm}_{R}^{\le1}$, with $\mathrm{Im}(f)\in\mathcal{A}$. 

We shall write this rescaled exact sequence as
$$0\rightarrow \overline{K}\rightarrow X\rightarrow \overline{L}\rightarrow 0$$
in $\mathrm{SNrm}_{R}^{\le1}$. Finally, let $T\in\mathcal{A}^{\perp}$. Now we have that 
$$0\rightarrow\mathrm{Hom}^{\le1}(\overline{L},T)\rightarrow\mathrm{Hom}^{\le1}(X,T)\rightarrow\mathrm{Hom}^{\le1}(\overline{K},T)\rightarrow0$$
is right semi-exact.

For $r\in\mathbb{R}_{>0}$, denote by $\mathrm{Hom}^{\le r}(X,Y)$ the set of bounded morphisms of norm at most $r$. We have 
$$\mathrm{Hom}^{\le r}(X,Y)\cong\mathrm{Hom}^{\le1}(X_{r},Y).$$

For each $n\in\mathbb{N}$, the sequence of pointed sets
$$0\rightarrow\mathrm{Hom}^{\le n}(\overline{L},T)\rightarrow\mathrm{Hom}^{\le n}(X,T)\rightarrow\mathrm{Hom}^{\le n}(\overline{K},T)\rightarrow0$$
is then just
$$0\rightarrow\mathrm{Hom}^{\le1}(\overline{L}_{n},T)\rightarrow\mathrm{Hom}^{\le1}(X_{n},T)\rightarrow\mathrm{Hom}^{\le1}(\overline{K}_{n},T)\rightarrow0.$$
Now $\overline{L}_{n}$ is isomorphic in $\mathrm{Ban}_{R}$ to $\overline{L}$, and hence is in $\mathcal{A}$. The sequence 
$$0\rightarrow \overline{K}_{n}\rightarrow X_{n}\rightarrow \overline{L}_{n}\rightarrow 0$$
is short exact. Thus
$$0\rightarrow\mathrm{Hom}^{\le1}(\overline{L}_{n},T)\rightarrow\mathrm{Hom}^{\le1}(X_{n},T)\rightarrow\mathrm{Hom}^{\le1}(\overline{K}_{n},T)\rightarrow0$$
is right semi-exact. Taking colimits over $\mathbb{N}$, we get
$$0\rightarrow\mathrm{Hom}(\overline{L},T)\rightarrow\mathrm{Hom}(X,T)\rightarrow\mathrm{Hom}(\overline{K},T)\rightarrow0$$
is right semi-exact. But this is now a sequence of abelian groups. Hence it is short exact, and $T\in\overline{\mathcal{A}}^{\perp}$. 

Suppose now that the pair $(\mathcal{A},\mathcal{A}^{\perp})$ is complete. That is, for any object $X\in\mathrm{SNrm}_{R}^{\le1}$ there is an exact sequence
$$0\rightarrow X\rightarrow B\rightarrow A\rightarrow 0$$
with $B\in\mathcal{A}^{\perp}$ and $A\in\mathcal{A}$, 
and an exact sequence
$$0\rightarrow F\rightarrow G\rightarrow A\rightarrow 0$$
with $G\in\mathcal{A}$ and $F\in\mathcal{A}^{\perp}$. 

Then we automatically have that $(\mathcal{A},\overline{\mathcal{A}}^{\perp})$ is complete as a pair. We therefore arrive at the following result.

\begin{theorem}\label{thm:injban}
    The injective cotorsion pair $(\mathrm{SNrm}_{R}^{\le1},\mathrm{Inj}^{\le1}_{R})$ is a scalable complete cotorsion pair in $\mathrm{SNrm}_{R}^{\le1}$. Thus $(\mathrm{SNrm}_{R}^{\le1},\overline{\mathrm{Inj}}^{\le1}_{R})$ is a complete cotorsion pair. In particular, $\overline{\mathrm{Inj}}^{\le1}_{R}=\mathrm{Inj}_{R}$, and $\mathrm{SNrm}_{R}$ has enough injectives. This also applies to $\mathrm{SNrm}_{R}^{nA}$, $\mathrm{Nrm}_{R}$, $\mathrm{Nrm}_{R}^{nA}$, $\mathrm{Ban}_{R}$, and $\mathrm{Ban}_{R}^{nA}$.
\end{theorem}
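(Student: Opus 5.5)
The plan has three stages: show the cotorsion pair $(\mathrm{Ob}(\mathrm{SNrm}_{R}^{\le1}),\mathrm{Inj}^{\le1}_{R})$ is complete and scalable, transfer it to $\mathrm{SNrm}_{R}$ by the scalable cotorsion discussion preceding the statement, and deduce enough injectives in $\mathrm{SNrm}_{R}$.

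\textbf{Completeness.} Corollary \ref{cor:exactfiltered} gives that $\mathrm{SNrm}_{R}^{\le1}$ has enough injectives. Concretely, Corollary \ref{cor:enoghinj} applies with $\mathcal{A}=\mathrm{Ob}(\mathcal{C})$. Its hypotheses are:
\begin{enumerate}
\item purely locally $\aleph_{1}$-presentable, from Corollary \ref{cor:exactfiltered};
\item totally parabelian with the obscure axiom, from Theorem \ref{cor:pushpullnonexp};
\item weakly elementary, because filtered colimits are exact.
\end{enumerate}
This gives, for each $X$, an admissible monomorphism $X\rightarrow I$ with $I$ injective. Its cokernel automatically lies in $\mathrm{Ob}(\mathcal{C})$. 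The other half of completeness is trivial: take $0\rightarrow 0\rightarrow X\rightarrow X\rightarrow 0$. So the pair is complete.

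\textbf{Scalability.} This is immediate, since the left class is the class of all objects. It is closed under any isomorphism in $\mathrm{SNrm}_{R}$, because every object of $\mathrm{SNrm}_{R}$ is already an object of $\mathrm{SNrm}_{R}^{\le1}$.

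\textbf{Transfer.} The argument before the statement gives $\mathrm{Inj}^{\le1}_{R}=\mathcal{A}^{\perp}\subseteq\overline{\mathcal{A}}^{\perp}$. Here $\overline{\mathcal{A}}^{\perp}$ is the class of $T$ for which $\mathrm{Hom}(X,T)\rightarrow\mathrm{Hom}(K,T)$ is surjective for every exact sequence in $\mathrm{SNrm}_{R}$, i.e.\ $\mathrm{Inj}_{R}$. The mechanism, spelled out before the statement, is:
\begin{enumerate}
\item replace the sequence by the isomorphic one $0\rightarrow\mathrm{Im}(f)\rightarrow X\rightarrow X/\mathrm{Im}(f)\rightarrow 0$, which is exact in $\mathrm{SNrm}_{R}^{\le1}$ by Proposition \ref{prop:isosio};
\item rescale by $n$;
\item use $\mathrm{Hom}^{\le n}(-,T)=\mathrm{Hom}^{\le1}((-)_{n},T)$;
\item pass to the colimit over $n$.
\end{enumerate}
An exact sequence in $\mathrm{SNrm}_{R}^{\le1}$ is exact in $\mathrm{SNrm}_{R}$, since isometries are strict monomorphisms and quotient norms are strict epimorphisms there. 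Hence the admissible monomorphism $X\rightarrow I$ from the completeness step is an admissible monomorphism in $\mathrm{SNrm}_{R}$ into an object of $\mathrm{Inj}_{R}$, and $\mathrm{SNrm}_{R}$ has enough injectives.

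\textbf{Equality of classes.} For the reverse inclusion $\mathrm{Inj}_{R}\subseteq\mathrm{Inj}^{\le1}_{R}$, embed $T\in\mathrm{Inj}_{R}$ as $T\rightarrow I$ with $I\in\mathrm{Inj}^{\le1}_{R}$. This map splits in $\mathrm{SNrm}_{R}$, so $T$ is a retract of $I$ there. In general this only gives $T\in\overline{\mathcal{A}}^{\perp}$ up to rescaling. So I would state the equality as $\overline{\mathrm{Inj}}^{\le1}_{R}=\mathrm{Inj}_{R}$, where $\overline{\mathrm{Inj}}^{\le1}_{R}$ is the right orthogonal in $\mathrm{SNrm}_{R}$. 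That equality is just the definition, and is what is needed.

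\textbf{Other categories.} For $\mathrm{Nrm}$, $\mathrm{Ban}$ and the non-Archimedean variants, repeat the argument verbatim. The inputs are Corollary \ref{cor:exactfiltered}; for $\mathrm{Nrm}$, unionability of all objects in place of weak elementarity in Corollary \ref{cor:deconAcoversenvs}; and Proposition \ref{prop:isosio} for the rescaling.

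\textbf{Main obstacle.} The delicate step is the transfer. Two things must be checked:
\begin{enumerate}
\item The colimit over $n$ of right semi-exact pointed-set sequences is right semi-exact. Surjectivity is preserved by filtered colimits, and every bounded map has norm at most some $n$, so $\colim_{n}\mathrm{Hom}^{\le n}=\mathrm{Hom}$.
\item The rescaled sequence $0\rightarrow\overline{K}_{n}\rightarrow X_{n}\rightarrow\overline{L}_{n}\rightarrow0$ remains exact. This holds because rescaling is an automorphism of $\mathrm{SNrm}_{R}^{\le1}$.
\end{enumerate}
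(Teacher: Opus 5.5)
Your proposal is correct and follows the paper's route. Enough injectives in $\mathrm{SNrm}_{R}^{\le1}$ comes from Corollary~\ref{cor:exactfiltered} via Corollary~\ref{cor:enoghinj}; scalability and the second half of completeness are trivial because the left class is all objects; and the rescaling-and-colimit-over-$n$ argument gives $\mathrm{Inj}^{\le1}_{R}\subseteq\overline{\mathcal{A}}^{\perp}=\mathrm{Inj}_{R}$. Your extra remarks make explicit two points the paper leaves implicit: exact sequences in the non-expanding category remain exact in $\mathrm{SNrm}_{R}$, and objects of $\mathrm{Inj}_{R}$ are only retracts of objects of $\mathrm{Inj}^{\le1}_{R}$, not members of it.
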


\comment{
Now suppose the cotorsion pair on $\mathrm{Ban}_{R}^{nA,\le1}$ is complete. Suppose that $T\in\mathfrak{R}$. There is an exact sequence in $\mathrm{Ban}_{R}^{nA,\le1}$, and hence in $\mathrm{Ban}_{R}^{nA}$, 
$$0\rightarrow T\rightarrow S\rightarrow C\rightarrow 0$$
with $S\in\mathfrak{R}$ and $C\in\mathfrak{L}$. Since $S\in\mathfrak{L}^{\perp}$ this sequence splits. Thus $T$ is a summand of an object of $\mathfrak{R}$. Let $\overline{\mathfrak{R}}$ denote the class of retracts of objects of $\mathfrak{R}$ in $\mathrm{Ban}_{R}^{nA}$. Then $(\mathfrak{L},\overline{\mathfrak{R}})$ is a complete cotorsion pair in $\mathrm{Ban}_{R}^{nA}$.
}

\comment{
\subsubsection{A remark on exact structures in the non-Archimedean case}

In \cite{kelly2016homotopy}*{Section 3.1} we also introduced the strong exact structure on the categories $\mathrm{SNrm}_{R}^{nA,\le1},\mathrm{Nrm}_{R}^{nA,\le1},$ and $\mathrm{Ban}_{R}^{nA,\le1}$. In this exact structure, a kernel cokernel pair 
\begin{displaymath}
    \xymatrix{
    0\ar[r] & K\ar[r]^{i} & X\ar[r]^{p} & Z\ar[r] & 0
    }
\end{displaymath}
is exact if for any $z\in Z$ there is $x\in X$ such that $p(x)=z$ and $\rho_{X}(x)=\rho_{Z}(z)$. Moreover, $i$ is an admissible monomorphism precisely if it is an isometry and any element of $X$ has a closest point in $i(K)$. In this case these exact categories have sets of projective generators, given by $\{R_{\delta}:\delta\ge 0\}$ in the semi-normed case, and by $\{R_{\delta}:\delta> 0\}$ in the normed and Banach cases.

\begin{proposition}
    When equipped with the strong exact, projective model structures the adjunctions
        $$\adj{\mathrm{Sep}}{\mathrm{Ch}(\mathrm{SNrm}_{R}^{nA,\le1})}{\mathrm{Ch}(\mathrm{Nrm}_{R}^{nA,\le1})}{i_{Nrm}}$$
        and
       $$\adj{\overline{\mathrm{Cpl}}}{\mathrm{Ch}(\mathrm{Nrm}_{R}^{nA,\le1})}{\mathrm{Ch}(\mathrm{Ban}_{R}^{nA,\le1})}{i_{Ban}}$$
are Quillen reflections.
\end{proposition}

\begin{proof}
    In both adjunctions, both the left and right adjoints are exact. Hence, the claim follows from the fact that they are both reflective adjunctions of $1$-categories.

\end{proof}

As mentioned above, the categories $\mathrm{SNrm}_{R}^{nA,\le1}$, $\mathrm{Nrm}_{R}^{nA,\le1}$, and $\mathrm{Ban}_{R}^{nA,\le1}$ are all quasi-abelian categories. Thus, they may be equipped with their respective quasi-abelian exact structures. 

\begin{proposition}
    Products in $\mathrm{SNrm}_{R}^{nA,\le1}$, $\mathrm{Nrm}_{R}^{nA,\le1}$, and $\mathrm{Ban}_{R}^{nA,\le1}$ are all exact.
\end{proposition}

Moreover, with the quasi-abelian exact structures, $\mathrm{SNrm}_{R}^{nA,\le1}$ and $\mathrm{Ban}_{R}^{nA,\le1}$ have enough injectives. Since filtered colimits in $\mathrm{SNrm}_{R}^{nA,\le1}$ and $\mathrm{Ban}_{R}^{nA,\le1}$ are exact by Corollary \ref{cor:exactfiltered}, by Corollary \ref{cor:injectivemodel}, the injective model structure exists on both $\mathrm{Ch}(\mathrm{SNrm}_{R}^{nA,\le1})$ and on $\mathrm{Ch}(\mathrm{Ban}_{R}^{nA,\le1})$.

\begin{proposition}
    The adjunction
    $$\adj{\overline{\mathrm{Cpl}}}{\mathrm{Ch}(\mathrm{SNrm}_{R}^{nA,\le1})}{\mathrm{Ch}(\mathrm{Ban}_{R}^{nA,\le1})}{i}$$
    is a Quillen reflection, where $i$ is the inclusion functor.
\end{proposition}

\begin{proof}
    The fact that the adjunction is Quillen is a consequence of the fact that $\overline{\mathrm{Cpl}}$ is exact (Corollary \ref{cor:cplexact}). Now, let $I_{\bullet}$ be a dg-injective complex in $\mathrm{Ch}(\mathrm{Ban}_{R}^{nA,\le1})$. Then we have
    $$\mathbb{L}\overline{\mathrm{Cpl}}(\mathbb{R}i(I_{\bullet}))\cong\overline{\mathrm{Cpl}}(i(I_{\bullet}))\cong I_{\bullet}.$$
\end{proof}

\begin{proposition}
\begin{enumerate}
    \item 
        Let $\mathrm{Ch}_{strong}(\mathrm{SNrm}_{R}^{nA,\le1})$ denote the category of complexes of semi-normed $R$-modules equipped with the projective model structure corresponding to the strong exact structure. Let $\mathrm{Ch}_{qa}(\mathrm{SNrm}_{R}^{nA,\le1})$ denote the category of complexes of semi-normed $R$-modules equipped with the injective model structure corresponding to the quasi-abelian exact structure. The adjunction
        $$\adj{\mathrm{Id}}{\mathrm{Ch}_{strong}(\mathrm{SNrm}_{R}^{nA,\le1})}{\mathrm{Ch}_{qa}(\mathrm{SNrm}_{R}^{nA,\le1})}{\mathrm{Id}}$$
        is a Quillen reflection.
        \item 
            Let $\mathrm{Ch}_{strong}(\mathrm{Ban}_{R}^{nA,\le1})$ denote the category of complexes of semi-normed $R$-modules equipped with the projective model structure corresponding to the strong exact structure. Let $\mathrm{Ch}_{qa}(\mathrm{Ban}_{R}^{nA,\le1})$ denote the category of complexes of semi-normed $R$-modules equipped with the injective model structure corresponding to the quasi-abelian exact structure. The adjunction
        $$\adj{\mathrm{Id}}{\mathrm{Ch}_{strong}(\mathrm{Ban}_{R}^{nA,\le1})}{\mathrm{Ch}_{qa}(\mathrm{Ban}_{R}^{nA,\le1})}{\mathrm{Id}}$$
        is a Quillen reflection.
\end{enumerate}
\end{proposition}

\begin{proof}
    We prove the first case, the second being similar. All objects of $\mathrm{Ch}_{qa}(\mathrm{SNrm}_{R}^{nA,\le1})$ are cofibrant, so the `left' $\mathrm{Id}$ clearly preserves cofibrations. Note that the identity 
    $$(\mathrm{SNrm}_{R}^{nA,\le1},\mathcal{Q}_{strong})\rightarrow(\mathrm{SNrm}_{R}^{nA,\le1},\mathcal{Q}_{qa})$$
    is exact, where $\mathcal{Q}_{strong}$ denotes the strong exact structure, and $\mathcal{Q}_{qa}$ denotes the quasi-abelian exact structure. Thus the `left-hand' identity preserves quasi-isomorphisms. Hence the adjunction is Quillen. Now let $I_{\bullet}$ be a dg-injective complex in $\mathrm{Ch}_{qa}(\mathrm{SNrm}_{R}^{nA,\le1})$. Then we have 
    $$\mathbb{L}\mathrm{Id}(\mathbb{R}\mathrm{Id}(I_{\bullet}))\cong\mathrm{Id}(\mathrm{Id}(I_{\bullet}))\cong I_{\bullet}.$$
\end{proof}

\begin{proposition}
    We have equivalence of categories
    $$\mathrm{Nrm}_{R}^{nA,\le1}\cong\mathrm{Nrm}_{\mathrm{Sep}(R)}^{nA,\le1}$$
      $$\mathrm{Ban}_{R}^{nA,\le1}\cong\mathrm{Ban}_{\overline{\mathrm{Cpl}}(R)}^{nA,\le1},$$
      which are exact for both the quasi-abelian and strong exact structures. 
\end{proposition}

To summarise, we get the following diagram of triangulated categories (or indeed, of stable $(\infty,1)$-categories)

\begin{displaymath}
\xymatrix{
    \mathbf{Ch}_{strong}(\mathrm{SNrm}_{R}^{nA,\le1})\ar[d]\ar[r] &  \mathbf{Ch}_{qa}(\mathrm{SNrm}_{R}^{nA,\le1})\ar[dd]\\
 \mathbf{Ch}_{strong}(\mathrm{Nrm}_{\mathrm{Sep(R)}}^{nA,\le1})\ar[d] &  \\
  \mathbf{Ch}_{strong}(\mathrm{Ban}_{\overline{\mathrm{Cpl}}(R)}^{nA,\le1})\ar[r] &  \mathbf{Ch}_{qa}(\mathrm{Ban}_{\overline{\mathrm{Cpl}}(R)}^{nA,\le1}).
}
\end{displaymath}
in which all functors have fully faithful right adjoints. 

Perhaps surprisingly, let us give an interpretation in terms of persistence homology. \textcolor{red}{finish}.
}

\comment{
Let 
 \begin{displaymath}
        \xymatrix{
        & X_{1}\ar[dl]^{f_{1}}\ar[dr]^{g_{1}} &\\
        Y & & Z
        }
    \end{displaymath}
    and
     \begin{displaymath}
        \xymatrix{
        & X_{2}\ar[dl]^{f_{2}}\ar[dr]^{g_{1}} &\\
        Y & & Z
        }
    \end{displaymath}
    be two morphisms in $\mathrm{D}(\mathrm{SNrm}_{R}^{nA,\le1})$ with $f_{1},f_{2}\in S$, which become equal after applying $\overline{Q}_{sn}$. Then there is a commutative diagram
    \begin{displaymath}
        \xymatrix{
     & &   C\ar[dl]^{f}\ar[dr]^{g} & &\\
     &   X_{1}\ar[dl]^{f_{1}}\ar[drr]^{g_{1}} & & X_{2} \ar[dll]^{f_{2}}\ar[dr]^{g_{2}} &\\
     Y & & & & Z.
        }
    \end{displaymath}
    where $f$ is a weak equivalence in $\mathrm{Ch}(\mathrm{SNrm}_{R}^{nA})$.
    }
\comment{

\subsubsection{The flat-cotorsion example}
Let $\mathcal{F}^{w}\subset\mathrm{Ban}_{R}^{\le1}$ denote the category of \textit{weakly flat objects}, i.e., those objects $F$ such that $F$ is flat as an object of $\mathrm{Ban}_{R}$. 

\begin{lemma}
    Let $F\in\mathrm{Ban}_{R}^{\le1}$. Then $F$ is flat in $\mathrm{Ban}_{R}^{\le1}$ if and only if it is flat in $\mathrm{Ban}_{R}$.
\end{lemma}

\begin{proof}
   \textcolor{red}{finish} Suppose $F$ is flat in $\mathrm{Ban}_{R}$.
\end{proof}

\begin{proposition}
    \textcolor{red}{true?} The class $\mathfrak{W}$ of acyclic objects is strongly pure subobject stable in $\mathrm{SNrm}_{R}^{\le1}$ and in $\mathrm{Ban}_{R}^{\le1}$.
\end{proposition}

\begin{proof}
Let
$$0\rightarrow X\rightarrow Y\rightarrow Z\rightarrow 0$$
be a $\lambda$-pure exact sequence 
    As in the proof of \cite{kelly2024flat}*{Corollary 3.23}, the following 
    $$0\rightarrow Z_{n}B_{X}(0,r)\rightarrow Z_{n}B_{Y}(0,r)\rightarrow Z_{n}B_{Z}(0,r)\rightarrow 0$$
    is a right semi-exact sequence
    for all $r\in R$. In particular, the following sequence
     $$0\rightarrow Z_{n}X\rightarrow Z_{n}Y\rightarrow Z_{n}Z\rightarrow 0$$
     is exact in $\mathrm{Ban}_{R}$ (in fact, this is much stronger than being exact in $\mathrm{Ban}_{R}$). It then follows from \cite{kelly2024flat}*{Lemma 3.22} that $X$ and $Z$ are acyclic in $\mathrm{Ban}_{R}$. Finally, we claim that $\mathfrak{W}$ is in fact stable by filtered colimits of transfinite sequences. 
\end{proof}

\begin{corollary}
    The category $\mathcal{F}^{w}$ is strongly pure subobject stable. 
\end{corollary}

\begin{proof}
    
\end{proof}
\subsection{The non-Archimedean setting}

\subsubsection{The categories $\mathrm{xNrm}_{R}^{nA,\le1}$}

\begin{lemma}
    The categories $\mathrm{SNrm}_{R}^{nA,\le1}$, $\mathrm{Nrm}_{R}^{nA,\le1}$, and $$\mathrm{Ban}_{R}^{nA,\le1}$$
    are quasi-abelian.
\end{lemma}

\begin{corollary}
    $\mathrm{SNrm}^{nA,\le1}_{R}$, $\mathrm{Nrm}^{nA,\le1}_{R}$, and $\mathrm{Ban}^{nA,\le1}_{R}$ are pure $\aleph_{1}$-exact categories with exact filtered colimits. Thus they are exact categories of Grothendieck type.
\end{corollary}

\begin{lemma}
In the quasi-abelian exact structures on $\mathrm{SNrm}_{R}^{nA,\le1}$, $\mathrm{Nrm}_{R}^{nA,\le1}$, and $\mathrm{Ban}_{R}^{nA,\le1}$ filtered colimits of flat objects are flat.
\end{lemma}

Although there are not enough projectives for the quasi-abelian exact structure, the objects $R_{\delta}$ still generate our categories and are flat. Thus we have enough flat objects. Moreover each category has exact filtered colimits are is $\aleph_{1}$-presentable. In particular they are purely $\aleph_{1}$-accessible and weakly elementary. By \cite{kelly2024flat} Corollary 6.3 the flat model structure exists on $\mathrm{Ch}(\mathrm{SNrm}_{R}^{nA,\le1})$, $\mathrm{Ch}(\mathrm{Nrm}_{R}^{nA,\le1})$, and $\mathrm{Ch}(\mathrm{Ban}_{R}^{nA,\le1})$. Moreover they are all combinatorial monoidal model categories satisfying the monoid axiom. In particular they present closed, symmetric, presentable, monoidal $(\infty,1)$-categories which we denote by $\mathbf{SN}_{R,qa}^{nA,\le1},\mathbf{N}_{R,qa}^{nA,\le1},$ and $\mathbf{B}_{R,qa}^{\le1}$ respectively.

Let us summarise the relationship between all of our categories in the following diagram
\begin{displaymath}
    \xymatrix{
    \mathbf{SN}_{R}^{nA,\le1}\ar[d]\ar[r] & \mathbf{SN}_{R,qa}^{nA,\le1}\ar[d]\\
    \mathbf{N}_{R}^{nA,\le1}\ar[d]\ar[r] & \mathbf{N}_{R,qa}^{nA,\le1}\ar[d]\\
    \mathbf{B}_{R}^{nA,\le1}\ar[r] & \mathbf{B}_{R,qa}^{nA,\le1}
    }
\end{displaymath}

in which all maps are strongly monoidal functors and localisations. The kernel of the left-to-right maps in the left-hand rectangle consist of those $dg$-projective complexes $X_{\bullet}$ which are acyclic for the quasi-abelian exact structure.

We conclude with a discussion which will be useful for discussing adic completeness later.

\begin{lemma}
    Products in $\mathrm{SNrm}_{R}^{nA,\le1}$ are strongly exact.
\end{lemma}

\begin{proof}
    Clearly they commute with kernels. It remains to prove they commute with cokernels. Now the product of a family $\{M_{i}\}_{i\in\mathcal{I}}$, $\prod_{i\in\mathcal{I}}^{\le1}M_{i}$ is the sub-module of the product $\prod_{i\in\mathcal{I}}M_{i}$ consisting of those tuples $(m_{i})_{i\in\mathcal{I}}$ where $\mathrm{sup}_{i\in\mathcal{I}}||m_{i}||<\infty$. The norm is given by the supremum of the norms of the entries. Let $\{f_{i}:M_{i}\rightarrow N_{i}\}$ be a collection of maps. 
\end{proof}

\begin{corollary}
    Let 
    $$\ldots\rightarrow M_{n+1}\rightarrow M_{n}\rightarrow\ldots\rightarrow M_{1}\rightarrow M_{0}$$ be a sequence in $\mathrm{SNrm}_{R}^{nA,\le1}$ with each $M_{n+1}\rightarrow M_{n}$ being an admissible epimorphism for the quasi-abelian exact structure. Then the map
    $$\lim_{n}M_{n}\rightarrow\mathbb{R}\lim_{n}M_{n}$$
    is an equivalence in $\mathbf{Ch}_{qa}(\mathrm{SNrm}_{R}^{nA,\le1})$. 
\end{corollary}

\begin{proof}
We may pass to the left heart $\mathrm{LH}(\mathrm{SNrm}_{R}^{nA,\le1})$. By \cite{kelly2024flat} this is a Grothendieck abelian category. Then the claim simply reduces to the Mittag-Leffler condition in such a category. 
\end{proof}

\subsection{The model structures}

\begin{proposition}
    The injective model structuree exists on $\mathrm{SNrm}_{R}^{\le1}$ and $\mathrm{Ban}_{R}^{\le1}$.
\end{proposition}

\section{Cotorsion pairs for Banach spaces}

\subsection{The non-Archimedean case}

}


    


\bibliographystyle{amsalpha}
\bibliography{main.bib}

\end{document}